\documentclass[letterpaper]{article}

\usepackage{arxiv}
\AtBeginDocument{%
  \newgeometry{textheight=9in,textwidth=6.5in,top=1in,headheight=22.5pt,headsep=25pt,footskip=30pt,marginparwidth=62pt,marginparsep=7pt}%
}

\usepackage[english]{babel}
\usepackage{csquotes}
\usepackage{booktabs}
\usepackage{graphicx}
\usepackage{sidecap}
\usepackage[hypertexnames=false]{hyperref}
\hypersetup{
	colorlinks,
	linkcolor={red!50!black},
	citecolor={blue!50!black},
	urlcolor={blue!80!black}
}
\usepackage{amssymb,amsmath,amsthm}
\usepackage{mathrsfs}
\usepackage{mathtools}

\usepackage{accents}
\usepackage[normalem]{ulem}
\usepackage{dsfont}
\usepackage{diagbox,multirow}
\usepackage{hhline}
\usepackage[dvipsnames,x11names]{xcolor}
\usepackage{array}
\usepackage{colortbl}
\usepackage{subfig}
\usepackage{enumerate}
\usepackage[shortlabels]{enumitem}
\usepackage{makecell}
\usepackage{float}
\usepackage{framed}
\usepackage{varwidth}
\usepackage[export]{adjustbox}
\usepackage{xparse}
\usepackage {tikz}
\usetikzlibrary{cd}
\usetikzlibrary{arrows,decorations.pathmorphing,backgrounds,positioning,fit,matrix}
\usetikzlibrary{decorations.markings}

\newlength\mylen
\tikzset{
	bicolor/.style 2 args={
		dashed,dash pattern=on 20pt off 20pt,-,#1,
		postaction={draw,dashed,dash pattern=on 20pt off 20pt,-,#2,dash phase=20pt}
	},
}

\usepackage{etoolbox}
\apptocmd{\sloppy}{\hbadness 10000\relax}{}{}
\usepackage{amsbsy}

\newtheorem{lemma}{Lemma}[section]
\newtheorem{theorem}[lemma]{Theorem}
\newtheorem{proposition}[lemma]{Proposition}
\newtheorem{corollary}[lemma]{Corollary}
\newtheorem*{proposition*}{Proposition}
\theoremstyle{definition}
\newtheorem{definition}[lemma]{Definition}
\newtheorem{example}{Example}

\newtheorem{remark}[lemma]{Remark}

\newtheoremstyle{named}{}{}{\itshape}{}{\bfseries}{.}{.5em}{\thmnote{#3}}
\theoremstyle{named}
\newtheorem*{namedtheorem}{}

\numberwithin{equation}{section}

\newcommand{\IP}{\hyperlink{ass:IP}{\textup{\textbf{(IP)}}}}

\newcommand{\UM}{\hyperlink{ass:UM}{\textup{\textbf{(UM)}}}}
\newcommand{\B}{\hyperlink{ass:B}{\textup{\textbf{(B)}}}}
\newcommand{\BD}{\hyperlink{ass:BD}{\textup{\textbf{(BD)}}}}
\newcommand{\C}{\hyperlink{ass:C}{\textup{\textbf{(C)}}}}
\newcommand{\Cp}{\hyperlink{ass:C}{\textup{\textbf{(C\textsubscript{p})}}}}

\newcommand{\FB}{\hyperlink{ass:FB}{\textup{\textbf{(FB)}}}}
\newcommand{\EMp}{\hyperlink{ass:EM}{\textup{\textbf{(EM\textsubscript{p})}}}}

\newcommand{\Cone}{\hyperlink{ass:C}{\textup{\textbf{(C\textsubscript{1})}}}}

\newcommand{\EM}[1][]{\hyperlink{ass:EM}{\textup{\textbf{(EM\ifx&#1&\else\textsubscript{#1}\fi)}}}}
\newcommand{\SP}[1][]{\hyperlink{ass:SP}{\textup{\textbf{(S\ifx&#1&\else\textsubscript{#1}\fi)}}}}

\newcommand{\N}{\mathbb{N}}
\newcommand{\R}{\mathbb{R}}
\newcommand{\Q}{\mathcal{Q}}
\newcommand{\QD}{Q_{D}}
\newcommand{\QN}{Q_{N}}
\newcommand{\D}{\mathcal{D}}

\newcommand{\lm}{\lambda}
\newcommand{\sgn}{\operatorname{sgn}}

\makeatletter 
\newcommand{\sbullet}{%
	\hbox{\fontfamily{lmr}\fontsize{.6\dimexpr(\f@size pt)}{0}\selectfont\textbullet}}

\makeatother

\DeclareMathSymbol{\shortminus}{\mathbin}{AMSa}{"39}

\newcommand{\dom}{\textnormal{dom}} 
\newcommand{\Deg}{\operatorname{Deg}}
\newcommand{\id}{\operatorname{id}}
\newcommand{\Deltaphi}{\mathcal{L}}
\newcommand{\Deltaphip}{\mathcal{L}^{(p)}}
\newcommand{\Dep}{\Delta^{(p)}}
\newcommand{\Sep}{S_\lambda^{(p)}}
\newcommand{\ellone}{\ell^1(X,\mu)}
\newcommand{\ellp}{\ell^p(X,\mu)}
\newcommand{\deltaphim}{\mathcal{L}_{\min}}
\newcommand{\deltam}{\Delta_{\min}}

\newcommand{\Deltaphiom}{\Deltaphi_{|\Omega}}

\newcommand{\Lmin}{\mathcal{L}_{n}}

\newcommand{\Deltadir}[1][]{\Delta_{D\ifx&#1&\else,#1\fi}}

\colorlet{red2}{red!65!black}
\definecolor{azure}{rgb}{0.0, 0.5, 1.0}
\definecolor{darkpastelgreen}{rgb}{0.01, 0.75, 0.24}
\definecolor{lightgreen}{rgb}{0.56, 0.93, 0.56}
\definecolor{lightgray}{rgb}{0.83, 0.83, 0.83}
\definecolor{gray}{rgb}{0.5, 0.5, 0.5}
\definecolor{darkspringgreen}{rgb}{0.09, 0.45, 0.27}

\usepackage[style=numeric,backend=biber,giveninits=true,maxbibnames=99,sortcites=true]{biblatex}
\begin{document}

\title{On the accretivity and m-accretivity of Laplacians\\ and porous medium-type operators on graphs}
\date{}
\author{Davide Bianchi\\
	School of Mathematics (Zhuhai)\\
	Sun Yat-sen University\\
	Zhuhai 518055 (China)\\
    \textit{bianchid@mail.sysu.edu.cn}\\
\And
Matthias Keller \\
Institut für Mathematik\\
Universität Potsdam \\
    Potsdam (Germany)\\
\textit{matthias.keller@uni-potsdam.de}\\
\And
Alberto G. Setti\\
Dipartimento di Scienza e Alta Tecnologia\\
	Universit\`a dell'Insubria\\
	Como 22100 (Italy)\\
\textit{alberto.setti@uninsubria.it}\\
\And
Rados{\l}aw K. Wojciechowski\\
Department of Mathematics
	\\
	Graduate Center -- CUNY \\
    New York, NY (USA)\\
    and \\
Department of Mathematics and Computer Science
	\\
	York College -- CUNY\\
    Jamaica, NY (USA)\\
\textit{rwojciechowski@gc.cuny.edu}
}

\setcounter{tocdepth}{2}

\keywords{weighted  graphs \and Laplacian and porous medium-type operators \and accretivity \and m-accretivity \and minimal and maximal operators \and stochastic completeness}
\subjclass[2020]{47H06, 05C63, 35J60, 39A12}

\maketitle

\begin{abstract}
We study the accretivity and m-accretivity of Laplacian and porous medium-type operators on weighted graphs.
In particular, we give several conditions that imply these properties for
maximal operators and investigate when these operators agree with various restrictions. 
For porous medium-type operators
on $\ell^1$ and for Laplacians on $\ell^p$ for $p \in [1,\infty)$,
we show that there always exists a dense subset of the domain on which 
the maximal operator is m-accretive. 
As a consequence, we establish that
accretivity, m-accretivity and
injectivity of the shifted operator are all equivalent for these maximal operators.
Under additional conditions on the graph, we then prove that
the maximal operators are m-accretive on the entire domain,
not just a dense subset.

We also investigate minimal operators and show that they are m-accretive
if and only if the minimal and maximal operators agree and the maximal operator
is accretive. We then give some conditions
that imply this agreement.
Furthermore, for the minimal Laplacian on $\ell^p$, we show that accretivity
and m-accretivity are not equivalent. For the $\ell^2$ case, we give connections
to Markov uniqueness and essential self-adjointness.
For the $\ell^\infty$ case, we establish the equivalence of stochastic completeness at infinity, 
m-accretivity for the maximal Laplacian on $\ell^\infty$,
and m-accretivity of the minimal Laplacian on $\ell^1$.
\end{abstract}

\section{Introduction}\label{sec:intro}
We study the accretivity and m-accretivity of nonlinear operators of porous medium-type
on graphs. More specifically, we consider operators of the form 
$\Deltaphip \colon \dom(\Deltaphip)\subseteq \ellp\to \ellp$ with maximal domain
$$\dom(\Deltaphip)= \left\{u \in \ellp \mid \Phi u \in \dom (\Delta),\; \Delta\Phi u \in \ellp\right\}$$
and acting as $\Deltaphip= \Delta\Phi$
where $p\in [1,\infty]$, $\Delta$ is the formal Laplacian associated to a  graph $G=(X,w,\kappa,\mu)$, and 
 $\Phi$ is the canonical extension to a function space of a continuous, monotone
increasing map $\phi \colon \R \to \R$ with $\phi(0)=0$. Here and throughout,
monotone increasing is meant in the weak sense, that is,
$\phi(s_{1}) \leq \phi(s_{2})$ whenever $s_{1} < s_{2}$.  We refer to $\phi$
as the \emph{nonlinearity} and to $\Delta\Phi$ as the associated \emph{nonlinear
operator}.
Explicitly, $\Phi u(x) = \phi(u(x))$ and
$$\Deltaphip u (x) = \frac{1}{\mu(x)}\sum_{y \in X}w(x,y)(\phi(u(x))-\phi(u(y))) + \frac{\kappa(x)}{\mu(x)}\phi(u(x))$$
for $u \in \dom(\Deltaphip)$ and $x \in X$.
In the special case when $\phi(s)=s^m\coloneqq s|s|^{m-1}$, the associated nonlinear 
operator is called the \emph{porous medium operator} for $m>1$ and the \emph{fast diffusion operator} for $m\in (0,1)$, see \cite{vazquez2007porous,bianchi2022generalized}. When $m=1$, $\Phi$ is the identity operator $\id$ and $\Deltaphip$ acts as the formal graph
Laplacian $\Delta$. For a complete introduction to the graph setting and notations, 
we direct the reader to Section \ref{sec:notation}.

We note that the present assumptions allow $\phi$ to be constant on nontrivial intervals
 and not necessarily  surjective, in the same spirit as~\cite{benilan1981continuous}.

If $T \colon \dom(T)\subseteq E \to E$ is a linear or nonlinear operator 
on a Banach space $\mathbb{E}=(E,\|\cdot\|)$ and
$S_\lambda = \id+\lambda T$
for $\lambda \in \R$ is the shifted operator associated to $T$, 
then $T$ is said to be \emph{accretive} if
$$\| S_\lambda u - S_\lambda v \| \geq \|u-v\|$$
for all $u,v\in\dom(T)$ and all $\lambda>0$.
In particular, if $T$ is accretive, then $S_\lambda$ is injective for all $\lm>0$.
Furthermore, $T$ is said to be \emph{m-accretive} if $T$ is accretive and $S_\lambda$ is surjective for every  $\lambda >0$. 
In particular, if $T$ is m-accretive, then the operators $S_\lambda$ are bijective and the resolvents $S_\lambda^{-1}$ are non-expansive for all $\lambda>0$. In the case of Hilbert spaces, accretivity is equivalent to monotonicity, see, for example, \cite[Chapter 13.2]{deimling1985nonlinear}.

Accretivity is a potent property that ensures the existence and uniqueness of solutions to general abstract differential equations on Banach spaces, see \cite{benilan1988evolution, barbu2010nonlinear}. For symmetric linear operators on Hilbert spaces,  m-accretivity 
is equivalent
to positivity and self-adjointness \cite[Problem V.3.32]{kato2013perturbation}. Furthermore, accretivity has significant implications in analytic and asymptotic perturbation theory, see \cite[Chapter 13]{goebel1990topics} and \cite{neidhardt2018operator}, and plays a crucial role in approximation and interpolation theory. In particular, accretivity ensures that numerical approximations for the Cauchy problem via Euler discretization converge to a mild solution within certain prescribed bounds \cite{nochetto2006nonlinear,chill2024real,beurich2024interpolation}.

In the Euclidean setting, the theory of mild solutions for the generalized porous medium equation $u_t = \Delta\Phi u + f$, where $\Phi$ is a maximal monotone graph in the sense of multivalued
function theory, is well-established within the framework of nonlinear semigroup theory and 
accretive operators on $L^1(D)$
where $D \subseteq \R^n$ is an open and bounded set. 
A foundational result, due to Brezis and Strauss~\cite{BrezisStraussJFA1973}, shows that the operator $\mathcal{A}u = -\Delta v$ with domain
\[
\operatorname{dom}(\mathcal{A}) = \bigl\{u \in L^1(D) \mid \exists v \in W_0^{1,1}(D),\; \Delta v \in L^1(D),\; v(x) \in \Phi(u(x)) \text{ a.e.}\bigr\}
\]
is m-accretive  on $L^1(D)$ with dense domain. This yields existence, uniqueness, and contractivity of mild solutions via the Crandall--Liggett theorem~\cite{CrandallLiggett1971} along with a maximum principle and the existence of
an order-preserving semigroup of contractions, see V\'{a}zquez~\cite[Chapter 10]{vazquez2007porous} for a comprehensive treatment. To the best of our knowledge, there are no results
for accretivity on $L^p$ for $p > 1$. 
In contrast, the graph setting, which is the focus of the present work, requires a substantially different approach. We also note that, while the multivalued function context
is standard for the general theory on Euclidean spaces, we do not pursue
this direction here as it is not typical for the graph setting. However, all essential technical
details are already present in the case of single-valued functions.

In \cite{bianchi2022generalized}, the existence and uniqueness of mild solutions to the 
Cauchy problem for the generalized porous medium equation
on a graph with $\ellone$ initial data were established by using a tailored sequence of approximating solutions. 
A key step was to prove that $\Deltaphi=\Deltaphi^{(1)}$, 
which can
be considered as an operator with a maximal domain on $\ell^1$,
is m-accretive when restricted to a suitable dense subset 
of its domain under additional assumptions on the graph. In this paper, 
we remove all restrictions on the graph and show that there always exists
such a dense subset, see Theorem~\ref{thm:new}.
We then give sufficient conditions 
to guarantee that $\Deltaphi$ is m-accretive on 
the entire domain and not just 
a dense subset. Our main results show that this is the case
when we assume infinite measure 
of infinite paths which holds, in particular, 
if there is a uniform lower bound on the vertex measure, see Theorem~\ref{t:lf};
or when
 the edge degree is uniformly bounded and $\Phi$ preserves $\ell^1$,
see Theorem~\ref{t:accretive_bd}.
A key observation is that injectivity
of the shifted operator is already sufficient for the m-accretivity
of $\Deltaphi$, see Theorem~\ref{t:fix}.

As mentioned above, accretivity has connections to uniqueness of 
operators. In this paper, we also consider the operator obtained by
restricting the maximal $\Deltaphi$ to the finitely supported 
functions and taking the closure when this restriction is closable. 
We denote the resulting operator by $\deltaphim$ and think of $\deltaphim$
as a minimal restriction of $\Deltaphi$ when $\Deltaphi$ is closed.
It turns out, when $\Deltaphi$ is accretive, that the m-accretivity of
$\deltaphim$ is equivalent to the fact that these two operators
agree, see Lemma~\ref{l:min_equal_accretive}.
In this context, our ultimate results give conditions for $\Deltaphi = \deltaphim$ in the case
of the porous medium operator, see Theorem~\ref{t:min=max} and Corollary~\ref{c:graph_metric}.
One of the assumptions introduces a notion for a metric to be
intrinsic that is adapted to functions in $\ell^1$. 
Here, completeness properties with respect to such a metric imply 
uniqueness of operators in the spirit of the use of such metrics
to conclude essential self-adjointness and Markov uniqueness
on $\ell^2$, see \cite{HK14, HKMW13, keller2021graphs, Sch20}.

We also consider the m-accretivity of the maximal Laplacian on $\ell^p$.
We first study the closability of the Laplacian
on $\ell^p$, see Lemma~\ref{lem:EMp}, and give conditions for the maximal
Laplacian on $\ell^1$
to be equal to the minimal restriction that 
are weaker than those required for the corresponding result for $\Deltaphi$,
see Theorem~\ref{t:Laplacian_min_max}.
In parallel to $\Deltaphi$ on $\ell^1$, we then show that there always exists 
a dense subset of $\ell^p$ so that the restriction of the maximal Laplacian to this subset
is m-accretive, see Theorem~\ref{thm:new2}. As a consequence, as for $\Deltaphi$, we show that
accretivity, m-accretivity and injectivity of the shifted operator are all equivalent
for the maximal Laplacian on $\ell^p$, see Theorem~\ref{t:acc_inj}.
We also show that metric completeness with respect to an $\ell^2$-intrinsic metric
implies m-accretivity of the maximal Laplacian on all $\ell^p$ spaces for $p \in (1,\infty)$
and extend the criteria for the m-accretivity of $\Deltaphi$ on $\ell^1$ 
to various ranges of $\ell^p$ spaces, 
see Theorem~\ref{t:Lap_accretive}.

For the minimal Laplacian on $\ell^p$, we show
that this operator is accretive whenever it is defined,
see Lemma~\ref{l:min_accretivity}.
We then explore when the maximal Laplacian is equal to the minimal restriction, 
see Propositions~\ref{p:bounded_Lap}~and~\ref{p:IP_Lap} as well as 
Corollaries~\ref{c:balls_Lap}~and~\ref{c:sc_um}.
This equivalence implies the m-accretivity of the minimal restriction.
On the other hand, we give conditions under which the minimal Laplacian on $\ell^p$
is not m-accretive for all $p \in [1,\infty]$, see Theorem~\ref{t:non_minimal_m-accretivity}. 
This shows that the phenomenon of the equivalence
of accretivity and m-accretivity which was observed for the maximal operators, does not hold
for the minimal operators.

By using general theory, 
m-accretivity of the maximal Laplacian on $\ell^2$ implies both form (equivalently, Markov) uniqueness
and essential self-adjointness, which is furthermore equivalent to the fact
that the minimal and maximal Laplacians on $\ell^2$ are equal,
see Propositions~\ref{p:MU}~and~\ref{p:ESA}.
On the other hand, failure of form uniqueness
implies that the maximal Laplacian is not accretive on all $\ell^p$
spaces and thus the maximal Laplacian is not equal to the minimal restriction, 
see Theorem~\ref{t:counter}.
For the case of $p=\infty$, we show that m-accretivity of
the maximal Laplacian on $\ell^\infty$ is equivalent to stochastic completeness
at infinity which is also equivalent to the m-accretivity
of the minimal Laplacian on $\ell^1$, see Theorem~\ref{thm:stoch_comp_maccr}. As a consequence,
when a graph is stochastically complete at infinity and the measure
is uniformly bounded from below, the maximal Laplacians on $\ell^p$
are m-accretive for all $p \in [1,\infty]$ and are equal to the minimal
Laplacians for $p \in [1, \infty)$, see Corollary~\ref{c:sc_um}.

To the best of our knowledge, m-accretivity of (Schr\"odinger operators driven by) the graph Laplacian was studied for the first time in \cite{milatovic2015maximal} for the case of vector  bundles over graphs.  
Furthermore, \cite{milatovic2015maximal} gives some criteria for the equality of the minimal and maximal operators.
For the case of non-symmetric graph Laplacians, see \cite{anne2020m}.
On the other hand, form uniqueness, essential self-adjointness, and stochastic
completeness are all well-studied properties
for graph Laplacians; see, e.g., 
\cite{BG15, CTT11, Gol14, HKLW12, HK14, HKMW13, HKMRW, HMW21, IKMW25, Jor08,
JP11, KL12, keller2021graphs, KMW25, Mas09, Sch20, Tor10, Web10, Woj08, Woj09, Woj21} among other works. 
Furthermore, there is a rapidly growing literature on nonlinear equations on graphs; we
mention \cite{ma2022porous,punzo2025semilinear, GLY16a, GLY16b, GMP, HX23, LSZ23, LY21, LW17, KS18,HM15,berchio2026semilinear,berchio2026fractional} as a representative sample.

We structure the paper as follows.
In Section~\ref{sec:notation}, we 
introduce basic definitions and give some consequences of additional assumptions
on the graph. We also prove a comparison principle which will be useful for subsequent considerations
and discuss the accretivity of formal operators on finitely supported functions. 
Section~\ref{sec:main} is the heart of the paper.
There, we discuss the accretivity and m-accretivity of the maximal operator
$\Deltaphi$ as well as when it agrees with some of its restrictions.
In Section~\ref{s:Laplacian_accretivity}, we focus on the maximal Laplacian on $\ell^p$
as well as restrictions, extending many results for $\Deltaphi$ on $\ell^1$
to Laplacians on $\ell^p$ and show how the minimal operator
may not be m-accretive.
In Section~\ref{sec:Laplacian}, we discuss connections between form 
uniqueness, essential self-adjointness and accretivity for the Laplacian on $\ell^2$.
In Section~\ref{s:sc_and_acc}, we give connections between stochastic completeness
and accretivity on $\ell^\infty$ and $\ell^1$.
Along the way, we  
strengthen one of the main results
in~\cite{bianchi2022generalized}. 
We give additional details on the connection to this previous result
in Appendix~\ref{sec:proof_Theo1}.

\section{Notation and preliminaries}\label{sec:notation}
In this section we introduce basic notations and the main
players. In particular, we first introduce graphs as well as
Laplacian and porous medium-type operators. We then prove
a comparison principle which gives uniqueness of solutions
for equations involving porous medium-type operators. 
This uniqueness will be crucial for future considerations.
We then introduce some additional assumptions on graphs
which will be used when we establish our main results 
as well as introduce the notions of accretivity
and m-accretivity for general operators on Banach
spaces. Finally, we discuss the accretivity of formal
operators on finitely supported functions.

We start with notations for the identity operator  as well as the Nemytskii operator which arises from
composition of functions.
Given a set $X$, we let $C(X)=\{ f\colon X \to \R\}$ denote the set
of all real-valued functions on $X$.
If $E\subseteq C(X)$, 
we denote by $\operatorname{id}\colon E \to E$ the identity operator. 
If $\phi \colon \dom\left(\phi\right)\subseteq \R \to \R$ is a function, then we denote by the capital letter $\Phi$ the canonical extension of $\phi$ to $E$, 
i.e., the operator $\Phi \colon \dom\left(\Phi\right)\subseteq E \to C(X)$ 
given by
\begin{align*}
	\dom\left(\Phi\right)&= \left\{ u \in E \mid u(x) \in \dom\left(\phi\right) \textup{ for all } x \in X  \right\}\\
	\Phi u(x)&= \phi(u(x)).
\end{align*}

If $u, v \in C(X)$, we write $u\geq v$ if $u(x)\geq v(x)$ for every $x \in X$. All other ordering symbols are defined accordingly. We say that a function $u$ is \emph{positive} if $u \geq 0$
and \emph{strictly positive} if $u>0$, while $u$ is \emph{negative}, 
resp.~\emph{strictly negative}, if $-u$ is positive, resp.~strictly positive. Otherwise, we say that a function
is \emph{sign-changing}.

\subsection{Graphs and function spaces}
We now introduce graphs and some associated function spaces.
For a detailed introduction to the graph setting as presented here, see \cite{keller2021graphs}.

\begin{definition}[Graph]
	A \textit{graph} is a quadruple $G=(X,w,\kappa,\mu)$ given by
	\begin{itemize}
		\item  a countable set of \emph{vertices} $X$,
		\item a positive \emph{edge-weight} function $w\colon X\times X \to [0,\infty)$,
		\item  a positive \emph{killing term} $\kappa \colon X \to [0,\infty)$,
		\item a strictly positive \emph{vertex measure} $\mu \colon X \to (0,\infty)$
	\end{itemize}
	where the edge-weight function $w$ satisfies:
	\begin{enumerate}
		\item[(1)]\label{assumption:symmetry} Symmetry: $w(x,y)=w(y,x)$ for every $x,y \in X$.
		\item[(2)]\label{assumption:loops} No loops: $w(x,x)=0$ for every $x \in X$.
		\item[(3)]\label{assumption:degree} Local summability: $\sum_{y\in X} w(x,y) < \infty$ for every $x \in X$.
	\end{enumerate}
\end{definition}
Whenever the vertex set $X$ is finite, we call $G$ a \emph{finite graph}.
We think of $x, y \in X$ with $w(x,y)>0$ as being \emph{connected}
by an edge with weight $w(x,y)$, write $x \sim y$, and call $x$ and $y$ 
\emph{neighbors} in this case.
We say that a graph is \emph{locally finite} if every vertex
has finitely many neighbors, i.e.,
$| \{ y \mid y \sim x \}| < \infty$
for every $x \in X$. We stress that we do not assume local finiteness in general. 
We let 
$$\Deg(x)= \frac{1}{\mu(x)}\left( \sum_{y \in X}w(x,y) + \kappa(x)\right)$$
denote the \emph{degree} of $x \in X$ and note that the local summability assumption implies this sum is always finite.

A \emph{path} in a graph is a sequence (either finite or infinite) of distinct vertices $(x_n)$ such that
$x_n \sim x_{n+1}$ for all $n$ such that $n$ and $n+1$ are in the index set, that is, all consecutive vertices
are neighbors. 
We write $\gamma=(x_n)$ for a path with $\gamma(n)=x_n$.
We assume that all graphs are \emph{connected}
in the sense that for any two distinct vertices $x, y \in X$, there exists a path
that starts at $x$ and ends at $y$.
A subset $U \subseteq X$ \emph{induces a subgraph} by considering the restriction of 
$w$ to $U \times U$ and the restrictions of $\kappa$ and $\mu$ to $U$.
We then call $U \subseteq X$ \emph{connected} if the subgraph
$U$ induces is connected, i.e.,
if for any two distinct vertices $x, y \in U$,
there exists a path of vertices in $U$ that starts at $x$ and ends at $y$.
A sequence of finite connected subsets $(X_n)$ is called an \emph{exhaustion} of $G$
if $X_n \subseteq X_{n+1}$ for all $n$ and $X=\bigcup_n X_n$.

When a path consists of finitely many vertices, we say that the \emph{length}
of the path is the number of edges in the path.
The \emph{combinatorial graph distance} is the length of the
shortest path between two distinct vertices $x, y \in X$. This is denoted by $d(x,y)$
where we additionally let $d(x,x)=0$.

We let $C_c(X)$ denote the finitely
supported functions in $C(X)$. We introduce the usual $\ellp$ spaces by
\begin{align*}
\ell^p(X,\mu)&= \{ f \in C(X) \mid \sum_{x \in X} |f(x)|^p \mu(x)< \infty \} \qquad	\mbox{for } p \in [1,\infty)\\
\ell^\infty(X) &= \{ f\in C(X) \mid \sup_{x \in X}
|f(x)| < \infty\} \qquad \qquad \mbox{for } p=\infty
\end{align*}
	with  norms
$$
\|f\|_p= \begin{cases}
	\left(\sum_{x\in X} |f(x)|^p\mu(x)\right)^{{1}/{p}} & \mbox{for } p \in [1,\infty)\\
	\sup_{x\in X}|f(x)| & \mbox{for } p =\infty
\end{cases}
$$	
and remark that the $\ell^2$ norm is induced by the inner product
$$
\langle f, g \rangle = \sum_{x\in X} f(x)g(x)\mu(x)
$$
giving a Hilbert space structure to $\ell^2(X,\mu)$.

\subsection{The formal Laplacian and porous medium-type operators}\label{SSect: The formal Laplacian}
We now introduce the family of operators that we will consider. More specifically,
we first let 
$$\dom(\Delta) = \left\{ u\in C(X) \mid \sum_{y \in X} w(x,y)|u(y)| < \infty
\textup{ for all } x\in X \right\} $$
denote the domain of the \emph{formal Laplacian}.
For $u \in \dom(\Delta)$, we let the formal Laplacian act as
$$\Delta u(x) = \frac{1}{\mu(x)} \sum_{y \in X} w(x,y) (u(x)-u(y)) +
\frac{\kappa(x)}{\mu(x)} u(x)$$
for all $x \in X$.

Next, we define the formal porous medium-type operator. 

We let $\phi \colon \R \to \R$ be continuous and monotone increasing, where
monotone increasing is always understood in the weak sense, i.e.,
$\phi(s_{1}) \leq \phi(s_{2})$ whenever $s_{1} < s_{2}$. We further assume
$\phi(0) = 0$ and let $\Phi$ be the canonical extension of $\phi$
to $C(X)$ given by $\Phi u = \phi \circ u$.
Note that in this case $\dom(\Phi) = C(X)$.
 With the nonlinearity $\phi$, we then let
$$\dom(\Delta \Phi) = \{ u \in C(X) \mid \Phi u \in \dom(\Delta) \}$$
denote the domain of \emph{formal porous medium-type operator} and for $u \in \dom(\Delta \Phi)$
and $x \in X$, we let the formal operator act as
$$\Delta \Phi u(x) = \Delta (\phi u)(x)= \frac{1}{\mu(x)}\sum_{y \in X}w(x,y)(\phi(u(x))-\phi(u(y))) + \frac{\kappa(x)}{\mu(x)}\phi(u(x)).$$

Finally, we introduce some restrictions of the formal porous medium-type operator. 
More specifically, we let 
$$\dom(\Deltaphip)= \{u \in \ellp \cap \dom (\Delta \Phi) \mid \Delta \Phi u \in \ellp\}$$ 
with
$$\Deltaphip u(x) = \Delta \Phi u(x)$$
for all $u \in \dom(\Deltaphip)$ and $x \in X$.
We refer to $\Deltaphip$ as a maximal restriction of $\Delta \Phi$ on $\ell^p(X,\mu)$.

As $p=1$ will be the case of greatest interest for the porous medium-type operator,
we will let $\Deltaphi=\Deltaphi^{(1)}$ throughout.

We note that $C_c(X) \subseteq \dom(\Deltaphi)$ 
as $\Phi(C_c(X)) \subseteq C_c(X)$ and
$\Delta(C_c(X)) \subseteq \ell^1(X,\mu)$ by a direct calculation.

When $\phi(s)=s^m\coloneqq s|s|^{m-1}$, the associated operator is called the \emph{porous medium operator} for $m>1$ and the \emph{fast diffusion operator} for $m\in (0,1)$. 
For $m=1$, we recover the maximal Laplacian. In this case, we write $\Dep$ for $\Deltaphip$, i.e.,
$\Dep$ acts as $\Delta$ on
$$\dom(\Dep)=\{ u \in \ell^p(X,\mu) \cap \dom(\Delta) \mid \Delta u \in \ell^p(X,\mu) \}. $$

\subsection{Comparison principle}
We now expand a comparison principle found in \cite[Theorem~A.2]{bianchi2022generalized}, see also~\cite[Lemma~3.5~and~Corollary~3.6]{schmidt2025nonlinear}. 
Here we add additional possibilities in Case~3 
and a new Case 4 which is useful for considering $\ell^\infty$.
We also include a possible rescaling of the identity operator by a strictly
positive function $\sigma$ where we write $\sigma$ for $\sigma \cdot \id$, i.e.,
$(\sigma u)(x)=\sigma(x)u(x)$ for all $u \in C(X)$ and $x \in X$.

In essence, this result gives conditions so that the operator $(\id + \lambda\Delta \Phi)^{-1}$
is  order preserving, more specifically, maps positive functions to positive functions.
In order to optimize the statement, 
we formulate the result with the minimal assumptions on $\phi$
and $\psi$ needed to make the proof work, though, for most applications, 
$\phi$ will satisfy all of the assumptions
for the porous medium-type operator and $\psi$ will be the identity.
 In the ``moreover'' statement below, we impose two
additional assumptions: that $\phi$ is strictly positive on $(0,\infty)$
and that $\psi(0)\leq0$. This statement is not needed for any of our
subsequent results.

\begin{theorem}[Sign preservation]\label{thm:min_principle}
Let $\lambda>0$, $\sigma \colon X \to (0, \infty)$ and $\phi, \,\psi \colon \R \to \R$ be monotone increasing, with
$\phi(0)=0$ and $\psi(t)<0$ for $t<0$.
Let $u \in \dom(\Delta \Phi)$ satisfy 
$$\left( \sigma \Psi + \lambda\Delta \Phi\right)u \geq 0.$$ 
We consider four cases:

\begin{itemize}
	\item[\emph{\textbf{Case 1:}}]
	There exists $x_0 \in X$ such that $u$ attains a minimum at $x_0$, i.e.,
	$$u(x_{0})=\inf_{x\in X} u(x) > -\infty.$$
	
	\item[\emph{\textbf{Case 2:}}] $G$ does not contain any infinite paths. 	

    \item[\emph{\textbf{Case 3:}}] For every infinite path $(x_{n})$, we have $\sum_{n} \mu(x_{n})=\infty$ and there exists  a monotonically increasing  function $\gamma \colon [0,\infty) \to [0,\infty)$ with  $\gamma(t)>0$ for $t>0$ such  that 
    $$
    \sum_{n}\gamma \bigl( |u(x_{n})| \bigr) \mu(x_{n})<\infty .
    $$
  \item[\emph{\textbf{Case 4:}}] The function $u$ is bounded below 
  and, for every infinite path $(x_{n})$, we have 
    $$\sum_{n} \frac{\sigma(x_n)}{\Deg(x_{n})}=\infty.$$
\end{itemize}    
	 In all four cases, $u \geq 0$. 
     
     Moreover, in all cases,
     { assuming additionally that $0=\phi(0)<\phi(t)$ for all $t>0$ and $\psi(0)\leq 0$}, if $u(x)=0$ for some $x\in X$, then  $u= 0$. 
\end{theorem}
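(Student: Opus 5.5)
The plan is to argue by contradiction. Assume $u(x_0)<0$ for some $x_0\in X$, and use the inequality to produce a path along which $u$ strictly decreases. Each of the four cases then rules out such a path.

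\emph{Key local step.} Let $x\in X$ with $u(x)<0$. Then $\psi(u(x))<0$, and $\phi(u(x))\le \phi(0)=0$ by monotonicity, so $\kappa(x)\phi(u(x))\le 0$. The hypothesis $(\sigma\Psi+\lambda\Delta\Phi)u(x)\ge 0$ gives
\[
\frac{1}{\mu(x)}\sum_{y}w(x,y)\bigl(\phi(u(x))-\phi(u(y))\bigr)\;\ge\; \frac{\sigma(x)|\psi(u(x))|}{\lambda}-\frac{\kappa(x)}{\mu(x)}\phi(u(x))\;\ge\;\frac{\sigma(x)|\psi(u(x))|}{\lambda}>0 .
\]
In particular $x$ has neighbors, so $\Deg(x)>0$. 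Since $\sum_y w(x,y)/\mu(x)\le \Deg(x)$, some neighbor $y$ must satisfy
\[
\phi(u(x))-\phi(u(y))\ge \frac{\sigma(x)|\psi(u(x))|}{2\lambda\Deg(x)}.
\]
Otherwise the left-hand sum would be at most half the right-hand side. For such a $y$ we have $\phi(u(y))<\phi(u(x))$, hence $u(y)<u(x)<0$ because $\phi$ is monotone.

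\emph{Building the path.} Iterating the local step from $x_0$ yields a sequence $(x_n)$ with $x_n\sim x_{n+1}$ and $u(x_0)>u(x_1)>\dots$. Since the values of $u$ are strictly decreasing, the vertices are distinct, so $(x_n)$ is an infinite path.

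\emph{The four cases.}
\begin{itemize}
\item[] \textbf{Case 1:} Applying the local step at the minimum point $x_0$ (where $u(x_0)<0$ if $u\not\ge 0$) produces a neighbor with a strictly smaller value, which is impossible.
\item[] \textbf{Case 2:} The infinite path itself contradicts the hypothesis.
\item[] \textbf{Case 3:} Along the path $|u(x_n)|\ge |u(x_0)|>0$, so $\gamma(|u(x_n)|)\ge\gamma(|u(x_0)|)>0$. Combined with $\sum_n\mu(x_n)=\infty$, this makes $\sum_n\gamma(|u(x_n)|)\mu(x_n)$ diverge, a contradiction.
\item[] \textbf{Case 4:} This case needs the quantitative choice of $y$ above. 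Since $u(x_n)\le u(x_0)<0$ and $\psi$ is monotone, $|\psi(u(x_n))|\ge|\psi(u(x_0))|>0$. Telescoping then gives
\[
\phi(u(x_0))-\phi(u(x_N))\ge \frac{|\psi(u(x_0))|}{2\lambda}\sum_{n<N}\frac{\sigma(x_n)}{\Deg(x_n)}\to\infty .
\]
But $u$ is bounded below and $\phi$ is monotone, so $\phi\circ u$ is bounded below, a contradiction.
\end{itemize}
I expect Case 4 to be the main obstacle. It is the only case where a mere existence of a smaller neighbor is not enough, and the degree-weighted averaging has to be set up with the factor $\tfrac12$ to get a uniform decrement.

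\emph{Moreover part.} Now $u\ge 0$. Suppose $u(x)=0$. Since $\psi(0)\le 0$, the inequality gives $\lambda\Delta\Phi u(x)\ge -\sigma(x)\psi(0)\ge 0$. On the other hand $\phi(u(x))=0$, so
\[
\Delta\Phi u(x)=-\frac{1}{\mu(x)}\sum_y w(x,y)\phi(u(y))\le 0,
\]
because $u\ge0$ implies $\phi(u(y))\ge 0$. Hence $\phi(u(y))=0$ for every neighbor $y$ of $x$. By the strict positivity of $\phi$ on $(0,\infty)$, this forces $u(y)=0$. Connectedness of the graph propagates the zero to all of $X$, so $u=0$.
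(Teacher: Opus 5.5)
Your proof is correct and uses essentially the same argument as the paper. At a vertex where $u<0$ you find a neighbor on which $\Phi u$ is smaller by a definite amount, build a strictly decreasing infinite path, rule that path out in each of the four cases, and use the same propagation argument for the ``moreover'' part. The only difference is cosmetic: you take half the decrement, $\sigma(x)|\psi(u(x))|/(2\lambda\Deg(x))$, which avoids the step where the paper uses a strict inequality for an infinite sum to get the full decrement $\frac{\sigma(x_0)}{\lambda\Deg(x_0)}|\Psi u(x_0)|$; either version suffices for Case~4.
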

\begin{proof}
{
We first show that $u\geq0$ by contradiction. Thus, suppose that there exists $x_0\in X$ such that
$u(x_0)<0$ so that $\Phi u(x_0)\leq0$ and $\Psi u(x_0)<0$ by the
assumptions on $\phi$ and $\psi$.
If $x_0$ has no neighbors, then
\[
 \Delta\Phi u(x_0)=\frac{\kappa(x_0)}{\mu(x_0)}\Phi u(x_0)\leq0
\]
and hence $\left(\sigma\Psi+\lambda\Delta\Phi\right)u(x_0)<0$, which
contradicts the hypothesis. Therefore, $x_0$ has at least one neighbor and
$\Deg(x_0)>0$.}

Rewriting the inequality $0 \leq \left( \sigma \Psi + \lambda\Delta \Phi\right)u(x_0)$ we get
\[
\frac{\lambda}{\mu(x_0)} 
\sum_{y \in X} w(x_0,y) \Phi u(y) \leq \sigma(x_0) \Psi u(x_0) + \lambda \Deg(x_0) \Phi u(x_0)
\]
or, equivalently,
\begin{equation}\label{eq:ineq_1}
\sum_{y \in X} w(x_0,y) \Phi u(y) \leq { \Deg(x_0)\mu(x_0)
\left[\frac{\sigma(x_0)}{\lambda \Deg(x_0)} \Psi u(x_0) +  \Phi u(x_0) \right].}
\end{equation}
{ We claim that there exists $x_1\sim x_0$ such that}
\begin{equation}
\label{eq:comp-ineq}
{
\Phi u(x_1) \leq \frac{\sigma(x_0)}{\lambda \Deg(x_0)}\Psi u(x_0) + \Phi u(x_0)=:A_{x_0}.}
\end{equation}
{
Indeed, if $\Phi u(y)>A_{x_0}$
for all $y \sim x_0$, then, since  $\sum_{y \in X} w(x_0,y) \leq \Deg(x_0)\mu(x_0)$
and
$A_{x_0}<\Phi u(x_0)\leq0$, we get
\[
\sum_{y \in X}w(x_0,y) \Phi u(y)> A_{x_0} \sum_{y\in X} w(x_0,y) \geq
\Deg(x_0)\mu(x_0)A_{x_0}
\]
which contradicts \eqref{eq:ineq_1}.}

In particular, \eqref{eq:comp-ineq} implies
{
\[
\Phi u(x_1)\leq A_{x_0} < \Phi u(x_0) \quad \textup{ and, therefore,} \quad u(x_1) < u(x_0)
\]
since $\phi$ is monotone increasing.}

{ The above descent argument proves that every vertex $x_0$ such that
$u(x_0)<0$ must have a neighbor $x_1\sim x_0$ such that $u(x_1)<u(x_0)$.}
This gives an immediate contradiction in Cases 1 and 2.

For Case 3, we iterate the argument to get an infinite path $(x_n)$ such that
$u(x_{n+1}) < u(x_n)$. 
Therefore, 
$\gamma\bigl(|u(x_n)|\bigr)\leq \gamma\bigl(|u(x_{n+1})|\bigr)$ for all $n$ and, thus,
\[
\sum_n \gamma\bigl(|u(x_n)|\bigr) \mu(x_n) \ge \gamma\bigl(|u(x_0)|\bigr) \sum_n \mu(x_n) = \infty
\]
which gives a contradiction.

For Case 4, we iterate the estimate \eqref{eq:comp-ineq} 
to get an infinite path $(x_n)$ such that
\begin{align*}
\Phi u(x_{n+1}) \leq \frac{\sigma(x_n)}{\lambda \Deg(x_n)}\Psi u(x_n) + \Phi u(x_n)
\leq \sum_{k=0}^n\frac{\sigma(x_k)}{\lambda \Deg(x_k)} \Psi u(x_k)+\Phi u(x_0) { \leq }\frac{\Psi u(x_0)}{\lambda} \sum_{k=0}^n \frac{\sigma(x_k)}{\Deg(x_k)}.
\end{align*}
As $\sum_k \sigma(x_k)/\Deg(x_k) = \infty$ and $\Psi u(x_0)<0$, we get that $\Phi u$
is not bounded below, contradicting the assumption that $u$ is bounded below.

This establishes $u \geq 0$ in all four cases. 

Assume now that $\phi(0)=0<\phi(t)$ for every $t>0$ and  $\psi(0)\leq 0$ and suppose that $u(x)=0$  for some $x \in X$. 
Since we have
already established $u(y) \geq 0$, and thus  $\Phi u(y)\geq 0$, for all $y$ the inequality 
$\left( \sigma\Psi  + \lambda\Delta \Phi\right)u(x) \geq 0$ gives
$$0 \leq \frac{\lambda}{\mu(x)} \sum_{y \in X} w(x,y) \Phi u(y)\leq \sigma(x) \Psi u(x) + \lambda \Deg(x) \Phi u(x) =\sigma(x) \psi(0)
\leq 0.$$
Hence, $\Phi u(y)=0$  and, therefore,  $u(y)=0$ for all $y \sim x$. 
Iterating the argument and using connectedness
gives $u = 0$. 
\end{proof}

{ Adapting the proof of the above theorem, we obtain the following
comparison result, which in turn yields uniqueness of solutions under the stated
hypotheses.}

\begin{theorem}[Comparison principle]\label{thm:min}
Let $\lambda>0$, $\sigma \colon X \to (0, \infty)$ and { $\phi\colon\R\to\R$ be continuous and monotone increasing with $\phi(0)=0$}. 
Let $u_k \in \dom(\Delta \Phi)$ and  $g_k \in C(X)$ satisfy
	$$
	\left(\sigma + \lambda\Delta \Phi \right) u_k = g_k \qquad \mbox{ for } k=1,2.
	$$
{ Set
\[
 u\coloneqq u_1-u_2
 \qquad \text{and} \qquad
 v\coloneqq \Phi u_1-\Phi u_2.
\]
If $g_1\geq g_2$ and $G$ and $v$ satisfy the assumptions of one of the Cases~1--3 of
Theorem~\ref{thm:min_principle}, then
$u_1\geq u_2$. The same conclusion holds in Case~4 provided that, for 
every $c>0$, there exists $\varepsilon_c>0$ such that
\begin{equation}\label{extra_assumption_case4}
 v(x)\leq-c
 \quad\Longrightarrow\quad
 u(x)\leq-\varepsilon_c \qquad \text{for all } x \in X.
\end{equation}} 

In particular, if  $u_1$ and  $u_2$ are in $\ell^p(X,\mu)$ for some $p \in [1,\infty)$, then { $v$ satisfies the summability condition in
Case~3}. 
If $u_1$ and $u_2$ are in $\ell^\infty(X)$, { then $v$ is bounded and satisfies
the additional condition \eqref{extra_assumption_case4}}.
It follows that 
\begin{itemize}
\item[\textup{(i)}] If $\sum_n \mu(x_n)=\infty$ for every infinite path $(x_n)$, then
$\sigma + \lambda\Delta \Phi$ is injective on $\ell^p(X,\mu) \cap \dom(\Delta \Phi)$
for all $p\in[1,\infty)$ { and its inverse is order preserving on its range}. 
\item[\textup{(ii)}] If $\sum_{n} \sigma(x_n)/\Deg(x_{n})=\infty$ for every infinite path $(x_n)$,
then $\sigma + \lambda\Delta \Phi$ is injective on
{ $\ell^\infty(X)\cap\dom(\Delta\Phi)$ and its inverse is order preserving on its range}.
\end{itemize}

\end{theorem}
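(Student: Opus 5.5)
We run the descent argument from the proof of Theorem~\ref{thm:min_principle} on the difference $u=u_1-u_2$, replacing $\Phi u$ by $v=\Phi u_1-\Phi u_2$. Subtracting the two equations and using linearity of $\Delta$ on $\dom(\Delta)$ gives
\[
\sigma u+\lambda\Delta v=g_1-g_2\geq 0 .
\]
Monotonicity of $\phi$ shows that $u(x)<0$ implies $v(x)\le 0$, and $v(x)<0$ implies $u(x)<0$. Thus $u$ plays the role of $\Psi u$ and $v$ the role of $\Phi u$ in the earlier proof, but there is no single $\psi$ with $u=\psi(v)$. We therefore redo the descent directly rather than cite Theorem~\ref{thm:min_principle}.

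Suppose $u(x_0)<0$. If $x_0$ has no neighbors, then $\sigma u+\lambda\Delta v<0$ at $x_0$, which is a contradiction. Otherwise, rewriting the inequality at $x_0$ as in \eqref{eq:ineq_1} gives
\[
\sum_y w(x_0,y)v(y)\le \Deg(x_0)\mu(x_0)A_{x_0},
\qquad
A_{x_0}=\frac{\sigma(x_0)}{\lambda\Deg(x_0)}u(x_0)+v(x_0)<v(x_0)\le 0 .
\]
Hence some neighbor $x_1$ has $v(x_1)\le A_{x_0}<v(x_0)\le0$. Then $v(x_1)<0$, so $u(x_1)<0$, and the step can be iterated.

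The key issue is that $v(x_1)<v(x_0)$ does not by itself give $u(x_1)<u(x_0)$, since $\phi$ is applied to $u_1$ and $u_2$ separately. The cases must therefore be phrased in terms of $v$: the descent produces a path along which $v$ strictly decreases. In Case~1, applied to $v$, the descent contradicts attainment of the minimum at $x_0$, since we may start at the minimizer (which has $v(x_0)<0$ and hence $u(x_0)<0$); if instead the minimum is read for $u$, we interpret the hypothesis as stated for $v$. In Case~2 the path must be infinite, contradicting the absence of infinite paths. In Case~3 the vertices are distinct because $v$ strictly decreases, so we obtain an infinite path. Along it $|v(x_n)|\ge|v(x_1)|>0$, hence
\[
\sum_n\gamma(|v(x_n)|)\mu(x_n)\ge\gamma(|v(x_1)|)\sum_n\mu(x_n)=\infty,
\]
which is a contradiction.

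Case~4 is the main obstacle, and here \eqref{extra_assumption_case4} is used. Iterating gives
\[
v(x_{n+1})\le v(x_1)+\lambda^{-1}\sum_{k=1}^n \frac{\sigma(x_k)}{\Deg(x_k)}\,u(x_k).
\]
Set $c=-v(x_1)>0$. Since $v(x_k)\le -c$ for $k\ge1$, the extra assumption yields $u(x_k)\le-\varepsilon_c$. The sum therefore diverges to $-\infty$, contradicting the boundedness of $v$ from below.

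For the ``in particular'' part, assume first $u_1,u_2\in\ell^p$ with $p<\infty$. Take $\gamma(t)=\mu$-independent $\min(1,\phi^{-}(t))$; we need a monotone $\gamma$ with $\gamma>0$ on $(0,\infty)$ and $\sum_n\gamma(|v(x_n)|)\mu(x_n)<\infty$. Since $u_i\in\ell^p$ and $\sum\mu(x_n)=\infty$, we have $u_i(x_n)\to0$, so $v(x_n)\to0$ by continuity. It is simplest to choose $\gamma$ along the specific path: the requirement is only that the descent path have $|v(x_n)|\ge c>0$. Thus we can bypass $\gamma$, because $|v(x_n)|\ge c$ for all $n$ contradicts $v(x_n)\to0$. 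The set $\{x:|v(x)|\ge c\}$ has finite measure if $\mu$ is bounded below along paths; otherwise we use uniform continuity of $\phi$ on the bounded range of the $u_i$. This shows that $|u_1(x)-u_2(x)|\ge\delta$ and hence $\sum\mu\le\delta^{-p}\|u_1-u_2\|_p^p<\infty$, which is a contradiction. For $u_1,u_2\in\ell^\infty$, the function $v$ is bounded by continuity of $\phi$ on a compact interval, and uniform continuity of $\phi$ there gives \eqref{extra_assumption_case4}.

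Injectivity in (i) and (ii) then follows by applying the result with $g_1=g_2$ in both orders. Order preservation of the inverse is exactly the statement $g_1\ge g_2\Rightarrow u_1\ge u_2$.
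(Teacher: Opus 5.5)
Your descent step, your handling of Cases~1--4 with the hypotheses read for $v$, the $\ell^\infty$ part, and the passage to injectivity and order preservation all agree with the paper's proof. The gap is in the $\ell^p$ part for $p<\infty$, and it has three pieces.

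\begin{enumerate}
\item The claim that $u_i(x_n)\to0$ along every path with $\sum_n\mu(x_n)=\infty$ is false. Summability of $|u_i|^p\mu$ only gives $\sum_{n:\,|u_i(x_n)|\ge c}\mu(x_n)<\infty$. For instance, $u_i$ may equal $1$ on vertices of summable measure, interlaced with vertices of measure $1$ where it vanishes.
\item Your fallback, uniform continuity of $\phi$ on ``the bounded range of the $u_i$'', is also unavailable. Without \UM{} an $\ell^p$ function need not be bounded. For $\phi$ of unbounded slope, such as $\phi(s)=s^3$, one can have $|v(x)|\ge c$ while $|u_1(x)-u_2(x)|$ is arbitrarily small, at vertices where $u_1,u_2$ are large. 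So $|u|\ge\delta$ along the descent path does not follow. This is exactly why Case~4 needs the extra hypothesis \eqref{extra_assumption_case4}.
\item Since you abandon $\gamma$, the asserted fact that $v$ satisfies the Case~3 summability condition is never proved.
\end{enumerate}

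The missing idea is to control $|v|$ by the size of $u_1,u_2$ rather than by their difference; this needs only continuity of $\phi$ at $0$. Set $M(x)=\max\{|u_1(x)|,|u_2(x)|\}$ and $\alpha(t)=\phi(t)-\phi(-t)$. Monotonicity gives $|v(x)|\le\alpha(M(x))$, and $\alpha$ is continuous, increasing, with $\alpha(0)=0$. Hence $|v(x)|\ge c$ forces $M(x)\ge\delta_c>0$. Along your descent path this gives $\sum_n\mu(x_n)\le\delta_c^{-p}\bigl(\|u_1\|_p^p+\|u_2\|_p^p\bigr)<\infty$, contradicting \IP.

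To get the statement as formulated, the paper packages this as $\gamma(s)=\min\{1,\beta(s)^p\}$ with $\beta(s)=\inf\{t\ge0\mid\alpha(t)\ge s\}$. Then $\gamma$ is increasing and positive on $(0,\infty)$, and $\gamma(|v(x)|)\le\gamma(\alpha(M(x)))\le M(x)^p\le|u_1(x)|^p+|u_2(x)|^p$. Therefore $\sum_n\gamma(|v(x_n)|)\mu(x_n)\le\|u_1\|_p^p+\|u_2\|_p^p$ on every path.
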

\begin{proof}
{
Notice that   $v\in\dom(\Delta)$, and subtracting the  equations satisfied by $u_1$ and $u_2$ gives
\begin{equation}
\label{eq:comparison-differences}
 \sigma u+\lambda\Delta v=g_1-g_2\geq0. 
\end{equation}
Moreover, monotonicity of $\phi$ implies
\begin{equation}
\label{eq:weak-sign-compatibility}
 \{v<0\}\subseteq\{u<0\}\subseteq\{v\leq0\}. 
\end{equation}

We now adapt the descent step in the proof of Theorem~\ref{thm:min_principle}
to show
that if $x_0\in X$ satisfies $u(x_0)<0$, then there exists
$x_1\sim x_0$ such that $v(x_1)<v(x_0) \leq 0$.
Indeed, let $x_0\in X$ satisfy
$u(x_0)<0$, so that $v(x_0)\leq0$ by \eqref{eq:weak-sign-compatibility}. If $x_0$ has no neighbors, then
\[
 \Delta v(x_0)=\frac{\kappa(x_0)}{\mu(x_0)}v(x_0)\leq0
\]
which contradicts \eqref{eq:comparison-differences} at $x_0$ since $u(x_0)<0$.
Thus, $x_0$ has at least one neighbor and $\Deg(x_0)>0$. Rearranging
\eqref{eq:comparison-differences}, we obtain
\begin{equation}
\label{eq:ineq_1bis}
 \sum_{y\in X}w(x_0,y)v(y)
 \leq
 \Deg(x_0) \mu(x_0)\left[ \frac{\sigma(x_0)}{\lambda\Deg(x_0)}u(x_0)
 +v(x_0)\right]. 
\end{equation}
The same weighted-average argument used after \eqref{eq:ineq_1} in the
proof of Theorem~\ref{thm:min_principle}, with $\Phi u$ and $\Psi u$
there replaced by $v$ and $u$, applied to \eqref{eq:ineq_1bis} shows that
there exists $x_1\sim x_0$ such that
\begin{equation}
\label{eq:v-ineq}
 v(x_1)
 \leq v(x_0)+\frac{\sigma(x_0)}{\lambda\Deg(x_0)}u(x_0)=:A_{x_0}
 <v(x_0)\leq0. 
\end{equation}
In particular, $v(x_1)<0$ and therefore $u(x_1)<0$ by
\eqref{eq:weak-sign-compatibility}, so the descent step can be
iterated.

We now show that $u \geq 0$. Suppose, by contradiction, that $u(z)<0$ at some vertex $z$. By the preceding
descent step, there exists $x_0\sim z$ such that
\[
v(x_0)<v(z)\leq0.
\]
Thus, $v(x_0)<0$ and $u(x_0)<0$ by
\eqref{eq:weak-sign-compatibility}. In
Case~1, applying~\eqref{eq:v-ineq} at a point where
$v$ attains its negative minimum gives a still smaller value. In
Case~2, iteration gives an infinite path of descent, since $v$ strictly decreases
and hence all its vertices are distinct. In Case~3, the resulting
infinite path $(x_n)$ satisfies $v(x_{n+1})<v(x_n)<0$, and therefore
\[
 \sum_n\gamma\bigl(|v(x_n)|\bigr)\mu(x_n)
 \geq
 \gamma\bigl(|v(x_0)|\bigr)\sum_n\mu(x_n)=\infty
\]
which gives a contradiction.

In Case~4, let $c=-v(x_0)>0$. Along the descending path we have
$v(x_n)\leq-c$, so~\eqref{extra_assumption_case4} gives
$u(x_n)\leq-\varepsilon_c$. Iterating
\eqref{eq:v-ineq} yields
\[
 v(x_{n+1})
 \leq v(x_0)-\frac{\varepsilon_c}{\lambda}
 \sum_{k=0}^n\frac{\sigma(x_k)}{\Deg(x_k)}
 \to -\infty
\]
as $n \to \infty$, contradicting the fact that $v$ is bounded below.
Thus, the assumption $u(z)<0$ leads to a contradiction in Cases~1--3
and also in Case~4 under \eqref{extra_assumption_case4}. Hence, $u\geq0$.
}
{ Thus, we have shown $u_1\geq u_2$ in Cases~1--3 and in
Case~4 under condition~\eqref{extra_assumption_case4}.

We now assume that $u_1,u_2 \in \ell^p(X,\mu)$ for some $p \in [1,\infty)$
and show that $v$ satisfies the summability condition in Case 3 and that (i) holds.}
{
Define $\alpha\colon[0,\infty)\to[0,\infty)$ by
\[
 \alpha(t):=\phi(t)-\phi(-t).
\]
The assumptions on $\phi$ imply directly that $\alpha$ is continuous,
positive and monotone increasing, with $\alpha(0)=0$.
We define now the generalized  inverse by
\[
 \beta(s):=\inf\{t\geq0 \mid \alpha(t)\geq s\}
 \qquad \text{for } s\geq0,
\]
where $\inf\emptyset:=\infty$, and let
\[
 \gamma(s):=\min\{1,\beta(s)^p\}.
\]
Here and below, $\min\{1,\infty\}:=1$. Both $\beta$ and $\gamma$ are
monotone increasing and $\beta(0)=\gamma(0)=0$.

For $s>0$, continuity of $\alpha$ at zero gives the existence of $\delta_s>0$ such that
$\alpha(t)<s$ for $0\leq t<\delta_s$. Hence, $\beta(s)\geq\delta_s$ if
the defining set for $\beta(s)$ is nonempty, whereas $\beta(s)=\infty$
otherwise. Thus, $\gamma(s)>0$ for every $s>0$. Moreover, $t$ is
admissible in the definition of $\beta(\alpha(t))$, so
$\beta(\alpha(t))\leq t$ and, hence,
\begin{equation}\label{eq:gamma-alpha-estimate}
 \gamma(\alpha(t))
 =
 \min\{1,\beta(\alpha(t))^p\}
 \leq\min\{1,t^p\}
 \leq t^p.
\end{equation}

Now, for every $x\in X$, let
\[
 M(x):=\max\{|u_1(x)|,|u_2(x)|\}.
\]
Both $u_1(x)$ and $u_2(x)$ belong to $[-M(x),M(x)]$. Since $\phi$ is
monotone increasing, both $\phi(u_1(x))$ and $\phi(u_2(x))$ belong to
\[
 [\phi(-M(x)),\phi(M(x))].
\]
It follows that
\begin{align*}
 |v(x)|
 &=
 |\phi(u_1(x))-\phi(u_2(x))|\leq
 \phi(M(x))-\phi(-M(x))
 =
 \alpha(M(x)).
\end{align*}
Since $\gamma$ is monotone increasing, we obtain from
\eqref{eq:gamma-alpha-estimate} that
\[
 \gamma(|v(x)|)
 \leq
 \gamma(\alpha(M(x)))
 \leq
 M(x)^p.
\]
Finally,
\[
 M(x)^p
 =
 \max\{|u_1(x)|^p,|u_2(x)|^p\}
 \leq
 |u_1(x)|^p+|u_2(x)|^p.
\]
Therefore, for every infinite path $(x_n)$,
\begin{align*}
 \sum_n\gamma(|v(x_n)|)\mu(x_n)
 \leq
 \|u_1\|_p^p+\|u_2\|_p^p
 <\infty.
\end{align*}
Thus, $v$ satisfies the summability condition in Case~3 of
Theorem~\ref{thm:min_principle}. If every infinite path has infinite
measure, then all of the hypotheses of Case~3 are satisfied.
{ The statements of (i) now follow from what we have proved above.

We now assume $u_1,u_2 \in \ell^\infty(X)$ and show $v \in \ell^\infty(X)$, 
that $u$ and $v$ satisfy \eqref{extra_assumption_case4} and (ii) holds.}
Let
\[
 M:=\max\{\|u_1\|_\infty,\|u_2\|_\infty\}.
\]
Then, $u_1(x),u_2(x)\in[-M,M]$ for every $x\in X$ and $v$ is
bounded since $\phi$ is continuous on $[-M,M]$.

Fix $c>0$. By the uniform continuity of $\phi$ on $[-M,M]$, there exists
$\varepsilon_c>0$ such that
\[
 |r-s|<\varepsilon_c
 \quad\Longrightarrow\quad
 |\phi(r)-\phi(s)|<c
 \qquad\text{for all }r,s\in[-M,M].
\]
Equivalently, by contraposition,
\[
 |\phi(r)-\phi(s)|\geq c
 \quad\Longrightarrow\quad
 |r-s|\geq\varepsilon_c.
\]
If $v(x)\leq-c$, then
\[
 \phi(u_2(x))-\phi(u_1(x))=-v(x)\geq c>0.
\]
Since $\phi$ is monotone increasing, this also implies $u_2(x)>u_1(x)$.
Consequently,
\[
 u_2(x)-u_1(x)
 =
 |u_2(x)-u_1(x)|
 \geq\varepsilon_c
\]
and, hence,
\[
 u(x)=u_1(x)-u_2(x)\leq-\varepsilon_c.
\]
Thus, the additional condition~\eqref{extra_assumption_case4} is
satisfied.

The preceding comparison gives order preservation on the range.
Applying it in both directions when $g_1=g_2$ gives injectivity.}
{ This proves the statements in (ii) and completes the proof.}

\end{proof}

\subsection{Additional assumptions and first consequences}
In what follows, we will consider some additional assumptions on the graph 
in order to establish the m-accretivity of porous medium-type operators. 
Here, we introduce the conditions and derive some easy consequences.

More specifically, we now introduce the following additional assumptions on $G$: 
\hypertarget{ass:LF}{}\hypertarget{ass:FP}{}\hypertarget{ass:INT}{}%
\begin{enumerate}
    \item[$\IP$]\hypertarget{ass:IP}{} $\sum_n\mu(x_n) = \infty$ for every infinite path $(x_n)$. \hfill(``Infinite measure of infinite paths'')
	\item[$\UM$]\hypertarget{ass:UM}{} $\inf_{x \in X}\mu(x)>0$. \hfill(``Uniformly positive measure'')
	\item[$\B$]\hypertarget{ass:B}{} $\sup_{x \in X}\frac{\sum_{y \in X}w(x,y)}{\mu(x)}<\infty$. \hfill(``Bounded edge degree'')
    \item[$\BD$]\hypertarget{ass:BD}{} $\sup_{x \in X}\frac{\sum_{y \in X}w(x,y)+\kappa(x)}{\mu(x)} = \sup_{x \in X}\Deg(x) <
    \infty$. \hfill(``Bounded degree'')
    \item[$\EMp$]\hypertarget{ass:EM}{} $\frac{w(x,\cdot)}{\mu(\cdot)}\in \ell^{p}(X,\mu)$ for every $x \in X$.  \hfill(``$p$-edge-measure'')
\end{enumerate}
As we will most often be concerned with \EM[p] for the case of $p=\infty$,
we write \EM[] for \EM[$\infty$].

We note that $\IP$ is trivially satisfied if either $G$
does not have infinite paths or $\UM$ holds.
Furthermore, 
$\BD$ is equivalent to
$\B$ and $\kappa/\mu \in \ell^\infty(X)$.

For $\phi \colon \R \to \R$ and $\Phi$, the extension of $\phi$ to $\ell^p(X,\mu)$ for $p \in [1,\infty]$, 
we introduce the following condition which
states that $\Phi f$ is contained in $\ell^p(X,\mu)$ for every $f \in \ell^p(X,\mu)$:
\begin{enumerate}
	\item[$\Cp$]\hypertarget{ass:C}{} $\Phi(\ell^p(X,\mu))\subseteq \ell^p(X,\mu)$. \hfill(``$p$-Containment'')
\end{enumerate}
As we will most often be concerned with the case of $p=1$, we will write $\C$ for 
$\Cone$ and refer to it simply as containment.

We first show that a uniform lower bound on the measure 
and containment are equivalent when $\phi$ leads to the porous medium
operator and these properties imply
the continuity of $\Phi$ on all $\ell^p$ spaces.
\begin{proposition}\label{p:PME_measure}
{
For every continuous $\phi\colon\R\to\R$, the map
$\Phi\colon \ell^\infty(X) \to \ell^\infty(X)$ is continuous.
Furthermore, if $\phi(s)=s|s|^{m-1}$ for some $m>1$,
then $G$ satisfies $\UM$}
if and only if $\Phi$ satisfies $\Cp$ for some (equivalently, all) $p \in [1,\infty)$. In this case, 
$\Phi\colon \ell^p(X,\mu) \to \ell^p(X,\mu)$ is continuous for all $p \in [1,\infty]$.
\end{proposition}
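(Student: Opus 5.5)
The proof splits into three parts: continuity on $\ell^\infty$, the equivalence of \UM{} with \Cp, and continuity on $\ell^p$.

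\emph{Continuity on $\ell^\infty(X)$.} Let $u_n\to u$ in $\ell^\infty(X)$. Then all $u_n$ and $u$ take values in a fixed compact interval $[-M,M]$. Since $\phi$ is uniformly continuous on $[-M,M]$, we get $\sup_x|\phi(u_n(x))-\phi(u(x))|\to0$. This also shows that $\Phi$ maps $\ell^\infty(X)$ into itself.

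\emph{\UM{} implies \Cp.} Now let $\phi(s)=s|s|^{m-1}$ with $m>1$. Assume \UM{}, so $\mu\ge c>0$. Then for $f\in\ell^p(X,\mu)$ we have $|f(x)|^p\le\|f\|_p^p/c$, hence $\|f\|_\infty\le c^{-1/p}\|f\|_p$. It follows that
$$\sum_x|f(x)|^{mp}\mu(x)\le\|f\|_\infty^{(m-1)p}\|f\|_p^p<\infty,$$
which gives \Cp{} for every $p$.

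\emph{\Cp{} implies \UM.} Conversely, suppose \UM{} fails. Pick distinct vertices $x_k$ with $\mu(x_k)\le 2^{-k}$, passing to a sparse subsequence if needed, for instance $\mu(x_k)\le 4^{-k\,\cdot}$ adjusted as below. Define $f=\sum_k a_k 1_{x_k}$ with $a_k^p\mu(x_k)=k^{-2}$, that is, $a_k=(k^2\mu(x_k))^{-1/p}$. Then $f\in\ell^p(X,\mu)$. However,
$$|a_k|^{mp}\mu(x_k)=k^{-2m}\mu(x_k)^{1-m}.$$
Choosing the subsequence so that $\mu(x_k)^{1-m}\ge k^{2m}$, i.e. $\mu(x_k)\le k^{-2m/(m-1)}$, each term is at least $1$. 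So the series diverges and $\Phi f\notin\ell^p(X,\mu)$, and \Cp{} fails for every $p$. Since the argument works for arbitrary $p\in[1,\infty)$, we get the ``some (equivalently, all)'' clause: \Cp{} for one $p$ gives \UM, which in turn gives \Cp{} for all $p$.

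\emph{Continuity on $\ell^p(X,\mu)$ under \UM.} The case $p=\infty$ was handled above. For $p<\infty$, let $u_n\to u$ in $\ell^p(X,\mu)$. The embedding bound gives $\|u_n\|_\infty,\|u\|_\infty\le M$. By the mean value theorem,
$$|\phi(a)-\phi(b)|\le mM^{m-1}|a-b| \qquad \text{for } |a|,|b|\le M.$$
Therefore $\|\Phi u_n-\Phi u\|_p\le mM^{m-1}\|u_n-u\|_p\to0$.

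The only delicate point is the choice of subsequence in the counterexample, which is routine since $\inf\mu=0$ allows arbitrarily small measures.
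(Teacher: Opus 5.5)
Your proof is correct and follows the paper's argument essentially step for step. Both use uniform continuity on a compact interval for $\ell^\infty$, the embedding $\|f\|_\infty\le c^{-1/p}\|f\|_p$ for \UM{} $\Rightarrow$ containment, a function concentrated on vertices of small measure for the converse, and a local Lipschitz bound for continuity on $\ell^p$. The only differences are cosmetic: you normalize $a_k^p\mu(x_k)=k^{-2}$ and require $\mu(x_k)\le k^{-2m/(m-1)}$, whereas the paper picks $\sum_n\mu_n^{1-1/m}<\infty$ and $f(x_n)=\mu_n^{-1/(pm)}$ so that every term of $\|\Phi f\|_p^p$ equals $1$; and you should delete the garbled aside about ``$4^{-k\,\cdot}$ adjusted as below.''
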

\begin{proof}
{
If $u_n\to u$ in $\ell^\infty(X)$, then the values of $u_n$ and $u$ lie in a common compact interval. 
The uniform continuity of $\phi$ on this interval gives
$\|\Phi u_n-\Phi u\|_\infty\to0$ which gives continuity of $\Phi$ in this case.
Now let $\phi(s)=s|s|^{m-1}$ with $m>1$. We prove the equivalence of
$\UM$ and $\Cp$. Fix $p\in[1,\infty)$. If $\mu(x)\geq c>0$ for every
$x\in X$, then
\[
 \|u\|_\infty\leq c^{-1/p}\|u\|_p
\]
for every $u\in\ell^p(X,\mu)$, and hence
\[
 \|\Phi u\|_p^p
 =\sum_{x\in X}|u(x)|^{mp}\mu(x)
 \leq\|u\|_\infty^{(m-1)p}\|u\|_p^p<\infty.
\]
Thus, $\UM$ implies $\Cp$ for every $p \in [1,\infty)$.

Conversely, suppose that $\inf_{x\in X}\mu(x)=0$. We may choose distinct
vertices $(x_n)$ such that, with $\mu_n:=\mu(x_n)$,
\[
 \sum_n\mu_n^{1-1/m}<\infty.
\]
For the fixed, arbitrary $p\in[1,\infty)$, define
\[
 f(x):=
 \begin{cases}
  \mu_n^{-1/(pm)}&\text{if }x=x_n,\\
  0&\text{otherwise}.
 \end{cases}
\]
Then
\[
 \|f\|_p^p=\sum_n\mu_n^{1-1/m}<\infty,
 \qquad
 \|\Phi f\|_p^p=\sum_n1=\infty.
\]
Thus $\Cp$ fails for every $p\in[1,\infty)$ whenever $\UM$ fails.

It remains to prove continuity for finite $p$ under $\UM$. If
$u_n\to u$ in $\ell^p(X,\mu)$, the preceding embedding gives a common
bound $R$ for the values of $u_n$ and $u$. The power law $\phi$ is
Lipschitz on $[-R,R]$, so there exists $C_R>0$ such that
\[
 |\Phi u_n(x)-\Phi u(x)|\leq C_R|u_n(x)-u(x)|
 \qquad\text{for every }x\in X.
\]
Consequently,
$\|\Phi u_n-\Phi u\|_p\leq C_R\|u_n-u\|_p\to0$. Together with the
$\ell^\infty$ assertion proved above, this completes the proof.
}
\end{proof}

For the porous medium operator, the proposition above
yields that boundedness of the degree and containment imply continuity
of the operator.
\begin{corollary}\label{c:continuity}
Let $\phi(s)=s|s|^{m-1}$ for $m \geq 1$. Let $G$ satisfy $\BD$.
Then, $\dom({\mathcal{L}^{(\infty)}}) = \ell^\infty(X)$
and $\mathcal{L}^{(\infty)}$ is continuous.
If, additionally, $\Phi$ satisfies $\C$, then $\dom({\Deltaphip}) = \ell^p(X,\mu)$
and $\Deltaphip$ is continuous for $p \in [1,\infty]$.
\end{corollary}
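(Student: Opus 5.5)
The plan is to show that, under \BD, the operator $\Delta$ is bounded from $\ell^p(X,\mu)$ to itself for every $p\in[1,\infty]$, and then compose with the continuity of $\Phi$ from Proposition~\ref{p:PME_measure}. For $p=\infty$ the operator bound is immediate. For $f\in\ell^\infty(X)$, the sum $\sum_y w(x,y)|f(y)|\le \|f\|_\infty\sum_y w(x,y)<\infty$, so $\ell^\infty(X)\subseteq\dom(\Delta)$, and
\[
|\Delta f(x)|\le \frac{1}{\mu(x)}\sum_y w(x,y)\bigl(|f(x)|+|f(y)|\bigr)+\frac{\kappa(x)}{\mu(x)}|f(x)|\le 2\sup_{z\in X}\Deg(z)\,\|f\|_\infty .
\]
Hence $\Delta\colon\ell^\infty(X)\to\ell^\infty(X)$ is a bounded linear operator. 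Since $\Phi$ maps $\ell^\infty(X)$ into itself continuously by the first part of Proposition~\ref{p:PME_measure}, we get $\Phi u\in\dom(\Delta)$ and $\Delta\Phi u\in\ell^\infty(X)$ for every $u\in\ell^\infty(X)$. Thus $\dom(\mathcal{L}^{(\infty)})=\ell^\infty(X)$, and $\mathcal{L}^{(\infty)}=\Delta\circ\Phi$ is continuous as a composition of continuous maps.

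For finite $p$, write $\Delta f(x)=\Deg(x)f(x)-\frac{1}{\mu(x)}\sum_y w(x,y)f(y)$. The multiplication part is bounded on $\ell^p$ by $\sup\Deg$. For the averaging operator $Af(x)=\frac1{\mu(x)}\sum_y w(x,y)f(y)$, I would use Jensen (or Hölder) with the probability-like weights $w(x,y)/\sum_z w(x,z)$ to obtain
\[
|Af(x)|^p\le \Bigl(\frac{\sum_z w(x,z)}{\mu(x)}\Bigr)^{p-1}\frac{1}{\mu(x)}\sum_y w(x,y)|f(y)|^p .
\]
Multiplying by $\mu(x)$, summing over $x$, and using the symmetry of $w$ together with \B{} gives
\[
\|Af\|_p^p\le C^{p-1}\sum_y |f(y)|^p\sum_x w(x,y)\le C^p\|f\|_p^p ,
\]
where $C=\sup_x\sum_y w(x,y)/\mu(x)$. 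This computation also shows that the defining series converges absolutely whenever $f\in\ell^p$. Concretely, $\sum_y w(x,y)|f(y)|<\infty$ follows either from finiteness of the right-hand side above at each $x$, or from Hölder with $\sum_y w(x,y)^{p'}\mu(y)^{1-p'}$. The cleanest route is the Jensen bound itself, which shows the absolute sum is finite, so $\ell^p(X,\mu)\subseteq\dom(\Delta)$. Hence $\Delta$ is bounded on $\ell^p(X,\mu)$ for all $p\in[1,\infty]$ under \BD.

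It remains to handle $\Phi$ for finite $p$. If $m=1$, $\Phi$ is the identity and we are done. If $m>1$, assumption \C{} gives \UM{} by Proposition~\ref{p:PME_measure}, and that proposition then yields that $\Phi\colon\ell^p(X,\mu)\to\ell^p(X,\mu)$ is continuous for every $p\in[1,\infty]$; in particular it satisfies $\Cp$. Consequently $\Phi u\in\ell^p\subseteq\dom(\Delta)$ and $\Delta\Phi u\in\ell^p$ for all $u\in\ell^p$, so $\dom(\Deltaphip)=\ell^p(X,\mu)$, and $\Deltaphip=\Delta\circ\Phi$ is continuous. The only step needing care is the $\ell^p$-boundedness of the off-diagonal part $A$, where the symmetry of $w$ is essential to swap the sums; everything else is a direct combination of earlier results.
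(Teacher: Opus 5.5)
Your proposal is correct and follows essentially the same route as the paper: you combine boundedness of $\Delta$ on $\ell^p(X,\mu)$ under \BD{} with continuity of $\Phi$ from Proposition~\ref{p:PME_measure}, using that \C{} forces \UM{} when $m>1$. The paper instead cites \cite[Theorem~2.15]{keller2021graphs} for the boundedness of $\Delta$ and gets $\ell^p(X,\mu)\subseteq\ell^\infty(X)\subseteq\dom(\Delta)$ from \UM{}, whereas you prove the boundedness directly by a Jensen/Schur-type estimate using the symmetry of $w$ and obtain $\ell^p(X,\mu)\subseteq\dom(\Delta)$ from \B{}.
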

\begin{proof}
For $m=1$ and $p \in [1,\infty]$,
Theorem~2.15 in \cite{keller2021graphs} gives that the boundedness of the degree $\BD$ is equivalent to 
boundedness of the Laplacian $\Delta$ restricted to $\ell^p(X,\mu)$. This gives the conclusion in this case.

For $m>1$ and $p=\infty$, we note that $\ell^\infty(X) \subseteq \dom(\Delta)$ by the local summability
condition on the edge weight. As $\Phi\colon \ell^\infty(X) \to \ell^\infty(X)$, this implies 
$\dom({\mathcal{L}^{(\infty)}}) = \ell^\infty(X)$
and the continuity of $\mathcal{L}^{(\infty)}$ for the porous medium operator
by Proposition~\ref{p:PME_measure} and the boundedness of the Laplacian.

For $m>1$ and $p \in [1,\infty)$, we note that assumption $\C$ gives uniformity of the measure
$\UM$ by Proposition~\ref{p:PME_measure}. By $\UM$ we then obtain 
$$\ell^p(X,\mu) \subseteq \ell^\infty(X) \subseteq \dom(\Delta)$$
for every $p \in [1,\infty)$. Now,
Proposition~\ref{p:PME_measure} gives $\Phi\colon \ell^p(X, \mu) \to \ell^p(X, \mu)$ is continuous
implying $\dom({\Deltaphip}) = \ell^p(X,\mu)$
and the continuity of $\Deltaphip$ by boundedness of the Laplacian. This completes the proof.
\end{proof}

We now explore the $p$-edge-measure condition. 
We recall that $G$ satisfies \EM[p] if for every $x\in X$
\[
  \frac{w(x,\cdot)}{\mu(\cdot)}\in \ell^{p}(X,\mu)
  \qquad\text{i.e.}\qquad
  \begin{cases}
  \sum_{y\in X}\biggl(\frac{w(x,y)}{\mu(y)}\biggr)^{p}\mu(y)<\infty &\text {if }\, p \in [1,\infty) \\
\sup_{y\in X}\frac{w(x,y)}{\mu(y)}<\infty &
  \text {if }\, p=\infty.
  \end{cases}
  \]
For $p=\infty$, we write \EM[] for \EM[$\infty$]. 

We note that $\EM[1]$ amounts to the local summability of the edge weight 
and is always satisfied. 
Hence, if $\EM[p]$ holds for some $p>1$ then,  
by interpolation, it holds for   every $r \in[1,p]$.

We now characterize the p-edge-measure condition in various ways. We also show that
this condition implies that $\Delta^{(q)}$ is closed where $q$ is the H{\"o}lder conjugate
of $p$.
See Theorem~1.29 in \cite{keller2021graphs} for the corresponding statements for $p=2$
and Lemma~3.2 in \cite{milatovic2015maximal} where the authors prove (iii) $\Longrightarrow$ (ii)
from the result below
for general $p$.
\begin{lemma}\label{lem:EMp}
Let $p\in[1,\infty]$ and let $q$ be the  conjugate exponent of $p$, i.e.,  $p^{-1}+q^{-1}=1$.
The following statements are equivalent:
\begin{itemize}
\item[\textup{(i)}]   \EM[p].
\item[\textup{(ii)}]  $\ell^{q}(X,\mu)\subseteq\dom(\Delta)$.
\item[\textup{(iii)}] $\Delta(C_c(X))\subseteq\ell^{p}(X,\mu)$.
\end{itemize}
In this case, $\Delta^{(q)}$ is closed.
In particular, \EM[1] always holds and thus $\Delta^{(\infty)}$ is always closed.
\end{lemma}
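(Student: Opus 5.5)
The plan is to use duality between $\ell^p$ and $\ell^q$, written through the pairing $\sum_y w(x,y)u(y) = \sum_y \frac{w(x,y)}{\mu(y)}u(y)\mu(y)$.

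\textbf{(i) $\Rightarrow$ (ii).} For $u\in\ell^q(X,\mu)$, H\"older's inequality gives
\[
\sum_y w(x,y)|u(y)| \le \Bigl\|\tfrac{w(x,\cdot)}{\mu(\cdot)}\Bigr\|_p \|u\|_q<\infty,
\]
so $u\in\dom(\Delta)$.

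\textbf{(ii) $\Rightarrow$ (i).} Fix $x$. By (ii), the sum $\sum_y \frac{w(x,y)}{\mu(y)}|u(y)|\mu(y)$ is finite for every $u\in\ell^q$. I will conclude that $g=w(x,\cdot)/\mu(\cdot)\in\ell^p$ by the standard K\"othe-duality argument.
\begin{itemize}
\item For $q\in(1,\infty]$ (so $p<\infty$): suppose $g\notin\ell^p$. Choose disjoint finite blocks $B_k$ with $\|g 1_{B_k}\|_p\ge 4^k$. Set $u=\sum_k 2^{-k} h_k$, where $h_k$ is the normalized $\ell^q$ dual element of $g1_{B_k}$, i.e.\ $h_k=(g1_{B_k})^{p-1}/\|g1_{B_k}\|_p^{p-1}$. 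Then $u\in\ell^q$, but $\sum g u\mu \ge \sum_k 2^{-k}4^k=\infty$. Alternatively, the closed graph theorem or the uniform boundedness principle applied to the functionals $u\mapsto \sum_{y\in F} g u\mu$ over finite $F$ gives the same conclusion.
\item For $q=1$ ($p=\infty$): if $\sup_y g(y)=\infty$, pick distinct $y_k$ with $g(y_k)\ge 4^k$. Put $u=\sum_k 2^{-k}1_{y_k}/\mu(y_k)\in\ell^1$. Then $\sum g u \mu=\infty$.
\end{itemize}

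\textbf{(i) $\Leftrightarrow$ (iii).} Compute directly, for $y\neq x$,
\[
\Delta 1_x(y) = -\frac{w(x,y)}{\mu(y)}, \qquad \Delta 1_x(x)=\Deg(x).
\]
So $\Delta 1_x$ differs from $-w(x,\cdot)/\mu(\cdot)$ only at the single point $x$. Hence $\Delta 1_x\in\ell^p$ for all $x$ if and only if (i) holds. Linearity extends this from the indicators $1_x$ to all of $C_c(X)$.

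\textbf{Closedness.} Let $u_n\in\dom(\Delta^{(q)})$ with $u_n\to u$ and $\Delta u_n\to f$ in $\ell^q$. By (ii), $u\in\dom(\Delta)$. The key step is pointwise convergence: for each $x$,
\[
\Delta u_n(x) = \Deg(x)u_n(x) - \sum_y \frac{w(x,y)}{\mu(y)}u_n(y)\mu(y).
\]
The sum converges to the corresponding sum for $u$ by H\"older's inequality with $g\in\ell^p$, and $u_n(x)\to u(x)$ because $\ell^q$ convergence implies pointwise convergence (since $\mu>0$). Also $\Delta u_n(x)\to f(x)$ by the same pointwise argument. Hence $\Delta u=f\in\ell^q$, so $u\in\dom(\Delta^{(q)})$ and $\Delta^{(q)}u=f$.

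\textbf{The case $p=1$.} Condition \EM[1] is exactly local summability of $w$, which always holds. Taking $q=\infty$ then shows that $\Delta^{(\infty)}$ is always closed.

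The only real obstacle is the converse direction (ii) $\Rightarrow$ (i), which needs the gliding-hump construction or a uniform boundedness argument. Everything else is a direct computation.
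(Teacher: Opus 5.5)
Your proof is correct. For (i)$\Rightarrow$(ii), (i)$\Leftrightarrow$(iii), the closedness of $\Delta^{(q)}$ and the case $p=1$, it follows the paper's argument: H\"older, the explicit formula for $\Delta 1_x$, and continuity of $u\mapsto \Delta u(x)$ on $\ell^q(X,\mu)$.

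The real difference is in (ii)$\Rightarrow$(i).

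\textbf{The paper's route.} It fixes $x$ and considers the restriction map $R_x\colon \ell^q(X,\mu)\to \ell^1(N(x),\nu_x)$, where $\nu_x(x)=\mu(x)$ and $\nu_x(y)=w(x,y)$ for $y\sim x$. It notes that $R_x$ is closed, obtains $\sum_y w(x,y)|u(y)|\le C_x\|u\|_q$ from the closed graph theorem, and then quotes the converse of H\"older's inequality (Folland, Theorem~6.14) to conclude $w(x,\cdot)/\mu(\cdot)\in\ell^p(X,\mu)$.

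\textbf{Your route.} You argue by contradiction with an explicit gliding-hump function. For $p<\infty$ you build $u=\sum_k 2^{-k}h_k$ on disjoint finite blocks; for $p=\infty$ you take $u=\sum_k 2^{-k}1_{y_k}/\mu(y_k)$. In both cases $u\in\ell^q(X,\mu)$ directly violates $\sum_y w(x,y)|u(y)|<\infty$.

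\textbf{Comparison.} Your argument is elementary and self-contained: it needs neither Baire-category tools nor the quoted converse-H\"older theorem. The cost is a split into $p<\infty$ and $p=\infty$. The paper's argument treats all $p\in[1,\infty]$ at once and is shorter given the references. Your alternative via the uniform boundedness principle is essentially the paper's functional-analytic route.

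One small slip in the closedness step: the formula should read $\Delta u_n(x)=\Deg(x)u_n(x)-\frac{1}{\mu(x)}\sum_y w(x,y)u_n(y)$. The missing factor $1/\mu(x)$ is a constant and does not affect the argument.
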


\begin{proof}
(i) $\Longrightarrow$ (ii):
Assume \EM[p] and let $u\in\ell^{q}(X,\mu)$. Let $x \in X$.
We apply H\"older's inequality on $(X,\mu)$ to obtain
\[
  \sum_{y\in X}w(x,y)\,|u(y)| = \sum_{y\in X}\frac{w(x,y)}{\mu(y)}\,|u(y)|\,\mu(y)
  \;\le\;
  \biggl\|\frac{w(x,\cdot)}{\mu(\cdot)}\biggr\|_p
  \;\|u\|_q
  \;<\;\infty.
\]
Hence, $u\in\dom(\Delta)$.

(ii) $\Longrightarrow$ (i):
Assume $\ell^{q}(X,\mu)\subseteq\dom(\Delta)$ and fix $x\in X$.
Let $N(x)=\{x\}\cup\{y\in X \mid y\sim x\}$ denote the combinatorial
neighborhood of~$x$ and define the measure $\nu_x$ on $N(x)$ by
\[
  \nu_x(y)=
  \begin{cases}
    \mu(x) & \textup{if } y=x\\
    w(x,y) & \textup{if } y\sim x.
  \end{cases}
\]
Consider the restriction map $R_x\colon\ell^{q}(X,\mu)\to\ell^{1}(N(x),\nu_x)$
given by $R_x(u)=u_{|_{N(x)}}$.
Since $\ell^{q}(X,\mu)\subseteq\dom(\Delta)$, for every
$u\in\ell^{q}(X,\mu)$ we have
\[
  \|R_x u\|_{\ell^{1}(N(x),\nu_x)}
  =|u(x)|\,\mu(x)+\sum_{y\sim x}|u(y)|\,w(x,y)<\infty
\]
so that $R_x$ is well-defined.
Moreover, $R_x$ is closed since if $u_n\to u$ in $\ell^{q}(X,\mu)$ and
$R_x u_n\to g$ in $\ell^{1}(N(x),\nu_x)$, then both convergences
imply pointwise convergence in $N(x)$, whence $g=u|_{N(x)}$.
By the closed graph theorem, $R_x$ is bounded, i.e., there exists
$C_x>0$ with
\[
  \sum_{y\in X}w(x,y)\,|u(y)|\le C_x\,\|u\|_q
\]
for all $u\in\ell^{q}(X,\mu)$.

If we let $g_x(y)=w(x,y)/\mu(y)$ for $y \in X$, 
this means that for  every $u\in \ell^q(X,\mu)$ and $x \in X$, we get
$g_x u\in \ell^1(X,\mu)$ and  $\|g_xu\|_1\leq C_x \|u\|_q$.
By a standard result in real analysis (e.g., \cite[Theorem 6.14]{Folland1999RealAnalysis}) it follows that $g_x\in \ell^p(X,\mu)$ for every $x \in X$ which is \EM[p].  

(i) $\Longleftrightarrow$ (iii):
Since $C_c(X)=\operatorname{span}\{1_x \mid x\in X\}$, for (iii) it suffices
to characterize when $\Delta 1_x\in\ell^{p}(X,\mu)$ for every $x\in X$.
A direct computation gives
\[
  \Delta 1_x(y)=
  \begin{cases}
    \Deg(x) & \textup{if } y=x\\[4pt]
    \displaystyle-\frac{w(x,y)}{\mu(y)} & \textup{if } y\neq x.
  \end{cases}
\]
It follows that 
\[
  \Delta1_x\in\ell^{p}(X,\mu)
  \quad\Longleftrightarrow\quad
    \frac{w(x,\cdot)}{\mu(\cdot)}\in \ell^p(X,\mu)
\]
which is exactly \EM[p]. This gives the equivalence between (iii) and (i).

\textbf{Closedness of $\boldsymbol{\Delta^{(q)}}$.}
Assume the equivalent conditions hold for $p \in [1,\infty]$. For any $u\in\ell^{q}(X,\mu)$
and $x\in X$, we estimate
\begin{align*}
  |\Delta u(x)|
  &\le \frac{1}{\mu(x)}\sum_{y\in X}w(x,y)\bigl(|u(x)|+|u(y)|\bigr)
       +\frac{\kappa(x)}{\mu(x)}|u(x)|\\
  &= \Deg(x)\,|u(x)|
     +\frac{1}{\mu(x)}\sum_{y\in X}w(x,y)\,|u(y)|\\
  &\le \Deg(x)\,\mu(x)^{-1/q}\,\|u\|_{q}
     +\frac{1}{\mu(x)}
       \biggl\|\frac{w(x,\cdot)}{\mu(\cdot)}\biggr\|_{p}
       \|u\|_{q}
\end{align*}
where the last step uses $|u(x)|\le\mu(x)^{-1/q}\|u\|_{q}$, with  $\mu(x)^{-1/q}=1$ if $q=\infty$, and
H\"older's inequality.
Hence, for each fixed~$x$, the point evaluation
$u\mapsto\Delta u(x)$ is a continuous linear functional on
$\ell^{q}(X,\mu)$.

Now suppose $u_n\in\dom(\Delta^{(q)})$ with $u_n\to u$ and
$\Delta u_n\to g$ in $\ell^{q}(X,\mu)$.
Convergence in $\ell^{q}(X,\mu)$ implies pointwise convergence (via
the evaluation functional $u\mapsto u(x)$) and continuity of
$u\mapsto\Delta u(x)$ gives $\Delta u_n(x)\to\Delta u(x)$ for every $x \in X$.
On the other hand, $\Delta u_n\to g$ in $\ell^{q}(X,\mu)$ forces
$\Delta u_n(x)\to g(x)$ for every $x \in X$.
Therefore, $\Delta u(x)=g(x)$ for all $x\in X$, so
$\Delta u=g\in\ell^{q}(X,\mu)$ and $u\in\dom(\Delta^{(q)})$.
\end{proof}

We remark that the $p$-edge-measure conditions hold for a large class
of graphs. First, as noted in Lemma~\ref{lem:EMp} directly above, 
$\EM[1]$ always holds.
For $p \in [1,\infty]$ we have the following connections with our other assumptions
for graphs.
\begin{corollary}\label{c:emp}
    If $G$ is locally finite or
    satisfies $\UM$ or $\B$, then $G$ satisfies \EM[p] for all $p \in [1,\infty]$.
\end{corollary}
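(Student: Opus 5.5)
By the interpolation remark preceding Lemma~\ref{lem:EMp}, it suffices to verify \EM[] (that is, \EM[$\infty$]). Indeed, \EM[$\infty$] together with the always-valid \EM[1] yields \EM[p] for every $p\in[1,\infty]$. Concretely, if $g_x(y)=w(x,y)/\mu(y)$, then
\[
\sum_y g_x(y)^p\mu(y)\le \|g_x\|_\infty^{p-1}\sum_y w(x,y)<\infty .
\]
So the plan is to show, in each of the three cases, that $\sup_{y} w(x,y)/\mu(y)<\infty$ for every fixed $x\in X$.

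\textbf{Locally finite case.} The quantity $w(x,y)/\mu(y)$ vanishes unless $y\sim x$. There are only finitely many such neighbors $y$, so the supremum is a maximum of finitely many finite numbers. Alternatively, one can observe that $\Delta 1_x$ is then finitely supported, hence lies in every $\ell^p(X,\mu)$, and invoke (iii) of Lemma~\ref{lem:EMp} directly.

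\textbf{Case $\UM$.} Let $c=\inf_y\mu(y)>0$. Then
\[
\frac{w(x,y)}{\mu(y)}\le \frac{1}{c}\sum_{z}w(x,z)<\infty
\]
by local summability.

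\textbf{Case $\B$.} Let $C=\sup_y \frac{1}{\mu(y)}\sum_z w(y,z)<\infty$. Using symmetry of $w$,
\[
\frac{w(x,y)}{\mu(y)}=\frac{w(y,x)}{\mu(y)}\le \frac{1}{\mu(y)}\sum_z w(y,z)\le C
\]
uniformly in $y$.

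There is no real obstacle here. The only points needing care are using symmetry of the edge weights in the $\B$ case, and the interpolation step, which reduces all $p$ to the single case $p=\infty$.
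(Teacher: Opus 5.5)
Your proof is correct. In the locally finite and \UM{} cases it is essentially what the paper does: the paper uses finitely many neighbors in the first case, and local summability of $w(x,\cdot)$ together with $\inf\mu>0$ in the second.

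The genuine difference is in how you organize the argument and in the \B{} case. You first reduce every $p$ to \EM[$\infty$] via the explicit estimate $\sum_y g_x(y)^p\mu(y)\le\|g_x\|_\infty^{p-1}\sum_y w(x,y)$. You then settle \B{} with the one-line pointwise bound $w(x,y)/\mu(y)=w(y,x)/\mu(y)\le C$, which comes from the symmetry of $w$.

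The paper does not reduce to $p=\infty$. Instead it invokes the boundedness, under \B{}, of the Laplacian without killing term on every $\ell^p(X,\mu)$, citing Theorem~2.15 of the Keller--Lenz--Wojciechowski book. From this it reads off $\Delta(C_c(X))\subseteq\ell^p(X,\mu)$; the killing term only adds the finitely supported function $(\kappa(x)/\mu(x))1_x$ to $\Delta 1_x$. It then concludes via the implication (iii)$\Rightarrow$(i) of Lemma~\ref{lem:EMp}.

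What your route buys:
\begin{itemize}
\item It is self-contained and elementary.
\item It gives a bound on $w(x,y)/\mu(y)$ that is uniform in both $x$ and $y$, hence the quantitative estimate $\|w(x,\cdot)/\mu(\cdot)\|_p\le C\,\mu(x)^{1/p}$.
\end{itemize}

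What the paper's route buys: it is shorter on the page and ties the statement to the operator-theoretic characterization of \EM[p] and to the boundedness of $\Delta$, both of which are used elsewhere in the paper. The cost is reliance on an external boundedness theorem. Its $\ell^1$ part in any case hinges on the same symmetry of $w$ that you exploit directly.
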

\begin{proof}
    In the case of local finiteness, \EM[p] follows trivially from definitions.
    For $\UM$, i.e., $\mu(x) \geq C >0$ for all $x \in X$, 
    \EM[p] follows
    from the local summability of the edge weight 
    at $x \in X$, i.e., $\sum_y w(x,y)<\infty$ for all $x \in X$.
    Finally, in the case of $\B$, it follows that the Laplacian without
    the killing term is bounded on $\ell^p(X,\mu)$ for all $p \in [1,\infty]$, e.g., 
    \cite[Theorem~2.15]{keller2021graphs}. From this, $\Delta(C_c(X)) \subseteq \ell^p(X,\mu)$
    follows directly which gives \EM[p] for every $p \in [1,\infty]$ by Lemma~\ref{lem:EMp}.
\end{proof}

\begin{remark}[{\EM[] is \EM[$\infty$]}]\label{rem:EMp_p1}
We will be most
interested in the case when $\Delta^{(1)}$ is closed, i.e., 
when $p=\infty$ and $q=1$. Thus, as already noted above,
we write \EM[] for \EM[$\infty$]. 
Furthermore, there is a stronger condition than $\EM$, 
namely, that there exists $C>0$ such that $w(x,y) \leq C \mu(x) \mu(y)$
for all $x, y \in X$ whose consequences for curvature and stochastic properties have 
been explored in \cite{KM24, KMW25}.
\end{remark}

\subsection{Accretivity and \texorpdfstring{$m$}{m}-accretivity}
We now give a general definition for accretive and m-accretive operators. We also give an easy consequence of these properties 
which will be useful later. Finally, we introduce the bracket which allows us to
analyze accretivity on $\ell^p$ spaces.

Given a linear or nonlinear operator $T$ with domain 
$\dom(T) \subseteq \mathbb{E}$ where $\mathbb{E}$ is a 
Banach space and $\lambda \in \R$ we write
$$
S_\lambda = \id +\lambda T
$$
for the operator acting on $\dom(T)$
where $\id$ is the identity operator. We refer to $S_\lm$ as the \emph{shifted operator}.
\begin{definition}[Accretive and m-accretive operators]\label{def:m-accretivity}
If $\mathbb{E}=(E,\|\cdot\|)$ is a real Banach space and  $T \colon \dom(T)\subseteq E \to E$ is an operator, then $T$ is said to be \emph{accretive} if
$$ \left\|S_\lambda u - S_\lambda v \right\|\geq \|u - v\|$$	
for all $u,v \in \dom (T)$ and every $\lambda >0$.
An accretive operator $T$ is called \emph{m-accretive} if $S_\lambda$ is surjective for every $\lambda >0$. 
\end{definition}

Note that accretivity implies the injectivity of $S_\lambda$ for every $\lambda >0$.
\begin{remark}[For some/all $\lm$]\label{rem:lambda}
	The norm inequality in Definition~\ref{def:m-accretivity} need not be checked for every $\lambda>0$. Indeed, if it holds for every $\lambda\in(0,\lambda_0]$ with some fixed $\lambda_0>0$, 
    then it automatically holds for every $\lambda>0$. 
    This follows from the equivalent characterization of accretivity in terms of the bracket $[\,\cdot\,,\,\cdot\,]$, see \cite[II.~Proposition~2.5]{benilan1988evolution} and Remark~\ref{rem:bracket}
    directly below. 
    However, we will see that, for the maximal porous medium-type
    operators on $\ell^1$ and maximal Laplacians on $\ell^p$, if the accretivity norm
    inequality holds
    for one $\lm>0$, then it holds for all $\lm>0$.
    Furthermore, for general operators, 
    the surjectivity in Definition~\ref{def:m-accretivity} need not be checked 
    for every $\lambda>0$ either as an accretive operator 
    is m-accretive as soon as $S_\lambda$ is surjective for some $\lambda>0$, see \cite[Proposition~3.3]{barbu2010nonlinear}.
\end{remark}

We now give an easy consequence of these properties which will be useful later when we consider operators
and their restrictions.
\begin{lemma}\label{l:inclusion}
    Let $T_k \colon \dom(T_k)\subseteq E \to E$ be operators on a real Banach space for $k=1,2$ with $T_1 \subseteq T_2$.
    Let $S_\lm^{(k)}=\id+\lm T_k$ for $\lm>0$ and $k =1,2$. If $S_\lm^{(1)}$ is onto and $S_\lm^{(2)}$ is injective for some $\lm>0$, then $T_1=T_2$.
    In particular, this is true if $T_1$ is m-accretive and $T_2$ is accretive.
\end{lemma}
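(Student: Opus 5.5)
The argument is a direct set-theoretic one, and it does not use any of the graph machinery. The inclusion $T_1 \subseteq T_2$ means $\dom(T_1) \subseteq \dom(T_2)$ and $T_2 u = T_1 u$ for all $u \in \dom(T_1)$. It therefore suffices to prove the reverse inclusion $\dom(T_2) \subseteq \dom(T_1)$; equality of the operators then follows automatically.

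Fix $\lm>0$ such that $S_\lm^{(1)}$ is onto and $S_\lm^{(2)}$ is injective. Take $v \in \dom(T_2)$ and set $f = S_\lm^{(2)} v = v + \lm T_2 v \in E$. By surjectivity of $S_\lm^{(1)}$ there exists $u \in \dom(T_1)$ with
\[
u + \lm T_1 u = f .
\]
Since $u \in \dom(T_1) \subseteq \dom(T_2)$ and $T_2 u = T_1 u$, we have $S_\lm^{(2)} u = f = S_\lm^{(2)} v$. Injectivity of $S_\lm^{(2)}$ then gives $v = u \in \dom(T_1)$. Hence $\dom(T_2)=\dom(T_1)$, and so $T_1=T_2$.

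For the ``in particular'' statement: if $T_1$ is m-accretive, then $S_\lm^{(1)}$ is surjective for every $\lm>0$ by Definition~\ref{def:m-accretivity}. If $T_2$ is accretive, then $\|S^{(2)}_\lm u - S^{(2)}_\lm v\| \geq \|u-v\|$ for all $u,v \in \dom(T_2)$, so $S_\lm^{(2)}$ is injective for every $\lm>0$, as noted after the definition. Taking any single $\lm>0$, the first part applies.

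There is no real obstacle here. The only point requiring care is to remember that $T_1 \subseteq T_2$ includes the agreement of the actions, and not merely the inclusion of domains, so that $S^{(2)}_\lm u = S^{(1)}_\lm u$.
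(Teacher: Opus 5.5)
Your proposal is correct and follows essentially the same argument as the paper: given $v \in \dom(T_2)$, use surjectivity of $S_\lm^{(1)}$ to find $u \in \dom(T_1)$ with $S_\lm^{(1)}u = S_\lm^{(2)}v$, then use $T_1 \subseteq T_2$ and injectivity of $S_\lm^{(2)}$ to get $v = u \in \dom(T_1)$. Your explicit treatment of the ``in particular'' clause, via surjectivity from m-accretivity and injectivity from accretivity for any single $\lm>0$, fills in what the paper leaves to the definitions.
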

\begin{proof}
    This follows from definitions. Let $u \in \dom(T_2)$ so that $S_\lm^{(2)}u \in E$.
    Since $S_\lm^{(1)}$ is onto, there exists $v \in \dom(T_1)$ such that $S_\lm^{(1)}v=S_\lm^{(2)}u$.
    Since $T_1 \subseteq T_2$ and $S_\lm^{(2)}$ is injective, we now get $u=v \in \dom(T_1)$ which gives the conclusion.
\end{proof}

\begin{remark}[Accretivity and the bracket]\label{rem:bracket}
The accretivity condition admits an equivalent characterization in terms of the directional derivative of the norm. Setting
$$
u = f - g \qquad \mbox{and} \qquad z = Tf - Tg
$$
for $f, g \in \dom(T)$,
the operator $T$ is accretive if and only if
\begin{equation*}
 [u, z] \coloneqq \lim_{\lambda\to 0^+} \lambda^{-1}\bigl(\|u + \lambda z\| - \|u\|\bigr) \geq 0.
\end{equation*}
As mentioned in the above Remark~\ref{rem:lambda}, the expression on the left-hand side is known as the \emph{bracket} and $[u,z]$ is a shorthand for the right-hand derivative at $u$ of the norm in the direction $z$, see, for instance,~\cite[Section 2.2 and Theorem 2.15]{benilan1988evolution} or~\cite[Equation~(3.13)]{barbu2010nonlinear}. 

In the special case of $\ell^p(X,\mu)$ spaces $[u,z] \geq 0$ is equivalent to
\begin{equation}\label{eq:lp_accretivity}
[u,z] = \begin{cases}
\displaystyle\sum_{\substack{x \in X : \\ u(x) = 0}} |z(x)| \mu(x) + \displaystyle\sum_{\substack{x \in X : \\ u(x) \neq 0}} z(x) \sgn(u(x)) \mu(x) \geq 0  & \text{if } p = 1, \\[5ex] 
\|u\|_p^{1-p}\displaystyle\sum_{x \in X} z(x) u(x) |u(x)|^{p-2} \mu(x) \geq 0 & \text{if }  p \in (1, \infty)
\end{cases}
\end{equation}
where $\sgn\colon \R \to \R$ denotes the signum function defined by
$$\sgn(s)=
\begin{cases}
    1 &\textup{if } s>0 \\
    0 &\textup{if } s=0 \\
    -1 &\textup{if } s<0.
\end{cases}$$ 
A direct derivation of~\eqref{eq:lp_accretivity} can be found in~\cite[Example~(2.8)]{benilan1988evolution} or~\cite[Remark~3]{bianchi2022generalized}.
\end{remark}

\subsection{\texorpdfstring{Accretivity of the porous medium-type operator on finitely supported functions in $\ell^1$
and of the Laplacian on finitely supported functions in $\ell^p$}{Accretivity of the porous medium-type operator on finitely supported functions in ell1 and of the Laplacian on finitely supported functions in ellp}}
In this subsection, we address the accretivity of the porous medium-type operator
acting on the finitely supported functions in $\ell^1(X,\mu)$.
We then consider the Laplacian acting on the finitely
supported functions in $\ell^p(X,\mu)$ for $p \in [1,\infty]$. In particular, this covers
the case of these operators on finite graphs. While these results should be known, we offer
a proof based on the bracket characterization introduced in the previous subsection.

We start by looking at the porous medium-type operator on $\ell^1$.
Recall that the formal porous medium operator acts as $\Delta \Phi $
for $u \in C(X)$ such that $\Phi u \in \dom(\Delta)$. 

\begin{proposition}\label{p:PMTO_accretivity_Cc}
    Let $G$ be a graph. Then, $(\Delta \Phi)_c = (\Delta \Phi)_{\vert_{C_c(X)}}$
    gives an accretive operator on $\ell^1(X,\mu)$. 
\end{proposition}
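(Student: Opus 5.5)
We will verify the bracket criterion of Remark~\ref{rem:bracket} for $p=1$. Let $f,g \in C_c(X)$ and set $u=f-g$, $z=\Delta\Phi f-\Delta\Phi g = \Delta v$ where $v=\Phi f-\Phi g \in C_c(X)$. By \eqref{eq:lp_accretivity}, it suffices to show
\[
\sum_{x:\,u(x)=0}|z(x)|\mu(x)+\sum_{x:\,u(x)\neq 0} z(x)\sgn(u(x))\mu(x)\geq 0 .
\]
Since the first sum is nonnegative, the task reduces to bounding the second sum from below. The key observation from monotonicity of $\phi$ is the compatibility of signs: if $u(x)>0$ then $v(x)\geq 0$, if $u(x)<0$ then $v(x)\le 0$, and $v(x)=0$ whenever $u(x)=0$. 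Thus, defining $s(x)=\sgn(u(x))$, we have $s(x)v(x)=|v(x)|$ for all $x$, and whenever $v(x)\neq 0$, $s(x)=\sgn(v(x))$.

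Next, we compute $\sum_x z(x)s(x)\mu(x)$. Since $v$ is finitely supported, $z=\Delta v\in \ell^1(X,\mu)$ (as noted after the definition of $\Deltaphi$), so the sum converges absolutely and rearrangements are justified. Writing $\Delta v(x)\mu(x)=\sum_y w(x,y)(v(x)-v(y))+\kappa(x)v(x)$ and expanding, we get
\[
\sum_x s(x)\mu(x)\Delta v(x)=\sum_x \kappa(x)|v(x)| + \sum_{x}\sum_y w(x,y)\bigl(|v(x)| - s(x)v(y)\bigr).
\]
Absolute convergence of this double sum holds because $v$ has finite support and local summability holds: terms with $v(x)=v(y)=0$ vanish, and only finitely many $x$ or $y$ contribute the rest. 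Using $s(x)v(y)\le |v(y)|$ and the symmetry of $w$, the double sum is bounded below by $\sum_{x,y}w(x,y)(|v(x)|-|v(y)|)=0$. This argument, however, includes the points with $u(x)=0$ inside the sum with $s(x)=0$. At those points, the contribution is $z(x)\cdot 0$, which is consistent with the second sum in the bracket excluding them. Therefore, the second bracket sum equals $\sum_x s\mu\Delta v\ge \sum_x\kappa|v|\ge 0$, and the bracket is nonnegative.

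The main obstacle is the bookkeeping in the last step. Specifically, one must justify splitting $\sum_x\sum_y w(x,y)|v(x)|$ and $\sum_x\sum_y w(x,y)|v(y)|$ as finite, equal quantities. Each quantity equals $\sum_x |v(x)|\sum_y w(x,y)$, which is finite by finite support and local summability, and the equality follows from symmetry. Once this is in place, the inequality follows. Alternatively, one could verify the norm inequality $\|S_\lambda f-S_\lambda g\|_1\ge\|f-g\|_1$ directly by the same sign argument, but the bracket route, as in the paper's setup, is the cleanest approach.
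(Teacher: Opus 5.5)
Your proof is correct and follows essentially the paper's route: you use the $\ell^1$ bracket, the sign compatibility $s(x)v(x)=|v(x)|$ coming from monotonicity of $\phi$, and cancellation via the symmetry of $w$. The differences are cosmetic. The paper sets $\xi=\sgn(z)$ on $\{u=0\}$, so that the bracket equals $\sum_x z\xi\mu$ exactly, whereas you discard the nonnegative first sum. The paper also invokes Green's formula to obtain the symmetrized, pairwise nonnegative expression, where you use the one-sided bound $s(x)v(y)\le|v(y)|$ and justify the rearrangements by hand.
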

\begin{proof}
{
Since $\phi(0)=0$, one has $\Phi(C_c(X))\subseteq C_c(X)$. Let $\varphi_1,\varphi_2\in C_c(X)$ and set
\[
 u:=\varphi_1-\varphi_2,\qquad
 v:=\Phi\varphi_1-\Phi\varphi_2,\qquad \text{and} \qquad
 z:=\Delta v.
\]
Define
\[
 \xi(x):=
 \begin{cases}
  \sgn(u(x))& \text{if }u(x)\neq0\\
  \sgn(z(x))& \text{if }u(x)=0.
 \end{cases}
\]
Since $\phi$ is monotone increasing,
$u(x)v(x)\geq0$ for all $x \in X$ and $u(x)=0$ implies $v(x)=0$. Consequently,
$|\xi(x)|\leq1$ and $v(x)\xi(x)=|v(x)|$ for every $x\in X$.
Moreover, for every $x,y\in X$,
\[
 (v(x)-v(y))(\xi(x)-\xi(y))
 =|v(x)| -v(x)\xi(y)+|v(y)|-v(y)\xi(x)\geq0.
\]
Since $v\in C_c(X)$ and the bounded function $\xi$ belongs to $\dom(\Delta)$,
Remark~\ref{rem:bracket} and Green's formula~\cite[Proposition~1.5]{keller2021graphs} yield
\[
\begin{aligned}
[u,z]= \sum_{x\in X}z(x)\xi(x)\mu(x) &= \sum_{x\in X}\Delta v(x)\xi(x)\mu(x)\\
&=\frac12\sum_{x,y\in X}w(x,y)(v(x)-v(y))(\xi(x)-\xi(y))
+\sum_{x\in X}\kappa(x)|v(x)|\geq0.
\end{aligned}
\]
The $\ell^1$-bracket characterization~\eqref{eq:lp_accretivity} proves the accretivity of $(\Delta\Phi)_c$.
}
\end{proof}

As a corollary, we automatically get accretivity of $\Delta \Phi$ on $\ell^1(X,\mu)$ 
over finite graphs. Recall that $\Deltaphi=\Deltaphi^{(1)}$
is the restriction of the formal operator $\Delta \Phi$ to the maximal domain in $\ell^1(X,\mu)$.
\begin{corollary}\label{c:PMTO_accretive_finite}
    If $G$ is a finite graph, then $\Deltaphi$ is accretive.
\end{corollary}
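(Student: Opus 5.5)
The plan is to reduce directly to Proposition~\ref{p:PMTO_accretivity_Cc}. When $G$ is finite, every function on $X$ is finitely supported, so $C(X)=C_c(X)$. Hence $\ell^1(X,\mu)=C(X)$ as sets.

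Next we identify the domains. We have $\dom(\Delta)=C(X)$, because the local summability sums are finite sums. It follows that $\dom(\Delta\Phi)=C(X)$, and also $\Delta\Phi u\in C(X)=\ell^1(X,\mu)$ for every $u$. Therefore $\dom(\Deltaphi)=C(X)=C_c(X)$. Since both operators act as $\Delta\Phi$, the maximal operator $\Deltaphi$ coincides with the restriction $(\Delta\Phi)_c$.

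By Proposition~\ref{p:PMTO_accretivity_Cc}, $(\Delta\Phi)_c$ is accretive on $\ell^1(X,\mu)$, so $\Deltaphi$ is accretive. Alternatively, one can rerun the bracket computation via \eqref{eq:lp_accretivity}; there Green's formula is just a finite rearrangement of sums, so no convergence issues arise.

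There is no real obstacle here. The only point needing care is checking that the maximal domain imposes no extra constraint in the finite case, and this is immediate because all sums involved are finite.
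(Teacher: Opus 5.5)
Your proof is correct and matches the paper's route. On a finite graph $C(X)=C_c(X)=\ell^1(X,\mu)=\dom(\Deltaphi)$, so $\Deltaphi$ coincides with $(\Delta\Phi)_c$ and Proposition~\ref{p:PMTO_accretivity_Cc} applies; the paper states the corollary as exactly this immediate consequence, without a separate proof.
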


We next look at the Laplacian on finitely supported functions. Unlike the case
of $p=1$, for $p \in (1,\infty]$ we have to impose an additional condition
on the graph in order to ensure that finitely supported functions are mapped into
$\ell^p$.
\begin{proposition}\label{p:Lap_accretivity_Cc}
    Let $G$ be a graph satisfying \EM[p] for some $p \in [1, \infty]$. Then, $\Delta_c = \Delta_{\vert_{C_c(X)}}$
    gives an accretive operator on $\ell^p(X,\mu)$. In particular, $\Delta_c$ is always accretive on $\ell^1(X,\mu)$.
\end{proposition}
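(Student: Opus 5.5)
Throughout, let $\varphi_1,\varphi_2\in C_c(X)$ and set $u=\varphi_1-\varphi_2\in C_c(X)$ and $z=\Delta u$. By Lemma~\ref{lem:EMp}, \EM[p] gives $z\in\ell^p(X,\mu)$, so $\Delta_c$ maps into $\ell^p(X,\mu)$. By linearity it suffices to show $[u,z]\geq 0$ for every $u\in C_c(X)$, or, for $p=\infty$, the norm inequality directly. The case $p=1$ needs no assumption, since \EM[1] always holds. It is exactly Proposition~\ref{p:PMTO_accretivity_Cc} with $\phi=\id$, so we may simply invoke it. This also gives the ``in particular'' statement.

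For $p\in(1,\infty)$ we use the bracket formula~\eqref{eq:lp_accretivity}. We need
\[
\sum_{x\in X}\Delta u(x)\,\xi(x)\mu(x)\geq 0,\qquad \xi:=u|u|^{p-2}\in C_c(X).
\]
Since both $u$ and $\xi$ are finitely supported, Green's formula \cite[Proposition~1.5]{keller2021graphs} applies without any summability issues. It gives
\[
\frac12\sum_{x,y}w(x,y)(u(x)-u(y))(\xi(x)-\xi(y))+\sum_x\kappa(x)|u(x)|^p .
\]
Both terms are nonnegative because $s\mapsto s|s|^{p-2}$ is monotone increasing. This proves $[u,z]\geq0$, hence accretivity.

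For $p=\infty$ the bracket formula is not stated, so I would verify $\|u+\lambda\Delta u\|_\infty\geq\|u\|_\infty$ directly for $\lambda>0$. Since $u$ is finitely supported and nonzero, $|u|$ attains its maximum at some $x_0$. Replacing $u$ by $-u$ if needed, assume $u(x_0)=\|u\|_\infty>0$. Then
\[
\Delta u(x_0)=\frac{1}{\mu(x_0)}\sum_y w(x_0,y)(u(x_0)-u(y))+\frac{\kappa(x_0)}{\mu(x_0)}u(x_0)\geq0,
\]
so $u(x_0)+\lambda\Delta u(x_0)\geq u(x_0)=\|u\|_\infty$. This gives the inequality for all $\lambda>0$. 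Here \EM[] is only needed to ensure $\Delta u\in\ell^\infty(X)$, so that $\Delta_c$ is an operator on $\ell^\infty(X)$.

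The only step requiring care is the justification of Green's formula in the $p\in(1,\infty)$ case. Because both functions are finitely supported, however, the double sum is effectively over finitely many $x$ (one argument lies in the support), with absolutely convergent sums in $y$ by local summability. So I expect no genuine obstacle; the main point is to handle $p=\infty$ separately via the maximum point argument.
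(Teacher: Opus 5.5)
Your proposal is correct and follows the paper's proof essentially step for step. The case $p=1$ is reduced to Proposition~\ref{p:PMTO_accretivity_Cc} with $\Phi=\id$. The case $p\in(1,\infty)$ uses Green's formula on finitely supported functions in the bracket~\eqref{eq:lp_accretivity}, with nonnegativity coming from the monotonicity of $s\mapsto s|s|^{p-2}$. The case $p=\infty$ uses the same maximum-point argument, with \EM[] serving only to guarantee $\Delta(C_c(X))\subseteq\ell^\infty(X)$.
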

\begin{proof}
    We split the proof into cases. Let $\varphi \in C_c(X)$.

    Case $p=1$: This follows from Proposition~\ref{p:PMTO_accretivity_Cc}
    directly above by considering $\Phi=\id$.

    Case $p\in(1,\infty)$:
    In this case, we estimate the expression \eqref{eq:lp_accretivity} in Remark~\ref{rem:bracket} 
    with $u=\varphi$ and $z = \Delta \varphi$ by using Green's formula, \cite[Proposition~1.5]{keller2021graphs}, as follows:  
    \begin{align*}
        \sum_{x \in X} & \Delta\varphi(x)  \varphi(x) |\varphi(x)|^{p-2} \mu(x) =
        \frac{1}{2}
        \sum_{x,y \in X} w(x,y)
        \bigl(\varphi(x) |\varphi(x)|^{p-2} - \varphi(y) |\varphi(y)|^{p-2}\bigr) \cdot(\varphi(x)-\varphi(y)) \\
        & \hspace{2in} + \sum_{x \in X}\kappa(x)|\varphi(x)|^p  \\
        &\geq  \frac{1}{2}\sum_{x,y \in X} w(x,y)
        \bigl(\varphi(x) |\varphi(x)|^{p-2} - \varphi(y) |\varphi(y)|^{p-2}\bigr) \cdot(\varphi(x)-\varphi(y)) \geq 0
    \end{align*}
    { The last inequality follows because the function
    $s\mapsto s|s|^{p-2}$ is monotone increasing for $p>1$.}

    Thus, as we assume \EM[p] which is equivalent to $\Delta(C_c(X)) \subseteq \ell^p(X,\mu)$
    by Lemma~\ref{lem:EMp}, we now obtain that $\Delta_c$ gives an accretive operator
    on $\ell^p(X,\mu)$ by the characterization in terms of the bracket 
    introduced in Remark~\ref{rem:bracket} and
    linearity of the operator.

    Case $p=\infty$:
    Here, $\Delta \varphi \in \ell^\infty(X,\mu)$ by Lemma~\ref{lem:EMp} as we assume \EM[$\infty$].
    Let $x_0 \in X$ be such that $|\varphi|$ achieves a maximum
    at $x_0$.
    Without loss of generality, we assume that $\varphi(x_0) \geq 0$. It then follows
    from the definition of the Laplacian that $\Delta \varphi(x_0)\geq0$ and thus
    $(\id + \lm \Delta)\varphi(x_0) \geq \varphi(x_0)$ for all $\lm>0$. Therefore,
    $$\| (\id + \lm \Delta_c)\varphi \|_\infty \geq \|\varphi\|_\infty $$
    which gives accretivity by definition since we are in the linear case.
\end{proof}

As a corollary, we obtain that the Laplacian on finite graphs is
accretive for all $p \in [1,\infty]$.  Recall that $\Dep$ denotes the restriction
of $\Delta$ to the maximal domain
$$\dom(\Dep)= \{ u \in \ell^p(X,\mu) \cap \dom(\Delta) \mid \Delta u \in \ell^p(X,\mu)\}.$$
\begin{corollary}\label{c:accretive_finite_Lap}
    If $G$ is a finite graph, then $\Dep$ is an accretive
    operator for all $p \in [1,\infty]$.
\end{corollary}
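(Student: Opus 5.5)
On a finite graph $C_c(X)=C(X)$, so the plan is to identify $\Dep$ with $\Delta_c$ and then apply Proposition~\ref{p:Lap_accretivity_Cc}.

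First, the domain. Since $X$ is finite, every function on $X$ has finite support, so $C(X)=C_c(X)$. The sum $\sum_y w(x,y)|u(y)|$ is finite for every $u$, hence $\dom(\Delta)=C(X)$. Every function also lies in $\ell^p(X,\mu)$ for all $p\in[1,\infty]$. Consequently $\dom(\Dep)=\ell^p(X,\mu)=C_c(X)$, and $\Dep$ coincides with $\Delta_c=\Delta_{\vert_{C_c(X)}}$, viewed as an operator on $\ell^p(X,\mu)$.

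Second, the hypothesis of Proposition~\ref{p:Lap_accretivity_Cc}. A finite graph is locally finite, so Corollary~\ref{c:emp} gives \EM[p] for every $p\in[1,\infty]$. One can also check this directly: $w(x,\cdot)/\mu(\cdot)$ is a function on a finite set, so it lies in every $\ell^p$. Proposition~\ref{p:Lap_accretivity_Cc} then says that $\Delta_c$ is accretive on $\ell^p(X,\mu)$. Since $\Dep=\Delta_c$, $\Dep$ is accretive for every $p\in[1,\infty]$.

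There is no real obstacle. The only point needing care is to check, for every $p$ including $p=\infty$, that the maximal domain really equals all of $C_c(X)$, so that no additional functions have to be handled. This is immediate from finiteness of $X$.
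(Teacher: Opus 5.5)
Your proof is correct and is essentially the paper's argument: the paper states this as an immediate corollary of Proposition~\ref{p:Lap_accretivity_Cc}, since on a finite graph $\dom(\Dep)=C(X)=C_c(X)$ and \EM[p] holds trivially. Hence $\Dep=\Delta_c$, which is accretive on $\ell^p(X,\mu)$ for every $p\in[1,\infty]$.
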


We next show that accretivity follows from local finiteness in the case of no infinite paths
as the graph is finite in this case. As this statement may be of independent interest, we include
a short proof.
\begin{proposition}
If $G$ is locally finite and does not have infinite paths, then $G$ is finite. In particular,
$\Deltaphi$ and $\Dep$ for $p \in [1,\infty]$ are accretive.
\end{proposition}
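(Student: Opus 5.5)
This is König's lemma. We argue by contraposition: assuming $G$ is infinite, locally finite and connected, we build an infinite path.

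Fix a root $o \in X$. For each $n \ge 0$ let $S_n=\{x \in X \mid d(o,x)=n\}$ be the sphere of radius $n$. By local finiteness and induction, every $S_n$ is finite: $S_0=\{o\}$, and $S_{n+1}$ is contained in the union of the neighborhoods of the finitely many vertices of $S_n$. Since $G$ is connected, $X=\bigcup_n S_n$. Since $X$ is infinite, it follows that $S_n\neq\emptyset$ for every $n$.

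Next we take geodesics. For each $n$ pick $z_n \in S_n$ and a shortest path from $o$ to $z_n$. Its vertices are distinct and lie in $S_0,S_1,\dots,S_n$ in order. We then extract a single infinite geodesic ray by the usual diagonal or compactness argument. Infinitely many of these geodesics pass through the same vertex $x_1\in S_1$, because $S_1$ is finite. Among those, infinitely many pass through the same $x_2 \in S_2$, and so on. This gives $x_0=o, x_1, x_2,\dots$ with $x_k\in S_k$ and $x_k\sim x_{k+1}$: consecutive vertices appear consecutively on a common geodesic. The vertices are distinct since $d(o,x_k)=k$. Hence $(x_k)$ is an infinite path, contradicting the hypothesis. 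So $G$ is finite.

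The ``in particular'' statement then follows directly from Corollary~\ref{c:PMTO_accretive_finite} for $\Deltaphi$ and Corollary~\ref{c:accretive_finite_Lap} for $\Dep$, $p\in[1,\infty]$.

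The only step needing care is the extraction of a single ray. The key point is that the nested choice uses only the finiteness of each $S_k$, which is exactly where local finiteness enters. Without it, a star with infinitely many leaves would be a counterexample.
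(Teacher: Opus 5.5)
Your proof is correct and follows essentially the same route as the paper: finite spheres (the paper uses balls $B_r$) from local finiteness, geodesics from a fixed root to vertices at every distance, a nested pigeonhole extraction (König's lemma) giving an infinite geodesic ray, and then Corollaries~\ref{c:PMTO_accretive_finite} and~\ref{c:accretive_finite_Lap} for the ``in particular'' part. The step ``$X$ infinite implies $S_n\neq\emptyset$ for all $n$'' tacitly uses that an empty sphere $S_n$ forces every $S_m$ with $m>n$ to be empty, since a geodesic to a vertex at distance $m$ passes through $S_n$; this is immediate, and the paper relies on the same observation.
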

\begin{proof}
It follows from  local finiteness that, having fixed $o\in X$, the balls $B_r=\{x \mid d(x,o)\leq r \}$ are finite
where $d$ denotes the combinatorial graph metric. 
We want to show that if the graph additionally only has finite paths, 
then there exists an $r$ such that $X=B_r$.
	
Suppose not. Then, for every $r$, there exists $x_r\in B_r \setminus B_{r-1}$ and, since $G$ is connected, there exists a minimizing path $\gamma_r$ joining $o$ to $x_r$ such that $\gamma_r(k)\in B_k\setminus B_{k-1}$ for all $k\leq r$.  Since $o$ has  only finitely many neighbors, there exists a subsequence $(n_1)$ such that  $\gamma_{n_1}(1)$ is a fixed vertex $y_1\in B_1\setminus B_0$. Iterating  the argument, for every $k$ we find a  subsequence $(n_k)$ of  $(n_{k-1})$ such that
	$\gamma_{n_k}(j)=y_j$ is a fixed vertex in $B_j \setminus B_{j-1}$  for every $j\leq k$.
	
To conclude, we consider the  vertices $(y_k)$. 
Since for  every $l\geq k$, $y_k= \gamma_{n_{l}}(k)$,
	it follows that these points form a path originating from $o$ which has length at least $k$. Since $k$ is
	arbitrary, this contradicts the fact that $G$ has no infinite paths.

    The ``in particular'' statements now follow from Corollaries~\ref{c:PMTO_accretive_finite}~and~\ref{c:accretive_finite_Lap}.
\end{proof}

In forthcoming sections, we will consider both the maximal Laplacian $\Dep$ 
as well as a minimal operator. It turns out that when  
these two operators agree, they are automatically m-accretive.
We now start this investigation by introducing the minimal operator and discussing some
basic properties.

As established in Proposition~\ref{p:Lap_accretivity_Cc} directly above, whenever the graph satisfies \EM[p],
$\Delta_c$ gives an accretive operator on $\ell^p(X,\mu)$. By general theory,
densely defined accretive linear operators are always closable, thus, we can define
the closure for $p \in [1,\infty)$. 
For the case of $p =\infty$, although $\Delta_c$ is not densely defined,
it is still closable as we now show.
We denote the closure of $\Delta_c$ on $\ell^p(X,\mu)$ by $\deltam^{(p)}$. That is, whenever $\Delta_c$ is closable
on $\ell^p(X,\mu)$, we let
$$\dom(\deltam^{(p)}) = \{ u \in \ell^p(X,\mu) \mid \exists \varphi_n \in C_c(X) \textup{ with }  \varphi_n  \to u \textup{ and } \Delta \varphi_n \to v  \textup{ in } \ell^p(X,\mu) \} $$
and then let $\deltam^{(p)}u =v$ for $u \in \dom(\deltam^{(p)})$ as above.

\begin{lemma}\label{l:min_accretivity}
Let $G$ satisfy \EM[p] for some $p \in [1,\infty]$. 
Then, $\Delta_c$
is accretive and closable on $\ell^p(X,\mu)$. Consequently, $\deltam^{(p)}$
is defined and accretive. If $\Delta^{(p)}$ is closed, then $\deltam^{(p)} \subseteq \Delta^{(p)}$.
In particular, this happens if $G$ additionally satisfies \EM[q] for $p^{-1}+q^{-1}=1$.
\end{lemma}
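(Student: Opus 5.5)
Accretivity of $\Delta_c$ on $\ell^p(X,\mu)$ under \EM[p] is exactly Proposition~\ref{p:Lap_accretivity_Cc}, so the work is closability, accretivity of the closure, and the inclusion into $\Delta^{(p)}$.

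\textbf{Closability.} We must show that if $\varphi_n\in C_c(X)$ with $\varphi_n\to 0$ and $\Delta\varphi_n\to v$ in $\ell^p(X,\mu)$, then $v=0$. The argument will not use density of $C_c(X)$, so it also covers $p=\infty$. Convergence in $\ell^p$ implies pointwise convergence, so $\varphi_n(y)\to 0$ for every $y$. Fix $x\in X$. We cannot pass to the limit in $\Delta\varphi_n(x)$ directly, because the sum over neighbours may be infinite. Instead we use \EM[p] through duality. By Green's formula for finitely supported functions,
\[
\langle \Delta\varphi_n, 1_x\rangle=\langle \varphi_n,\Delta 1_x\rangle .
\]
Lemma~\ref{lem:EMp}(iii) gives $\Delta 1_x\in\ell^{p}(X,\mu)$, and we need it in $\ell^q(X,\mu)$ to pair against $\varphi_n\in\ell^p$.

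The gap between $\ell^p$ and $\ell^q$ is the main obstacle. One way around it is to pair the other way:
\[
\mu(x)\,v(x)=\lim_n \mu(x)\,\Delta\varphi_n(x).
\]
The left-hand side converges because evaluation at $x$ is continuous on $\ell^p$. The difficulty is that we need $\Delta\varphi_n(x)\to 0$, and this requires control of $\sum_y w(x,y)|\varphi_n(y)|$. That control holds only when $\ell^p\subseteq\dom(\Delta)$ with a continuous bound, which is \EM[q], not assumed here.

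So for general \EM[p] I would use accretivity instead, via the standard Banach space fact that a linear accretive operator is closable. For a linear accretive operator $T$ with $\varphi_n\to0$ and $T\varphi_n\to v$, accretivity gives, for every $\psi\in\dom(T)$ and $\lambda>0$,
\[
\|\varphi_n+\lambda\psi+\lambda T\varphi_n+\lambda^2T\psi\|\ge\|\varphi_n+\lambda\psi\| .
\]
Letting $n\to\infty$ and dividing by $\lambda$ gives $\|\psi+v+\lambda T\psi\|\ge\|\psi\|$. Letting $\lambda\to0$ gives $\|\psi+v\|\ge\|\psi\|$ for all $\psi\in C_c(X)$.

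For $p<\infty$, density of $C_c(X)$ lets us take $\psi\to -v$, which forces $v=0$.

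For $p=\infty$ the density step fails, so we argue pointwise. Take $\psi=-t\,v(x)1_x$ with $t$ large. Then $\|\psi+v\|_\infty\ge\|\psi\|_\infty=t|v(x)|$. If $v(x)\neq0$, compare this with
\[
\|\psi+v\|_\infty\le\max\bigl(|1-t|\,|v(x)|,\ \|v\|_\infty\bigr),
\]
which is smaller than $t|v(x)|$ for large $t$. This is a contradiction, so $v(x)=0$. I expect this $p=\infty$ case to be the delicate point, and would double-check it.

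\textbf{Accretivity of the closure.} The accretivity inequality $\|(\id+\lambda\Delta)u\|\ge\|u\|$ passes to limits of graph-norm convergent sequences in $C_c(X)$, so $\deltam^{(p)}$ is accretive.

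\textbf{Inclusion into $\Delta^{(p)}$.} Since $\Delta_c\subseteq\Delta^{(p)}$ (by \EM[p] and Lemma~\ref{lem:EMp}), if $\Delta^{(p)}$ is closed then it contains the closure of $\Delta_c$, which is $\deltam^{(p)}$. Finally, \EM[q] makes $\Delta^{(p)}$ closed by the last part of Lemma~\ref{lem:EMp}, applied with the roles of $p$ and $q$ swapped.
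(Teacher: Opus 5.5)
Your proof is correct. For $p<\infty$ it is the paper's argument: you are reproving the standard fact, which the paper cites from Engel--Nagel, that a densely defined accretive linear operator is closable.

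For $p=\infty$ you take a genuinely different route. The paper observes that \EM[1] always holds, so $\Delta^{(\infty)}$ is closed by Lemma~\ref{lem:EMp}. Under \EM[$\infty$] this closed operator extends $\Delta_c$, so $\Delta_c$ is closable and $\deltam^{(\infty)}\subseteq\Delta^{(\infty)}$ follows at once.

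This is exactly the direct pointwise approach you abandoned. The obstruction you identified, namely needing \EM[q], disappears precisely when $p=\infty$. Then $q=1$, and
$\sum_y w(x,y)|\varphi_n(y)|\le\|\varphi_n\|_\infty\sum_y w(x,y)\to0$,
so $\Delta\varphi_n(x)\to0$ and $v=0$ directly.

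Your replacement argument is also valid. You test $\|\psi+v\|_\infty\ge\|\psi\|_\infty$ against $\psi=-t\,v(x)1_x$. If $v(x)\neq0$ and $t>\max\{1,\|v\|_\infty/|v(x)|\}$, then $\|\psi+v\|_\infty\le\max\{(t-1)|v(x)|,\|v\|_\infty\}<t|v(x)|=\|\psi\|_\infty$, a contradiction.

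What your route buys is generality. It uses only accretivity and the fact that each $1_x$ lies in the domain. So it shows that any linear accretive operator on $\ell^\infty(X)$ whose domain contains $C_c(X)$ is closable, without knowing any closed extension in advance.

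The paper's route is shorter once Lemma~\ref{lem:EMp} is available, and it gives the inclusion into $\Delta^{(\infty)}$ in the same step. However, it depends on the graph-specific fact that \EM[1] always holds.
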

\begin{proof}  

The accretivity of $\Delta_c$ on $\ell^p(X,\mu)$ under \EM[p]
was established in Proposition~\ref{p:Lap_accretivity_Cc} above. 
We now discuss closability. For $p \in [1, \infty)$, since $C_c(X) \subseteq \ell^p(X,\mu)$ is
dense, closability follows as densely defined accretive linear operators are closable, e.g.,  \cite[Proposition~II.3.14]{EngelNagel2000}. For $p=\infty$,
$C_c(X)$ is not dense in $\ell^\infty(X)$, however, as \EM[1] holds for all graphs, it follows that $\Delta^{(\infty)}$ is closed by Lemma~\ref{lem:EMp}.
Therefore, as we assume \EM[$\infty$], $\Delta_c$ acting on $\ell^\infty(X)$ has 
$\Delta^{(\infty)}$ as a closed extension and is thus closable. Hence, $\deltam^{(p)}$ is defined
as soon as \EM[p] holds for any $p \in [1,\infty]$ and, by taking limits, $\deltam^{(p)}$ inherits accretivity from $\Delta_c$.
The remaining statements then follow by definitions and Lemma~\ref{lem:EMp}.
\end{proof}

\section{\texorpdfstring{Accretivity and m-accretivity of porous medium-type operators on $\ell^1$}{Accretivity and m-accretivity of porous medium-type operators on ell1}}\label{sec:main}
In this section we provide sufficient conditions under which porous medium-type operators 
are m-accretive on $\ell^1$.
In the case of finite graphs, we have just discussed 
how the Laplacian $\Delta^{(p)}$ is accretive for every $p \in [1, \infty]$, see Corollary~\ref{c:accretive_finite_Lap} directly above. 
However, for general porous medium-type operators,
accretivity may not hold even for finite graphs.
For instance, accretivity already fails 
for the porous medium operator $\mathcal{L}^{(2)}$  with $\phi(s) = s|s|^3$
in the case of a graph with only four vertices,
see \cite[Example following Proposition 2.5]{bianchi2022generalized}.
On the other hand, $\Deltaphi$ is always accretive on finite graphs
by Corollary~\ref{c:PMTO_accretive_finite} above. 
Therefore, we will focus on the case of $\Deltaphi$ and infinite graphs.

We will first show that $\Deltaphi$ is accretive under conditions $\B$ and $\C$. 
We then construct a dense subset $\Omega \subseteq \ell^1(X,\mu)$ 
such that $\Deltaphiom$ is m-accretive and
give conditions for $\Deltaphi = \Deltaphiom$ 
which yields m-accretivity of $\Deltaphi$. 
A key insight is that accretivity of $\Deltaphi$ already suffices for m-accretivity,
in fact,  it suffices to show that the shifted operator
$\mathcal{S}_\lambda = \id + \lambda\Deltaphi$ is injective for $\lambda > 0$.

Finally, we will  introduce
the minimal operator $\deltaphim$ which is the closure of the restriction
of $\Deltaphi$ to the finitely supported functions, 
and give conditions
under which $\Deltaphi$ and $\deltaphim$ coincide.
We point out that $\deltaphim$ is always accretive when it is defined, thus,
when these operators agree, they are automatically m-accretive.
Along the way, we also investigate these questions for the Laplacian on $\ell^1$ where
we show that the minimal and maximal operators agree under some weaker assumptions.

\subsection{\texorpdfstring{Some preliminary results on the accretivity of $\Deltaphi$}{Some preliminary results on the accretivity of Delta Phi}}

We begin by showing that
boundedness of the edge degree and containment imply that $\Deltaphi$ is accretive on its domain
$$\dom(\Deltaphi)=\{u \in \ell^1(X,\mu) \cap \dom (\Delta \Phi) \mid \Delta \Phi u \in \ell^1(X,\mu)\}.$$
Indeed, boundedness of the edge degree
implies the boundedness of the Laplacian without the killing term 
which can be used to show the accretivity
of $\Deltaphi$.
\begin{theorem}\label{t:accretivity_bd}
Let $G$ satisfy $\B$ and $\Phi$ satisfy $\C$. Then, $\Deltaphi$ is
accretive.
\end{theorem}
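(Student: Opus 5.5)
We would prove the claim through the $\ell^1$-bracket characterization \eqref{eq:lp_accretivity} in Remark~\ref{rem:bracket}, following the proof of Proposition~\ref{p:PMTO_accretivity_Cc}. The finite support used there is replaced by summability coming from $\B$ and $\C$.

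Fix $f,g\in\dom(\Deltaphi)$ and set
\[
u:=f-g,\qquad v:=\Phi f-\Phi g,\qquad z:=\Deltaphi f-\Deltaphi g=\Delta v .
\]
By $\C$, $\Phi f,\Phi g\in\ell^1(X,\mu)$, so $v\in\ell^1(X,\mu)$. We take $\xi$ exactly as in the proof of Proposition~\ref{p:PMTO_accretivity_Cc}: $\xi=\sgn(u)$ where $u\neq0$ and $\xi=\sgn(z)$ where $u=0$. Monotonicity of $\phi$ again gives $|\xi|\le1$, $v\xi=|v|$ and $(v(x)-v(y))(\xi(x)-\xi(y))\ge0$. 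The $\ell^1$ bracket equals $\sum_x z(x)\xi(x)\mu(x)$, which converges absolutely since $z\in\ell^1$. It therefore suffices to show that this sum is nonnegative.

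To do this, we split $\Delta=\Delta_0+\kappa/\mu$, where $\Delta_0$ is the Laplacian without killing term. Under $\B$, with $D:=\sup_x \mu(x)^{-1}\sum_y w(x,y)$, symmetry of $w$ gives
\[
\sum_{x,y}w(x,y)|v(x)-v(y)|\le 2\sum_x|v(x)|\sum_y w(x,y)\le 2D\|v\|_1<\infty .
\]
Hence $\Delta_0 v\in\ell^1(X,\mu)$, and the double sum $\sum_{x,y}w(x,y)(v(x)-v(y))\xi(x)$ converges absolutely. Since $z,\Delta_0 v\in\ell^1$, we also get $\kappa v/\mu=z-\Delta_0 v\in\ell^1$, that is, $\sum_x\kappa(x)|v(x)|<\infty$. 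Absolute convergence lets us interchange the order of summation and symmetrize in $x\leftrightarrow y$. This yields the Green formula
\[
\sum_x z(x)\xi(x)\mu(x)=\frac12\sum_{x,y}w(x,y)(v(x)-v(y))(\xi(x)-\xi(y))+\sum_x\kappa(x)|v(x)|\ \ge 0 .
\]
By \eqref{eq:lp_accretivity} this gives $[u,z]\ge0$, and hence $\Deltaphi$ is accretive.

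The main obstacle is justifying Green's formula for the non-finitely supported $v$ and bounded $\xi$. The cited \cite[Proposition~1.5]{keller2021graphs} is stated for finitely supported functions, so we will verify absolute convergence directly as above. This is exactly where both hypotheses enter: $\C$ puts $v$ in $\ell^1$, and $\B$ makes the edge sum absolutely convergent. The killing term needs no extra hypothesis, because $\Delta v\in\ell^1$ and $\Delta_0 v\in\ell^1$ together force $\kappa v/\mu\in\ell^1$.
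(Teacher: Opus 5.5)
Your proposal is correct and follows essentially the same route as the paper's proof. The paper uses the same $\xi$, splits off the killing term via $\Delta'$, obtains absolute convergence from $\B$ and $\C$ with the same $2C\|v\|_1$-type bounds, gets $(\kappa/\mu)v=z-\Delta' v\in\ell^1(X,\mu)$ in the same way, and then symmetrizes to conclude $[u,z]\geq 0$.
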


\begin{proof}
{
Let $\Delta'$ denote the formal Laplacian without the killing term.
Under $\B$, $\Delta'$ is bounded on $\ell^1(X,\mu)$
by~\cite[Theorem~2.15]{keller2021graphs}. Let
$\varphi_1,\varphi_2\in\dom(\Deltaphi)$ and set
\[
 u:=\varphi_1-\varphi_2,\qquad
 v:=\Phi\varphi_1-\Phi\varphi_2,\qquad \text{and} \qquad
 z:=\Delta v.
\]
By $\C$, $v\in\ell^1(X,\mu)$, while $z\in\ell^1(X,\mu)$ by the
definition of the maximal domain. Define
\[
 \xi(x):=
 \begin{cases}
  \sgn(u(x))& \text{if }u(x)\neq0\\
  \sgn(z(x))& \text{if }u(x)=0.
 \end{cases}
\]
Monotonicity of $\phi$ gives
$u(x)v(x)\geq0$ for all $x \in X$ and $u(x)=0$ implies $v(x)=0$. Hence,
$|\xi(x)|\leq1$ and $v(x)\xi(x)=|v(x)|$.
{ Choose $C>0$ such that
$\sum_yw(x,y)\leq C\mu(x)$ for every $x\in X$. Then
\[
\begin{aligned}
&\sum_{x,y\in X}w(x,y)|v(x)-v(y)|\,|\xi(x)|
 \leq2C\|v\|_1<\infty  \qquad \text{ and }\\
&\sum_{x,y\in X}w(x,y)
 |(v(x)-v(y))(\xi(x)-\xi(y))| \leq4\sum_{x\in X}|v(x)|\sum_{y\in X}w(x,y)
\leq4C\|v\|_1<\infty.
\end{aligned}
\]
Thus, all rearrangements below are justified by absolute convergence, and
symmetry of $w$ gives}
\[
\begin{aligned}
\sum_{x\in X}\Delta'v(x)\xi(x)\mu(x)
&=\frac12\sum_{x,y\in X}w(x,y)(v(x)-v(y))(\xi(x)-\xi(y))\\
&=\frac12\sum_{x,y\in X}w(x,y)
\bigl(|v(x)|-v(x)\xi(y)+|v(y)|-v(y)\xi(x)\bigr)\geq0.
\end{aligned}
\]
Moreover, $(\kappa/\mu)v=z-\Delta'v\in\ell^1(X,\mu)$. Therefore, the
$\ell^1$-bracket formula~\eqref{eq:lp_accretivity} gives
\[
[u,z]= \sum_{x\in X}z(x)\xi(x)\mu(x) = \sum_{x\in X}\Delta'v(x)\xi(x)\mu(x)
+\sum_{x\in X}\kappa(x)|v(x)|\geq0,
\]
which proves the claim.
}
\end{proof}

\subsection{\texorpdfstring{The operators $\Deltaphiom$ and $\Deltaphi$}{The operators Delta Phi on Omega and Delta Phi}}\label{SSect:Omega}
We now introduce the set $\Omega$ and establish some basic properties
of $\Deltaphiom$, the restriction of $\Deltaphi$ to $\Omega$.
For full details, see~\cite{bianchi2022generalized}.
For $G$ infinite, we let $(X_n)$ be an exhaustion of $X$.

For our presentation, it is convenient to introduce some auxiliary operators. 
We first let $\Deltadir[n] \colon C(X_n) \to C(X_n)$ be the \emph{Dirichlet Laplacian} on $X_n$,
which acts as
\begin{equation}\label{def:dir_laplacian}
\Deltadir[n] u(x) = \frac{1}{\mu(x)} \sum_{y \in X_n} w(x,y) (u(x)-u(y)) +
\frac{1}{\mu(x)} \left( \sum_{y \not \in X_n} w(x,y) +\kappa(x) \right) u(x) 
\end{equation}
for $u \in C(X_n)$ and $x \in X_n$. Note that, if $X_n = X$ is finite, then $\Deltadir[n] = \Delta$.

{
Equivalently, define
\[
 G_n^{D}:=
 \bigl(X_n,w|_{X_n\times X_n},\kappa_n,\mu|_{X_n}\bigr)
 \qquad \text{with }
 \kappa_n(x):=\kappa(x)+\sum_{y\notin X_n}w(x,y).
\]
We call $G_n^{D}$ the \emph{finite Dirichlet graph associated with
$X_n$}. Its formal Laplacian is precisely $\Deltadir[n]$.
}

Next, we denote by $\boldsymbol{\mathfrak{i}}_{n}$ 
the canonical embedding and by $\boldsymbol{\pi}_n$ the canonical projection for each $X_n$, namely,
\begin{align*}
&\boldsymbol{\mathfrak{i}}_{n} \colon C(X_n)\to C(X) \quad &\boldsymbol{\mathfrak{i}}_{n}u(x) = \begin{cases}
u(x) & \mbox{if } x \in X_n\\
0   & \mbox{if } x \in X\setminus X_n
\end{cases}\\
&\boldsymbol{\pi}_n \colon C(X) \to C(X_n) \quad &\boldsymbol{\pi}_n u(x) = u(x) \mbox{ for every } x \in X_n.
\end{align*}
A direct computation yields $\Deltadir[n] u(x) = \Delta(\boldsymbol{\mathfrak{i}}_n u)(x)$ for all $u \in C(X_n)$ and $x \in X_n$.

Finally, we define the operator $\Lmin \colon C_c(X) \to C_c(X)$, with domain
$\dom(\Lmin) = C_c(X)$, by
\begin{equation}\label{eq:Deltaphi_n}
    \Lmin \varphi = \boldsymbol{\mathfrak{i}}_{n}\Deltadir[n]\Phi\boldsymbol{\pi}_{n} \varphi.
\end{equation}

We are now ready to define $\Omega$.

\begin{definition}[$\Omega$ and the restriction $\Deltaphiom$]\label{def:Omega}
Let $\Omega \subseteq \dom(\Deltaphi)$ be defined by
$$
\Omega = \bigl\{ u \in \dom(\Deltaphi) \mid \exists \varphi_n \in C_c(X) \text{ with } \operatorname{supp}\varphi_n \subseteq X_n,\; \varphi_n  \to u \textup{ and } 
\Lmin  \varphi_n \to \Deltaphi u \text{ in } \ell^1(X,\mu) \bigr\}.
$$  
We let $\Deltaphiom$ denote the restriction of $\Deltaphi$ to $\Omega$. 
\end{definition}

\begin{remark}[$\Lmin  = 1_{X_n} \Deltaphi $]\label{rem:Cutoff_Laplacian}
In the $\ell^1$ setting we have 
$C_c(X) \subseteq \dom(\Deltaphi)$ and a direct computation shows that
$$
\Lmin \varphi = 1_{X_n} \Deltaphi \varphi
$$
for every $\varphi \in C_c(X)$ with $\operatorname{supp}\varphi \subseteq X_n$ where $1_{X_n}$ denotes the characteristic function of $X_n$. Consequently, $\Omega$ admits an equivalent definition obtained by replacing $\Lmin\varphi_n$ with $1_{X_n}\Deltaphi \varphi_n$ in Definition~\ref{def:Omega}.
\end{remark}

\begin{remark}[On the dependence of $\Omega$ on the exhaustion]
We note that the  definition of the set $\Omega$ may depend on the exhaustion $(X_n)$ and we write
$\Omega_{(X_n)}$ when this dependence needs to be made explicit. We will see that, from
any starting exhaustion $(X_n)$, one can pass to a subsequence $(Y_k)=(X_{n_k})$ such that
$\Deltaphi_{\vert \Omega_{(Y_k)}}$ is m-accretive. Moreover, whenever the shifted
operator $\mathcal{S}_\lambda = \id + \lambda \Deltaphi$ is injective, it turns out that
$\Omega_{(X_n)}=\dom(\Deltaphi)$ for \emph{any} exhaustion. Therefore, in this case, $\Omega$ is
independent of the exhaustion.
\end{remark}

We now state some basic properties of the set $\Omega$ which hold for every exhaustion.
First, we note that $\Omega$ always contains the finitely supported functions
and is thus dense in the domain of $\Deltaphi$.
\begin{lemma}[{\cite[Lemma~B.7]{bianchi2022generalized}}]\label{l:omega_denseness}
    For every graph $G$, $C_c(X) \subseteq \Omega \subseteq \dom(\Deltaphi)$. In particular, 
    $$\overline{\Omega}^{\ell^1} = \overline{\dom(\Deltaphi)}^{\ell^1} = \ell^1(X,\mu).$$
\end{lemma}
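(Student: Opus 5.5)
To prove Lemma~\ref{l:omega_denseness}, I will first show $C_c(X) \subseteq \Omega$ and then read off the density statement. The inclusion $\Omega \subseteq \dom(\Deltaphi)$ holds by Definition~\ref{def:Omega}, and $C_c(X) \subseteq \dom(\Deltaphi)$ was noted in Section~\ref{SSect: The formal Laplacian}.

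Fix $\varphi \in C_c(X)$. Since $(X_n)$ is an exhaustion and $\operatorname{supp}\varphi$ is finite, there is $N$ with $\operatorname{supp}\varphi \subseteq X_n$ for all $n \geq N$. I will use the approximating sequence $\varphi_n = \varphi$ for $n \geq N$, and $\varphi_n = 0$ for $n < N$; this choice satisfies $\operatorname{supp}\varphi_n \subseteq X_n$ for every $n$. Trivially $\varphi_n \to \varphi$ in $\ell^1(X,\mu)$.

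It remains to check $\Lmin\varphi_n \to \Deltaphi\varphi$ in $\ell^1(X,\mu)$. By Remark~\ref{rem:Cutoff_Laplacian}, $\Lmin\varphi = 1_{X_n}\Deltaphi\varphi$ for $n \geq N$, so
\[
\|\Lmin\varphi_n - \Deltaphi\varphi\|_1 = \sum_{x \notin X_n} |\Delta\Phi\varphi(x)|\,\mu(x).
\]
Since $\Delta\Phi\varphi \in \ell^1(X,\mu)$ (because $\Phi\varphi \in C_c(X)$ and $\Delta(C_c(X)) \subseteq \ell^1(X,\mu)$) and $X_n \uparrow X$, this tail tends to $0$ by dominated convergence. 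The only point needing care is this identification via Remark~\ref{rem:Cutoff_Laplacian}. To verify it directly: for $x \in X_n$ one has $\Deltadir[n]\Phi\boldsymbol{\pi}_n\varphi(x) = \Delta(\boldsymbol{\mathfrak{i}}_n\boldsymbol{\pi}_n\Phi\varphi)(x) = \Delta\Phi\varphi(x)$, using $\boldsymbol{\mathfrak{i}}_n\boldsymbol{\pi}_n\Phi\varphi = \Phi\varphi$ since $\phi(0)=0$ and $\operatorname{supp}\varphi \subseteq X_n$. Hence $\varphi \in \Omega$.

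For the ``in particular'' statement, the chain $C_c(X) \subseteq \Omega \subseteq \dom(\Deltaphi) \subseteq \ell^1(X,\mu)$ combined with the density of $C_c(X)$ in $\ell^1(X,\mu)$ gives that all three closures equal $\ell^1(X,\mu)$. I expect no real obstacle: the argument is definitional, and the tail estimate is the only analytic step.
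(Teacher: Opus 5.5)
Your proposal is correct and follows essentially the same route as the paper's proof: the same sequence $\varphi_n$ (zero for $n<N$, equal to $\varphi$ afterwards), the identification $\Lmin\varphi=1_{X_n}\Deltaphi\varphi$ from Remark~\ref{rem:Cutoff_Laplacian}, and dominated convergence on the tail $\|1_{X\setminus X_n}\Deltaphi\varphi\|_1$. The density statement then follows, as you say, from the density of $C_c(X)$ in $\ell^1(X,\mu)$.
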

\begin{proof}
    Let $\varphi \in C_c(X)$. Then, $\Phi \varphi \in C_c(X)$ and thus 
    $\Delta \Phi\varphi \in \ell^1(X,\mu)$ since \EM[1] always holds as noted in
    Lemma~\ref{lem:EMp}. Thus, $\varphi \in \dom(\Deltaphi) \subseteq \ell^1(X,\mu)$.
    
    Now, as $(X_n)$ is an exhaustion, there exists $N$ such that
    $\operatorname{supp}\varphi \subseteq X_n$ for all $n \geq N$. We let $\varphi_n \in C_c(X)$
    be defined by 
    $$ \varphi_n = \begin{cases}
        0 & \textup{if } n < N \\
        \varphi & \textup{otherwise} 
    \end{cases}
    $$
    so that $\operatorname{supp}\varphi_n \subseteq X_n$ for all $n$ and $\varphi_n \to \varphi$
    in $\ell^1(X,\mu)$ as $n \to \infty$. 
    Furthermore, by Remark~\ref{rem:Cutoff_Laplacian}, we get for $n \geq N$
    $$\|\Deltaphi \varphi -\Deltaphi_n \varphi_n\|_1 = \|\Deltaphi \varphi -\Deltaphi_n \varphi\|_1 =
    \| \Deltaphi \varphi - 1_{X_n} \Deltaphi \varphi \|_1 = \|1_{X \setminus X_n} \Deltaphi \varphi\|_1.
    $$
    This implies $\Deltaphi_n \varphi_n \to \Deltaphi \varphi$ in $\ell^1(X,\mu)$
    as $n \to \infty$
    by Lebesgue's dominated convergence theorem as $(X_n)$ is an exhaustion and 
    $\Deltaphi \varphi \in \ell^1(X,\mu)$. Therefore, $\varphi \in \Omega$
    which completes the proof.
\end{proof}

Next, we note that, as $\Lmin$ is essentially given by the cutting off the Dirichlet Laplacian
on a finite graph, it follows that $\Lmin$ is accretive.

\begin{proposition}[{\cite[Corollary~B.4]{bianchi2022generalized}}]\label{prop:accretivity_lmin}
The operator $\Lmin$ is accretive in $\ell^1(X,\mu)$ for every $n$.
\end{proposition}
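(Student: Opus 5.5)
The plan is to reduce the accretivity of $\Lmin$ to the accretivity of the porous medium-type operator on the finite Dirichlet graph $G_n^D$, which is Corollary~\ref{c:PMTO_accretive_finite} (equivalently, Proposition~\ref{p:PMTO_accretivity_Cc} applied to the finite graph $G_n^D$). Fix $n$, $\lm>0$ and $\varphi_1,\varphi_2\in C_c(X)$. We must show
\[
\|(\varphi_1-\varphi_2)+\lm(\Lmin\varphi_1-\Lmin\varphi_2)\|_1\geq \|\varphi_1-\varphi_2\|_1 .
\]
Since $\Lmin$ depends only on $\boldsymbol{\pi}_n\varphi$ and its output is supported in $X_n$, we split the $\ell^1$ norm into the part on $X_n$ and the part on $X\setminus X_n$.

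On $X\setminus X_n$ we have $\Lmin\varphi_k=0$. Hence the contribution there to the left-hand side equals $\sum_{x\notin X_n}|\varphi_1(x)-\varphi_2(x)|\mu(x)$, which is exactly the contribution of $X\setminus X_n$ to the right-hand side.

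On $X_n$, put $u_k=\boldsymbol{\pi}_n\varphi_k\in C(X_n)$. Then $\boldsymbol{\pi}_n\Lmin\varphi_k=\Deltadir[n]\Phi u_k$, and $\Deltadir[n]$ is the formal Laplacian of the finite graph $G_n^D$, with $\mu$ restricted to $X_n$ and modified killing term $\kappa_n\geq0$. The operator $\Deltadir[n]\Phi$ on $\ell^1(X_n,\mu|_{X_n})$ is therefore the maximal porous medium-type operator of $G_n^D$. Note that $G_n^D$ is connected, since exhaustion sets are connected, although connectedness is not even needed in the bracket computation. Corollary~\ref{c:PMTO_accretive_finite} now gives
\[
\|(u_1-u_2)+\lm(\Deltadir[n]\Phi u_1-\Deltadir[n]\Phi u_2)\|_{\ell^1(X_n)}\geq\|u_1-u_2\|_{\ell^1(X_n)} .
\]
Adding the two pieces yields the claim.

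The only step needing care is verifying that $\Deltadir[n]$ is indeed the Laplacian of a legitimate graph in the paper's sense: $w|_{X_n\times X_n}$ is symmetric, loopless and locally summable, and $\kappa_n(x)=\kappa(x)+\sum_{y\notin X_n}w(x,y)$ is finite and nonnegative by local summability. This check is routine.

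Alternatively, one could redo the bracket computation of Proposition~\ref{p:PMTO_accretivity_Cc} directly for $z=\Lmin\varphi_1-\Lmin\varphi_2$. In that computation the points outside $X_n$ with $u(x)=0$ contribute $|z(x)|=0$, and the points outside $X_n$ with $u(x)\neq0$ contribute $0$ as well. On $X_n$, Green's formula on the finite graph produces the nonnegative symmetric sum plus $\sum_{x\in X_n}\kappa_n(x)|v(x)|\geq0$.
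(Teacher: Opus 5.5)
Your proposal is correct and follows essentially the same route as the paper. The paper also splits the $\ell^1$ norm into the parts on $X_n$ and on $X\setminus X_n$, notes that $\Lmin$ acts trivially outside $X_n$, and applies Proposition~\ref{p:PMTO_accretivity_Cc} to the finite Dirichlet graph $G_n^{D}$ on $X_n$.
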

\begin{proof}
{
For fixed $n$, consider the finite Dirichlet graph $G_n^{D}$ defined
above. By
Proposition~\ref{p:PMTO_accretivity_Cc}, the operator
$\Deltadir[n]\Phi$ is accretive on $\ell^1(X_n,\mu)$.

For $\varphi,\psi\in C_c(X)$ and $\lambda>0$, the parts inside and outside of $X_n$ have disjoint supports and, therefore,
\[
 \begin{split}
 \|(\id+\lambda\Lmin)\varphi-(\id+\lambda\Lmin)\psi\|_1 &=\|(\id_n+\lambda\Deltadir[n]\Phi)\boldsymbol{\pi}_n\varphi
 -(\id_n+\lambda\Deltadir[n]\Phi)\boldsymbol{\pi}_n\psi\|_{1,n}
 +\|1_{X\setminus X_n}(\varphi-\psi)\|_1\\
 &\geq\|\boldsymbol{\pi}_n(\varphi-\psi)\|_{1,n}
 +\|1_{X\setminus X_n}(\varphi-\psi)\|_1
 =\|\varphi-\psi\|_1
 \end{split}
\]
where $\id_n$ is the restriction of $\id$ to $X_n$
and $\|\cdot\|_{1,n}$ is the $\ell^1$ norm on $X_n$.
Thus, $\Lmin$ is accretive.
}
\end{proof}

As a consequence of the previous proposition it follows that $\Deltaphiom$ is always accretive.

 \begin{proposition}[{\cite[Lemma~B.8]{bianchi2022generalized}}]\label{p:accretivityDeltaphiom}
   $\Deltaphiom$ is accretive.
 \end{proposition}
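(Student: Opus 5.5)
The idea is to transfer the accretivity of the approximating operators $\Lmin$ (Proposition~\ref{prop:accretivity_lmin}) to the limit operator $\Deltaphiom$ by passing to the limit in the defining norm inequality. Fix $u,v\in\Omega$ and $\lambda>0$. By Definition~\ref{def:Omega}, there are sequences $\varphi_n,\psi_n\in C_c(X)$ with $\operatorname{supp}\varphi_n,\operatorname{supp}\psi_n\subseteq X_n$ such that
\[
\varphi_n\to u,\quad \psi_n\to v,\quad \Lmin\varphi_n\to\Deltaphi u,\quad \Lmin\psi_n\to\Deltaphi v \qquad\text{in } \ell^1(X,\mu).
\]
The important point is that both sequences use the same index $n$, and hence the same operator $\Lmin$. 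This is what lets us compare them.

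Applying the accretivity of $\Lmin$ from Proposition~\ref{prop:accretivity_lmin} to $\varphi_n,\psi_n\in C_c(X)=\dom(\Lmin)$ gives, for each $n$,
\[
\|(\varphi_n-\psi_n)+\lambda(\Lmin\varphi_n-\Lmin\psi_n)\|_1\geq\|\varphi_n-\psi_n\|_1 .
\]
We then let $n\to\infty$. The left-hand side converges to $\|(u-v)+\lambda(\Deltaphi u-\Deltaphi v)\|_1$, and the right-hand side converges to $\|u-v\|_1$, since the norm is continuous under $\ell^1$ convergence. Because $\Deltaphiom$ acts as $\Deltaphi$ on $\Omega$, this is exactly the accretivity inequality for $\Deltaphiom$, for every $\lambda>0$.

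There is no real obstacle in this argument. The only point that needs checking is the one already noted: $\Lmin$ is a single operator on $C_c(X)$ for each fixed $n$, so a synchronized index is available, whereas arbitrary approximating sequences indexed differently could not be compared.
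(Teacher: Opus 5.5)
Your proof is correct and follows the paper's argument: take the synchronized approximating sequences from Definition~\ref{def:Omega}, apply the accretivity of $\Lmin$ from Proposition~\ref{prop:accretivity_lmin}, and pass to the limit using continuity of the $\ell^1$ norm. The paper handles finite graphs separately via Corollary~\ref{c:PMTO_accretive_finite}, but your argument also covers that case if you take the trivial exhaustion $X_n=X$.
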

\begin{proof}
 For convenience, we give a brief sketch of the proof. When $G$ is finite the statement reduces to Corollary~\ref{c:PMTO_accretive_finite}. 
 Thus, assume $G$ is infinite and let $u,v\in\Omega$. By the definition of $\Omega$, there exist sequences $(u_n),(v_n)$ in $\dom(\Lmin)=C_c(X)$ with $u_n\to u$, $v_n\to v$ and $\Lmin u_n\to\Deltaphi u$, $\Lmin v_n\to\Deltaphi v$ in $\ell^1(X,\mu)$. Since $\Lmin$ is accretive by Proposition~\ref{prop:accretivity_lmin}, for every $\lambda > 0$ we have
$$ \|(\id+ \lambda\Lmin )u_n -(\id+ \lambda\Lmin ) v_n\|_1 \geq \|u_n - v_n\|_1.$$
Taking $n\to\infty$ yields $\|(\id+ \lambda\Deltaphi )u -(\id+ \lambda\Deltaphi) v\|_1 \geq \|u-v\|_1$, i.e.,\ $\Deltaphiom$ is accretive.
\end{proof}

We note that one of the motivating reasons for the definition
of $\Omega$ is to have a dense set in $\ell^1(X,\mu)$ 
such that the restriction of $\Deltaphi$ to this set is accretive. 
It is not clear that restricting $\Deltaphi$
to $C_c(X)$ instead of using $\Deltaphi_n$ on $C_c(X)$ works for this purpose, see Remark~10 in~\cite{bianchi2022generalized}. In some sense, $\Deltaphi_n$ is the natural operator to enforce Dirichlet boundary conditions in an exhaustion argument.

\subsubsection{\texorpdfstring{On the m-accretivity of $\Deltaphiom$}{On the m-accretivity of Delta Phi on Omega}}\label{sec:m-accretivity_Lomega}
While $\Deltaphiom$ is always accretive, 
in~\cite[Theorem 1]{bianchi2022generalized} the m-accretivity of $\Deltaphiom$
is asserted under the conditions of local finiteness or uniform measure or 
bounded edge degree and containment.  In this subsection we  refine the original proof and  strengthen the conclusions of \cite[Theorem 1]{bianchi2022generalized}.  
Indeed, we will show 
that, by passing to a subsequence of the exhaustion sequence, 
there always exists a dense subset $\Omega$ such that
$\Deltaphiom$ is m-accretive. For extra details on the original 
proof in \cite{bianchi2022generalized} and the key refinements, see Appendix~\ref{sec:proof_Theo1}.

Given $\lambda>0$ and $n \in \N$, for notational convenience we let 
$$
\mathcal{S}_\lambda = \id + \lambda \Deltaphi \qquad \mbox{and} \qquad \mathcal{S}_{\lambda,n} = \id + \lambda \Lmin.
$$

We need a few  preliminary lemmas. The first lemma states that
we can solve the problem of interest on finite subgraphs.
\begin{lemma}\label{lem:finite-resolvent}
Let $X_n\subseteq X$ be finite, nonempty and connected. Let $\lambda>0$ and
$g\in C_c(X)$ satisfy $\operatorname{supp}g\subseteq X_n$.
{ Define} $T_n\colon C(X_n)\to C(X_n)$ by
\[
T_n =\operatorname{id}_n+\lambda\Deltadir[n]\Phi.
\]
Then, there exists a unique $\vartheta_n\in C(X_n)$ such that
\[
T_n\vartheta_n=\boldsymbol{\pi}_n g.
\]
Moreover, setting $\varphi_n:=\boldsymbol{\mathfrak i}_n\vartheta_n$, the function $\varphi_n$ is the unique  solution in $C_c(X)$ of
\[
\mathcal{S}_{\lambda,n}\varphi=g
\]
and satisfies
\begin{equation}\label{eq:properties_finite-resolvent-solution}
\operatorname{supp}\varphi_n\subseteq X_n
\qquad \textup{ and } \qquad
\Lmin\varphi_n(x)=\Delta\Phi\varphi_n(x)
\end{equation}
for every $x\in X_n.$

Finally, let $g,h\in C_c(X)$ with $\operatorname{supp}g,\operatorname{supp}h\subseteq X_n$ and let
$\vartheta_n^g,\vartheta_n^h\in C(X_n)$ denote the unique solutions of
\[
T_n\vartheta_n^g=\boldsymbol{\pi}_n g\qquad \text{and} \qquad
T_n\vartheta_n^h=\boldsymbol{\pi}_n h.
\]
Set $\varphi_n^g:=\boldsymbol{\mathfrak i}_n\vartheta_n^g$ and $\varphi_n^h:=\boldsymbol{\mathfrak i}_n\vartheta_n^h$. Then, we get:
\begin{enumerate}[label=\textup{(\alph*)},leftmargin=2.4em]
\item If $g\le h$, then $\varphi_n^g\le \varphi_n^h$.

\item $\|\varphi_n^g-\varphi_n^h\|_1 \leq \|g-h\|_1$.
\end{enumerate}
\end{lemma}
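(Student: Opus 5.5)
Existence will come from a fixed-point or degree argument on the finite-dimensional space $C(X_n)$, and uniqueness, order preservation and the contraction estimate from the comparison principle, Theorem~\ref{thm:min}, applied to the finite Dirichlet graph $G_n^D$.

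\textbf{Uniqueness and monotonicity.} The graph $G_n^D$ is finite, so every function on $X_n$ attains its minimum. Hence Case~1 of Theorem~\ref{thm:min_principle}, and therefore Theorem~\ref{thm:min} with $\sigma\equiv 1$, applies to $\id_n+\lambda\Deltadir[n]\Phi$. This immediately gives (a): if $T_n\vartheta^g=\boldsymbol{\pi}_n g\le \boldsymbol{\pi}_n h=T_n\vartheta^h$, then $\vartheta^g\le\vartheta^h$. Applying (a) in both directions gives uniqueness of $\vartheta_n$.

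\textbf{Existence.} I would rewrite $T_n\vartheta=\boldsymbol{\pi}_n g$ as a fixed-point problem on the compact convex set $K=\{\vartheta\in C(X_n)\mid -M\le\vartheta\le M\}$ with $M=\|g\|_\infty$. A maximum-principle argument should show that any solution lies in $K$: at a maximum point $x_0$ with $\vartheta(x_0)>0$ we have $\Deltadir[n]\Phi\vartheta(x_0)\ge0$ because $\phi$ is monotone and $\phi(0)=0$, so $\vartheta(x_0)\le g(x_0)$. For existence I would use a homotopy (Leray--Schauder) argument with $T_n^{t}=\id_n+t\lambda\Deltadir[n]\Phi$, $t\in[0,1]$. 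These maps are continuous, the a priori bound $\|\vartheta\|_\infty\le\|g\|_\infty$ holds uniformly in $t$ by the same argument, and $T_n^0=\id_n$ has degree one. Hence a solution exists for $t=1$.

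An alternative that avoids degree theory is the Browder--Minty theorem for continuous monotone maps on the weighted space $\ell^2(X_n,\mu)$. But $\Deltadir[n]\Phi$ is not monotone in the $\ell^2$ sense for general $\phi$; it is only accretive in $\ell^1$ by Proposition~\ref{p:PMTO_accretivity_Cc}. For that reason I prefer the degree argument, or equivalently the Crandall--Liggett/Barbu result that a continuous accretive operator defined on the whole of a finite-dimensional space is m-accretive. Since $\Deltadir[n]\Phi$ is continuous and accretive on all of $\ell^1(X_n,\mu)$, citing that result is the cleanest route. The main obstacle is here, namely securing surjectivity without monotonicity in a Hilbert sense.

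\textbf{Remaining statements.} Set $\varphi_n=\boldsymbol{\mathfrak i}_n\vartheta_n$. Then $\operatorname{supp}\varphi_n\subseteq X_n$. Since $\boldsymbol{\pi}_n\boldsymbol{\mathfrak i}_n=\id_n$ and $\phi(0)=0$, we get $\Phi\varphi_n=\boldsymbol{\mathfrak i}_n\Phi\vartheta_n$. Consequently $\Lmin\varphi_n=\boldsymbol{\mathfrak i}_n\Deltadir[n]\Phi\vartheta_n$, and since $\Deltadir[n]u(x)=\Delta(\boldsymbol{\mathfrak i}_n u)(x)$ on $X_n$, this gives \eqref{eq:properties_finite-resolvent-solution}.

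To solve $\mathcal S_{\lambda,n}\varphi=g$, split along $X_n$ and $X\setminus X_n$. Outside $X_n$ the equation reads $\varphi=g=0$, and on $X_n$ it is $T_n\boldsymbol{\pi}_n\varphi=\boldsymbol{\pi}_n g$. Hence $\varphi_n$ is the unique solution in $C_c(X)$. Finally, (b) follows from the accretivity of $\Lmin$ (Proposition~\ref{prop:accretivity_lmin}):
\[
\|\varphi_n^g-\varphi_n^h\|_1\le\|\mathcal S_{\lambda,n}\varphi_n^g-\mathcal S_{\lambda,n}\varphi_n^h\|_1=\|g-h\|_1 .
\]
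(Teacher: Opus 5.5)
Your proposal is correct. It uses the same key sign computation as the paper, but it reaches existence and uniqueness with different tools.

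\textbf{Existence.} The paper applies the Poincar\'e--Miranda theorem directly to $F(\vartheta)=T_n\vartheta-\boldsymbol{\pi}_n g$ on the box $Q=\{\vartheta \mid |\vartheta(x)|\le R \text{ for all } x\in X_n\}$ with $R=1+\|\boldsymbol{\pi}_n g\|_\infty$. The extremal-point computation behind your a priori bound gives exactly the required sign conditions on opposite faces. If $\vartheta(x)=R$, monotonicity of $\phi$ and $\phi(0)=0$ force $\Deltadir[n]\Phi\vartheta(x)\ge 0$. If $\vartheta(x)=-R$, they force $\Deltadir[n]\Phi\vartheta(x)\le 0$. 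So no homotopy is needed.

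Your Leray--Schauder/degree route needs that bound uniformly in $t\in[0,1]$, which your maximum-principle argument does supply. Strictly speaking, it also needs an open set slightly larger than $K$, e.g.\ $\{\|\vartheta\|_\infty<\|g\|_\infty+1\}$, so that no zeros lie on its boundary. It is equivalent in strength, since both arguments rest on Brouwer, but it is heavier.

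Your fallback of citing the standard fact that a continuous, everywhere-defined accretive operator is m-accretive is also legitimate. In finite dimensions that fact itself rests on Brouwer-type arguments, so it is not more elementary.

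\textbf{Uniqueness.} The paper gets uniqueness of $\vartheta_n$ from the $\ell^1$-accretivity of $\Deltadir[n]\Phi$ (Proposition~\ref{p:PMTO_accretivity_Cc} applied to $G_n^D$). It gets uniqueness in $C_c(X)$ from the accretivity of $\Lmin$. You instead obtain uniqueness of $\vartheta_n$ by applying the comparison principle in both directions. For uniqueness in $C_c(X)$ you split the equation along $X_n$ and $X\setminus X_n$. That splitting is more elementary and shows explicitly that any solution vanishes off $X_n$.

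\textbf{Remaining parts.} Part (a) and the identity $\Lmin\varphi_n=\Delta\Phi\varphi_n$ on $X_n$ are handled as in the paper. For part (b) you use accretivity of $\Lmin$, which is equivalent to the paper's use of accretivity of $\Deltadir[n]\Phi$ on $\ell^1(X_n,\mu)$.
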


\begin{proof}
{
Write $A_n:=\Deltadir[n]\Phi$ and equip $C(X_n)$ with $\|\cdot\|_{1,n}$, that is, the $\ell^1(X_n,\mu)$ norm. By
Proposition~\ref{p:PMTO_accretivity_Cc} applied to the finite Dirichlet graph
$G_n^{D}$,
$A_n$ is accretive and $A_n0=0$.

\emph{Existence.} Identify $C(X_n)$ with $\mathbb R^{X_n}$ and consider the map
\[
 F\colon\mathbb R^{X_n}\to\mathbb R^{X_n}
 \qquad  \text{given by} \qquad 
 F(\vartheta):=T_n\vartheta-\boldsymbol{\pi}_ng
 =\vartheta+\lambda A_n\vartheta-\boldsymbol{\pi}_ng
\]
which is continuous by the continuity of $\phi$. Set
$R:=1+\|\boldsymbol{\pi}_ng\|_\infty$ and
\[
 Q:=\bigl\{\vartheta\in\mathbb R^{X_n} \mid \ |\vartheta(x)|\leq R\ \text{ for all } x\in X_n\bigr\}.
\]
Let $x\in X_n$. By the definition of the Dirichlet
Laplacian~\eqref{def:dir_laplacian},
\[
 A_n\vartheta(x)
 =\frac{1}{\mu(x)}\sum_{y\in X_n}w(x,y)
 \bigl(\phi(\vartheta(x))-\phi(\vartheta(y))\bigr)
 +\frac{\kappa_n(x)}{\mu(x)}\phi(\vartheta(x)).
\]
If $\vartheta\in Q$ and $\vartheta(x)=R$, then
$\vartheta(x)=R\geq\vartheta(y)$ for every $y\in X_n$. Since $\phi$ is
monotone increasing and $\phi(R)\geq\phi(0)=0$, both the edge terms and
the killing term in the preceding display are nonnegative.
Hence,
$A_n\vartheta(x)\geq 0$ and
\[
 F(\vartheta)(x)=R+\lambda A_n\vartheta(x)-\boldsymbol{\pi}_ng(x)
 \geq R-\|\boldsymbol{\pi}_ng\|_\infty\geq 0 .
\]
Symmetrically, if $\vartheta\in Q$ and $\vartheta(x)=-R$, then
$\vartheta(x)\leq\vartheta(y)$ for every $y\in X_n$ and
$\phi(-R)\leq\phi(0)=0$, so
$A_n\vartheta(x)\leq 0$ and
\[
 F(\vartheta)(x)=-R+\lambda A_n\vartheta(x)-\boldsymbol{\pi}_ng(x)
 \leq -R+\|\boldsymbol{\pi}_ng\|_\infty\leq 0 .
\]
These are exactly the sign conditions of the Poincaré--Miranda theorem~\cite{Mawhin2013} on the
opposite faces of the box $Q$, therefore, $F$ vanishes at some
$\vartheta_n\in Q$, that is,
\[
 T_n\vartheta_n=\boldsymbol{\pi}_ng .
\]

\emph{Uniqueness.} The accretivity of $A_n$ yields
$\|\vartheta-\eta\|_{1,n}\leq\|T_n\vartheta-T_n\eta\|_{1,n}$ for all
$\vartheta,\eta\in C(X_n)$, whence $\vartheta_n$ is the unique solution of
$T_n\vartheta=\boldsymbol{\pi}_ng$.
}

{
Set $\varphi_n:=\boldsymbol{\mathfrak i}_n\vartheta_n$. By the definition of $\Lmin$,
\begin{align*}
    \mathcal{S}_{\lambda,n}\varphi_n
=
(\operatorname{id}+\lambda\mathcal L_n)\boldsymbol{\mathfrak i}_n\vartheta_n &=  
(\operatorname{id} + \lambda\boldsymbol{\mathfrak i}_n\Deltadir[n]\Phi\boldsymbol{\pi}_n)\boldsymbol{\mathfrak i}_n\vartheta_n\\
&=
\boldsymbol{\mathfrak i}_n
(\operatorname{id}_n+\lambda\Deltadir[n]\Phi)\vartheta_n\\
&=
\boldsymbol{\mathfrak i}_n\boldsymbol{\pi}_n g.
\end{align*}
Since $\operatorname{supp}g\subseteq X_n$, one has $\boldsymbol{\mathfrak i}_n\boldsymbol{\pi}_n g=g$ and, therefore,
\[
\mathcal{S}_{\lambda,n}\varphi_n=g.
\]
Uniqueness of $\varphi_n$ as a solution of $\mathcal{S}_{\lambda,n}\varphi=g$ follows from the accretivity of $\Lmin$ established in Proposition~\ref{prop:accretivity_lmin}. The inclusion $\operatorname{supp}\varphi_n\subseteq X_n$ is immediate from the definition of $\boldsymbol{\mathfrak i}_n$. Moreover, $\varphi_n\in\dom(\Lmin)\subseteq\dom(\Deltaphi)$ and, by the definition of the Dirichlet Laplacian~\eqref{def:dir_laplacian}, for $x \in X_n$ we get
\begin{align*}
\Lmin\varphi_n(x)
=\boldsymbol{\mathfrak{i}}_n\Deltadir[n]\Phi\boldsymbol{\pi}_n\varphi_n(x)
=\boldsymbol{\mathfrak{i}}_n\Deltadir[n]\Phi\vartheta_n(x)
=\Delta\Phi\varphi_n(x).
\end{align*}

To prove (a), recall that $\vartheta_n^g,\vartheta_n^h\in C(X_n)$ are the unique solutions of
\[
T_n\vartheta_n^g=\boldsymbol{\pi}_n g
\qquad \text{and} \qquad
T_n\vartheta_n^h=\boldsymbol{\pi}_n h.
\]
By Theorem~\ref{thm:min}, applied to the finite Dirichlet graph
$G_n^{D}$,
the inverse $T_n^{-1}$ is order preserving. Hence,
\[
g\le h
\quad\Longrightarrow\quad
\boldsymbol{\pi}_n g\le \boldsymbol{\pi}_n h
\quad\Longrightarrow\quad
\vartheta_n^g\le \vartheta_n^h.
\]
Applying $\boldsymbol{\mathfrak i}_n$ to both sides gives $\varphi_n^g\le\varphi_n^h$.

To prove (b), write $\|\cdot\|_{1,n}$ for the $\ell^1(X_n,\mu)$ norm. By
Proposition~\ref{p:PMTO_accretivity_Cc} applied to $G_n^{D}$,
$\Deltadir[n]\Phi$ is accretive on $\ell^1(X_n,\mu)$ and, therefore,
\[
\|\vartheta_n^g-\vartheta_n^h\|_{1,n}
\le
\|T_n\vartheta_n^g-T_n\vartheta_n^h\|_{1,n}
=
\|\boldsymbol{\pi}_n g-\boldsymbol{\pi}_n h\|_{1,n}.
\]
Consequently,
\[
\|\varphi_n^g-\varphi_n^h\|_1
=
\|\boldsymbol{\mathfrak i}_n\vartheta_n^g-\boldsymbol{\mathfrak i}_n\vartheta_n^h\|_1
=
\|\vartheta_n^g-\vartheta_n^h\|_{1,n}
\le
\|\boldsymbol{\pi}_n g-\boldsymbol{\pi}_n h\|_{1,n}
=
\|g-h\|_1
\]
since $\operatorname{supp}g, \operatorname{supp}h \subseteq X_n$.
This concludes the proof.
}
\end{proof}

The next lemma states that solutions enjoy a monotonicity property
as we take larger and larger sets in the exhaustion sequence.
\begin{lemma}\label{lem:monotone-resolvent}
Let $X_n\subseteq X_{n+1}$ be finite, connected and nonempty subsets of $X$.
Let $\lambda>0$ and $g\in \ell^1(X,\mu)$. Let $g_k$ be the Dirichlet cutoff of $g$
given by
\[
g_k:=\boldsymbol{\mathfrak i}_k\boldsymbol{\pi}_k g
\]
for $k \in \{n, n+1\}$
so that $g_k\in C_c(X)$ with $\operatorname{supp}g_k\subseteq X_k$. 
Let $\varphi_k\in C_c(X)$, with $\operatorname{supp}\varphi_k \subseteq X_k$, be the unique solution of
\[
\mathcal{S}_{\lambda,k}\varphi_k=g_k
\]
for $k\in\{ n,n+1\}$.
If $g\ge 0$, then $0\le \varphi_n \le \varphi_{n+1}$. If $g\le 0$, then
$\varphi_{n+1} \le \varphi_{n} \le 0$.
\end{lemma}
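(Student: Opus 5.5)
We treat the case $g\ge 0$; the case $g\le0$ then follows by the same argument with all inequalities reversed (equivalently, replace $\phi$ by $\tilde\phi(s)=-\phi(-s)$, which is again continuous, monotone increasing with $\tilde\phi(0)=0$, and note that $-\varphi_k$ solves the corresponding problem with data $-g_k$).

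\emph{Positivity.} Since $g_n\ge 0=\boldsymbol{\mathfrak i}_n\boldsymbol{\pi}_n 0$ and the solution with zero data is $0$ (as $\Phi0=0$), Lemma~\ref{lem:finite-resolvent}(a) on $X_n$ gives $\varphi_n\ge 0$. Likewise, $\varphi_{n+1}\ge0$.

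\emph{Monotonicity in $n$.} Here we compare two solutions living on different finite graphs. We do this on the larger Dirichlet graph $G_{n+1}^D$, viewing $\vartheta:=\boldsymbol{\pi}_{n+1}\varphi_n\in C(X_{n+1})$, which vanishes on $X_{n+1}\setminus X_n$. For $x\in X_n$, \eqref{eq:properties_finite-resolvent-solution} gives $\Delta\Phi\varphi_n(x)=\Lmin\varphi_n(x)$. Since $\varphi_n$ vanishes outside $X_n$, we also have $\Deltadir[n+1]\Phi\vartheta(x)=\Delta\Phi\varphi_n(x)$ for $x\in X_{n+1}$. Hence on $X_n$
\[
T_{n+1}\vartheta(x)=\varphi_n(x)+\lambda\Lmin\varphi_n(x)=g_n(x)=g(x)=g_{n+1}(x).
\]
For $x\in X_{n+1}\setminus X_n$ we have $\vartheta(x)=0$, so
\[
T_{n+1}\vartheta(x)=\lambda\Delta\Phi\varphi_n(x)=-\frac{\lambda}{\mu(x)}\sum_{y\in X_n}w(x,y)\phi(\varphi_n(y))\le 0\le g(x)=g_{n+1}(x),
\]
using $\varphi_n\ge0$ and the monotonicity of $\phi$ with $\phi(0)=0$. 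Thus $T_{n+1}\vartheta\le \boldsymbol{\pi}_{n+1}g_{n+1}=T_{n+1}\vartheta_{n+1}$.

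We then apply the comparison principle Theorem~\ref{thm:min} (Case~1, or Case~2, since $G^D_{n+1}$ is finite) on $G_{n+1}^D$ with $\sigma\equiv1$, $u_1=\vartheta_{n+1}$, $u_2=\vartheta$. This yields $\vartheta\le\vartheta_{n+1}$. Applying $\boldsymbol{\mathfrak i}_{n+1}$ gives $\varphi_n\le\varphi_{n+1}$.

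The main point to check is the boundary computation on $X_{n+1}\setminus X_n$: it shows that $\varphi_n$ is a subsolution there, and its sign relies on the positivity of $\varphi_n$ established first. One also needs Theorem~\ref{thm:min} for data in $C(X_{n+1})$ that are merely ordered, rather than equal to resolvent data; this is exactly what that theorem provides, since it only requires $g_1\ge g_2$.
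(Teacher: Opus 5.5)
Your proof is correct and follows the paper's argument. You first get $\varphi_n\ge 0$ from order preservation. You then show that the zero extension of $\varphi_n$ to $X_{n+1}$ is a subsolution of the $X_{n+1}$ problem, with equality on $X_n$ and a nonpositive value on $X_{n+1}\setminus X_n$ by that positivity, and you compare on the finite Dirichlet graph $G_{n+1}^{D}$.

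The differences are cosmetic. You invoke Theorem~\ref{thm:min} directly rather than Lemma~\ref{lem:finite-resolvent}(a), which is itself proved from that theorem. You also obtain the case $g\le 0$ via the reflection $\tilde\phi(s)=-\phi(-s)$, where the paper says only that a similar argument works.
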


\begin{proof}
Fix $g\geq 0$. Set $\vartheta_k:=\boldsymbol{\pi}_k\varphi_k\in C(X_k)$ 
and let
\[
T_k:=\operatorname{id}_k+\lambda\Deltadir[k]\Phi
\]
for $k \in \{n,n+1\}$.
By Lemma~\ref{lem:finite-resolvent}, we have $\varphi_k=\boldsymbol{\mathfrak i}_k\vartheta_k$
so that $\operatorname{supp} \varphi_k \subseteq X_k$
and
\[
T_k\vartheta_k=\boldsymbol{\pi}_k g
\]
where we used $\boldsymbol{\pi}_k g_k=\boldsymbol{\pi}_k g$ for $k\in \{n,n+1\}$.

Since $g\ge 0$, we have $g_n\ge 0$. Applying Lemma~\ref{lem:finite-resolvent}~(a) on $X_n$ 
for $0$ and $g_n$ gives
\[
0\le\varphi_n.
\]

Restrict $\varphi_n$ to $X_{n+1}$, note that it is zero outside of $X_n$ and let
\[
\eta_{n+1}:=\boldsymbol{\pi}_{n+1}\varphi_n\in C(X_{n+1}).
\]
Let
\[
h_{n+1}:=T_{n+1}\eta_{n+1}\in C(X_{n+1}).
\]
We claim that
\[
h_{n+1}\le \boldsymbol{\pi}_{n+1} g
\]
on $X_{n+1}$.

Since $\operatorname{supp}\varphi_n\subseteq X_n$, if $x\in X_n$, 
then $\eta_{n+1}(x)=\varphi_n(x)=\vartheta_n(x)$ and if $x\in X_{n+1}\setminus X_n$,
then $\eta_{n+1}(x)=0$. Hence, by the definition of the Dirichlet Laplacian~\eqref{def:dir_laplacian},
\[
\Deltadir[n+1]\Phi \eta_{n+1}(x)
=
\Deltadir[n]\Phi \vartheta_n(x)
\]
for all $x\in X_n$.
Therefore, for $x\in X_n$,
\begin{align*}
   h_{n+1}(x)
=T_{n+1}\eta_{n+1}(x)
&=(\operatorname{id}_{n+1}+\lambda\Deltadir[n+1]\Phi)\eta_{n+1}(x)\\
&=\vartheta_n(x)+\lambda\Deltadir[n]\Phi \vartheta_n(x)\\
&=T_n\vartheta_n(x)\\
&=\boldsymbol{\pi}_n g(x)\\
&=\boldsymbol{\pi}_{n+1} g(x).
\end{align*}
If $x\in X_{n+1}\setminus X_n$, then $\eta_{n+1}(x)=0$, so
\[
h_{n+1}(x)
=
\lambda\Deltadir[n+1]\Phi \eta_{n+1}(x)
=
-\frac{\lambda}{\mu(x)}
\sum_{y\in X_n} w(x,y)\,\Phi \vartheta_n(y).
\]
Since $\vartheta_n\ge 0$ and {$\Phi$ is monotone increasing with $\Phi(0)=0$}, every term in the sum is non-negative. Thus,
\[
h_{n+1}(x)\le 0\le \boldsymbol{\pi}_{n+1} g(x)
\]
for all $x\in X_{n+1}\setminus X_n$.
Hence,
\[
h_{n+1}\le \boldsymbol{\pi}_{n+1} g
\]
on $X_{n+1}$ as claimed.

Now, $\eta_{n+1}$ solves $T_{n+1}\eta_{n+1}=h_{n+1}$ 
and $\vartheta_{n+1}$ solves $T_{n+1}\vartheta_{n+1}=\boldsymbol{\pi}_{n+1} g$. By Lemma~\ref{lem:finite-resolvent}~(a) applied on $X_{n+1}$, we obtain
\[
\eta_{n+1}\le \vartheta_{n+1}.
\]
Applying $\boldsymbol{\mathfrak i}_{n+1}$ and using that $\operatorname{supp}\varphi_n\subseteq X_n\subseteq X_{n+1}$, so that
\[
\boldsymbol{\mathfrak i}_{n+1}\eta_{n+1}
=\boldsymbol{\mathfrak i}_{n+1}\boldsymbol{\pi}_{n+1}\varphi_n
=\varphi_n
\]
we conclude
\[
0\le \varphi_n\le \varphi_{n+1}.
\]
A similar argument works in case $g \leq 0$. This completes the proof.
\end{proof}

The following lemma is a discrete instance of the Kolmogorov–Riesz compactness theorem, see for example~\cite[Theorem 4]{hanche2010kolmogorov}. For the convenience of the reader, we provide a short proof.
\begin{lemma}\label{lem:sequentially-compact}
Let $c\in \ell^1(X,\mu)$ and set
\[
K_c:=\{f\in \ell^1(X,\mu) \mid |f|\leq c\}.
\]
Then, $K_c$ is compact in $\ell^1(X,\mu)$.
\end{lemma}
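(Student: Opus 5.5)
We will show that $K_c$ is sequentially compact, which suffices in the metric space $\ell^1(X,\mu)$. The argument combines a diagonal extraction for pointwise convergence with dominated convergence, which upgrades pointwise convergence to $\ell^1$ convergence. Closedness of $K_c$ will come along automatically, since pointwise limits preserve the bound $|f|\leq c$.

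Let $(f_k)$ be a sequence in $K_c$. Since $X$ is countable, enumerate $X=\{x_1,x_2,\dots\}$. For each $j$, the real sequence $(f_k(x_j))_k$ lies in the compact interval $[-c(x_j),c(x_j)]$. By Bolzano--Weierstrass and a Cantor diagonal argument, we extract a subsequence $(f_{k_l})$ that converges pointwise on all of $X$ to some function $f\colon X\to\R$. Passing to the limit in $|f_{k_l}(x)|\leq c(x)$ gives $|f|\leq c$. Hence $f\in\ell^1(X,\mu)$ with $\|f\|_1\leq\|c\|_1$, and so $f\in K_c$.

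It remains to show $\|f_{k_l}-f\|_1\to0$. The terms satisfy $|f_{k_l}(x)-f(x)|\mu(x)\leq 2c(x)\mu(x)$, and the dominating function is summable because $c\in\ell^1(X,\mu)$. Each term tends to zero pointwise, so Lebesgue's dominated convergence theorem for sums over the countable set $X$ gives the claim.

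Alternatively, we can argue by hand. Given $\varepsilon>0$, choose a finite $F\subseteq X$ with $\sum_{x\notin F}c(x)\mu(x)<\varepsilon$. Then
\[
\|f_{k_l}-f\|_1\leq\sum_{x\in F}|f_{k_l}(x)-f(x)|\mu(x)+2\varepsilon,
\]
and the finite sum tends to zero.

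No step here is a genuine obstacle. The only point needing care is the diagonal extraction, which is where the countability of $X$ is used. The rest is a direct application of domination by the summable function $c$.
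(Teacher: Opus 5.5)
Your proposal is correct and follows essentially the same route as the paper: diagonal extraction over the enumeration of the countable set $X$ to get a pointwise limit $f$ with $|f|\leq c$, then dominated convergence with the summable bound $2c$ to upgrade to $\ell^1$ convergence. The paper additionally disposes of the case where $c(x)<0$ for some $x$ (so $K_c=\emptyset$). Your argument covers this vacuously, since no sequence in $K_c$ exists then.
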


\begin{proof}
If $c(x)<0$ for some $x$, then $K_c = \emptyset$ and the statement is trivial. Therefore, suppose that $c\geq 0$. Since $X$ is countable, we write $X=\{x_1,x_2,\dots\}$. Let $(f_n)$ be a sequence in $K_c$. For every fixed $j$, the sequence $(f_n(x_j))_n$ is bounded in $\mathbb R$ as $|f_n(x_j)|\leq c(x_j)$. Hence, by a diagonal extraction, there exists a subsequence $(f_{n_k})$ and a function $f\colon X\to\mathbb R$ such that $f_{n_k}(x)\to f(x)$ for every $x\in X$. Clearly, $|f(x)|\leq c(x)$ for all $x$, so $f\in K_c$. Since
\[
|f_{n_k}-f|\leq 2c\in \ell^1(X,\mu)
\]
dominated convergence yields $\|f_{n_k}-f\|_1\to 0$. Thus, every sequence in $K_c$ has a convergent subsequence, therefore, 
$K_c$ is sequentially compact. Since $\ell^1(X,\mu)$ is a metric space, $K_c$ is compact as well.
\end{proof}

We are now in position to prove the main theorem of this subsection.
\begin{theorem}\label{thm:new}
Let $G$ be a  graph. Then, there exists a dense subset $\Omega\subseteq\dom(\Deltaphi)$ such that $\Deltaphi_{|\Omega}$ is m-accretive. In particular, for every $\lambda>0$ and 
every $g\in \ell^{1}(X,\mu)$, there exists a unique $u\in \Omega$ such that
\[
\mathcal{S}_\lambda u=g
\]
and the solution $u$ satisfies the contractivity estimate
\[
\|u\|_1\leq \|g\|_1.
\]
Moreover, if $g\geq 0$, then $u\geq 0$ and if $g\leq 0$, then $u\leq 0$.
\end{theorem}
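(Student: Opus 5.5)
The plan is to construct solutions of $\mathcal{S}_\lambda u=g$ as limits of the finite Dirichlet solutions of Lemma~\ref{lem:finite-resolvent}. The set $\Omega$ will be $\Omega_{(Y_k)}$ for a suitable subsequence $(Y_k)$ of the exhaustion. Accretivity of $\Deltaphi_{|\Omega}$ holds for every exhaustion by Proposition~\ref{p:accretivityDeltaphiom}, and density follows from Lemma~\ref{l:omega_denseness}. By Remark~\ref{rem:lambda}, it then suffices to prove surjectivity of $\mathcal{S}_\lambda$ from $\Omega$ for a fixed $\lambda>0$; I would nevertheless arrange the argument to work for all $\lambda$ at once. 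Uniqueness in $\Omega$ comes from accretivity. The contractivity estimate $\|u\|_1\le\|g\|_1$ follows from accretivity with $v=0\in\Omega$, since $\mathcal{S}_\lambda 0=0$. Sign preservation will come out of the construction.

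\textbf{Step 1: monotone limits for signed data.} Fix $\lambda>0$ and $g\in\ell^1(X,\mu)$. Write $g=g^+-g^-$ and let $g_n=\boldsymbol{\mathfrak i}_n\boldsymbol{\pi}_n g$. Let $\varphi_n$ be the solution of $\mathcal{S}_{\lambda,n}\varphi_n=g_n$, and let $\varphi_n^{\pm}$ be the solutions for $\pm g^{\pm}$ cut off to $X_n$. Lemma~\ref{lem:monotone-resolvent} gives $0\le\varphi_n^{+}\uparrow$ and $0\ge\varphi_n^{-}\downarrow$. Lemma~\ref{lem:finite-resolvent}(b) gives $\|\varphi_n^{\pm}\|_1\le\|g^\pm\|_1$, so by monotone convergence $\varphi_n^{+}\to u^+$ and $\varphi_n^{-}\to u^-$ in $\ell^1$. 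Lemma~\ref{lem:finite-resolvent}(a) yields $\varphi_n^{-}\le\varphi_n\le\varphi_n^{+}$, hence $|\varphi_n|\le u^+-u^-=:c\in\ell^1$. Lemma~\ref{lem:sequentially-compact} then produces a subsequence $(n_k)$ with $\varphi_{n_k}\to u$ in $\ell^1$. For $g\ge0$ no subsequence is needed and the limit is $\ge 0$; similarly for $g\le 0$. This gives the sign statement.

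\textbf{Step 2: the limit is a solution.} On $X_{n}$, Lemma~\ref{lem:finite-resolvent} gives $\lambda\Delta\Phi\varphi_n=g_n-\varphi_n$ pointwise. To pass to the limit pointwise at a fixed $x$, I need $\sum_y w(x,y)\phi(\varphi_{n}(y))\to\sum_y w(x,y)\phi(u(y))$. Pointwise convergence of $\varphi_n$ together with continuity of $\phi$ gives termwise convergence. For domination I use the order bounds: $\phi(\varphi_n^-)\le\phi(\varphi_n)\le\phi(\varphi_n^+)$ with monotone envelopes $\phi(u^\pm)$. It remains to know that $\Phi u^\pm\in\dom(\Delta)$. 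For this I would first treat the signed cases: for $g\ge0$, the sums $\sum_y w(x,y)\phi(\varphi_n^+(y))$ are bounded by the identity $\lambda\Delta\Phi\varphi_n^+(x)=g(x)-\varphi_n^+(x)$, since $\Deg(x)\phi(\varphi_n^+(x))$ is bounded in $n$. Monotone convergence then shows $\Phi u^+\in\dom(\Delta)$ and that $u^+$ solves the equation. Hence $\mathcal{S}_\lambda u=g$ pointwise, and $\Deltaphi u=(g-u)/\lambda\in\ell^1$, so $u\in\dom(\Deltaphi)$. Moreover, $\Lmin\varphi_{n_k}=(g_{n_k}-\varphi_{n_k})/\lambda\to(g-u)/\lambda=\Deltaphi u$ in $\ell^1$, so $u\in\Omega_{(Y_k)}$ with $Y_k=X_{n_k}$. (A zero-padded version of the sequence handles the requirement $\operatorname{supp}\subseteq Y_k$ indexwise.)

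\textbf{Main obstacle: one subsequence for all data.} Step~1 produces a subsequence depending on $g$ and $\lambda$, whereas $\Omega$ must be defined by a single exhaustion $(Y_k)$. My plan is to fix $\lambda$, which is enough by Remark~\ref{rem:lambda}. I would then take a countable dense set $\{g^{(j)}\}\subseteq C_c(X)$ with rational values and extract a diagonal subsequence $(n_k)$ along which $\varphi_{n_k}^{(j)}$ converges for every $j$. For general $g$, approximate it by $g^{(j)}$. Lemma~\ref{lem:finite-resolvent}(b) gives $\|\varphi_n^{g}-\varphi_n^{g^{(j)}}\|_1\le\|g-g^{(j)}\|_1$ uniformly in $n$, so $(\varphi^g_{n_k})$ is Cauchy in $\ell^1$. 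Its limit $u$ is identified as a solution by Step~2, or alternatively by closedness of $\Deltaphi_{|\Omega}$ under this convergence. This uniform contraction across $n$ is the key point that makes a single diagonal subsequence work, and it is where I expect the technical care to lie.
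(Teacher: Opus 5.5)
Your proposal is correct and follows essentially the paper's own argument: monotone limits for signed data via Lemma~\ref{lem:monotone-resolvent}, the sandwich $u^-\le\varphi_n\le u^+$ that feeds Lemma~\ref{lem:sequentially-compact}, a diagonal subsequence over a countable dense set of data extended to all $g$ by the uniform contraction of Lemma~\ref{lem:finite-resolvent}(b), and identification of the limit by dominated convergence with envelopes $\Phi u^{\pm}\in\dom(\Delta)$. You deviate in two places, both legitimate: you fix a single $\lambda$ and invoke the one-$\lambda$ surjectivity criterion of Remark~\ref{rem:lambda} (Barbu) instead of the paper's extra diagonalization over rational $\lambda$ and the Lipschitz-in-$\lambda$ estimate of Step~4(b), and you get $\|u\|_1\le\|g\|_1$ from accretivity against $0\in\Omega$ rather than from Fatou's lemma; sign preservation for every $\lambda$ still holds because the monotone limits lie in $\Omega_{(X_n)}\subseteq\Omega_{(Y_k)}$, where solutions are unique.
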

\begin{proof}
The proof is a significant refinement of the proof of Theorem~1~in~\cite{bianchi2022generalized}.  In Appendix~\ref{sec:proof_Theo1} we compare the two statements and  point out precisely where the new observations occur.

Assume first that $G$ is finite.  In this case, setting $X_n = X$ and $\Omega = C(X)=\dom(\Deltaphi)$, the statement follows from Corollary~\ref{c:PMTO_accretive_finite},
which gives the accretivity of $\Deltaphi$, and Lemma~\ref{lem:finite-resolvent}, 
which gives the surjectivity of $\mathcal{S}_{\lambda,n} = \mathcal{S}_\lambda$, 
the sign-preserving property by part (a) and the contraction estimates by part (b).

Assume now that $G$ is infinite. Fix an exhaustion $(X_n)$ by finite connected sets,  
see \cite[Lemma A.4]{bianchi2022generalized} for the existence of such an exhaustion. 
For each $n$, let the Dirichlet cutoff of $g \in \ell^1(X,\mu)$ be defined by
$$g_n \coloneqq \boldsymbol{\mathfrak{i}}_n \boldsymbol{\pi}_n g$$
so that $g_n \in C_c(X)$, $\operatorname{supp}g_n \subseteq X_n$
and $ \|g_n - g\|_1 \to 0$ as $n \to \infty$.

\medskip
\noindent\textbf{Resolvent notation.}
By Proposition~\ref{prop:accretivity_lmin}, for every $\lambda>0$, the 
operator $\mathcal{S}_{\lambda,n}=\id+\lambda\,\Lmin$ is injective as
$\Lmin$ is accretive on $\ell^1(X,\mu)$ and, 
by Lemma~\ref{lem:finite-resolvent}, its range contains every $g\in C_c(X)$ with $\operatorname{supp} g\subseteq X_n$. We denote the \emph{resolvent operator} of $\Lmin$ by
\[
J^\lambda_n \;:=\; (\id+\lambda\,\Lmin)^{-1}
\]
which, by the injectivity of $\mathcal{S}_{\lambda,n}$,
is well-defined on $\operatorname{Rg}(\mathcal{S}_{\lambda,n})$, the range
of $\mathcal{S}_{\lambda,n}$. The map $(\lambda,g)\mapsto J^\lambda_n g$ highlights the dependence of the solution of $\mathcal{S}_{\lambda,n}\varphi=g$ on the data $g$ and $\lm$
and will allow us to handle the limit $n\to\infty$ in a compact manner.

\medskip
\noindent\textbf{Properties of $J^\lambda_n g_n$.} Applied to $\lambda>0$ and $ g_n$
as above, 
Lemma~\ref{lem:finite-resolvent} yields a unique $\varphi_n\in C_c(X)$ such that $\mathcal{S}_{\lambda,n}\varphi_n=g_n$ which, in the resolvent notation, reads as
\[
\varphi_n = J^\lambda_n g_n.
\]
Combining Lemma~\ref{lem:finite-resolvent} (existence, support, contraction, and order preservation) and Lemma~\ref{lem:monotone-resolvent} (monotonicity along the exhaustion), the family $(J^\lambda_n g_n)_n$ satisfies
\begin{align}
\textup{(i)} \;\; & \mathcal{S}_{\lambda,n}\bigl(J^\lambda_n g_n\bigr) = g_n,
& \textup{(ii)} \;\; & \operatorname{supp}\bigl(J^\lambda_n g_n\bigr)\subseteq X_n, \label{eq:finite-resolvent-1}\\[2pt]
\textup{(iii)} \;\; & \bigl\|J^\lambda_n g_n\bigr\|_1 \le \|g_n\|_1 \le \|g\|_1,
& \textup{(iv)} \;\; & \mbox{If } g\geq 0, \mbox{ then } 0\leq J^\lambda_n g_n \le J^\lambda_{n+1} g_{n+1}. \label{eq:finite-resolvent-2}
\end{align}
Here (iii) uses Lemma~\ref{lem:finite-resolvent}-(b) with $h=0$ together with $\|g_n\|_1\le\|g\|_1$, while (iv) is precisely the conclusion of Lemma~\ref{lem:monotone-resolvent}.

We now divide  the argument into several substeps.

\medskip
\noindent\textbf{Step 1: For every $\lambda>0$ and $g\geq 0$, there exists $u\in \Omega_{(X_n)}$ such that $u \geq 0$ and $\mathcal{S}_\lambda u=g $.}

By \eqref{eq:finite-resolvent-2}-(iv), $J^\lm_n g_n = \varphi_n \geq 0$ since $g \geq 0$.
By \eqref{eq:finite-resolvent-2}-(iii) and \eqref{eq:finite-resolvent-2}-(iv), $(J^\lambda_n g_n)$ is pointwise 
increasing and uniformly bounded in $\ell^1(X,\mu)$ and
hence converges monotonically pointwise to a function $u \geq 0$, that is,
$$
 \varphi_n (x) = J^\lambda_n g_n (x)  \nearrow u(x)
$$
as $n \to \infty$ for every $x\in X$.
By Fatou's lemma 
and the  inequality  \eqref{eq:finite-resolvent-2}-(iii), 
$$
\|u\|_1 \leq  \liminf_{n \to \infty} \|J^\lambda_n g_n\|_1 \leq \|g\|_1,
$$
in particular, $u\in \ell^1(X,\mu)$. By $0\leq J^\lambda_n g_n \leq u$ 
and dominated convergence, we get convergence in $\ell^1(X,\mu)$, i.e., 
\begin{equation}\label{eq:varphi_n-->u}
\|J^\lambda_n g_n - u\|_1 = \|\varphi_n - u \|_1 \to 0
\end{equation}
as $n \to \infty$.
By~\eqref{eq:properties_finite-resolvent-solution} in Lemma~\ref{lem:finite-resolvent}, when $x\in X_n$
equality \eqref{eq:finite-resolvent-1}-(i) reduces to
\begin{align*}
g_n(x) &= \varphi_n(x)+\lambda \Deltaphi_n \varphi_n (x)\\
&= \varphi_n(x)+\lambda \Delta\Phi \varphi_n(x)\\
&= \varphi_n(x)+\lambda\Deg(x)\Phi\varphi_n(x)  -\frac{\lambda}{\mu(x)}\sum_{y\in X}w(x,y)\Phi\varphi_n(y).
\end{align*}

Notice that, as $(X_n)$ is an exhaustion, for every $x\in X$ there exists $N$ large enough such that $x\in X_n$ for every $n\geq N$. { Since $0\leq\varphi_n\nearrow u$ and $\Phi$ is monotone increasing with $\Phi(0)=0$, applying $\Phi$ preserves both sign and monotonicity: $0=\Phi(0)\leq\Phi\varphi_n$ and $\Phi\varphi_n$ is monotone increasing in $n$. Continuity of $\Phi$ then gives $\Phi\varphi_n\to\Phi u$ pointwise, so $0\leq\Phi\varphi_n\nearrow \Phi u$ pointwise.} Rearranging terms, passing to the limit, and invoking monotone convergence, we obtain
\[
\begin{split}
\frac{\lambda}{\mu(x)} \sum_{y \in X} w(x,y) \Phi u (y) & = \lim_{n \to \infty} \frac{\lambda}{\mu(x)} \sum_{y \in X} w(x,y) \Phi \varphi_n (y) \\
&= 
\lim_{n \to \infty} [- g_n(x) + \varphi_n(x)+\lambda \Deg(x) \Phi \varphi_n(x)] \\
&= - g(x) + u(x) +\lambda \Deg(x) \Phi u(x)
\end{split}
\]
for every  $x\in X$,
showing that $u \in \dom(\Delta\Phi)$ and
$\mathcal{S}_\lambda u = (\id + \lambda \Deltaphi) u  = g$. By the fact that $\lambda \Deltaphi u = g - u$ and $g, u \in \ell^1(X, \mu)$, we obtain $\Deltaphi u \in \ell^1(X, \mu)$. Therefore, $u\in \dom(\Deltaphi)$. Moreover,
\begin{align*}
\|\Lmin \varphi_n - \Deltaphi u\|_1 = \frac{1}{\lambda} \| \lambda\Lmin \varphi_n + u - g\|_1 
&\leq \frac{1}{\lambda} \left( \|(\operatorname{id} + \lambda \Lmin) \varphi_n - g\|_1 + \| \varphi_n - u\|_1 \right) \nonumber\\
&=\frac{1}{\lambda} \left( \|g_n - g\|_1 + \| \varphi_n - u\|_1 \right)  \to 0
\end{align*}
as $n \to \infty$
by the definition of $g_n$ and \eqref{eq:varphi_n-->u}.

Summarizing, for every $\lambda >0$ and $g\geq 0$, there exist $u \geq 0$ and a sequence $(\varphi_n)$ such that
\begin{itemize}
    \item $u\in \dom(\Deltaphi)$ and $\operatorname{supp}\varphi_n \subseteq X_n$,
    \item $\lim_{n\to\infty} \| \varphi_n - u\|_1 = 0$,
    \item $\lim_{n\to\infty}   \| \Lmin \varphi_n - \Deltaphi u\|_1 = 0$,
    \item $\mathcal{S}_\lambda u = g$.
\end{itemize}
Therefore, by Definition \ref{def:Omega}, $u \in \Omega_{(X_n)}$ 
and $u$ is a solution of $\mathcal{S}_\lambda u = g$.

We remark that, by using standard comparison principle techniques, the solution
$u$ does not depend on the choice of the exhaustion $(X_n)$ whenever $g \geq 0$. 

\medskip
\noindent\textbf{Step 2: For every $\lambda>0$ and $g\leq 0$, 
there exists $u\in \Omega_{(X_n)}$ such that $u \leq 0$ and $\mathcal{S}_\lambda u=g $.}

This case is analogous to Step 1. If $g \leq 0$, Lemma~\ref{lem:finite-resolvent}-(a) gives $J^{\lambda}_n g_n \leq J^{\lambda}_n 0 = 0$ and the same argument as 
in Lemma~\ref{lem:monotone-resolvent}, with the inequalities reversed, yields
\[
0 \ge J_n^\lambda g_n \ge J_{n+1}^\lambda g_{n+1}.
\]
Proceeding as in Step 1 with the appropriate modifications, 
we obtain a sequence $\varphi_n \leq 0$ decreasing pointwise to a { negative}
$u \in \Omega_{(X_n)}$ satisfying $\mathcal{S}_\lambda u = g$.

\medskip
\noindent
\textbf{Step 3: Choice of a fixed diagonal subsequence $(Y_k)$  of $(X_n)$ and definition of $\Omega \coloneqq \Omega_{(Y_k)}\supseteq \Omega_{(X_n)}$.}

Let $g$ now be sign-changing. We write
\[
g=g^++g^- \quad \mbox{where} \quad
 g^+:=\max\{g,0\}  \text{ and } 
 g^-:=\min\{g,0\}.
\]
Applying Step 1 and  Step 2 to the data $g^+$ and $g^-$, respectively, we obtain monotone sequences 
$$
(J_n^\lambda g_n^+) = (\varphi_n^+) \qquad \mbox{and} \qquad (J_n^\lambda g_n^-) = (\varphi_n^-)
$$
converging in $\ell^1(X,\mu)$ to functions $u^+$ and $u^-$, respectively, with
\[
u^{\pm}\in \Omega_{(X_n)}
\qquad \textup{ and } \qquad
u^-\leq 0\leq u^+.
\]
By monotonicity of the resolvent established in Lemma~\ref{lem:finite-resolvent} (a) and monotone convergence,
\begin{equation}\label{eq:sandwich}
u^-\leq J_n^\lambda g_n^- \leq J_n^\lambda g_n\leq J_n^\lambda g_n^+ \leq  u^+
\end{equation}
for all $n$.
Hence, if we set
\[
c_{\lambda,g}:=|u^-|+|u^+|\in \ell^1(X,\mu),
\]
then
\begin{equation}\label{eq:dominating-function}
|J_n^\lambda g_n|\leq c_{\lambda,g}
\end{equation}
for all $n$.
By Lemma~\ref{lem:sequentially-compact}, the sequence $(J_n^\lambda g_n)$ is therefore relatively compact in $\ell^1(X,\mu)$.

Fix an enumeration $\mathbb Q_+=\{q_j\}_{j\geq 1}$ of the positive rationals and, since $\ell^1(X,\mu)$ is separable,  a countable dense subset~$D=\{g^m\}_{m\geq 1}\subseteq \ell^1(X,\mu)$. For each pair $(q_j, g^m)$, the sequence $(J_n^{q_j}g_n^m)_n$ is relatively compact by~\eqref{eq:dominating-function} and 
Lemma~\ref{lem:sequentially-compact}. 
Hence, by a diagonalization argument, 
there exists a strictly increasing sequence $(n_k)$ such that
\[
(J_{n_k}^{q_j}g_{n_k}^m)_k
\]
converges in $\ell^1(X,\mu)$ for every pair $(q_j, g^m)$.

We now set
\[
Y_k:=X_{n_k}
\]
for $k\geq 1$.
Then $(Y_k)$ is again an exhaustion of $X$ by finite connected sets. From now on, we define $\Omega = \Omega_{(Y_k)}$ by means of this exhaustion $(Y_k)$. Note that, as $(Y_k)$ is a subsequence of $(X_n)$, we get $\Omega_{(Y_k)} \supseteq \Omega_{(X_n)}$.

At this stage, it is important to emphasize that the exhaustion used to define \(\Omega\) is chosen \emph{once and for all}.  In particular, the exhaustion \((Y_k)\), and hence the set \(\Omega\), depend neither on the parameter \(\lambda\) nor on the datum~\(g\).

\medskip
\noindent\textbf{Step 4: For every $\lambda>0$ and every  $g\in \ell^1(X,\mu)$, there exists $u\in \Omega$ such that $\mathcal{S}_\lambda u=g $.}

With the diagonal exhaustion $(Y_k)$ fixed in Step 3, it remains 
to prove the  surjectivity of $\mathcal{S}_\lambda \colon \Omega \to \ell^1(X,\mu)$ 
for arbitrary data \(g\in \ell^1(X,\mu)\) and arbitrary \(\lambda>0\). 
We proceed in three sub-steps: First, we establish the convergence of \((J_{n_k}^{q}g_{n_k})_k\) for rational \(q>0\). Then, we pass from rational parameters to a general \(\lambda>0\) by a continuity estimate in \(\lambda\). Finally, we identify the limit as an element \(u\in \Omega\) solving \(\mathcal{S}_\lambda u=g\) with \(\|u\|_1\le \|g\|_1\).

\medskip
\noindent
\textbf{Step 4 (a): $\ell^1$-convergence of $(J_{n_k}^q g_{n_k})$ for rational $q>0$ and arbitrary data $g \in \ell^1(X,\mu)$.}

Fix $\lambda = q\in \mathbb Q_+$ and $g\in \ell^1(X,\mu)$. Choose a sequence $(g^{m_r})_r\subseteq D$ such that $g^{m_r}\to g$ in $\ell^1(X,\mu)$ as $r \to \infty$. For every $k,\ell$ and every $r$, Lemma~\ref{lem:finite-resolvent}-(b) gives
\begin{align*}
\|J_{n_k}^q g_{n_k}-J_{n_\ell}^q g_{n_\ell}\|_1
&\leq \|J_{n_k}^q g_{n_k} - J_{n_k}^q g_{n_k}^{m_r}\|_1
+\|J_{n_k}^q g_{n_k}^{m_r}-J_{n_\ell}^q g_{n_\ell}^{m_r}\|_1
+\|J_{n_\ell}^q g_{n_\ell}^{m_r}-J_{n_\ell}^q g_{n_\ell}\|_1 \\
&\leq 2\|g-g^{m_r}\|_1+\|J_{n_k}^q g_{n_k}^{m_r}-J_{n_\ell}^q g_{n_\ell}^{m_r}\|_1.
\end{align*}
Since $(J_{n_k}^q g_{n_k}^{m_r})_k$ converges for each fixed $r$, the second term tends to zero as $k,\ell\to\infty$. Hence, $(J_{n_k}^q g_{n_k})_k$ is Cauchy in $\ell^1(X,\mu)$ and, therefore, converges.

\medskip
\noindent
\textbf{Step 4 (b): $\ell^1$-convergence of $(J_{n_k}^\lambda g_{n_k})$ for arbitrary $\lambda >0$ and arbitrary data $g \in \ell^1(X,\mu)$.}

Fix $g\in \ell^1(X,\mu)$ and let, for $\lm, \tau >0$,
\[
\varphi_n^\lambda:=J_n^\lambda g_n
\qquad \text{and} \qquad
\varphi_n^\tau:=J_n^\tau g_n.
\]
Subtracting the two resolvent equations gives
\begin{align*}
   \varphi_n^\lambda-\varphi_n^\tau+\lambda\bigl(\Lmin\varphi_n^\lambda-\Lmin\varphi_n^\tau\bigr)
&= \varphi_n^\lambda + \lambda \Lmin\varphi_n^\lambda - (\varphi_n^\tau + \tau\Lmin\varphi_n^\tau)  +  (\tau - \lambda) \Lmin\varphi_n^\tau\\
&= g_n - g_n +  (\tau - \lambda) \Lmin\varphi_n^\tau \\
&=  (\tau - \lambda) \Lmin\varphi_n^\tau.
\end{align*}
Since $\Lmin$ is accretive by Proposition~\ref{prop:accretivity_lmin}, 
applying the definition of accretivity
and the above identity, we obtain
\begin{equation}\label{eq:step3_1}
 \|\varphi_n^\lambda-\varphi_n^\tau\|_1
\leq \| \varphi_n^\lambda-\varphi_n^\tau+\lambda\bigl(\Lmin\varphi_n^\lambda-\Lmin\varphi_n^\tau\bigr) \|_1 = |\tau-\lambda|\|\Lmin\varphi_n^\tau\|_1
\end{equation}
for every  $\lambda, \tau >0$.
Using the resolvent equation once more, we get
\[
\Lmin\varphi_n^\tau=\frac{g_n  -\varphi_n^\tau}{\tau}
\]
and using the estimate $\|\varphi_n^\tau\|_1 \leq  \|g\|_1$ from 
Lemma~\ref{lem:finite-resolvent}-(b), we get
\begin{equation}\label{eq:step3_2}
\|\Lmin\varphi_n^\tau\|_1\leq \frac{2\|g\|_1}{\tau}.
\end{equation}
Therefore, combining \eqref{eq:step3_1} and \eqref{eq:step3_2}, we get
\begin{equation*}
\|J_n^\lambda g_n-J_n^\tau g_n\|_1
\leq 2\|g\|_1\frac{|\lambda-\tau|}{\min\{\lambda,\tau\}}
\end{equation*}
for all $n$.

Let $\lambda>0$ and $q \in \mathbb{Q}_+$. Then,
\begin{align}
  \|J_{n_k}^\lambda g_{n_k} - J_{n_\ell}^\lambda g_{n_\ell}\|_1 &\leq   \|J_{n_k}^\lambda g_{n_k} - J_{n_k}^q g_{n_k}\|_1 +  \|J_{n_k}^q g_{n_k} - J_{n_\ell}^q g_{n_\ell}\|_1 +  \|J_{n_\ell}^q g_{n_\ell} - J_{n_\ell}^\lambda g_{n_\ell}\|_1\nonumber \\
  &\leq 4\|g\|_1\frac{|\lambda-q|}{\min\{\lambda, q\}}
+ \|J_{n_k}^q g_{n_k} - J_{n_\ell}^q g_{n_\ell}\|_1. \label{eq:continuity-lambda}
\end{align}
Choose $q\in \mathbb Q_+$ close to $\lambda$. Since $(J_{n_k}^q g_{n_k})$ converges 
by Step~4(a), estimate \eqref{eq:continuity-lambda} shows that $(J_{n_k}^\lambda g_{n_k})$ is Cauchy in $\ell^1(X,\mu)$ and hence converges. Denote this limit by $u$, that is,
\begin{equation}\label{eq:varphi_n-->u_2}
 \lim_{k\to \infty} \|  J_{n_k}^\lambda g_{n_k} - u\|_1 = \lim_{k\to \infty} \|  \varphi_{n_k} - u\|_1 =0.
\end{equation}
In particular, from~\eqref{eq:sandwich} notice that
$$
u^- \leq u \leq u^+.
$$

\medskip
\noindent
\textbf{Step 4 (c): $u$ is a solution of $\mathcal{S}_\lambda u=g$, 
$u \in \Omega$, and $\|u\|_1 \leq \|g\|_1$.}

This follows from essentially the same arguments used in Step 1. From \eqref{eq:varphi_n-->u_2} it follows that \(\varphi_{n_k}(x)\to u(x)\) pointwise for every \(x\in X\). Indeed, for every fixed \(x\in X\),
\[
|\varphi_{n_k}(x)-u(x)|\,\mu(x)\le \|\varphi_{n_k}-u\|_1
\]
and, therefore, $\varphi_{n_k}(x)\to u(x)$ as $k\to\infty$. 
As $\phi\colon \mathbb R\to\mathbb R$ is continuous, we also obtain
\begin{equation}\label{eq:continuity}
\Phi\varphi_{n_k}(x)\to \Phi u(x)
\end{equation}
as $k \to \infty$ for every $x\in X$.

Since $u^- \le \varphi_{n_k} \le u^+$ by \eqref{eq:sandwich} and
the fact that {$\phi$ is monotone increasing with $\phi(0)=0$}, we have
\[
\Phi u^- \le \Phi \varphi_{n_k} \le \Phi u^+
\qquad\text{hence }\qquad
|\Phi \varphi_{n_k}| \le |\Phi u^-|+|\Phi u^+|.
\]
As $u^\pm  \in \dom(\Deltaphi) \subseteq \dom(\Delta\Phi)$, it follows that for every $x\in X$,
\begin{equation}\label{eq:dominate_convergence_2}
\sum_{y\in X} w(x,y)|\Phi \varphi_{n_k}(y)|
\le
\sum_{y\in X} w(x,y)\bigl(|\Phi u^-(y)|+|\Phi u^+(y)|\bigr)
<\infty.
\end{equation}
In particular, dominated convergence yields
$$
 \sum_{y\in X} w(x,y)|\Phi  u(y)| = \lim_{k\to\infty} \sum_{y\in X} w(x,y)|\Phi  \varphi_{n_k}(y)| < \infty.
$$
Thus, $\Phi u\in \dom(\Delta)$, that is, $u\in \dom(\Delta\Phi)$.

By~\eqref{eq:properties_finite-resolvent-solution} in Lemma~\ref{lem:finite-resolvent}, when $x\in Y_k$
equality \eqref{eq:finite-resolvent-1}-(i) reduces to
\begin{align*}
g_{n_k}(x) = \varphi_{n_k}(x)+\lambda\Deg(x)\Phi\varphi_{n_k}(x)  -\frac{\lambda}{\mu(x)}\sum_{y\in X}w(x,y)\Phi\varphi_{n_k}(y).
\end{align*}
Notice that, since $(Y_k)$ is an exhaustion, for every $x\in X$ there exists $K$ large enough such that $x\in X_{n_k}$ for every $k\geq K$.  Rearranging terms, passing to the limit, and invoking dominated convergence thanks to \eqref{eq:dominate_convergence_2}, together with the continuity property~\eqref{eq:continuity} we obtain
\begin{align}
\frac{\lambda}{\mu(x)} \sum_{y \in X} w(x,y) \Phi u (y) & = \lim_{k\to\infty} \frac{\lambda}{\mu(x)} \sum_{y \in X} w(x,y) \Phi \varphi_{n_k} (y) \label{eq:extraHP}\\
&= 
\lim_{k\to\infty} [- g_{n_k}(x) + \varphi_{n_k}(x)+\lambda \Deg(x) \Phi \varphi_{n_k}(x)] \nonumber\\
&= - g(x) + u(x) +\lambda \Deg(x) \Phi u(x)  \nonumber
\end{align}
for every  $x\in X$,
showing that
$\mathcal{S}_\lambda u = (\id + \lambda \Deltaphi) u  = g$. By the fact that $\lambda \Deltaphi u = g - u$ and $g, u \in \ell^1(X, \mu)$, we obtain $\Deltaphi u \in \ell^1(X, \mu)$. Therefore, $u\in \dom(\Deltaphi)$. Moreover,
\begin{align}
\|\mathcal{L}_{n_k} \varphi_{n_k} - \Deltaphi u\|_1 = \frac{1}{\lambda} \| \lambda\mathcal{L}_{n_k} \varphi_{n_k} + u - g\|_1 
&\leq \frac{1}{\lambda} \left( \|(\operatorname{id} + \lambda \mathcal{L}_{n_k}) \varphi_{n_k} - g\|_1 + \| \varphi_{n_k} - u\|_1 \right) \nonumber\\
&=\frac{1}{\lambda} \left( \|g_{n_k} - g\|_1 + \| \varphi_{n_k} - u\|_1 \right)  \to 0
\end{align}
as $k \to \infty$
by the definition of $g_{n_k}$ and \eqref{eq:varphi_n-->u_2}.

Summarizing, for every $\lambda >0$ and $g\in \ell^1(X,\mu)$, there exist $u$ and a sequence $(\varphi_{n_k})$ such that
\begin{itemize}
    \item $u\in \dom(\Deltaphi)$ and $\operatorname{supp}\varphi_{n_k} \subseteq Y_k$,
    \item $\lim_{k\to\infty} \| \varphi_{n_k} - u\|_1 = 0$,
    \item $\lim_{k\to\infty}    \| \mathcal{L}_{n_k} \varphi_{n_k} - \Deltaphi u\|_1 = 0$,
    \item $\mathcal{S}_\lambda u = g$.
\end{itemize}
Therefore, by Definition \ref{def:Omega}, $u \in \Omega_{(Y_k)}$ and $u$ 
is a solution of $\mathcal{S}_\lambda u = g$. 
The contraction estimate follows by Fatou's lemma 
and the  inequalities in  \eqref{eq:finite-resolvent-2}-(iii), which give
$$
\|u\|_1 \leq  \liminf_{k \to \infty} \|\varphi_{n_k}\|_1 \leq \|g\|_1.
$$
{ This completes the construction of the solution $u$
and proves all properties stated in Step 4.}

\medskip
\noindent
\textbf{Step 5: $\Omega$ is dense, $\Deltaphiom$ is m-accretive and $u$ preserves the sign of $g$.}

By Lemma~\ref{l:omega_denseness}, $\Omega = \Omega_{(Y_k)}$ 
is dense in $\ell^1(X,\mu)$. 
Since Proposition~\ref{p:accretivityDeltaphiom} applies to any exhaustion, $\Deltaphi_{|\Omega}$ is accretive. Combined with the surjectivity of $\mathcal{S}_\lambda \colon \Omega \to \ell^1(X,\mu)$ established in Step 4(c), this shows that $\Deltaphi_{|\Omega}$ is m-accretive.

The sign-preserving property follows from Step 1 and Step 2, taking into account that $\Omega = \Omega_{(Y_k)}\supseteq \Omega_{(X_n)}$ and that $\mathcal{S}_\lambda \colon \Omega \to \ell^1(X,\mu)$ is bijective.  This completes the proof of the theorem.
\end{proof}

We emphasize that Theorem~\ref{thm:new} does not establish m-accretivity on a set
$\Omega$ tied to an arbitrarily prescribed exhaustion. Rather, it shows that from \emph{any}
initial exhaustion $(X_n)$ one can extract a subsequence $(Y_k)$ for which the
conclusion holds by setting $\Omega = \Omega_{(Y_k)}$;
the resulting solution $u$ may therefore depend on the original choice of
$(X_n)$. In general, one can only guarantee the existence of a dense subset
$\Omega \subseteq \dom(\Deltaphi)$ on which $\mathcal{S}_\lambda \colon \Omega \to
\ell^1(X,\mu)$ is bijective, on the full domain, $\mathcal{S}_\lambda \colon
\dom(\Deltaphi) \to \ell^1(X,\mu)$ need not be injective as we will see. Surjectivity, however, always
holds, yielding the following corollary.
\begin{corollary}\label{cor:S_lambda_surjectivity}
The shifted operator $\mathcal{S}_{\lambda} \colon \dom(\Deltaphi) \to \ell^1(X,\mu)$ is
surjective for every $\lambda >0$.
\end{corollary}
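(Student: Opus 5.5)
This is a direct consequence of Theorem~\ref{thm:new}. Fix $\lambda>0$ and $g\in\ell^1(X,\mu)$. Theorem~\ref{thm:new} provides a dense subset $\Omega\subseteq\dom(\Deltaphi)$ and an element $u\in\Omega$ with $(\id+\lambda\Deltaphi_{|\Omega})u=g$. The restriction $\Deltaphiom$ acts as $\Deltaphi$ on $\Omega$, so $u\in\dom(\Deltaphi)$ and $\mathcal{S}_\lambda u=g$ with $\mathcal{S}_\lambda=\id+\lambda\Deltaphi$. Since $g$ was arbitrary, $\mathcal{S}_\lambda$ is onto $\ell^1(X,\mu)$. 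Because $\lambda>0$ was also arbitrary, this holds for every $\lambda$.

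The only point to check is that the set $\Omega$ in Theorem~\ref{thm:new} is a subset of $\dom(\Deltaphi)$ and not merely of $\ell^1(X,\mu)$. This holds by Definition~\ref{def:Omega}: every $\Omega_{(Y_k)}$ is defined as a subset of $\dom(\Deltaphi)$. It is also verified explicitly in Step~4(c) of the proof, where the constructed $u$ is shown to satisfy $\Phi u\in\dom(\Delta)$ and $\Deltaphi u=(g-u)/\lambda\in\ell^1(X,\mu)$.

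There is no real obstacle, since surjectivity passes to extensions automatically. In the notation of Lemma~\ref{l:inclusion}, $T_1=\Deltaphiom\subseteq T_2=\Deltaphi$, so the range of $S^{(1)}_\lambda$ is contained in the range of $S^{(2)}_\lambda$. Injectivity on the full domain does not follow from this argument, consistent with the remark preceding the corollary.
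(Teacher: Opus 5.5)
Your proposal is correct and matches the paper's proof, which likewise reads the surjectivity off Step~4(c) of the proof of Theorem~\ref{thm:new}. There, for every $\lambda>0$ and $g\in\ell^1(X,\mu)$, one obtains $u\in\Omega\subseteq\dom(\Deltaphi)$ with $\mathcal{S}_\lambda u=g$.
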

\begin{proof}
    This follows directly from Step~4(c) in the proof of Theorem~\ref{thm:new} above.
\end{proof}

In terms of the injectivity of  $\mathcal{S}_{\lambda}$, we now show that
this implies $\Omega = \Omega_{(X_n)} = \dom(\Deltaphi)$ for \emph{any} exhaustion $(X_n)$. 
Consequently, in this case, the solution $u$ of $\mathcal{S}_\lambda u=g$ is independent of  the choice of the exhaustion $(X_n)$.
We will see further consequences of this injectivity in the next subsection.

\begin{corollary}
    If $\mathcal{S}_{\lambda_0} \colon \dom(\Deltaphi) \to \ell^1(X,\mu)$ 
    is injective for some $\lambda_0>0$, then $\Omega_{(X_n)}=\dom(\Deltaphi)$
    for any exhaustion $(X_n)$.
\end{corollary}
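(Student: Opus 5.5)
The plan is to combine Theorem~\ref{thm:new} with Lemma~\ref{l:inclusion}. A subtlety is that Theorem~\ref{thm:new} yields m-accretivity on $\Omega_{(Y_k)}$ for a subsequence $(Y_k)$ of the given exhaustion, and not directly on $\Omega_{(X_n)}$. So I would first argue that $\Omega_{(Y_k)}=\dom(\Deltaphi)$, and only then get back to the original $(X_n)$.

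\emph{Step 1.} Fix an arbitrary exhaustion $(X_n)$ and extract the subsequence $(Y_k)$ of Step~3 in the proof of Theorem~\ref{thm:new}. Then $\Deltaphi_{|\Omega_{(Y_k)}}$ is m-accretive, so $\id+\lambda_0\Deltaphi_{|\Omega_{(Y_k)}}$ is onto $\ell^1(X,\mu)$. Since $\Deltaphi_{|\Omega_{(Y_k)}}\subseteq\Deltaphi$ and $\mathcal{S}_{\lambda_0}$ is injective on $\dom(\Deltaphi)$ by hypothesis, Lemma~\ref{l:inclusion} with $T_1=\Deltaphi_{|\Omega_{(Y_k)}}$ and $T_2=\Deltaphi$ gives $\Omega_{(Y_k)}=\dom(\Deltaphi)$.

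\emph{Step 2.} To pass back to the full sequence, I would use Steps 1--2 of the proof of Theorem~\ref{thm:new}. There, for $g\ge0$ (resp.\ $g\le0$), the \emph{whole} sequence $J^{\lambda_0}_n g_n$ converges to a solution $u^\pm\in\Omega_{(X_n)}$, with no subsequence needed. For general $g$, the sandwich \eqref{eq:sandwich} and the compactness from Lemma~\ref{lem:sequentially-compact} show that every subsequence of $(J^{\lambda_0}_n g_n)$ has a further subsequence converging in $\ell^1$. By the argument of Step~4(c), each such limit $u$ lies in $\dom(\Deltaphi)$ and solves $\mathcal{S}_{\lambda_0}u=g$. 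Injectivity of $\mathcal{S}_{\lambda_0}$ on $\dom(\Deltaphi)$ forces all subsequential limits to coincide. Hence the full sequence $\varphi_n=J^{\lambda_0}_n g_n$ converges to $u$, and the estimate $\|\Lmin\varphi_n-\Deltaphi u\|_1\le\lambda_0^{-1}(\|g_n-g\|_1+\|\varphi_n-u\|_1)$ shows $u\in\Omega_{(X_n)}$. Therefore $\id+\lambda_0\Deltaphi_{|\Omega_{(X_n)}}$ is surjective.

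\emph{Step 3.} Applying Lemma~\ref{l:inclusion} again with $T_1=\Deltaphi_{|\Omega_{(X_n)}}$ and $T_2=\Deltaphi$ at $\lambda_0$ gives $\Omega_{(X_n)}=\dom(\Deltaphi)$. This is the conclusion, and Step~1 becomes a special case. The main obstacle is Step~2: we must check that the identification argument of Step~4(c) works for an arbitrary convergent subsequence of $(X_n)$, not just the diagonal one. It does, since that argument only uses the domination $|\varphi_{n}|\le|u^-|+|u^+|$ and the pointwise equations on $X_{n}$. We must also make sure that surjectivity at the single value $\lambda_0$ suffices in Lemma~\ref{l:inclusion}, which is exactly how that lemma is stated.
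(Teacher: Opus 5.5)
Your proposal is correct and follows essentially the paper's argument. The key steps are the same: relative compactness of the full sequence $(\varphi_n)$ via the sandwich \eqref{eq:sandwich}, identification of every subsequential limit as a solution as in Step~4(c), injectivity forcing a single limit, and hence convergence of the whole sequence with the limit in $\Omega_{(X_n)}$. The differences are cosmetic: the paper starts directly from $u\in\dom(\Deltaphi)$ with $g=\mathcal{S}_{\lambda_0}u$ (so the limit is $u$ itself) rather than passing through surjectivity and Lemma~\ref{l:inclusion}, and your Step~1 is unnecessary.
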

\begin{proof}
Assume that $\mathcal{S}_{\lambda_0}$ is
injective for  $\lambda_0>0$ and fix an exhaustion~$(X_n)$. 
Let $u\in\dom(\Deltaphi)$ and set
\[
g=\mathcal{S}_{\lambda_0}u.
\]

With the notation of Theorem~\ref{thm:new},  let $g_n$ be the Dirichlet cutoff of $g$ and let  $\varphi_n$ be the finitely supported solution of
\[
\mathcal{S}_{\lambda_0,n}\varphi_n=g_n, \qquad \text{with } \operatorname{supp}\varphi_n\subseteq X_n.
\]

By the estimates in Step 3 of the proof of Theorem~\ref{thm:new}, applied to this fixed
exhaustion, the sequence $(\varphi_n)$ is relatively compact in
$\ell^1(X,\mu)$. Let $(\varphi_{n_k})$ be any subsequence. Passing to a further subsequence if
necessary, we may assume that
$\varphi_{n_k}\to v$
in $\ell^1(X,\mu)$ as $k \to \infty$.
Passing to the limit in
\[
\mathcal{S}_{\lambda_0,n_k}\varphi_{n_k}=g_{n_k}
\]
as in Step~4(c) of the proof of Theorem~\ref{thm:new} gives $v\in\dom(\Deltaphi)$ and
\[
\mathcal{S}_{\lambda_0}v=g.
\]
Since $\mathcal{S}_{\lambda_0}$ is injective and $\mathcal{S}_{\lambda_0}u=g = \mathcal{S}_{\lambda_0}v$, we obtain
$v=u$. Thus, every subsequence of $(\varphi_n)$ has a further subsequence
converging to $u$. Hence, by a standard topological argument,
$\varphi_n \to u$
in $\ell^1(X,\mu)$ as $n \to \infty$.

Finally,
\[
\mathcal{L}_n \varphi_n=\frac{g_n-\varphi_n}{\lambda_0}
\to
\frac{g-u}{\lambda_0}
=\Deltaphi u
\]
in $\ell^1(X,\mu)$ as $n \to \infty$.
Therefore, $u\in\Omega_{(X_n)}$. Since $u\in\dom(\Deltaphi)$ was arbitrary, we get
$\dom(\Deltaphi)\subseteq\Omega_{(X_n)}.$
The opposite inclusion is part of the definition of $\Omega_{(X_n)}$, so $\Omega_{(X_n)}=\dom(\Deltaphi)$ as claimed.
\end{proof}

\subsubsection{\texorpdfstring{On the m-accretivity of $\Deltaphi$}{On the m-accretivity of Delta Phi}}

We note that the accretivity of $\Deltaphi$ implies that
the shifted operator $\mathcal{S}_\lambda=\id +\lm \Deltaphi$
is injective on $\dom(\Deltaphi)$ by definition.
A surprising fact, which follows from our previous results, is that the injectivity
of $\mathcal{S}_\lm$ is already sufficient for both the accretivity and the m-accretivity of $\Deltaphi$. 
That is, for $\Deltaphi$, accretivity, m-accretivity, and injectivity of $\mathcal{S}_\lm$ are all equivalent.
This allows us to conclude the m-accretivity of $\Deltaphi$ on $\dom(\Deltaphi)$
under assumptions that imply the injectivity of $\mathcal{S}_\lm$.

We first establish the equivalence of m-accretivity, accretivity
and the injectivity of $\mathcal{S}_\lm$ for the maximal operator
$\Deltaphi$.

\begin{theorem}\label{t:fix} Let $G$ be a graph.
Then the following statements are equivalent:
    \begin{itemize}
        \item[\textup{(i)}] $\Deltaphi$ is m-accretive.
        \item[\textup{(ii)}] $\Deltaphi$ is accretive.
        \item[\textup{(iii)}] $\mathcal{S}_\lambda$ is injective on $\dom(\Deltaphi)$
        for some (equivalently, all) $\lambda >0$.
    \end{itemize}
\end{theorem}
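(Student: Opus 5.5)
The implications (i) $\Rightarrow$ (ii) $\Rightarrow$ (iii) are immediate from Definition~\ref{def:m-accretivity}: m-accretivity includes accretivity, and accretivity gives $\|\mathcal{S}_\lambda u-\mathcal{S}_\lambda v\|_1\geq\|u-v\|_1$, hence injectivity of $\mathcal{S}_\lambda$ for every $\lambda>0$. The substance is therefore (iii) $\Rightarrow$ (i). We also have to handle ``some'' versus ``all'' $\lambda$; this will fall out of the argument, since we derive (i) from injectivity at a single $\lambda_0$, and (i) gives injectivity at every $\lambda$.

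For (iii) $\Rightarrow$ (i), assume $\mathcal{S}_{\lambda_0}$ is injective on $\dom(\Deltaphi)$ for some $\lambda_0>0$. I would use the dense subset from Theorem~\ref{thm:new}: fix an exhaustion and the subsequence $(Y_k)$ from that proof, and set $\Omega=\Omega_{(Y_k)}\subseteq\dom(\Deltaphi)$, so that $\Deltaphi_{|\Omega}$ is m-accretive. Then apply Lemma~\ref{l:inclusion} with $T_1=\Deltaphi_{|\Omega}\subseteq T_2=\Deltaphi$ and $\lambda=\lambda_0$. Here $S^{(1)}_{\lambda_0}$ is onto because $T_1$ is m-accretive, and $S^{(2)}_{\lambda_0}=\mathcal{S}_{\lambda_0}$ is injective by assumption. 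The lemma then gives $\Deltaphi=\Deltaphi_{|\Omega}$, which is m-accretive. Alternatively, one can invoke the preceding corollary, which says $\Omega_{(X_n)}=\dom(\Deltaphi)$ for every exhaustion, and combine it with Theorem~\ref{thm:new}.

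The only point needing care is that m-accretivity of $\Deltaphi_{|\Omega}$ in Theorem~\ref{thm:new} holds for all $\lambda>0$, while injectivity of $\mathcal{S}_\lambda$ was assumed only at $\lambda_0$. Lemma~\ref{l:inclusion} needs surjectivity and injectivity at just one common $\lambda$, so this causes no difficulty. Once $\Deltaphi=\Deltaphi_{|\Omega}$, accretivity for all $\lambda>0$ is inherited from Proposition~\ref{p:accretivityDeltaphiom}, and surjectivity for all $\lambda$ from Theorem~\ref{thm:new}. The main potential obstacle, that surjectivity alone does not control the full domain, is exactly what Lemma~\ref{l:inclusion} resolves. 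So I expect no real difficulty: the equivalence is essentially a bookkeeping consequence of Theorem~\ref{thm:new}.
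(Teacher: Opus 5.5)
Your proposal is correct and matches the paper's proof. The paper likewise takes the m-accretive restriction to $\Omega$ from Theorem~\ref{thm:new} and applies Lemma~\ref{l:inclusion} at the single $\lambda_0$ where injectivity is assumed, which forces the restriction to equal the full maximal operator.
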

\begin{proof}
The implications (i) $\Longrightarrow$ (ii) $\Longrightarrow$ (iii) are immediate. So, it remains to prove (iii) $\Longrightarrow$ (i). 

Let $\Omega \subseteq \dom(\Deltaphi)$ 
be as in Theorem~\ref{thm:new} so that $\Deltaphiom$ is m-accretive.
Let $\lambda_0 >0$ be such that (iii) $\mathcal{S}_{\lambda_0}$ is injective on $\dom(\Deltaphi)$.
Then, $\mathcal{S}_{\lm_0}\colon \Omega \to \ell^1(X,\mu)$
is onto by m-accretivity while $\mathcal{S}_{\lm_0}\colon \dom(\Deltaphi) \to \ell^1(X,\mu)$ is injective
by assumption.
Since $\Omega \subseteq \dom(\Deltaphi)$, the conclusion follows by Lemma~\ref{l:inclusion}
which gives $\Deltaphi=\Deltaphiom$.
\end{proof}

As an easy consequence of the above, we now observe that injectivity
of the shifted operator is equivalent to the fact that $\Omega$ such
that $\Deltaphiom$ is m-accretive is the entire domain of $\Deltaphi$.

\begin{corollary}
    $\mathcal{S}_\lambda$ is injective for some (equivalently, all) $\lambda>0$ if and only
    if $\Deltaphi = \Deltaphiom$ with $\Omega$ given by Theorem~\ref{thm:new}.
\end{corollary}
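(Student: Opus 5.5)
The corollary follows from Theorem~\ref{t:fix}, Theorem~\ref{thm:new} and Lemma~\ref{l:inclusion}. Throughout, $\Omega$ is the dense set constructed in Theorem~\ref{thm:new}, so that $\Deltaphiom$ is m-accretive.

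\emph{Forward direction.} Suppose $\mathcal{S}_{\lambda_0}$ is injective on $\dom(\Deltaphi)$ for some $\lambda_0>0$. Since $\Deltaphiom$ is m-accretive, $\mathcal{S}_{\lambda_0}\colon\Omega\to\ell^1(X,\mu)$ is onto. We also have $\Deltaphiom\subseteq\Deltaphi$. Lemma~\ref{l:inclusion}, applied with $T_1=\Deltaphiom$ and $T_2=\Deltaphi$, then gives $\Deltaphi=\Deltaphiom$. This is exactly the argument in the proof of Theorem~\ref{t:fix}, (iii)$\Rightarrow$(i), so one may simply cite it.

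\emph{Reverse direction.} Suppose $\Deltaphi=\Deltaphiom$. Then $\Deltaphi$ inherits m-accretivity, hence accretivity, from $\Deltaphiom$. By definition, accretivity gives $\|\mathcal{S}_\lambda u-\mathcal{S}_\lambda v\|_1\geq\|u-v\|_1$ for all $u,v\in\dom(\Deltaphi)$, so $\mathcal{S}_\lambda$ is injective for every $\lambda>0$.

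\emph{``Some'' versus ``all''.} The reverse direction yields injectivity for every $\lambda>0$, while the forward direction needs it for only one $\lambda_0$. Chaining the two implications therefore shows that injectivity for some $\lambda$ implies injectivity for all $\lambda$, which settles the parenthetical clause; this is also contained in Theorem~\ref{t:fix}.

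There is no real obstacle here. The only point to watch is that $\Omega$ is fixed once and for all by Theorem~\ref{thm:new}, independently of $\lambda$, so that the surjectivity of $\mathcal{S}_{\lambda_0}$ on $\Omega$ is available for the particular $\lambda_0$ at which injectivity is assumed.
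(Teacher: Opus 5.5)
Your proof is correct and takes the same route as the paper. The forward direction is the same application of Lemma~\ref{l:inclusion} as in Theorem~\ref{t:fix}, (iii)$\Rightarrow$(i), and the reverse direction uses the m-accretivity of $\Deltaphiom$ from Theorem~\ref{thm:new}; the only difference is that you read off injectivity directly from accretivity, where the paper cites Theorem~\ref{t:fix}.
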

\begin{proof}
    If $\mathcal{S}_\lm$ is injective for some $\lm>0$, then $\Deltaphi = \Deltaphiom$
    by Lemma~\ref{l:inclusion} as in the proof of (iii) $\Longrightarrow$ (i)
    above.

    Conversely, if $\Deltaphi = \Deltaphiom$, then $\Deltaphi$ is m-accretive
    since $\Deltaphiom$ is m-accretive by Theorem~\ref{thm:new}. Then, $\mathcal{S}_\lm$
    is injective for all $\lm>0$ by Theorem~\ref{t:fix}.
\end{proof}

With all of the tools at hand, we now harvest results for the maximal
porous medium-type operator. We first start with the case of all infinite
paths having infinite measure.

\begin{theorem}\label{t:lf}
Let $G$ satisfy $\IP$. Then,  $\Deltaphi$  
is m-accretive. In particular, this holds if $G$ satisfies $\UM$.
\end{theorem}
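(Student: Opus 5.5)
By Theorem~\ref{t:fix}, it suffices to show that $\mathcal{S}_\lambda=\id+\lambda\Deltaphi$ is injective on $\dom(\Deltaphi)$ for a single $\lambda>0$. Accretivity and m-accretivity then follow automatically: the m-accretive restriction $\Deltaphiom$ from Theorem~\ref{thm:new} must coincide with $\Deltaphi$ by Lemma~\ref{l:inclusion}.

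Injectivity will come from the comparison principle, Theorem~\ref{thm:min}, applied with $\sigma\equiv 1$ and $p=1$. Let $u_1,u_2\in\dom(\Deltaphi)$ satisfy $\mathcal{S}_\lambda u_1=\mathcal{S}_\lambda u_2=g$. Both functions lie in $\ell^1(X,\mu)\cap\dom(\Delta\Phi)$ and solve $(1+\lambda\Delta\Phi)u_k=g$ for $k=1,2$. Theorem~\ref{thm:min} shows that, since $u_1,u_2\in\ell^1(X,\mu)$, the difference $v=\Phi u_1-\Phi u_2$ satisfies the summability condition of Case~3, with the function $\gamma$ built from the generalized inverse of $t\mapsto\phi(t)-\phi(-t)$. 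Assumption $\IP$ is exactly the remaining hypothesis of Case~3, namely that every infinite path has infinite measure. Part~(i) of Theorem~\ref{thm:min} therefore yields injectivity of $\id+\lambda\Delta\Phi$ on $\ell^1(X,\mu)\cap\dom(\Delta\Phi)$. In more detail, applying the comparison in both directions with $g_1=g_2=g$ gives $u_1\ge u_2$ and $u_2\ge u_1$, hence $u_1=u_2$. This proves the m-accretivity of $\Deltaphi$ under $\IP$.

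For the ``in particular'' statement, suppose $\UM$ holds, that is, $\mu\ge c>0$. Then every infinite path $(x_n)$ consists of distinct vertices and so satisfies $\sum_n\mu(x_n)\ge\sum_n c=\infty$. Thus $\IP$ holds, as already noted after the list of assumptions, and the first part applies.

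There is essentially no serious obstacle, since the heavy lifting sits in Theorems~\ref{thm:new}, \ref{t:fix} and~\ref{thm:min}. The only point requiring care is that Theorem~\ref{thm:min}(i) is stated on all of $\ell^1(X,\mu)\cap\dom(\Delta\Phi)$, so no extra integrability of $\Phi u_k$ is needed. In particular, the summability condition on $v$ is guaranteed by $u_k\in\ell^1$ alone, without containment $\C$.
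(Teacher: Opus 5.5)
Your proof is correct and matches the paper's own argument: under $\IP$, Theorem~\ref{thm:min}~(i) gives injectivity of $\mathcal{S}_\lambda$ on $\dom(\Deltaphi)\subseteq \ell^1(X,\mu)\cap\dom(\Delta\Phi)$, and Theorem~\ref{t:fix} upgrades this to m-accretivity. The ``in particular'' claim then follows because $\UM$ implies $\IP$.
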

\begin{proof}
Since $\mathcal{S}_\lambda$ acts on
$\dom(\Deltaphi) \subseteq \ell^1(X,\mu)\cap \dom(\Delta \Phi)$, 
if $\IP$ holds, Theorem~\ref{thm:min}~(i) gives that
$\mathcal{S}_\lambda$ is injective. Then, Theorem~\ref{t:fix} implies
that $\Deltaphi$ is m-accretive.
\end{proof}

Next, we use the accretivity result in 
Theorem~\ref{t:accretivity_bd} to conclude that $\Deltaphi$ is m-accretive
on its  domain under bounded edge degree and containment.  
\begin{theorem}\label{t:accretive_bd}
Let $G$ satisfy $\B$ and $\Phi$ satisfy $\C$. Then, 
$\Deltaphi$ is m-accretive.
\end{theorem}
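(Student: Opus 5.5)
This is a direct combination of two results already proved. Under $\B$ and $\C$, Theorem~\ref{t:accretivity_bd} gives that $\Deltaphi$ is accretive on its full maximal domain $\dom(\Deltaphi)$. Theorem~\ref{t:fix} states that, for every graph $G$, accretivity of $\Deltaphi$ is equivalent to m-accretivity. So I would simply invoke the implication (ii) $\Longrightarrow$ (i) of Theorem~\ref{t:fix}.

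To make the chain explicit, I would unpack the argument as follows.

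1. Accretivity yields $\|\mathcal{S}_\lambda u-\mathcal{S}_\lambda v\|_1\geq\|u-v\|_1$ for all $u,v\in\dom(\Deltaphi)$. Hence $\mathcal{S}_\lambda$ is injective on $\dom(\Deltaphi)$ for every $\lambda>0$.
2. Theorem~\ref{thm:new} provides a dense set $\Omega\subseteq\dom(\Deltaphi)$ such that $\Deltaphiom$ is m-accretive. In particular, $\mathcal{S}_\lambda\colon\Omega\to\ell^1(X,\mu)$ is onto.
3. Lemma~\ref{l:inclusion}, applied with $T_1=\Deltaphiom\subseteq T_2=\Deltaphi$, gives $\Deltaphi=\Deltaphiom$.
4. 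Therefore $\Deltaphi$ is accretive and $\mathcal{S}_\lambda$ is surjective, which is exactly m-accretivity.

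There is essentially no obstacle here. The substantive work is the accretivity estimate in Theorem~\ref{t:accretivity_bd}, where $\B$ and $\C$ justify the absolute convergence needed for Green's formula with $v=\Phi\varphi_1-\Phi\varphi_2\in\ell^1(X,\mu)$. The other ingredient is the surjectivity built into Theorem~\ref{thm:new}, which holds without any assumptions on the graph.

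The only point to check is that the hypotheses of Theorem~\ref{t:fix} place no additional restriction on $G$ or $\phi$ beyond the standing assumptions: continuity, monotonicity, and $\phi(0)=0$. Both Theorem~\ref{t:fix} and Theorem~\ref{t:accretivity_bd} are stated under exactly these standing assumptions, so the two results combine directly.
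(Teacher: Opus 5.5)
Your proof is correct and is exactly the paper's argument: Theorem~\ref{t:accretivity_bd} gives accretivity of $\Deltaphi$ under $\B$ and $\C$, and the implication (ii)~$\Longrightarrow$~(i) of Theorem~\ref{t:fix} upgrades this to m-accretivity. Your steps 1--4 also reproduce the paper's own proof of Theorem~\ref{t:fix}, which uses Theorem~\ref{thm:new} and Lemma~\ref{l:inclusion}.
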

\begin{proof}
By Theorem~\ref{t:accretivity_bd}, we know that $\Deltaphi$ is accretive.
The conclusion now follows by Theorem~\ref{t:fix}.
\end{proof}

\begin{remark}[Non-accretive $\Deltaphi$]
    We point out that there exist examples where $\Deltaphi$ is not accretive and, in particular, 
    $\mathcal{S}_{\lambda}$ is not injective and
    $\Deltaphi \neq \Deltaphiom$. See Theorem~\ref{t:counter} and Remark~\ref{rem:LneqL|omega}. 
\end{remark}

\subsection{\texorpdfstring{The operators $\deltaphim$ and $\Deltaphi$}{The operators Delta Phi min and Delta Phi}}
Summarizing the results in the previous subsection, we see that $\Deltaphi$ is 
m-accretive when infinite paths 
have infinite measure (in particular, when the measure is uniformly bounded from 
below) or when the edge degree is bounded and $\Phi$ preserves $\ell^1(X,\mu)$.
This follows as there always exists a subset $\Omega$ such that the restriction 
$\Deltaphiom$ to this subset is m-accretive 
and $\Deltaphi$ and $\Deltaphiom$
agree under these additional assumptions.
In this subsection, we rather consider the minimal extension
of the restriction of $\Deltaphi$ to the finitely supported 
functions and show that it is m-accretive whenever
it agrees with $\Deltaphi$. We then give a metric
completeness criterion for this agreement.

We let $\Deltaphi_c =\Deltaphi_{|C_c(X)}$ and,
whenever $\Deltaphi_c$ is closable, we  let $\deltaphim$ be the closure
of $\Deltaphi_c$ in $\ell^1(X,\mu)$. That is, $\deltaphim$ has domain
$$\dom(\deltaphim) =\{ u \in \ell^1(X,\mu) \mid \exists \varphi_n \in C_c(X) \textup{ with }
\varphi_n \to u \textup { and } \Deltaphi \varphi_n \to v \textup{ in } \ell^1(X,\mu) \}$$
and acts as $\deltaphim u = v$ for $u \in \dom(\deltaphim)$. 
We note that $\Deltaphi_c$ is always accretive by Proposition~\ref{p:PMTO_accretivity_Cc}.
Therefore, $\deltaphim$ is accretive whenever defined by taking limits. 
This is certainly the case if $\Deltaphi$ is a closed operator in which case 
$\deltaphim \subseteq \Deltaphi$ and thus $\Deltaphi_c$ is closable.

We remark in this connection that unlike what happens in the linear case, a nonlinear densely
defined accretive operator is not necessarily closable. 
Thus, it is not clear if the operator $\deltaphim$ is defined 
without further assumptions on the graph. 
Similarly, it is not clear when $\Deltaphi$ is closed. 
However, we note that whenever $\deltaphim$ is defined and agrees with the maximal
operator, they are automatically m-accretive.
\begin{lemma}\label{l:min_equal_accretive}
If $\Deltaphi_c$ is closable, then $\deltaphim$ is accretive. 
In particular, if $\Deltaphi$ is closed, then $\Deltaphi_c$ is closable
and $\deltaphim \subseteq \Deltaphi$. If $\Deltaphi$ is accretive, then
$\deltaphim =\Deltaphi$ if and only if $\deltaphim$ is m-accretive.
\end{lemma}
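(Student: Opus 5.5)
The lemma has three parts, and I will prove them in order, using only the definitions and Lemma~\ref{l:inclusion}.

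\textbf{Accretivity of $\deltaphim$.} Let $u,v\in\dom(\deltaphim)$. By definition there are $\varphi_n,\psi_n\in C_c(X)$ with $\varphi_n\to u$, $\psi_n\to v$, $\Deltaphi\varphi_n\to\deltaphim u$ and $\Deltaphi\psi_n\to\deltaphim v$ in $\ell^1(X,\mu)$. Closability of $\Deltaphi_c$ guarantees that these limits depend only on $u$ and $v$. Proposition~\ref{p:PMTO_accretivity_Cc} says $\Deltaphi_c$ is accretive, so for every $\lambda>0$
\[
\|(\varphi_n+\lambda\Deltaphi\varphi_n)-(\psi_n+\lambda\Deltaphi\psi_n)\|_1\geq\|\varphi_n-\psi_n\|_1 .
\]
Both sides are continuous in $\ell^1$. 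Letting $n\to\infty$ gives the accretivity inequality for $\deltaphim$.

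\textbf{The closed case.} Assume $\Deltaphi$ is closed. Its graph is then a closed set containing the graph of $\Deltaphi_c$, since $C_c(X)\subseteq\dom(\Deltaphi)$ by Lemma~\ref{l:omega_denseness}. So the closure of the graph of $\Deltaphi_c$ lies inside the graph of $\Deltaphi$. Because $\Deltaphi$ is single-valued, this closure is again the graph of a map. This shows that $\Deltaphi_c$ is closable and that $\deltaphim\subseteq\Deltaphi$.

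\textbf{The equivalence.} Assume $\Deltaphi$ is accretive. If $\deltaphim=\Deltaphi$, then $\deltaphim$ is m-accretive by Theorem~\ref{t:fix}, because accretivity of $\Deltaphi$ implies its m-accretivity. Conversely, suppose $\deltaphim$ is defined and m-accretive, and that $\deltaphim\subseteq\Deltaphi$. Then Lemma~\ref{l:inclusion} applies with $T_1=\deltaphim$ and $T_2=\Deltaphi$: $\mathcal S_\lambda$ restricted to $\dom(\deltaphim)$ is onto, and $\mathcal S_\lambda$ on $\dom(\Deltaphi)$ is injective. The lemma then gives $\deltaphim=\Deltaphi$.

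\textbf{Main obstacle.} The hard point is securing $\deltaphim\subseteq\Deltaphi$ when only accretivity of $\Deltaphi$ is assumed and closedness is not. I would check pointwise that limits of $\Deltaphi\varphi_n$ along $\ell^1$-convergent $\varphi_n$ coincide with $\Delta\Phi u$. This needs local convergence of $\sum_y w(x,y)\Phi\varphi_n(y)$, and that may fail without a domination such as the one used in Step~4(c). If that check does not go through, I would show instead that accretivity of $\Deltaphi$ forces closedness. The tool would be Theorem~\ref{t:fix}: m-accretive operators have everywhere-defined non-expansive resolvents, hence are closed, which yields the inclusion from the previous paragraph. Otherwise I would read the final statement as implicitly assuming the inclusion, as in the closed case.
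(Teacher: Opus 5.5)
Your proposal is correct and follows the paper's proof. Accretivity of $\Deltaphi_c$ passes to the closure by taking limits, and closedness of $\Deltaphi$ gives closability and $\deltaphim\subseteq\Deltaphi$. For the converse, the paper uses precisely your fallback: accretivity of $\Deltaphi$ gives m-accretivity by Theorem~\ref{t:fix}, hence closedness, hence $\deltaphim\subseteq\Deltaphi$, and then Lemma~\ref{l:inclusion}; your resolvent-continuity argument for closedness is valid, while the paper cites Barbu instead.

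You should make that fallback the main line rather than a contingency. The inclusion is not an extra hypothesis of the lemma, and the pointwise check is unnecessary and may fail without domination. Reading the inclusion as implicitly assumed would prove a weaker statement.
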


\begin{proof}
$\Deltaphi_c$ is accretive by Proposition~\ref{p:PMTO_accretivity_Cc}. Thus, when
$\Deltaphi_c$ is closable, $\deltaphim$ is accretive, being the closure of an accretive
operator by taking limits. Moreover, since
$C_c(X) \subseteq \dom(\Deltaphi)$, we have $\Deltaphi_c \subseteq \Deltaphi$, and hence
$\deltaphim \subseteq \Deltaphi$ whenever $\Deltaphi$ is closed.

If $\Deltaphi$ is accretive and $\Deltaphi=\deltaphim$, then Theorem~\ref{t:fix} gives
m-accretivity of the operators.

Conversely, assume $\deltaphim$ is m-accretive and $\Deltaphi$ is accretive. By
Theorem~\ref{t:fix}, $\Deltaphi$ is m-accretive, hence closed, since m-accretive
operators are always closed, see \cite[III.~Proposition~3.4]{barbu2010nonlinear}. As
we have already shown, closedness of $\Deltaphi$ together with $\Deltaphi_c \subseteq \Deltaphi$
yields $\deltaphim \subseteq \Deltaphi$. Lemma~\ref{l:inclusion} then gives
$\deltaphim =\Deltaphi$.
\end{proof}

In the rest of this subsection, 
we will give various conditions under which these two operators agree.
First, we note the obvious fact that boundedness of degree along with continuity
of $\Phi$ imply this agreement.
\begin{proposition}\label{p:agreement}
    If $G$ satisfies $\BD$ and $\Phi\colon \ell^1(X,\mu) \to \ell^1(X,\mu)$ is
    continuous, then $\Deltaphi = \deltaphim$.
    In particular, $\deltaphim$ is m-accretive.
\end{proposition}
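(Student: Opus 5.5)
Under $\BD$ together with continuity of $\Phi$ on $\ell^1(X,\mu)$, the plan is to show that $\Deltaphi$ is an everywhere-defined continuous operator on $\ell^1(X,\mu)$. Then $C_c(X)$, being dense, already gives the whole operator in the closure.

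\emph{Step 1: domain and continuity.} Under $\BD$ the formal Laplacian restricts to a bounded operator on $\ell^1(X,\mu)$ with $\ell^1(X,\mu)\subseteq \dom(\Delta)$, by \cite[Theorem~2.15]{keller2021graphs}, as used in Corollary~\ref{c:continuity}. Continuity of $\Phi\colon\ell^1(X,\mu)\to\ell^1(X,\mu)$ includes $\Phi(\ell^1(X,\mu))\subseteq \ell^1(X,\mu)$. Hence for every $u\in\ell^1(X,\mu)$ we have $\Phi u\in\ell^1(X,\mu)\subseteq\dom(\Delta)$ and $\Delta\Phi u\in \ell^1(X,\mu)$. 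Therefore $\dom(\Deltaphi)=\ell^1(X,\mu)$, and $\Deltaphi=\Delta\circ\Phi$ is continuous as a composition of continuous maps. In particular $\Deltaphi$ is closed: if $u_n\to u$ and $\Deltaphi u_n\to v$, then continuity forces $v=\Deltaphi u$. So by Lemma~\ref{l:min_equal_accretive}, $\Deltaphi_c$ is closable and $\deltaphim\subseteq\Deltaphi$.

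\emph{Step 2: reverse inclusion.} Take $u\in\ell^1(X,\mu)$ and choose $\varphi_n\in C_c(X)$ with $\varphi_n\to u$ in $\ell^1(X,\mu)$, for instance $\varphi_n=1_{X_n}u$ along an exhaustion. By continuity, $\Deltaphi\varphi_n\to\Deltaphi u$ in $\ell^1(X,\mu)$. By the definition of $\dom(\deltaphim)$ this gives $u\in\dom(\deltaphim)$ with $\deltaphim u=\Deltaphi u$. Hence $\Deltaphi=\deltaphim$.

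\emph{Step 3: m-accretivity.} It remains to see that $\Deltaphi$ is accretive; Lemma~\ref{l:min_equal_accretive} (or Theorem~\ref{t:fix}) then upgrades this to m-accretivity. Note that $\BD$ implies $\B$, and continuity of $\Phi$ on $\ell^1(X,\mu)$ implies $\C$. So Theorem~\ref{t:accretivity_bd} gives accretivity, and Theorem~\ref{t:accretive_bd} gives m-accretivity directly. Alternatively, $\deltaphim$ is always accretive as the closure of the accretive $\Deltaphi_c$, and the equality from Step 2 transfers this accretivity to $\Deltaphi$.

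There is no serious obstacle. The only point needing care is Step 1, namely that the maximal domain is all of $\ell^1(X,\mu)$ and that $\Delta$ acts boundedly there under $\BD$; this is exactly where the killing-term bound in $\BD$ (rather than just $\B$) is used.
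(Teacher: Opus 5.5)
Your proof is correct and follows essentially the same route as the paper. $\BD$ makes $\Delta$ bounded on $\ell^1(X,\mu)$, so $\Deltaphi=\Delta\Phi$ is everywhere defined, continuous and hence closed; density of $C_c(X)$ then gives $\Deltaphi=\deltaphim$, and m-accretivity follows from Theorem~\ref{t:accretivity_bd} together with Lemma~\ref{l:min_equal_accretive} or Theorem~\ref{t:fix}. Your alternative in Step~3 is also valid and bypasses Theorem~\ref{t:accretivity_bd}: you transfer the automatic accretivity of $\deltaphim$ to $\Deltaphi$ through the equality and then invoke Theorem~\ref{t:fix}.
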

\begin{proof}
    We note $\BD$ implies that $\Delta$ gives a bounded operator
    on $\ell^1(X,\mu)$, e.g., \cite[Theorem~2.15]{keller2021graphs}.
    Thus, $\BD$ along with continuity of $\Phi$ implies that $\Deltaphi$
    is continuous. Therefore, 
    $\Deltaphi$ is closed and the equality $\Deltaphi=\deltaphim$ now follows
    easily from the continuity of $\Deltaphi$ as $C_c(X)$ is dense in $\dom(\Deltaphi)$. 
    The m-accretivity then follows by Lemma~\ref{l:min_equal_accretive} as $\Deltaphi$
    is accretive by Theorem~\ref{t:accretivity_bd}.
\end{proof}

As an example of the above,
in the porous medium case it is sufficient for the degree to be  bounded and 
for $\Phi$ to satisfy the containment property, see Corollary~\ref{c:continuity}. 

We now discuss when $\Deltaphi$ is closed which will imply
that $\Deltaphi_c$ is closable.
We begin by investigating this property 
for $\Deltaphip$ and ultimately specialize to $p=1$.

We recall that for $p \geq 1$, $\Deltaphip$ is the operator with domain
$$\dom(\Deltaphip)= \{u \in \ellp \cap \dom (\Delta \Phi) \mid \Delta \Phi u \in \ellp\}$$ 
which acts as
$$\Deltaphip u(x) = \Delta \Phi u(x)$$
for all $u \in \dom(\Deltaphip)$ and $x \in X$.
For the Laplacian case, that is, when $\phi(s)=s$, we will write $\Delta^{(p)}$ for $\Deltaphip$.
We first give a general lemma
which follows by definitions.

\begin{lemma}\label{l:PT_closed}
Let $p \in [1,\infty]$.
If $\Phi\colon \ell^p(X,\mu) \to \ell^p(X,\mu)$ is continuous
and $\Delta^{(p)}$ is closed, then $\Deltaphip$ is closed.
\end{lemma}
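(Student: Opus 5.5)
The plan is to verify closedness directly from the sequential definition. Let $u_n \in \dom(\Deltaphip)$ with $u_n \to u$ and $\Deltaphip u_n \to f$ in $\ell^p(X,\mu)$. We must show that $u \in \dom(\Deltaphip)$ and $\Deltaphip u = f$. The idea is to transfer the problem to the linear operator $\Delta^{(p)}$ by considering the sequence $v_n := \Phi u_n$.

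The first step uses the hypothesis that $\Phi\colon \ell^p(X,\mu)\to\ell^p(X,\mu)$ is a continuous map. Since $u_n, u \in \ell^p(X,\mu)$, this gives $v_n = \Phi u_n \in \ell^p(X,\mu)$, $\Phi u \in \ell^p(X,\mu)$, and $v_n \to \Phi u$ in $\ell^p(X,\mu)$.

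The second step checks that $v_n \in \dom(\Delta^{(p)})$. By definition of $\dom(\Deltaphip)$, we have $\Phi u_n \in \dom(\Delta)$ and $\Delta \Phi u_n \in \ell^p(X,\mu)$. Together with $\Phi u_n \in \ell^p(X,\mu)$ from the first step, this is exactly $v_n \in \dom(\Delta^{(p)})$, and moreover $\Delta^{(p)} v_n = \Deltaphip u_n \to f$.

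The third step applies closedness of $\Delta^{(p)}$ to the pair $v_n \to \Phi u$, $\Delta^{(p)} v_n \to f$. This yields $\Phi u \in \dom(\Delta^{(p)})$ and $\Delta \Phi u = f$. In particular, $\Phi u \in \dom(\Delta)$, so $u \in \dom(\Delta\Phi)$, and $\Delta \Phi u = f \in \ell^p(X,\mu)$. Since also $u \in \ell^p(X,\mu)$, we conclude $u \in \dom(\Deltaphip)$ with $\Deltaphip u = f$.

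There is no real obstacle here. The only point needing care is that the continuity hypothesis is read as including the mapping property $\Phi(\ell^p)\subseteq \ell^p$, i.e.\ \Cp. This is what places $v_n$ and $\Phi u$ in $\ell^p$, so that they lie in the domain of the $\ell^p$-operator $\Delta^{(p)}$ and not merely in $\dom(\Delta)$.
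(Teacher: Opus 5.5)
Your proof is correct and follows the paper's argument exactly: pass to $v_n = \Phi u_n$, use continuity and the mapping property $\Phi(\ell^p)\subseteq\ell^p$ to get $v_n \in \dom(\Delta^{(p)})$ with $v_n \to \Phi u$, and then apply closedness of $\Delta^{(p)}$. Your remark that the hypothesis includes the containment $\Phi(\ell^p)\subseteq \ell^p$ is also how the paper reads it.
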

\begin{proof}
This follows by definitions. More specifically, let $u_n \in \dom(\Deltaphip)$
be such that $u_n \to u$ and $\Deltaphip u_n \to v$ in $\ell^p(X,\mu)$.
From $u_n \in \dom(\Deltaphip)$ and $\Phi \colon\ell^p(X,\mu) \to \ell^p(X,\mu)$, we have 
$\Phi u_n \in \dom(\Delta^{(p)})$. Furthermore, since $\Phi$ is continuous,
we have $\Phi u_n \to \Phi u$ and $\Deltaphip u_n \to v$ is equivalent to
$\Delta^{(p)}\Phi u_n \to v$. As $\Delta^{(p)}$
is assumed to be closed, it follows that $\Phi u \in \dom(\Delta^{(p)})$ 
and $\Delta^{(p)}\Phi u =v$. Therefore, $u \in \dom(\Deltaphip)$ and $\Deltaphip u =v$.
\end{proof}

Combining the above along with the closedness of the Laplacian under the infinite measure of paths condition,
we now show that the porous
medium operator is closed if the measure is uniformly bounded below.

\begin{corollary}\label{c:PT_closed}
Let $G$ be a graph satisfying $\IP$ and $p \in [1,\infty)$. 
If $\Phi\colon \ell^p(X,\mu) \to \ell^p(X,\mu)$ is continuous,
then $\Deltaphip$ is closed. In particular, for $\phi(s)=s |s|^{m-1}$ for $m \geq 1$, if $G$ satisfies $\UM$, then $\Deltaphip$
is closed.
\end{corollary}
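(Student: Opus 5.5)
The plan is to reduce the claim to Lemma~\ref{l:PT_closed}. It suffices to show that, under $\IP$, the maximal Laplacian $\Delta^{(p)}$ is closed on $\ell^p(X,\mu)$ for every $p\in[1,\infty)$. Lemma~\ref{l:PT_closed} then gives closedness of $\Deltaphip$ whenever $\Phi\colon \ell^p(X,\mu)\to\ell^p(X,\mu)$ is continuous. The ``in particular'' part follows because $\UM$ implies $\IP$. By Proposition~\ref{p:PME_measure}, $\UM$ also gives that $\Phi$ is continuous on $\ell^p(X,\mu)$ for the power nonlinearity when $m>1$, and for $m=1$ we have $\Phi=\id$.

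To prove that $\Delta^{(p)}$ is closed, take $u_n\in\dom(\Delta^{(p)})$ with $u_n\to u$ and $\Delta u_n\to v$ in $\ell^p(X,\mu)$. Convergence in $\ell^p$ gives pointwise convergence of $u_n$ and $\Delta u_n$. The difficulty is that without \EM[q] the pointwise map $u\mapsto\Delta u(x)$ need not be continuous on $\ell^p$: the sums $\sum_y w(x,y)u_n(y)$ need not converge to $\sum_y w(x,y)u(y)$, and $u$ need not even lie in $\dom(\Delta)$. This is exactly where $\IP$ must enter, and it is the main obstacle.

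My first attempt is a resolvent argument in the spirit of Lemma~\ref{l:inclusion}, using the linear case $\phi=\id$. Fix $\lambda=1$ and set $g_n:=u_n+\Delta u_n\to g:=u+v$ in $\ell^p$. By the linear analogue of Theorem~\ref{thm:new} (exhaustion by Dirichlet problems; for $p=1$ this is Theorem~\ref{thm:new} itself with $\phi=\id$), there is $w\in\dom(\Delta^{(p)})$ with $(\id+\Delta)w=g$. This solution is obtained from the decomposition $g=g^++g^-$ and comes with the order bounds $w^-\le w\le w^+$. The resolvent is a contraction, so the solutions $w_n$ associated with $g_n$ converge to $w$.

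Under $\IP$, Theorem~\ref{thm:min}~(i) gives injectivity of $\id+\Delta$ on $\ell^p\cap\dom(\Delta)$, so $w_n=u_n$. Hence $u=\lim u_n=w\in\dom(\Delta^{(p)})$, and $\Delta u=g-u=v$, which proves closedness.

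The hard point is the existence and contractivity of the resolvent solution for general $p$. For $p=1$ I will take it from Theorem~\ref{thm:new} directly. For $p\in(1,\infty)$ I will redo Steps~1--4 of that proof in $\ell^p$. This works because the Dirichlet resolvents are $\ell^p$-contractive, since $\Delta_c$ is accretive on finite Dirichlet graphs by Proposition~\ref{p:Lap_accretivity_Cc}. The monotone limits in Steps~1 and~2 then converge in $\ell^p$ by dominated convergence, and no compactness is needed because the limits are identified by uniqueness, which is the injectivity from $\IP$ noted above.
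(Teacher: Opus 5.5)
Your proof is correct and is essentially the paper's argument. The paper also reduces to Lemma~\ref{l:PT_closed} and gets closedness of $\Delta^{(p)}$ under $\IP$ from Proposition~\ref{p:Lap_accretive}, that is, from injectivity via Theorem~\ref{thm:min}~(i) together with the m-accretive exhaustion restriction of Theorem~\ref{thm:new2} (your ``linear analogue of Theorem~\ref{thm:new}''); your explicit limit argument is just the inline proof that m-accretive operators are closed.

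One small correction: in the signed case no compactness is needed because the finite Dirichlet resolvents are linear. Thus $\varphi_n^{g_n}=\varphi_n^{g_n^+}+\varphi_n^{g_n^-}$ converges as a sum of two monotone limits. Uniqueness alone would only identify subsequential limits, and their existence would still require Lemma~\ref{lem:sequentially-compact}.
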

\begin{proof}
The infinite paths condition $\IP$ implies that $\Delta^{(p)}$ is closed
by Proposition~\ref{p:Lap_accretive} below.
Thus, the first statement follows
by Lemma~\ref{l:PT_closed}.
The statement for the porous medium operator follows as $\UM$ implies $\IP$
and that $\Phi$ is continuous on all $\ell^p$ spaces in this case
by Proposition~\ref{p:PME_measure}.
\end{proof}

In the rest of this subsection,
we will give some conditions under which 
the minimal and maximal 
operators agree and  are m-accretive. For this, we introduce a new geometric condition.

More specifically, we let $\varrho: X\times X \to [0,\infty)$ 
be a pseudo-metric and 
say that $\varrho$ is \emph{$\ell^1$-intrinsic} if
$$\sum_{y \in X}w(x,y)\varrho(x,y) \leq C \mu(x)$$ 
for some $C>0$ and all $x \in X$.
For such a pseudo-metric, we then consider the following condition:
\begin{enumerate}
\item[$\FB$]\hypertarget{ass:FB}{} All distance balls defined with respect to $\varrho$ are finite. \hfill(``Finite balls'')
\end{enumerate}

\smallskip

\begin{remark}[Intrinsic metrics]
We note that the $\ell^1$-intrinsic condition is a 
variant on the usual condition for a metric to be intrinsic on $\ell^2(X,\mu)$
where we take $\varrho$ instead of $\varrho^2$. This usual definition is used for geometric
analysis on $\ell^2(X,\mu)$, see \cite{Kel15, keller2021graphs} and Section~\ref{sec:Laplacian}
for some examples of this. 
Furthermore, the requirement for distance balls to be finite
can be thought of as a metric completeness assumption in the locally finite case
when the metric comes from paths,
see Theorem~11.16 in \cite{keller2021graphs}.
\end{remark}

\begin{example}[Combinatorial graph distance]~\label{ex:combinatorial}
Consider the combinatorial graph distance $d$ for which $d(x,y)=1$
for all $x \sim y$. Then $d$ is $\ell^1$-intrinsic if and only if
the bounded edge degree
condition $\B$ holds. Furthermore,
$d$ has finite balls if and only if the graph is locally finite.
\end{example}

Under the assumptions of an $\ell^1$-intrinsic metric having finite balls
and uniformity of measure,
we will show that the minimal operator agrees with the maximal operator for
the porous medium equation by using cutoff functions.  

We begin with a lemma concerning cutoff functions.
We let $\varrho$ be an $\ell^1$-intrinsic pseudo-metric as above. We let $R>0$, $x_0 \in X$ and let
$\eta_R \colon X \to [0, 1]$ denote the cutoff function given by
$$\eta_R(x) = \left(1 - \frac{\varrho(x_0, x)}{R} \right)_+$$
for $x \in X$.
We note that $\eta_R \leq 1_{B_R(x_0)}$ where
$B_R(x_0)=\{ x \in X \mid \varrho(x, x_0) \leq R \}$. In particular,
assuming the finite distance balls condition implies
that $\eta_R \in C_c(X)$.

For $x,y \in X$ and $f \in C(X)$, we let
$$\nabla_{x,y} f = f(x)-f(y)$$
and
$$|\nabla f|(x) = \frac{1}{\mu(x)} \sum_{y \in X} w(x,y)|\nabla_{x,y}f|= \frac{1}{\mu(x)} \sum_{y \in X} w(x,y)|f(x)-f(y)|.$$
With these notations we then have the following basic result.

\begin{lemma}\label{l:cutoff}
Let $\phi\colon \R \to \R$ be Lipschitz on $[0,1]$ and let $\varrho$ be an $\ell^1$-intrinsic pseudo-metric.
For
\[
\eta_R(\cdot) = \left(1 - {\varrho(x_0, \cdot)}/{R} \right)_+
\]
there exists $C>0$ such that
\[
| \nabla \Phi\eta_R | \leq \frac{C}{R}.
\]
\end{lemma}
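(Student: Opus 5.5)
The plan is a pointwise estimate: combine the Lipschitz bound for $\phi$ on $[0,1]$ with the reverse triangle inequality for $\varrho$, and then let the $\ell^1$-intrinsic condition absorb the remaining sum.

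\emph{Step 1 (Lipschitz bound for $\phi$).} Let $L$ be a Lipschitz constant of $\phi$ on $[0,1]$. Since $\eta_R$ takes values in $[0,1]$, for all $x,y\in X$ we have
\[
|\nabla_{x,y}\Phi\eta_R| = |\phi(\eta_R(x))-\phi(\eta_R(y))| \leq L\,|\eta_R(x)-\eta_R(y)|.
\]

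\emph{Step 2 (Lipschitz bound for $\eta_R$).} The map $t\mapsto (1-t/R)_+$ is $1/R$-Lipschitz on $[0,\infty)$. By the reverse triangle inequality for the pseudo-metric, $|\varrho(x_0,x)-\varrho(x_0,y)|\leq \varrho(x,y)$. Together these give
\[
|\eta_R(x)-\eta_R(y)| \leq \frac{\varrho(x,y)}{R}.
\]

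\emph{Step 3 (summation).} Inserting both bounds into the definition of the gradient and applying the $\ell^1$-intrinsic property $\sum_{y}w(x,y)\varrho(x,y)\leq C_\varrho\mu(x)$, we obtain
\[
|\nabla\Phi\eta_R|(x) \leq \frac{L}{R\,\mu(x)}\sum_{y\in X}w(x,y)\varrho(x,y) \leq \frac{L\,C_\varrho}{R}
\]
for every $x\in X$. So the claim holds with $C=L\,C_\varrho$, and this constant does not depend on $R$ or $x_0$.

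There is no real obstacle here. The only point to check is that the Lipschitz hypothesis on $\phi$ is used only on $[0,1]$, which contains the range of $\eta_R$. All sums are sums of nonnegative terms, so no convergence issues arise.
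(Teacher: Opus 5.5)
Your proposal is correct and follows the paper's proof essentially step for step. Both arguments use the Lipschitz bound for $\phi$ on $[0,1]$, the $1/R$-Lipschitz bound for $\eta_R$ via the (reverse) triangle inequality, and then the $\ell^1$-intrinsic condition, giving $C = L\,C_\varrho$.
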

\begin{proof}
As $0 \leq \eta_R \leq 1$ and $\phi$ is Lipschitz on $[0,1]$,
this is a straightforward calculation using the Lipschitz condition,  
the triangle inequality, and the $\ell^1$-intrinsic condition. More specifically,
we let $C_1$ be the Lipschitz constant of $\phi$ on $[0,1]$ and calculate as follows:
\begin{align*}
|\nabla \Phi \eta_R |(x) &=  
\frac{1}{\mu(x)} \sum_{y \in X}  w(x,y) \left| \Phi \eta_R(x) - \Phi\eta_R(y)\right| \\
&\leq \frac{C_1}{\mu(x)} \sum_{y \in X}  w(x,y) \left| \eta_R(x) - \eta_R(y)\right| \\
& \leq \frac{C_1}{\mu(x)} \sum_{y \in X} w(x,y)\left|\frac{\varrho(x_0,x) -\varrho(x_0,y)}{R}\right|  \\
&\leq \frac{C_1}{R}\left(\frac{
1}{\mu(x)} \sum_{y \in X} w(x,y)\varrho(x,y)\right) \leq \frac{C}{R}.
\end{align*}

This completes the proof.
\end{proof}

We now give a formula for
the Laplacian applied to products
of functions. The proof consists of straightforward calculations. 

\begin{lemma}\label{l:product}
Let $f, g \in \dom (\Delta)$ be such that $fg \in \dom (\Delta)$ and let $x \in X$. Then,
\begin{align*}
\Delta(fg)(x) &= f(x) \Delta g(x) + \frac{1}{\mu(x)}
\sum_{y \in X} w(x,y) g(y) \nabla_{x,y}f.
\end{align*}
\end{lemma}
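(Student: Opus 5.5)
The plan is to expand the definition of $\Delta$ applied to $fg$ and split each edge difference with the elementary identity
\[
f(x)g(x)-f(y)g(y) = f(x)\bigl(g(x)-g(y)\bigr) + g(y)\bigl(f(x)-f(y)\bigr) = f(x)\nabla_{x,y}g + g(y)\nabla_{x,y}f .
\]
Since $fg \in \dom(\Delta)$, we have
\[
\Delta(fg)(x) = \frac{1}{\mu(x)}\sum_{y\in X} w(x,y)\bigl(f(x)g(x)-f(y)g(y)\bigr) + \frac{\kappa(x)}{\mu(x)}f(x)g(x),
\]
and this sum converges absolutely because $\sum_y w(x,y)|f(y)g(y)|<\infty$ and $\sum_y w(x,y)<\infty$.

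Inserting the identity, I will split the sum into two pieces. The first is
\[
\frac{1}{\mu(x)}\sum_y w(x,y)f(x)\nabla_{x,y}g,
\]
which converges absolutely since $g\in\dom(\Delta)$ and $w(x,\cdot)$ is summable. The second is
\[
\frac{1}{\mu(x)}\sum_y w(x,y)g(y)\nabla_{x,y}f.
\]
The only point needing care is the absolute convergence of this second piece, since splitting a convergent series into two requires each part to converge. I will handle it as follows. Its terms equal $w(x,y)\bigl(f(x)g(y) - f(y)g(y)\bigr)$, and $\sum_y w(x,y)|g(y)|<\infty$ because $g \in \dom(\Delta)$, while $\sum_y w(x,y)|f(y)g(y)|<\infty$ because $fg\in\dom(\Delta)$. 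Hence the second piece converges absolutely and the splitting is justified. This is the reason for the hypothesis $fg\in\dom(\Delta)$; no assumption on $f$ beyond finiteness at $x$ is needed there.

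Finally, I will combine the first piece with the killing term $\kappa(x)f(x)g(x)/\mu(x)$. Pulling out the factor $f(x)$, these two together give exactly $f(x)\Delta g(x)$. Adding the second piece then yields the claimed formula, and I do not anticipate any real obstacle beyond the convergence bookkeeping above.
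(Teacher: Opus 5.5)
Your proposal is correct and is essentially the paper's argument, which it dismisses as a straightforward calculation: split $f(x)g(x)-f(y)g(y)=f(x)\nabla_{x,y}g+g(y)\nabla_{x,y}f$ and absorb the killing term into $f(x)\Delta g(x)$. Your absolute-convergence bookkeeping, using $g\in\dom(\Delta)$ and $fg\in\dom(\Delta)$, is exactly what justifies splitting the series. As you observe, it shows the hypothesis $f\in\dom(\Delta)$ is not actually needed.
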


We now turn our attention to showing that the minimal and maximal operators
agree.
We first show that a function cutoff by $\eta_R$ converges in $\ell^1$ for the Laplacian and 
porous medium operator cases, i.e., when $\phi(s) = s |s|^{m-1}$ for $m\geq1$,
under the finite balls assumption.
Here, we have to assume that $\Phi$ maps $\ell^1(X,\mu)$ to itself, i.e., 
condition $\C$. We recall from Proposition \ref{p:PME_measure} that if $m>1$, then the containment condition
$\C$ is actually equivalent to the uniform lower bound on the measure
condition $\UM$.

\begin{lemma}\label{l:main_approx}
Let $\phi\colon \R \to \R$ by $\phi(s) = s |s|^{m-1}$ for $m\geq1$
and assume that $\Phi$ satisfies $\C$.
Let $\varrho$ be an $\ell^1$-intrinsic metric satisfying $\FB$ 
and let $\eta_R(\cdot) = \left(1 - {\varrho(x_0, \cdot)}/{R} \right)_+$. 
For $u \in \dom(\Deltaphi)$, we have
$$ u\eta_R \to u \qquad \text{ and } \qquad \Deltaphi(u\eta_R) \to \Deltaphi u$$
in $\ell^1(X,\mu)$ as $R \to \infty$.
Consequently, if $\Deltaphi_c$ is closable, then $\dom(\Deltaphi) \subseteq \dom(\deltaphim)$ and
if $\Deltaphi$ is closed, then $\Deltaphi=\deltaphim$ and both operators are m-accretive.
\end{lemma}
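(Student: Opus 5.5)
The plan is to use the homogeneity $\phi(s\eta)=\phi(s)\phi(\eta)$ for $\eta\ge 0$, valid for the power nonlinearity $\phi(s)=s|s|^{m-1}$. It gives $\Phi(u\eta_R)=\eta_R^m\,\Phi u$, where $\eta_R^m:=\Phi\eta_R\in C_c(X)$ by $\FB$. In particular $u\eta_R\in C_c(X)$.

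The first convergence, $u\eta_R\to u$ in $\ell^1(X,\mu)$, is immediate from dominated convergence. Indeed $|u\eta_R|\le|u|$, and $\eta_R\to 1$ pointwise as $R\to\infty$ because $\varrho(x_0,x)<\infty$.

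For the operator, apply Lemma~\ref{l:product} with $f=\eta_R^m$ and $g=\Phi u$. This is legitimate: $\Phi u\in\dom(\Delta)$, the finitely supported function $\eta_R^m$ lies in $\dom(\Delta)$, and so does the product. We obtain
\[
\Deltaphi(u\eta_R)(x)=\eta_R^m(x)\,\Delta\Phi u(x)+\frac{1}{\mu(x)}\sum_{y\in X}w(x,y)\,\Phi u(y)\,\nabla_{x,y}\eta_R^m .
\]
The first term converges to $\Deltaphi u$ in $\ell^1(X,\mu)$ by dominated convergence, since $|\eta_R^m\Delta\Phi u|\le|\Delta\Phi u|\in\ell^1(X,\mu)$.

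For the second term I take $\ell^1$ norms, exchange the order of summation (Tonelli) and use the symmetry of $w$. This bounds it by
\[
\sum_{y}|\Phi u(y)|\sum_{x}w(x,y)\,|\eta_R^m(x)-\eta_R^m(y)|=\sum_y|\Phi u(y)|\,|\nabla\Phi\eta_R|(y)\,\mu(y)\le \frac{C}{R}\,\|\Phi u\|_1 ,
\]
where the inequality is Lemma~\ref{l:cutoff}, applicable because $s\mapsto s^m$ is Lipschitz on $[0,1]$ for $m\ge1$. Since $\Phi u\in\ell^1(X,\mu)$ by $\C$, this tends to $0$, and so $\Deltaphi(u\eta_R)\to\Deltaphi u$. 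I expect this step, the control of the gradient cross term, to be the main obstacle. It is exactly where the $\ell^1$-intrinsic metric and the containment $\C$ enter, and the exchange of summation has to be justified; Tonelli with nonnegative terms suffices.

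For the consequences, take $R=R_n\to\infty$ and $\varphi_n=u\eta_{R_n}\in C_c(X)$. Then $\varphi_n\to u$ and $\Deltaphi_c\varphi_n\to\Deltaphi u$, so $u\in\dom(\deltaphim)$ with $\deltaphim u=\Deltaphi u$ whenever $\Deltaphi_c$ is closable; in particular $\dom(\Deltaphi)\subseteq\dom(\deltaphim)$.

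If $\Deltaphi$ is closed, Lemma~\ref{l:min_equal_accretive} gives $\deltaphim\subseteq\Deltaphi$, and together with the previous inclusion this yields $\Deltaphi=\deltaphim$. For m-accretivity, $\Deltaphi=\deltaphim$ is accretive by Lemma~\ref{l:min_equal_accretive}, since it is the closure of the accretive operator $\Deltaphi_c$. Theorem~\ref{t:fix} then upgrades accretivity of $\Deltaphi$ to m-accretivity.
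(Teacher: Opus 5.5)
Your proposal is correct and follows essentially the same route as the paper. You use the factorization $\Phi(u\eta_R)=\Phi\eta_R\,\Phi u$ with Lemma~\ref{l:product}, dominated convergence for $\Phi\eta_R\,\Delta\Phi u$, and Tonelli with Lemma~\ref{l:cutoff} and $\C$ to bound the cross term by $\frac{C}{R}\|\Phi u\|_1$. You then derive the consequences from the definition of $\deltaphim$, Lemma~\ref{l:min_equal_accretive} and Theorem~\ref{t:fix}, exactly as the paper does.
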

\begin{proof}
It is clear that $|(u\eta_R)(x)| \nearrow |u(x)|$ as $R \to \infty$ for $u \in \dom(\Deltaphi)$
and all $x \in X$ and thus $ \|u \eta_R - u \|_1 \to 0$ as $R \to \infty$ by Lebesgue's dominated convergence theorem.

We now show that $\|\Deltaphi (u \eta_R) - \Deltaphi u\|_1 \to 0$ as $R \to \infty$ as well.
From the fact that $\phi(st)=\phi(s)\phi(t)$ which yields
$\Phi (u \eta_R) =\Phi u \Phi \eta_R$ and the formula in Lemma~\ref{l:product}, we have
for all $x \in X$ and $u \in \dom(\Deltaphi)$
$$
\Deltaphi(u\eta_R)(x) = \Delta (\Phi \eta_R \Phi u)(x)= \Phi \eta_R(x) \Delta \Phi u(x) +
\frac{1}{\mu(x)} \sum_{y \in X} w(x,y) \Phi u(y)\nabla_{x,y}\Phi \eta_R.$$
Therefore,
\begin{align*}
\Deltaphi u(x)-\Deltaphi(u \eta_R)(x)
&= \left(1 - \Phi \eta_R(x) \right) \Delta \Phi u(x) -
\frac{1}{\mu(x)} \sum_{y \in X} w(x,y) \Phi u(y)\nabla_{x,y}\Phi \eta_R.
\end{align*}

We now show that each of the terms above goes to $0$ in $\ell^1(X,\mu)$ as $R \to \infty$.
For the first term, we note that $0 \leq \eta_R(x) \leq 1$ and 
$\eta_R(x) \to 1$ as $R \to \infty$ for every $x \in X$.
Therefore,
$0 \leq \Phi \eta_R(x) \leq 1$ and
$\Phi \eta_R(x) \to 1$ as $R \to \infty$.
As $u \in \dom(\Deltaphi)$,
we have $\Delta \Phi u \in \ell^1(X,\mu)$ and thus
$\| \left(1 - \Phi \eta_R \right) \Delta \Phi u\|_1 \to 0$ as $R \to \infty$ by
Lebesgue's dominated convergence theorem.

For the second term, we let $T_R(x)= \frac{1}{\mu(x)} \sum_{y \in X} w(x,y) \Phi u(y)\nabla_{x,y}\Phi \eta_R$ 
and use Tonelli's theorem and Lemma~\ref{l:cutoff} as follows:
\begin{align*}
\|T_R\|_1 &\leq \sum_{x \in X} \sum_{y \in X} w(x,y) |\Phi u(y)| |\nabla_{x,y}\Phi \eta_R|   = \sum_{y \in X} |\Phi u(y)| \sum_{x \in X} w(x,y)  |\nabla_{x,y}\Phi \eta_R| \\
&= \sum_{y \in X} |\Phi u(y)| \mu(y) |\nabla \Phi \eta_R |(y) \leq \frac{C}{R} \|\Phi u \|_1.
\end{align*}
This shows that $T_R \to 0$ in $\ell^1(X,\mu)$ as $R \to \infty$ and completes the proof of the
first statement in the theorem. The remaining statements follow by definitions and
Lemma~\ref{l:min_equal_accretive}.
\end{proof}

We now come to the first major result
for the Laplacian case, i.e., when $\phi(s)=s$.
\begin{theorem}\label{t:Laplacian_min_max}
Suppose there exists an $\ell^1$-intrinsic metric $\varrho$ satisfying $\FB$.
If $\Delta^{(1)}$ is closed, then
$$\Delta^{(1)}=\deltam^{(1)}$$
and
$\Delta_{\min}^{(1)}$ is m-accretive. In particular, the conclusion holds if $G$ additionally satisfies
$\IP$ or \EM[] or $\B$.
\end{theorem}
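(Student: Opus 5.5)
The plan is to specialize Lemma~\ref{l:main_approx} to the case $m=1$, i.e., $\phi(s)=s$ and $\Phi=\id$. In this case the containment condition $\C$ is trivially satisfied, and $\phi$ is Lipschitz on $[0,1]$, so Lemma~\ref{l:cutoff} applies. Given $u\in\dom(\Delta^{(1)})$, Lemma~\ref{l:main_approx} yields $u\eta_R\to u$ and $\Delta(u\eta_R)\to\Delta u$ in $\ell^1(X,\mu)$. By $\FB$ we have $\eta_R\in C_c(X)$, so $u\eta_R\in C_c(X)$, and a diagonal choice $R\to\infty$ gives finitely supported approximants in the graph norm. This shows $\dom(\Delta^{(1)})\subseteq\dom(\deltam^{(1)})$ with agreement of the actions. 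Closability of $\Delta_c$ on $\ell^1$ is automatic from Lemma~\ref{l:min_accretivity}, since \EM[1] always holds. Since $\Delta^{(1)}$ is assumed closed, Lemma~\ref{l:min_accretivity} also gives $\deltam^{(1)}\subseteq\Delta^{(1)}$, and hence the two operators are equal.

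For the m-accretivity I would again use Lemma~\ref{l:main_approx}: once $\Delta^{(1)}$ is closed, it states that $\Deltaphi=\deltaphim$ and both are m-accretive. This rests on Lemma~\ref{l:min_equal_accretive} and on the accretivity of $\Deltaphi$. The point I would check carefully is that the paper's proof of Lemma~\ref{l:main_approx} needs $\Deltaphi$ to be accretive in order to invoke Lemma~\ref{l:min_equal_accretive}. If that is not automatic, I would argue directly instead. The operator $\deltam^{(1)}$ is accretive as the closure of the accretive $\Delta_c$ (Lemma~\ref{l:min_accretivity}). Now $\deltam^{(1)}=\Delta^{(1)}$, so $\Delta^{(1)}$ is accretive, and Theorem~\ref{t:fix} with $\Phi=\id$ upgrades this to m-accretivity of $\Delta^{(1)}=\deltam^{(1)}$. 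This is the step I expect to require the most care, but the route through Theorem~\ref{t:fix} avoids any circularity.

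For the ``in particular'' part, I would verify closedness of $\Delta^{(1)}$ in each of the three cases.
\begin{itemize}
\item Under \EM[] (that is, \EM[$\infty$]), Lemma~\ref{lem:EMp} with $p=\infty$, $q=1$ gives that $\Delta^{(1)}$ is closed.
\item Under $\B$, Corollary~\ref{c:emp} gives \EM[], which reduces to the previous case.
\item Under $\IP$, closedness of $\Delta^{(1)}$ is the content of Proposition~\ref{p:Lap_accretive} invoked in the proof of Corollary~\ref{c:PT_closed}. I would cite it as used there. Should a self-contained argument be wanted, I would note that $\IP$ gives injectivity of $\id+\lambda\Delta^{(1)}$ via Theorem~\ref{thm:min}~(i), hence m-accretivity by Theorem~\ref{t:fix}, and m-accretive operators are closed.
\end{itemize}
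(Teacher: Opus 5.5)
Your proposal is correct and follows the paper's own proof. You specialize Lemma~\ref{l:main_approx} to $\Phi=\id$, take closability from Lemma~\ref{l:min_accretivity}, and obtain closedness of $\Delta^{(1)}$ under \EM[], $\B$ or \IP{} exactly as the paper does; for \IP{} this means m-accretivity via Theorem~\ref{thm:min}~(i) and Theorem~\ref{t:fix} (i.e., Theorem~\ref{t:lf}) plus closedness of m-accretive operators. Your worry about the accretivity hypothesis in Lemma~\ref{l:min_equal_accretive} dissolves, because the equality $\Delta^{(1)}=\deltam^{(1)}$ transfers accretivity from $\deltam^{(1)}$; that is exactly your direct argument, and it is why the paper's appeal to Lemma~\ref{l:main_approx} goes through.
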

\begin{proof}
As mentioned previously, $C_c(X) \subseteq \dom(\Delta^{(1)})$
and $\Delta_c$ is always closable on $\ell^1(X,\mu)$, see Lemma~\ref{l:min_accretivity}
and note that \EM[1] always holds.
Now, $\FB$ with respect to an $\ell^1$-intrinsic pseudo-metric gives
$\Delta^{(1)} = \Delta_{\min}^{(1)}$ and m-accretivity by Lemma~\ref{l:main_approx}
as we assume that $\Delta^{(1)}$ is closed.

The ``in particular'' statements follow since
if $G$ satisfies $\IP$, then $\Delta^{(1)}$ is m-accretive by
Theorem~\ref{t:lf} and any m-accretive operator
is closed, see \cite[III.~Proposition~3.4]{barbu2010nonlinear}.
If $G$ satisfies $\EM$, then $\Delta^{(1)}$ is closed by Lemma~\ref{lem:EMp}.
Finally, $\B$ implies $\EM$, see Corollary~\ref{c:emp}, so any of
$\IP$ or \EM[] or $\B$ imply that $\Delta^{(1)}$ is closed.
\end{proof}

\begin{remark}[Boundedness of the operator]
    We note that if $G$ satisfies $\BD$, it follows that $\Delta^{(1)}$ is a bounded
    operator on $\ell^1(X,\mu)$ from which it follows easily that $\Delta^{(1)}=\deltam^{(1)}$,
    see Proposition~\ref{p:agreement}. 
    The result above shows that $\BD$ can be replaced by $\B$ 
    by adding the finite balls assumption $\FB$.
\end{remark}

Next, we put the preceding results together to 
show that the minimal and maximal operators agree for the porous
medium operator when we have lower bounded measure and finite balls
with respect to an $\ell^1$-intrinsic metric.

\begin{theorem}\label{t:min=max}
Suppose there exists an $\ell^1$-intrinsic metric $\varrho$ satisfying $\FB$.
If $G$ satisfies $\UM$ and $\phi\colon \R \to \R$ by $\phi(s) = s |s|^{m-1}$ for $m\geq1$, then
$$ \Deltaphi= \deltaphim.$$
In particular, $\deltaphim$ is m-accretive.
\end{theorem}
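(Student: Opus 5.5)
The plan is to reduce everything to Lemma~\ref{l:main_approx}, whose final clause gives $\Deltaphi=\deltaphim$ together with m-accretivity as soon as $\Phi$ satisfies $\C$ and $\Deltaphi$ is closed. The hypotheses of that lemma are $\phi(s)=s|s|^{m-1}$ with $m\geq1$, $\C$, and an $\ell^1$-intrinsic metric with $\FB$. The last two are assumed in the statement, so only two things remain to be checked: the containment condition $\C$ and the closedness of $\Deltaphi$.

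\textbf{Step 1: containment.} For $m>1$, Proposition~\ref{p:PME_measure} says that $\UM$ is equivalent to $\Cp$ for all $p\in[1,\infty)$. In particular $\C$ holds, and $\Phi\colon\ell^1(X,\mu)\to\ell^1(X,\mu)$ is continuous. For $m=1$ we have $\Phi=\id$, so both properties are trivial.

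\textbf{Step 2: closedness.} Since $\UM$ implies $\IP$, Corollary~\ref{c:PT_closed} with $p=1$ shows that $\Deltaphi=\Deltaphi^{(1)}$ is closed. This uses the continuity of $\Phi$ on $\ell^1$ from Step~1 and the closedness of $\Delta^{(1)}$ under $\IP$. Closedness of $\Deltaphi$ then makes $\Deltaphi_c$ closable with $\deltaphim\subseteq\Deltaphi$, by Lemma~\ref{l:min_equal_accretive}.

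\textbf{Step 3: conclusion.} Lemma~\ref{l:main_approx} now shows that every $u\in\dom(\Deltaphi)$ is approximated by the cutoffs $u\eta_R\in C_c(X)$ in the graph norm, which gives $\Deltaphi=\deltaphim$. For m-accretivity I would argue directly rather than through that lemma. Theorem~\ref{t:lf} gives that $\Deltaphi$ is m-accretive under $\IP$, and $\IP$ follows from $\UM$; since $\deltaphim=\Deltaphi$, the minimal operator is m-accretive as well. Lemma~\ref{l:min_equal_accretive} provides the same conclusion.

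The only potential obstacle is ensuring that the closedness of $\Deltaphi$ really follows for $p=1$. Corollary~\ref{c:PT_closed} relies on a forward reference (Proposition~\ref{p:Lap_accretive}) for the closedness of $\Delta^{(1)}$ under $\IP$. If I wanted to avoid it, I would use a different route. $\UM$ implies \EM[] by Corollary~\ref{c:emp}, so $\Delta^{(1)}$ is closed by Lemma~\ref{lem:EMp} with $q=1$. Lemma~\ref{l:PT_closed} then gives the closedness of $\Deltaphi$.
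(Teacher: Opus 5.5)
Your proposal is correct and is essentially the paper's argument. You obtain $\C$ and continuity of $\Phi$ from Proposition~\ref{p:PME_measure} and closedness of $\Deltaphi$ from Corollary~\ref{c:PT_closed}, then conclude with Lemma~\ref{l:main_approx}; the only cosmetic differences are that you treat $m=1$ uniformly instead of citing Theorem~\ref{t:Laplacian_min_max}, and you take m-accretivity from Theorem~\ref{t:lf}. Your alternative route to closedness, using \EM[] and Lemma~\ref{lem:EMp}, is also valid, though the forward reference in Corollary~\ref{c:PT_closed} is not circular anyway.
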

\begin{proof}
If $m=1$, then this follows by Theorem~\ref{t:Laplacian_min_max} above
since $\UM$ implies $\IP$.
If $m>1$, then the proof follows along the same lines as the proof of Theorem~\ref{t:Laplacian_min_max}.
In particular, $\Deltaphi$ is closed by Corollary~\ref{c:PT_closed} so that $\Deltaphi_c$
is closable and $\Deltaphi_{\min} \subseteq \Deltaphi$.
The fact that $\Deltaphi=\Deltaphi_{\min}$ and that both operators
are m-accretive now follows by Lemma~\ref{l:main_approx}
as we assume $\FB$ for an $\ell^1$-intrinsic metric. 
\end{proof}

Finally, we give a consequence for the combinatorial
graph metric, i.e., the path metric arising when we set the length of each edge 
to be $1$. In this case, the $\ell^1$-intrinsic condition translates to 
bounded edge degree condition and finiteness of balls translates to local finiteness
as pointed out in Example~\ref{ex:combinatorial}.
Thus, we get the following immediate corollary.

\begin{corollary}\label{c:graph_metric}
Let $G$ be locally finite and satisfy $\UM$ and $\B$.
Let $\phi\colon \R \to \R$ by $\phi(s) = s |s|^{m-1}$ for $m\geq1$.
Then,
$$\Deltaphi = \deltaphim.$$
In particular, $\deltaphim$ is  m-accretive.
\end{corollary}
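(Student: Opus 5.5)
This is a direct specialization of Theorem~\ref{t:min=max} to the combinatorial graph distance $d$, so the plan is to take $\varrho=d$ and check the hypotheses of that theorem one by one.

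\emph{Step 1: $d$ is $\ell^1$-intrinsic.} By Example~\ref{ex:combinatorial}, $d$ is $\ell^1$-intrinsic exactly when $\B$ holds. Concretely, $d(x,y)=1$ whenever $x\sim y$, and $w(x,y)=0$ otherwise, so
\[
\sum_{y\in X}w(x,y)\,d(x,y)=\sum_{y\in X}w(x,y)\leq C\mu(x)
\]
for all $x \in X$, with $C$ the constant from $\B$.

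\emph{Step 2: $d$ satisfies $\FB$.} Example~\ref{ex:combinatorial} also gives that local finiteness is equivalent to $d$ having finite balls. The argument is a simple induction on $r$: the ball $B_0(x_0)=\{x_0\}$ is finite, and $B_{r+1}(x_0)$ is contained in the union of the finite neighborhoods of the finitely many vertices of $B_r(x_0)$. Balls of non-integer radius agree with the ball of radius equal to their integer part, so all $d$-balls are finite.

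\emph{Step 3: apply Theorem~\ref{t:min=max}.} Since $G$ satisfies $\UM$ and $\phi(s)=s|s|^{m-1}$ with $m\geq1$, Theorem~\ref{t:min=max} applies with $\varrho=d$. It yields $\Deltaphi=\deltaphim$ and the m-accretivity of $\deltaphim$.

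There is no genuine obstacle here. The only points needing care are that $d$ is a genuine metric, which holds because $G$ is connected, and the routine finiteness induction in Step~2.
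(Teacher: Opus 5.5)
Your proposal is correct and matches the paper's proof: take $\varrho=d$, the combinatorial graph distance, in Theorem~\ref{t:min=max}, with Example~\ref{ex:combinatorial} supplying that $\B$ makes $d$ an $\ell^1$-intrinsic metric and local finiteness gives $\FB$. Your explicit checks of these two facts are accurate and simply spell out what the paper takes from that example.
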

\begin{proof}
    The result follows by letting $\varrho$ be the combinatorial
    graph metric in Theorem~\ref{t:min=max}.
\end{proof}

\begin{remark}[Boundedness]
    We note that if $G$ satisfies $\UM$ and $\BD$ in the above,
    we also get the same conclusion as $\Phi$ is continuous and $\Delta$ is bounded
    on $\ell^1(X,\mu)$ in this case, see Proposition~\ref{p:agreement}. 
    In the corollary above, $\BD$ is replaced by the weaker assumption
    $\B$ by adding the assumption of local finiteness.
\end{remark}

\section{\texorpdfstring{Accretivity and m-accretivity of the Laplacian on $\ell^p$}{Accretivity and m-accretivity of the Laplacian on ellp}}\label{s:Laplacian_accretivity}
We now present some results for the maximal Laplacian $\Delta^{(p)}$ acting as $\Delta$ on
$$\dom(\Dep) = \{ u \in \ell^p(X,\mu) \cap \dom(\Delta) \mid \Delta u \in \ell^p(X,\mu)\}$$
for $p \in [1,\infty]$ as well as various restrictions of $\Dep$.
In particular, we first adapt the definition of the set $\Omega$ to $\ell^p$ to get 
$\Omega^p$ and show that the restriction
of $\Delta^{(p)}$ to $\Omega^p$ is always m-accretive. We then prove that 
accretivity, m-accretivity and
injectivity of the shifted operator are all equivalent for $\Delta^{(p)}$, 
in parallel to what we
established for $\Deltaphi$ in the previous section. 
As a consequence, we show that the infinite measure of paths condition
implies m-accretivity of $\Delta^{(p)}$ for $p \in[1,\infty)$
and that the non-summability of the reciprocal of the vertex degree over paths
implies the m-accretivity of $\Delta^{(\infty)}$. 
Finally, we discuss how a finiteness of balls condition for $\ell^2$-intrinsic
metrics also implies m-accretivity of $\Delta^{(p)}$ for $p \in (1,\infty)$.

\subsection{\texorpdfstring{On the set $\Omega^p$ and the accretivity of $\Delta^{(p)}_{|\Omega^p}$}{On the set Omegap and the accretivity of Delta p on Omegap}}
In this subsection, we introduce the set $\Omega^p$ and show that 
the restriction of $\Dep$ to $\Omega^p$ is always accretive.

We first adapt the set $\Omega$ and the operator $\Lmin$ introduced in Section~\ref{SSect:Omega} to the operators $\Delta^{(p)}$. Given an exhaustion $(X_n)$, we define 
  \begin{equation}\label{eq:Omegap}
  \Omega^p = \{ u\in \dom(\Delta^{(p)}) \mid \exists \varphi_n \text{ with } \operatorname{supp}\varphi_n\subseteq X_n,\; \varphi_n \to u \text{ and } \Delta_n \varphi_n 
  \to \Delta^{(p)}u \text{ in } \ell^p(X,\mu)\}
  \end{equation} 
where the operator $\Delta_n\colon C_c(X) \to C_c(X)$, with domain $\dom(\Delta_n) = C_c(X)$,
is defined by 
\begin{equation}\label{eq:exhaustion_Lap}
 \Delta_n \varphi = \boldsymbol{\mathfrak{i}}_{n}\Deltadir[n]\boldsymbol{\pi}_{n} \varphi
\end{equation}
and $\Deltadir[n]$ is the Dirichlet Laplacian of the subgraph $X_n$ as defined in \eqref{def:dir_laplacian}.

We note that, in contrast to the nonlinear case of $\Deltaphi$, where in general one
must pass to a subsequence $(Y_k)$ of $(X_n)$ to construct a set
$\Omega=\Omega_{(Y_k)}$ on which $\Deltaphi_{|\Omega}$ is m-accretive, the proof below
shows that every exhaustion $(X_n)$ already yields such a set
$\Omega^p = \Omega^p_{(X_n)}$ on which $\Delta^{(p)}_{|\Omega^p}$ is m-accretive,
without passing to a subsequence.

By linearity, $\Omega^p$ is a vector subspace of $\dom(\Delta^{(p)})$. Moreover, unlike
the case of $\ell^1(X,\mu)$ discussed in Section~\ref{SSect:Omega}, the set $C_c(X)$ is
not necessarily contained in $\dom(\Delta^{(p)})$ for $p>1$, since the Laplacian need not
map finitely supported functions into $\ell^p(X,\mu)$. In particular,
Lemma~\ref{lem:EMp} shows that the condition \EM[p] is equivalent to the inclusion
$C_c(X) \subseteq \dom(\Delta^{(p)})$. We now show that \EM[p] is also equivalent to the inclusion
$C_c(X) \subseteq \Omega^p$.

\begin{lemma}\label{lem:Cc_density_Omegap}
Let $p\in [1,\infty)$.  Then, $C_c(X) \subseteq \Omega^p$ if and only if \EM[p] holds.
\end{lemma}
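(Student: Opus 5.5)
The plan is to treat the two implications separately, relying on the characterization of \EM[p] in Lemma~\ref{lem:EMp}.

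For the forward direction, assume $C_c(X)\subseteq\Omega^p$. By definition $\Omega^p\subseteq\dom(\Delta^{(p)})$, so every finitely supported function $\varphi$ satisfies $\Delta\varphi\in\ell^p(X,\mu)$. This is condition (iii) of Lemma~\ref{lem:EMp}, and hence \EM[p] holds.

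For the converse, assume \EM[p]. Lemma~\ref{lem:EMp} then gives $\Delta(C_c(X))\subseteq\ell^p(X,\mu)$, so $C_c(X)\subseteq\dom(\Delta^{(p)})$. Fix $\varphi\in C_c(X)$ and pick $N$ with $\operatorname{supp}\varphi\subseteq X_n$ for all $n\geq N$. Following the proof of Lemma~\ref{l:omega_denseness}, set $\varphi_n=0$ for $n<N$ and $\varphi_n=\varphi$ for $n\geq N$. Then $\operatorname{supp}\varphi_n\subseteq X_n$ and $\varphi_n\to\varphi$ in $\ell^p(X,\mu)$ trivially.

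The key identity, the linear analogue of Remark~\ref{rem:Cutoff_Laplacian}, is
\[
\Delta_n\varphi = 1_{X_n}\Delta\varphi \qquad \text{whenever } \operatorname{supp}\varphi\subseteq X_n.
\]
To verify it, use $\boldsymbol{\mathfrak i}_n\boldsymbol{\pi}_n\varphi=\varphi$ together with the identity $\Deltadir[n]u(x)=\Delta(\boldsymbol{\mathfrak i}_n u)(x)$ for $x\in X_n$, which was noted after \eqref{def:dir_laplacian}.

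From this identity, for $n\geq N$,
\[
\|\Delta\varphi-\Delta_n\varphi_n\|_p=\|1_{X\setminus X_n}\Delta\varphi\|_p .
\]
The right-hand side tends to $0$ by dominated convergence, because $\Delta\varphi\in\ell^p(X,\mu)$, $p<\infty$, and $(X_n)$ exhausts $X$. This is exactly where $p<\infty$ is used. Hence $\varphi\in\Omega^p$.

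There is no real obstacle here. The only care needed is the support bookkeeping in the cutoff identity, and noting that without \EM[p] the cutoff $1_{X_n}\Delta\varphi$ could still converge pointwise while $\Delta\varphi\notin\ell^p$. That case is excluded precisely because membership in $\Omega^p$ already requires $\varphi\in\dom(\Delta^{(p)})$.
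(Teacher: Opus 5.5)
Your proposal is correct and follows essentially the same route as the paper. The forward direction uses $\Omega^p\subseteq\dom(\Delta^{(p)})$ together with condition (iii) of Lemma~\ref{lem:EMp}; the converse uses the eventually constant sequence $\varphi_n$ and the identity $\Delta_n\varphi=1_{X_n}\Delta\varphi$, which gives $\|1_{X\setminus X_n}\Delta\varphi\|_p\to 0$. The only cosmetic difference is that the paper checks this identity by expanding \eqref{def:dir_laplacian} directly, while you cite the observation $\Deltadir[n]u(x)=\Delta(\boldsymbol{\mathfrak i}_n u)(x)$ stated after that definition.
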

\begin{proof}
If $C_c(X) \subseteq \Omega^p$, then $C_c(X) \subseteq \dom(\Dep)$ by the definition of $\Omega^p$. Thus, $\Delta(C_c(X)) \subseteq \ell^p(X,\mu)$ and  \EM[p] holds by Lemma~\ref{lem:EMp}.

Conversely, assume \EM[p] and fix $\varphi\in C_c(X)$. Then, $\varphi \in \dom(\Delta)$ and,  
by Lemma~\ref{lem:EMp}, $\Delta \varphi \in \ell^p(X,\mu)$
by \EM[p]. Therefore, $\varphi \in\dom(\Delta^{(p)})$.

Now choose $N$ such that $\operatorname{supp} \varphi \subseteq X_N$ and let 
$$
\varphi_n \coloneqq \begin{cases}
    0 &\mbox{if } n < N,\\
    \varphi &\mbox{otherwise}.
\end{cases}
$$
Clearly, by construction, $\operatorname{supp} \varphi_n \subseteq X_n$ for every $n$ and $\|\varphi_n - \varphi\|_p\to 0$ as $n\to\infty$. It remains to show that $\|\Delta_n\varphi_n - \Delta^{(p)} \varphi \|_p\to 0$
as $n \to \infty$.

For $n \geq N$, we have $\varphi_n=\varphi$ and $\varphi=0$ on $X\setminus X_n$. Therefore, for $x\in X_n$,
$$
\Delta_n \varphi(x) = \Deltadir[n](\boldsymbol{\pi}_n \varphi)(x).
$$
Using formula \eqref{def:dir_laplacian} in the definition of the Dirichlet Laplacian, we get
for $x \in X_n$
$$
\Deltadir[n](\boldsymbol{\pi}_n \varphi)(x) = \frac{1}{\mu(x)} \sum_{y\in X_n} w(x,y)\bigl(\varphi(x)-\varphi(y)\bigr) + \frac{1}{\mu(x)} \left( \sum_{y\notin X_n} w(x,y)+\kappa(x) \right)\varphi(x).
$$
Since $\varphi(y)=0$ for $y\notin X_n$, this is exactly
$$
\frac{1}{\mu(x)} \sum_{y\in X} w(x,y)\bigl(\varphi(x)-\varphi(y)\bigr) + \frac{\kappa(x)}{\mu(x)}\varphi(x) = \Delta \varphi(x).
$$
Thus, for $n \geq N$,
$$
\Delta_n\varphi_n=\Delta_n \varphi= \boldsymbol{\mathfrak{i}}_{n}\Deltadir[n]\boldsymbol{\pi}_{n} \varphi = \mathbf{1}_{X_n}\Delta \varphi.
$$
Consequently,
$$\|\Delta_n\varphi_n-\Delta \varphi\|_p = \|\mathbf{1}_{X\setminus X_n}\Delta \varphi\|_p\to 0 
$$
as $n \to \infty$ since $\Delta \varphi\in \ell^p(X,\mu)$ and $(X_n)$ is an exhaustion.
\end{proof}

As a counterpart to Proposition~\ref{prop:accretivity_lmin}
for $\mathcal{L}_n$ on $\ell^1(X,\mu)$ we next discuss how $\Delta_n$ is accretive for all $n$.
\begin{proposition}\label{prop:accretivity_deltamin}
The operator $\Delta_n$ is accretive on $\ell^p(X,\mu)$ for every $n$ and $p\in[1,\infty)$.
\end{proposition}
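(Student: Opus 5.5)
The plan is to mimic the proof of Proposition~\ref{prop:accretivity_lmin}: reduce to the finite Dirichlet graph $G_n^D$ and use the disjointness of supports. Since $\Delta_n$ is linear, it suffices to show $\|(\id+\lambda\Delta_n)\varphi\|_p\geq\|\varphi\|_p$ for all $\varphi\in C_c(X)$ and $\lambda>0$. Write $\varphi=\boldsymbol{\mathfrak i}_n\boldsymbol{\pi}_n\varphi+1_{X\setminus X_n}\varphi$. By definition \eqref{eq:exhaustion_Lap}, $\Delta_n\varphi$ is supported in $X_n$. Hence $(\id+\lambda\Delta_n)\varphi$ equals $\boldsymbol{\mathfrak i}_n(\id_n+\lambda\Deltadir[n])\boldsymbol{\pi}_n\varphi$ on $X_n$ and equals $\varphi$ on $X\setminus X_n$. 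For $p\in[1,\infty)$ we then get
\[
\|(\id+\lambda\Delta_n)\varphi\|_p^p=\|(\id_n+\lambda\Deltadir[n])\boldsymbol{\pi}_n\varphi\|_{p,n}^p+\|1_{X\setminus X_n}\varphi\|_p^p,
\]
where $\|\cdot\|_{p,n}$ is the $\ell^p(X_n,\mu)$ norm.

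It therefore remains to check that $\Deltadir[n]$ is accretive on $\ell^p(X_n,\mu)$. I will obtain this from Proposition~\ref{p:Lap_accretivity_Cc} applied to the finite graph $G_n^D=(X_n,w|_{X_n\times X_n},\kappa_n,\mu|_{X_n})$, whose formal Laplacian is exactly $\Deltadir[n]$.

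\begin{itemize}
\item $G_n^D$ is finite, so it is locally finite and satisfies \EM[p] by Corollary~\ref{c:emp}.
\item $C_c(X_n)=C(X_n)$, so the restriction to finitely supported functions is the whole operator.
\item The Green's formula computation in that proposition is a finite sum here, so no convergence issues arise. Alternatively, Corollary~\ref{c:accretive_finite_Lap} gives the accretivity directly.
\end{itemize}

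This yields $\|(\id_n+\lambda\Deltadir[n])\boldsymbol{\pi}_n\varphi\|_{p,n}\geq\|\boldsymbol{\pi}_n\varphi\|_{p,n}$. Substituting into the identity above gives $\|(\id+\lambda\Delta_n)\varphi\|_p^p\geq\|\boldsymbol{\pi}_n\varphi\|_{p,n}^p+\|1_{X\setminus X_n}\varphi\|_p^p=\|\varphi\|_p^p$, which is the claim.

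I do not expect a real obstacle. The only points needing care are that the connectedness assumption on graphs is satisfied by $G_n^D$, which holds because exhaustion sets are connected, and that the modified killing term $\kappa_n$ is still finite and positive, which follows from local summability. The $p=1$ case could also be quoted directly from Proposition~\ref{prop:accretivity_lmin} with $\phi=\id$.
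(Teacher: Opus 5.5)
Your proof is correct and is essentially the paper's argument. Both reduce to the finite Dirichlet graph $G_n^D$, use that $\Delta_n\varphi$ vanishes outside $X_n$, and invoke Corollary~\ref{c:accretive_finite_Lap}. The only difference is in how the reduction is written for $p\in(1,\infty)$. The paper checks that the $\ell^p$ bracket $\sum_{x\in X_n}\Delta_{D,n}\varphi(x)\,\varphi(x)|\varphi(x)|^{p-2}\mu(x)$ is nonnegative. You instead split $\|\cdot\|_p^p$ over $X_n$ and $X\setminus X_n$, exactly as in the $p=1$ proof of Proposition~\ref{prop:accretivity_lmin}, which avoids the bracket and handles all $p$ uniformly.
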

\begin{proof}
    This follows by a variant on the proof of Proposition~\ref{p:Lap_accretivity_Cc}.
    More specifically, for $p=1$, this follows from Proposition~\ref{prop:accretivity_lmin} with $\Phi=\id$.
    For $p \in (1,\infty)$, 
    we get that
    $$\sum_{x \in X} \Delta_n\varphi(x)  \varphi(x) |\varphi(x)|^{p-2} \mu(x)=
    \sum_{x \in X_n} \Deltadir[n] \varphi(x)  \varphi(x) |\varphi(x)|^{p-2} \mu(x) \geq 0$$
    which follows by Corollary~\ref{c:accretive_finite_Lap}
    as $\Deltadir[n]$ is a Laplacian on a finite graph.
\end{proof}

We denote by $\Delta^{(p)}_{|\Omega^p}$ the restriction of $\Delta^{(p)}$ to $\Omega^p$ as defined in~\eqref{eq:Omegap}. We then have the following result.
\begin{proposition}\label{p:accretivityDeltap}
   $\Delta^{(p)}_{|\Omega^p}$ is accretive for every $p\in [1,\infty)$. 
\end{proposition}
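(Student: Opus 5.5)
The plan is to mirror the proof of Proposition~\ref{p:accretivityDeltaphiom}, replacing $\Lmin$ by $\Delta_n$ and using linearity. The argument is an approximation: accretivity of $\Delta_n$ (Proposition~\ref{prop:accretivity_deltamin}) passes to $\Delta^{(p)}_{|\Omega^p}$ by taking limits in the norm inequality.

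Fix $u,v\in\Omega^p$ and $\lambda>0$. By the definition \eqref{eq:Omegap}, there are sequences $(\varphi_n)$ and $(\psi_n)$ with $\operatorname{supp}\varphi_n,\operatorname{supp}\psi_n\subseteq X_n$ such that, in $\ell^p(X,\mu)$,
\[
\varphi_n\to u,\qquad \psi_n\to v,\qquad \Delta_n\varphi_n\to\Delta^{(p)}u,\qquad \Delta_n\psi_n\to\Delta^{(p)}v.
\]
All $\varphi_n,\psi_n$ lie in $C_c(X)=\dom(\Delta_n)$. Since $\Delta_n$ is linear and accretive on $\ell^p(X,\mu)$ by Proposition~\ref{prop:accretivity_deltamin}, we get
\[
\|(\varphi_n-\psi_n)+\lambda\Delta_n(\varphi_n-\psi_n)\|_p\ \geq\ \|\varphi_n-\psi_n\|_p
\]
for every $n$. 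The left-hand side converges to $\|(u-v)+\lambda(\Delta^{(p)}u-\Delta^{(p)}v)\|_p$ and the right-hand side to $\|u-v\|_p$, by continuity of the norm. Passing to the limit therefore gives the accretivity inequality for $\Delta^{(p)}_{|\Omega^p}$. By linearity it even suffices to treat $v=0$, since $\Omega^p$ is a vector space.

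The only point needing care is that Proposition~\ref{prop:accretivity_deltamin} really yields the norm inequality for $\Delta_n$ on all of $C_c(X)$ in $\ell^p(X,\mu)$, not only the bracket inequality. Two facts make this work. First, $\Delta_n$ maps $C_c(X)$ into $C_c(X)\subseteq\ell^p(X,\mu)$, so the bracket characterization in Remark~\ref{rem:bracket} applies. Second, the bracket computation in that proposition splits as a sum over $X_n$, where one uses the Dirichlet Laplacian of the finite graph $G_n^D$, and a sum over the complement, where $\Delta_n\varphi$ vanishes. I do not expect a genuine obstacle here; the statement is a routine limit argument once that proposition is accepted.
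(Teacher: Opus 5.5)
Your proposal is correct and matches the paper's proof, which likewise obtains the accretivity of $\Delta^{(p)}_{|\Omega^p}$ by passing to the limit in the norm inequality for the accretive operators $\Delta_n$ of Proposition~\ref{prop:accretivity_deltamin}, exactly as in the proof of Proposition~\ref{p:accretivityDeltaphiom}. Your additional remarks are harmless and correct: linearity reduces the check to $v=0$, and the bracket inequality for $\Delta_n$ is equivalent to the norm inequality via Remark~\ref{rem:bracket}.
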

\begin{proof}
This follows from the same proof as that of 
Proposition~\ref{p:accretivityDeltaphiom} by using the accretivity of $\Delta_n$ established in Proposition~\ref{prop:accretivity_deltamin} and taking limits.
\end{proof}

\subsection{\texorpdfstring{On the m-accretivity of $\Delta^{(p)}_{|\Omega^p}$ for $p\in[1,\infty)$}{On the m-accretivity of Delta p on Omegap for p in [1,infty)}}\label{sec:m-accretivity_Deltap_omega}
In this subsection we consider the restriction of $\Delta^{(p)}$ to $\Omega^p$ for $p \in [1,\infty)$.
In parallel to what happens for $\Deltaphi$ on $\ell^1(X,\mu)$, we show that
this restriction is always m-accretive.

We first  adapt some of the notations and  preliminary lemmas of Subsection~\ref{sec:m-accretivity_Lomega} to 
the $\ell^p$ setting for the Laplacian. As usual, we consider an exhaustion $(X_n)$ and let
$\Delta_n$ denote the operator based on the Dirichlet Laplacian on $X_n$.
Given $\lambda>0$, for notational convenience, we let 
$$
S^{(p)}_\lambda = \id + \lambda \Delta^{(p)} \qquad \mbox{and} \qquad S_{\lambda,n} = \id + \lambda \Delta_n.
$$
With these notations, we now state a result on the existence of solutions on finite subgraphs.

\begin{lemma}\label{lem:finite-resolvent2}
Let $X_n$ be a finite, connected and nonempty subset of $X$, let
$\lambda>0$ and $p\in[1,\infty)$. For every $g\in\ell^p(X,\mu)$
with $\operatorname{supp}g\subseteq X_n$, there exists a unique
$\varphi_n^g\in\dom(\Delta_n)$ such that $\operatorname{supp}\varphi_n^g\subseteq X_n$ and
\[
S_{\lambda,n}\varphi_n^g=g.
\]

The map $g\mapsto\varphi_n^g$ satisfies, for all
$g,h\in\ell^p(X,\mu)$ with $\operatorname{supp}g,\operatorname{supp}h\subseteq X_n$, the
following:
\begin{enumerate}[label=\textup{(\alph*)},leftmargin=2.4em]
\item If $g\le h$, then $\varphi_n^g\le \varphi_n^h$.
\item $\|\varphi_n^g-\varphi_n^h\|_p\leq \|g-h\|_p$.
\end{enumerate}
Finally, let $X_n\subseteq X_{n+1}$ be finite, connected and nonempty subsets of $X$ and let the 
Dirichlet cutoffs be given by
$g_k:=\boldsymbol{\mathfrak i}_k\boldsymbol{\pi}_k g$ for $k\in\{n,n+1\}$. Then the following holds:
\begin{enumerate}[label=\textup{(\alph*)},leftmargin=2.4em,start=3]
\item If $g\ge 0$, then $0\le \varphi_n^{g_n}\le \varphi_{n+1}^{g_{n+1}}$ and if $g\le 0$,
then $\varphi_{n+1}^{g_{n+1}}\le \varphi_n^{g_n}\le 0$.
\end{enumerate}
\end{lemma}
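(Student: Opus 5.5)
This is the linear specialization of Lemmas~\ref{lem:finite-resolvent} and~\ref{lem:monotone-resolvent}, and I would follow their proofs with $\Phi=\id$. Everything happens on the finite Dirichlet graph $G_n^D$, so let $T_n=\id_n+\lambda\Deltadir[n]$ act on $C(X_n)\cong\mathbb R^{X_n}$.

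\emph{Existence and uniqueness.} Theorem~\ref{thm:min}, Case~1, applied to the finite graph $G_n^D$ with $\sigma\equiv1$ and $\phi=\id$, shows that $T_n$ is injective and that $T_n^{-1}$ is order preserving on its range. A minimum is always attained on a finite set, so Case~1 applies. Since $T_n$ is a linear map on a finite-dimensional space, injectivity gives bijectivity. Alternatively, Lemma~\ref{lem:finite-resolvent} with $\phi=\id$ already gives solvability for all right-hand sides.

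Given $g$ with $\operatorname{supp}g\subseteq X_n$, let $\vartheta=T_n^{-1}\boldsymbol{\pi}_n g$ and set $\varphi_n^g=\boldsymbol{\mathfrak i}_n\vartheta$. The computation from Lemma~\ref{lem:finite-resolvent} then gives $S_{\lambda,n}\varphi_n^g=\boldsymbol{\mathfrak i}_n T_n\vartheta=g$. For uniqueness among functions supported in $X_n$: if $\varphi$ is such a solution, then $\boldsymbol{\pi}_n\varphi$ solves $T_n\boldsymbol{\pi}_n\varphi=\boldsymbol{\pi}_n g$, so it equals $\vartheta$. Uniqueness in all of $\dom(\Delta_n)$ follows from injectivity of $S_{\lambda,n}$, which holds by the accretivity in Proposition~\ref{prop:accretivity_deltamin}. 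One can also see it directly: off $X_n$ we have $S_{\lambda,n}\varphi=\varphi$, which forces $\varphi=0$ there.

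\emph{Property (a).} This follows immediately from the order preservation of $T_n^{-1}$.

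\emph{Property (b).} Linearity gives $\varphi_n^g-\varphi_n^h=\varphi_n^{g-h}$. Proposition~\ref{prop:accretivity_deltamin} says $\Delta_n$ is accretive on $\ell^p(X,\mu)$, so $\|\varphi\|_p\le\|S_{\lambda,n}\varphi\|_p$ for $\varphi\in C_c(X)$. Applying this to $\varphi=\varphi_n^{g-h}$ yields the contraction estimate.

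\emph{Property (c).} I would copy the proof of Lemma~\ref{lem:monotone-resolvent} with $\Phi=\id$.
\begin{itemize}
\item For $g\ge0$, the cutoff satisfies $g_n\ge0$, so (a) compared with the zero solution gives $\varphi_n^{g_n}\ge0$.
\item Let $\eta=\boldsymbol{\pi}_{n+1}\varphi_n^{g_n}$. On $X_n$ the Dirichlet operators agree, so $T_{n+1}\eta=\boldsymbol{\pi}_{n+1}g$ there.
\item On $X_{n+1}\setminus X_n$ we have $\eta=0$, so $T_{n+1}\eta(x)=-\frac{\lambda}{\mu(x)}\sum_{y\in X_n}w(x,y)\varphi_n^{g_n}(y)\le0\le g(x)$.
\item Hence $T_{n+1}\eta\le\boldsymbol{\pi}_{n+1}g$, and the order preservation of $T_{n+1}^{-1}$ gives $\varphi_n^{g_n}\le\varphi_{n+1}^{g_{n+1}}$.
\end{itemize}
The case $g\le0$ follows by applying this to $-g$, using linearity.

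The main obstacle is only bookkeeping. One must check that the comparison principle of Theorem~\ref{thm:min} transfers to $G_n^D$, whose killing term $\kappa_n$ absorbs the boundary edges. One must also make sure that uniqueness in (b) relies on accretivity in the $\ell^p$ norm, which is Proposition~\ref{prop:accretivity_deltamin}, rather than the $\ell^1$ argument.
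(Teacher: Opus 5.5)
Your proposal is correct and follows the paper's proof. The paper likewise specializes Lemmas~\ref{lem:finite-resolvent} and~\ref{lem:monotone-resolvent} to $\Phi=\id$, obtains (a) from Theorem~\ref{thm:min} on $G_n^D$, and swaps the $\ell^1$ accretivity for the $\ell^p$ accretivity of $\Delta_n$ (equivalently of $\Deltadir[n]$ via Corollary~\ref{c:accretive_finite_Lap}) to get uniqueness and (b); your linear shortcuts are only a harmless streamlining of that argument, namely finite-dimensional injectivity implying bijectivity in place of Poincar\'e--Miranda, and the case $g\le 0$ in (c) via $-g$.
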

\begin{proof}
The proof proceeds along the same lines as those of Lemmas~\ref{lem:finite-resolvent}
and~\ref{lem:monotone-resolvent} in the nonlinear case for $\ell^1$, with
$\Phi=\operatorname{id}$ and the following substitutions. Existence in
$\dom(\Delta_n)$, together with the support property, follow as before. Furthermore,
the accretivity of $\Lmin$ on $\ell^1(X_n,\mu)$ is replaced by that of $\Delta_n$ on
$\ell^p(X,\mu)$ given by Proposition~\ref{prop:accretivity_deltamin}, which yields
uniqueness in $\dom(\Delta_n)$.  The order preservation~(a) follows from
{ Theorem~\ref{thm:min} applied to the finite Dirichlet graph
$G_n^{D}$ associated with $X_n$}. The contraction
estimate~(b) is obtained by replacing the accretivity of $\Deltadir[n]\Phi$ on
$\ell^1(X_n,\mu)$ with that of $\Deltadir[n]$ on $\ell^p(X_n,\mu)$ established
via Corollary~\ref{c:accretive_finite_Lap}.  Part~(c) follows from the same extension-by-zero argument
as in Lemma~\ref{lem:monotone-resolvent}.
\end{proof}

Given the lemma above,
we now establish the m-accretivity of $\Delta^{(p)}_{|\Omega^p}$ for all $p \in [1,\infty)$
in parallel to that of $\Deltaphi$ on $\ell^1(X,\mu)$ given in Theorem~\ref{thm:new}.
\begin{theorem}\label{thm:new2}
Let $G$ be a  graph and  $p\in [1, \infty)$. Then, there exists a dense subset $\Omega^p\subseteq\dom(\Delta^{(p)})$ such that $\Delta^{(p)}_{|\Omega^p}$ is m-accretive. In particular, for every $\lambda>0$ and every $g\in \ell^{p}(X,\mu)$, there exists a unique $u\in \Omega^p$ such that
\[
S^{(p)}_\lambda u=g
\]
and the solution $u$ satisfies the contractivity estimate
\[
\|u\|_p\leq \|g\|_p.
\]
Moreover, if $g\geq 0$, then $u\geq 0$, and if $g\leq 0$, then $u\leq 0$.
\end{theorem}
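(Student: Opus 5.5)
We follow the proof of Theorem~\ref{thm:new} closely, with $\Phi=\id$, $\Lmin$ replaced by $\Delta_n$, the $\ell^1$ norm replaced by the $\ell^p$ norm, and Lemmas~\ref{lem:finite-resolvent} and~\ref{lem:monotone-resolvent} replaced by Lemma~\ref{lem:finite-resolvent2}. The finite case is immediate from Corollary~\ref{c:accretive_finite_Lap} together with Lemma~\ref{lem:finite-resolvent2}. For $G$ infinite, we fix an exhaustion $(X_n)$ by finite connected sets and take $\Omega^p=\Omega^p_{(X_n)}$ as in \eqref{eq:Omegap}. Accretivity of $\Delta^{(p)}_{|\Omega^p}$ is Proposition~\ref{p:accretivityDeltap}. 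For $g\in\ell^p(X,\mu)$ we set $g_n=\boldsymbol{\mathfrak i}_n\boldsymbol{\pi}_n g$ and $\varphi_n=J^\lambda_n g_n:=(\id+\lambda\Delta_n)^{-1}g_n$. By Lemma~\ref{lem:finite-resolvent2}, these satisfy $\|\varphi_n\|_p\le\|g\|_p$, the order property (a), and the monotonicity property (c).

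\textbf{Linear shortcut replacing Steps 3--4.} Since the problem is linear, $J^\lambda_n g_n=J^\lambda_n g_n^++J^\lambda_n g_n^-$, where $g^\pm$ are the positive and negative parts of $g$. For $g\ge0$, Lemma~\ref{lem:finite-resolvent2}(c) gives $0\le\varphi_n\nearrow u^+$ pointwise. Fatou's lemma gives $\|u^+\|_p\le\|g\|_p$, and dominated convergence (with $|\varphi_n-u^+|^p\le |u^+|^p$) gives $\varphi_n\to u^+$ in $\ell^p(X,\mu)$. The case $g\le0$ is symmetric and produces $u^-$. Hence $\varphi_n\to u:=u^++u^-$ in $\ell^p(X,\mu)$ for arbitrary $g$, with no need for a diagonal subsequence, compactness, or density and continuity in $\lambda$. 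This is consistent with the remark preceding the theorem that every exhaustion works.

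\textbf{Identification of the limit.} We then show $u\in\Omega^p$ and $S^{(p)}_\lambda u=g$. For $x\in X_n$, the equation reads
\[
g_n(x)=\varphi_n(x)+\lambda\Deg(x)\varphi_n(x)-\frac{\lambda}{\mu(x)}\sum_{y}w(x,y)\varphi_n(y).
\]
We pass to the pointwise limit, using the domination $|\varphi_n|\le |u^+|+|u^-|=:c$. The main obstacle is to check that $c\in\dom(\Delta)$, i.e.\ $\sum_y w(x,y)c(y)<\infty$. This follows from the monotone case: there monotone convergence applied to the displayed identity shows that $\sum_y w(x,y)u^\pm(y)$ is finite, and hence $u^\pm\in\dom(\Delta)$. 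Dominated convergence then gives $u\in\dom(\Delta)$ and $u+\lambda\Delta u=g$, so $\Delta u=(g-u)/\lambda\in\ell^p(X,\mu)$ and $u\in\dom(\Delta^{(p)})$. Moreover,
\[
\|\Delta_n\varphi_n-\Delta^{(p)}u\|_p\le\lambda^{-1}\bigl(\|g_n-g\|_p+\|\varphi_n-u\|_p\bigr)\to0,
\]
so $u\in\Omega^p$.

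\textbf{Conclusions.} The contraction estimate $\|u\|_p\le\|g\|_p$ follows from Fatou's lemma applied to $\|\varphi_n\|_p\le\|g\|_p$. Sign preservation follows from the monotone construction together with injectivity on $\Omega^p$, which comes from accretivity. For density, we use the fact that $\dom$ of an m-accretive operator $A$ is dense in $\ell^p$ for $p\in(1,\infty)$ by reflexivity, since then $J_\lambda g\to g$ as $\lambda\to0$. Alternatively, and more concretely, $\|u_\lambda-g\|_p\to0$ as $\lambda\to0$ for $g\in C_c(X)$ can be checked by comparing with the finite problems. In the case $p=1$ this is simply Lemma~\ref{l:omega_denseness} with $\Phi=\id$.
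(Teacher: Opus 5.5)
Your argument is correct. For the existence part it is essentially the paper's argument: the paper also uses linearity to dispense with the diagonal subexhaustion, obtains $u^\pm$ by monotone convergence, and sets $u=u^+-u^-$. The only cosmetic difference is how the signed case is finished. The paper gets $u\in\Omega^p$ and $S^{(p)}_\lambda u=g$ at once because $\Omega^p$ is a vector subspace, and it gets the contraction from accretivity on $\Omega^p$. You instead re-identify the limit of $\varphi_n$ for signed data by dominated convergence with majorant $|u^+|+|u^-|$.

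The genuinely different step is density for $1<p<\infty$. The paper argues concretely. For $g\in C_c(X)$ the finite-set solutions do not depend on $p$, so $u_\lambda$ also lies in $\Omega^1$. Since $C_c(X)\subseteq\Omega^1$ always holds, accretivity in $\ell^1$ gives $\|u_\lambda-g\|_1\le\lambda\|\Delta g\|_1$. Combining this with $\|u_\lambda-g\|_r\le 2\|g\|_r$ for some $r>p$ and interpolating yields $\|u_\lambda-g\|_p\to0$. That is a precise version of your ``alternative'' remark. It is elementary and quantitative, and it sidesteps the fact that $C_c(X)\subseteq\Omega^p$ would require \EM[p].

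Your main route instead cites the abstract theorem that a linear m-accretive operator on a reflexive Banach space is densely defined. This is shorter and valid here, but you should state two things explicitly. First, it is linearity (i.e., $\Omega^p$ being a subspace) that makes the theorem apply; for nonlinear m-accretive operators the domain need not be dense even in a Hilbert space. Second, ``$J_\lambda g\to g$ for all $g$'' is the conclusion of that theorem (proved via weak compactness and closedness of the graph), not a reason for it.
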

\begin{proof}
The proof follows the same scheme as the proof of Theorem~\ref{thm:new} and is mostly identical. The linearity of $\Delta^{(p)}$ simplifies the argument and removes the need for the diagonal 
subexhaustion introduced in Step~3 of that proof. We briefly sketch the details for the convenience of the reader.

If $G$ is finite, the conclusion is immediate by Corollary~\ref{c:accretive_finite_Lap} and Lemma~\ref{lem:finite-resolvent2}. Hence, we assume that $G$ is infinite, fix an exhaustion $(X_n)$ by finite connected sets and let $\Omega^p=\Omega^p_{(X_n)}$.

Let $\lambda>0$ and $g\in\ell^p(X,\mu)$. Define as usual the Dirichlet cutoff of $g$ by
\[
g_n:=\boldsymbol{\mathfrak{i}}_n\boldsymbol{\pi}_n g\in C_c(X)
\]
so that
$$\operatorname{supp} g_n\subseteq X_n \qquad \textup{and} \qquad \|g_n - g\|_p \to 0$$
as $n \to \infty$.
From Lemma~\ref{lem:finite-resolvent2}, let $\varphi_n$ be the solution of the finite-set equation. The key ingredients are: 
\begin{equation}\label{eq:linear_1}
S_{\lambda,n}\varphi_n=g_n,\qquad
\operatorname{supp}\varphi_n\subseteq X_n, \qquad \|\varphi_n\|_p\leq  \|g\|_p, \qquad \mbox{and} \qquad  \begin{cases}
    0\leq \varphi_n\leq \varphi_{n+1} & \mbox{ if } \; g\geq 0,\\
    \varphi_{n+1} \leq \varphi_n\leq 0  & \mbox{ if } \; g\leq 0.
\end{cases}
\end{equation}

Assume first that $g\geq 0$. By the above properties and arguing as in Step 1 of the
proof of Theorem~\ref{thm:new},  $\varphi_n$ converges pointwise monotonically to some $u\geq 0$. In particular, $u\in \Omega^p$, $\|u\|_p \leq \|g\|_p$ and $u$ is a solution of $S^{(p)}_\lambda u=g$.

For general $g \in \ell^p(X,\mu)$, define
\[
g^+=\max\{g,0\}\qquad \mbox{and} \qquad g^-=\max\{-g,0\}
\]
so that $g=g^+ - g^-$ with $g^\pm\geq 0$. Applying the preceding argument to $g^+$ and $g^-$ gives $u^\pm\in\Omega^p$ with $S^{(p)}_\lambda u^\pm=g^\pm$. By linearity and accretivity on $\Omega^p$,
\[
u:=u^+-u^-\in\Omega^p,
\qquad
S^{(p)}_\lambda u=g, \qquad \|u\|_p = \|u-0\|_p \le \|S_\lambda^{(p)}u - S_\lambda^{(p)}0\|_p = \|g\|_p.
\]
This concludes the proof of the existence of $u$ with the stated properties.

It remains to prove that $\Omega^p$ is dense in $\ell^p(X,\mu)$. For $p=1$, this is immediate by  Lemma~\ref{lem:Cc_density_Omegap} and the fact that \EM[1] always holds.  

Let now $1<p<\infty$. Since $C_c(X)$ is dense in $\ell^p(X,\mu)$, it suffices to prove
\[
C_c(X)\subseteq \overline{\Omega^p}^{\,\ell^p}.
\]
Fix $g\in C_c(X)$ and let $u_\lambda\in\Omega^p$ solve
\[
S^{(p)}_\lambda u_\lambda=g.
\]
Notice that $g\in\ell^p(X,\mu)$ for any $p$ and that the finite-set solutions 
$\varphi_n$ that give $u_\lm$ do 
not depend on $p$. Applying the case $p=1$  to the same datum $g$, the pointwise limit $u_\lambda$ of $\varphi_n$ is also the $\ell^1$-solution. Hence, $u_\lambda\in\Omega^1$ and
\[
 S^{(1)}_\lambda u_\lambda=g.
\]
Since $g\in C_c(X)\subseteq\Omega^1$, the accretivity of $\Delta^{(1)}_{|\Omega^1}$ gives
\begin{equation}\label{eq:p-density-1}
\|u_\lambda-g\|_1
\leq
\| S^{(1)}_\lambda u_\lambda- S^{(1)}_\lambda g\|_1
=
\lambda\|\Delta^{(1)} g\|_1. 
\end{equation}
Now choose an  exponent $r>p$. Again, the pointwise limit $u_\lambda$ of $\varphi_n$ is also the $\ell^r$ solution for the datum $g$ and 
\[
\|\varphi_n\|_r \leq \|g\|_r \qquad \text{and} \qquad \|u_\lambda\|_r \leq \|g\|_r.
\]
Hence,
\begin{equation}\label{eq:p-density-2}
\|u_\lambda-g\|_r \leq 2\|g\|_r. 
\end{equation}
Now interpolate between $\ell^1$ and $\ell^r$. Let $\theta\in(0,1)$ be defined by
\[
\frac{1}{p}=\theta+\frac{1-\theta}{r}, \qquad\text{i.e.,}\qquad \theta=\frac{r-p}{p(r-1)}.
\]
Then, {by Littlewood's inequality, which is an interpolation of H{\"o}lder},
\[
\|u_\lambda-g\|_p \leq \|u_\lambda-g\|_1^{\theta}\,\|u_\lambda-g\|_r^{1-\theta}.
\]
Using \eqref{eq:p-density-1} and \eqref{eq:p-density-2}, we get
\[
\|u_\lambda-g\|_p \leq \bigl(\lambda \|\Delta^{(1)} g\|_1\bigr)^{\theta} \bigl(2\|g\|_r\bigr)^{1-\theta}.
\]
Therefore,
\[
\|u_\lambda-g\|_p\to0
\]
as $\lambda \to 0$.
So, every $g\in C_c(X)$ lies in the $\ell^p$-closure of $\Omega^p$. Since $C_c(X)$ is 
dense in $\ell^p(X,\mu)$ for $p \in [1,\infty)$, this proves that $\Omega^p$ is dense in $\ell^p(X,\mu)$.

Finally, $\Delta^{(p)}_{|\Omega^p}$ is accretive by Proposition~\ref{p:accretivityDeltap}, and we have just shown that $S^{(p)}_\lambda \colon \Omega^p \to \ell^p(X,\mu)$ is surjective for every $\lambda>0$. Hence, $\Delta^{(p)}_{|\Omega^p}$ is m-accretive. Uniqueness follows from accretivity.
\end{proof}

\subsection{\texorpdfstring{On the m-accretivity of $\Delta^{(p)}$ for $p\in[1,\infty]$}{On the m-accretivity of Delta p for p in [1,infty]}}
We will show that for $\Dep$ accretivity, m-accretivity
and the injectivity of $\Sep=\id+\lm \Dep$ 
for some $\lm>0$ are all equivalent. This holds even for $p=\infty$.  
As consequences, we derive the m-accretivity of $\Dep$ under 
the infinite measure of paths condition
and the m-accretivity of $\Delta^{(\infty)}$ if the reciprocal of the vertex degree is not summable
over paths.

In order to cover the $p=\infty$ case, we first identify the adjoint operator of $\Delta^{(\infty)}$.
We first note that $\ell^\infty(X) \subseteq \dom(\Delta)$ and thus the domain of $\Delta^{(\infty)}$
reads as
$$\dom(\Delta^{(\infty)})= \{ u\in \ell^\infty(X) \mid \Delta u \in \ell^\infty(X)\}.$$
We recall that \EM[1] always holds, i.e., $\Delta(C_c(X))\subseteq \ell^1(X,\mu)$. Furthermore, 
$\Delta_{\min}^{(1)}$, the closure of $\Delta_c=\Delta_{|C_c(X)}$ on $\ell^1(X,\mu)$, 
is a closed, densely defined and accretive operator by Lemma~\ref{l:min_accretivity}. 

We identify $(\ell^1(X,\mu))^{\ast}$ with $\ell^\infty(X)$ via the pairing
\[
(f,g)=\sum_{x\in X}f(x)g(x)\mu(x)
\]
for $f\in\ell^1(X,\mu)$ and $g\in\ell^\infty(X)$. With these preliminary statements, we now
show that $\Delta^{(\infty)}$ and $\Delta_{\min}^{(1)}$ are dual to each other.

\begin{lemma}\label{l:infty_adjoint} 
Let $G$ be a graph.
Then, $\bigl(\Delta_{\min}^{(1)}\bigr)^{\ast}=\Delta^{(\infty)}$.
\end{lemma}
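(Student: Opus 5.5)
We prove the two inclusions $\Delta^{(\infty)}\subseteq(\Delta_{\min}^{(1)})^\ast$ and $(\Delta_{\min}^{(1)})^\ast\subseteq\Delta^{(\infty)}$. Throughout we use the pairing $(\cdot,\cdot)$ between $\ell^1(X,\mu)$ and $\ell^\infty(X)$. We also use that $\Delta_{\min}^{(1)}$ is densely defined and closed (Lemma~\ref{l:min_accretivity}), so its adjoint is a well-defined operator. By definition, $v\in\dom((\Delta_{\min}^{(1)})^\ast)$ with $(\Delta_{\min}^{(1)})^\ast v=h$ if and only if $v,h\in\ell^\infty(X)$ and
$(\Delta_{\min}^{(1)}f,v)=(f,h)$ for all $f\in\dom(\Delta_{\min}^{(1)})$.
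Since $\Delta_{\min}^{(1)}$ is the closure of $\Delta_c$ and both sides are continuous in the graph norm, it suffices to test this identity with $f\in C_c(X)$. Hence the adjoint of $\Delta_{\min}^{(1)}$ equals the adjoint of $\Delta_c$, and we work only with $\Delta_c$.

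\textbf{Key step: Green's formula for bounded $v$.} For $\varphi\in C_c(X)$ and $v\in\ell^\infty(X)$ we have $v\in\dom(\Delta)$ by local summability. We claim
\[
(\Delta\varphi,v)=\sum_{x\in X}\Delta\varphi(x)v(x)\mu(x)=\sum_{x\in X}\varphi(x)\Delta v(x)\mu(x)=(\varphi,\Delta v).
\]
The cleanest route is to reduce to $\varphi=1_z$, using linearity. From the formula for $\Delta 1_z$ in the proof of Lemma~\ref{lem:EMp},
\[
(\Delta 1_z,v)=\Deg(z)\mu(z)v(z)-\sum_{y\neq z}w(z,y)v(y)=\mu(z)\Delta v(z)=(1_z,\Delta v).
\]
The sum on the left converges absolutely because $\sum_y w(z,y)|v(y)|\le\|v\|_\infty\sum_y w(z,y)<\infty$, so no Fubini issues arise. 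This is the one place where care is needed, and it is exactly where boundedness of $v$ and local summability are used.

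\textbf{Inclusion $\Delta^{(\infty)}\subseteq(\Delta_c)^\ast$.} If $v\in\dom(\Delta^{(\infty)})$, then $\Delta v\in\ell^\infty(X)$. The identity above gives $(\Delta\varphi,v)=(\varphi,\Delta v)$ for all $\varphi\in C_c(X)$, so $v\in\dom((\Delta_c)^\ast)$ and $(\Delta_c)^\ast v=\Delta v$.

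\textbf{Inclusion $(\Delta_c)^\ast\subseteq\Delta^{(\infty)}$.} Suppose $v,h\in\ell^\infty(X)$ satisfy $(\Delta\varphi,v)=(\varphi,h)$ for all $\varphi\in C_c(X)$. Testing with $\varphi=1_z$ and using the identity gives $\mu(z)\Delta v(z)=\mu(z)h(z)$, hence $\Delta v=h\in\ell^\infty(X)$. So $v\in\dom(\Delta^{(\infty)})$ and $\Delta^{(\infty)}v=h$.

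Combining the two inclusions gives $(\Delta_{\min}^{(1)})^\ast=(\Delta_c)^\ast=\Delta^{(\infty)}$. The only genuine obstacle is the absolute convergence in the Green's formula; restricting to indicator functions, together with $v\in\ell^\infty(X)$, disposes of it.
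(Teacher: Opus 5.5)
Your proof is correct and follows the paper's argument. You pass from $\Delta_{\min}^{(1)}$ to $\Delta_c$ using density and the definition of the closure, apply Green's formula $(\Delta\varphi,u)=(\varphi,\Delta u)$ for $\varphi\in C_c(X)$ and $u\in\ell^\infty(X)$, and identify the adjoint pointwise, the only difference being that you check Green's formula by hand on indicators $1_z$ rather than citing it, which is a fine self-contained substitute.
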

\begin{proof}
Since $C_c(X)$ is dense in $\ell^1(X,\mu)$ and $\Delta_{\min}^{(1)}$ is the closure of $\Delta_c$ on $\ell^1(X,\mu)$, we have $\bigl(\Delta_{\min}^{(1)}\bigr)^{\ast}=(\Delta_c)^{\ast}$. Green's formula \cite[Proposition~1.5]{keller2021graphs} gives for $\varphi\in C_c(X)$ and $u\in\ell^\infty(X)$
\[
\sum_{x\in X}\Delta\varphi(x)u(x)\mu(x)
=
\sum_{x\in X}\varphi(x)\Delta u(x)\mu(x).
\]
The right-hand side defines a bounded linear functional on $\ell^1(X,\mu)$ in the variable $\varphi$ if and only if $\Delta u\in\ell^\infty(X)$, i.e., $u \in \dom(\Delta^{(\infty)})$. This completes the proof.
\end{proof}

Having identified the needed duality for $\ell^\infty(X)$, we can now
give a unifying statement concerning the m-accretivity of the maximal
Laplacians on $\ell^p$.

\begin{theorem}\label{t:acc_inj}
    Let $p \in [1,\infty]$. The following statements are equivalent: 
    \begin{itemize}
        \item[\textup{(i)}] $\Dep$ is m-accretive.
        \item[\textup{(ii)}] $\Dep$ is accretive.
        \item[\textup{(iii)}] $\Sep$ is injective on $\dom(\Delta^{(p)})$
        for some (equivalently, all) $\lambda >0$.
    \end{itemize}
\end{theorem}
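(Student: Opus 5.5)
The implications (i) $\Longrightarrow$ (ii) $\Longrightarrow$ (iii) are immediate from the definitions, so the work lies in (iii) $\Longrightarrow$ (i). We treat $p\in[1,\infty)$ and $p=\infty$ separately.

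\emph{Case $p\in[1,\infty)$.} Here we mimic the proof of Theorem~\ref{t:fix}. By Theorem~\ref{thm:new2}, the restriction $\Delta^{(p)}_{|\Omega^p}$ is m-accretive, so $\id+\lm\Delta^{(p)}_{|\Omega^p}$ is onto $\ell^p(X,\mu)$ for every $\lm>0$. If $\Sep$ is injective on $\dom(\Dep)$ for some $\lm_0>0$, then Lemma~\ref{l:inclusion} applied to $\Delta^{(p)}_{|\Omega^p}\subseteq\Dep$ gives $\Dep=\Delta^{(p)}_{|\Omega^p}$, which is m-accretive. The ``some/all'' clause follows: once injectivity holds for one $\lm_0$, m-accretivity holds, and this gives injectivity for all $\lm>0$.

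\emph{Case $p=\infty$.} This is the main obstacle, because $C_c(X)$ is not dense in $\ell^\infty(X)$ and no analogue of $\Omega^\infty$ is available. Instead we use the duality from Lemma~\ref{l:infty_adjoint}, namely $\Delta^{(\infty)}=(\deltam^{(1)})^*$.

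We first record that $\deltam^{(1)}$ is closed, densely defined and accretive by Lemma~\ref{l:min_accretivity}. Hence $A:=\id+\lm\deltam^{(1)}$ satisfies $\|Au\|_1\ge\|u\|_1$, so it is injective with closed range.

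Next, the range of $A$ is dense if and only if its annihilator is trivial. That annihilator is $\ker A^*=\ker(\id+\lm\Delta^{(\infty)})$, which vanishes exactly by assumption (iii). Together with closedness, $A$ is therefore bijective onto $\ell^1(X,\mu)$, and $A^{-1}$ is a contraction. This makes $\deltam^{(1)}$ m-accretive.

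Finally, we transfer this to $\ell^\infty$ by dualizing. We have $(A^{-1})^*=(A^*)^{-1}=(\id+\lm\Delta^{(\infty)})^{-1}$, which is defined on all of $\ell^\infty(X)$ and satisfies $\|(A^{-1})^*\|\le 1$. Thus $S^{(\infty)}_\lm$ is surjective, and $\|S^{(\infty)}_\lm u\|_\infty\ge\|u\|_\infty$ for all $u\in\dom(\Delta^{(\infty)})$ and all $\lm>0$. By linearity, this is exactly accretivity, so $\Delta^{(\infty)}$ is m-accretive.

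The points needing care are the identification $(A^*)^{-1}=(A^{-1})^*$ for closed, densely defined operators with bounded inverse, and the fact that injectivity for one $\lm_0$ suffices. For the latter, running the argument at $\lm_0$ already makes $\deltam^{(1)}$ m-accretive; Remark~\ref{rem:lambda} then gives surjectivity of $A$ for every $\lm>0$, and dualizing as above yields m-accretivity of $\Delta^{(\infty)}$ for all $\lm$.
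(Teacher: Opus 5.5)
Your proposal is correct and follows essentially the same route as the paper. For $p\in[1,\infty)$ you combine the m-accretivity of $\Delta^{(p)}_{|\Omega^p}$ from Theorem~\ref{thm:new2} with Lemma~\ref{l:inclusion}, and for $p=\infty$ you use the duality $(\deltam^{(1)})^*=\Delta^{(\infty)}$ to get surjectivity of $\id+\lambda\deltam^{(1)}$ and then dualize its contractive inverse; the only cosmetic difference is that you obtain this surjectivity from density of the range (via the annihilator $\ker(\id+\lambda\Delta^{(\infty)})$) plus closedness, where the paper phrases the same step through the closed range theorem.
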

\begin{proof}
The implications (i) $\Longrightarrow$ (ii) $\Longrightarrow$ (iii) are immediate. So, it remains to prove (iii) $\Longrightarrow$ (i). We break this down into two cases.

\medskip
\noindent \textbf{Case $p\in [1,\infty)$}:
Let $\Omega^p$ be as in Theorem~\ref{thm:new2} so that $\Delta^{(p)}_{|\Omega^p}$ is m-accretive.
Let $\lambda_0 >0$ be such that (iii) holds. Then, $S^{(p)}_{\lm_0}\colon \Omega^p \to \ell^p(X,\mu)$
is onto by m-accretivity while $S^{(p)}_{\lm_0}\colon \dom(\Delta^{(p)}) \to \ell^p(X,\mu)$ is injective
by assumption.
Since $\Omega^p \subseteq \dom(\Dep)$, the conclusion follows by Lemma~\ref{l:inclusion}.

\medskip
\noindent \textbf{Case $p=\infty$}: 
By Lemma~\ref{l:infty_adjoint}, we have
$\bigl(\Delta_{\min}^{(1)}\bigr)^{\ast}=\Delta^{(\infty)}$.

We next claim that $\operatorname{Rg}\bigl(S^{(1)}_{\min, \lambda}\bigr)$, the
range of $S^{(1)}_{\min, \lambda}$,
is closed for every $\lambda>0$ where $S^{(1)}_{\min, \lambda}:=\id+\lambda\Delta_{\min}^{(1)}$.
Since, by Lemma~\ref{l:min_accretivity}, $\Delta_{\min}^{(1)}$ is accretive on $\ell^1(X,\mu)$,
\[
\|u\|_1\le \|S^{(1)}_{\min, \lambda} u\|_1
\]
for every $u\in\dom(\Delta_{\min}^{(1)})$.
Therefore, $S^{(1)}_{\min, \lambda}$ is bounded below. 
Let $g_n \in \operatorname{Rg}(S^{(1)}_{\min, \lambda})$ with $g_n=S^{(1)}_{\min, \lambda} u_n\to g$ in $\ell^1(X,\mu)$. The lower bound gives
\[
\|u_n-u_m\|_1\le \|S^{(1)}_{\min, \lambda}(u_n-u_m)\|_1=\|g_n-g_m\|_1.
\]
Hence, $(u_n)$ is Cauchy and $u_n\to u$ for some $u\in\ell^1(X,\mu)$. Then,
\[
\lambda\,\Delta_{\min}^{(1)} u_n=g_n-u_n\to g-u.
\]
Since $\Delta_{\min}^{(1)}$ is closed, we conclude $u\in\dom(\Delta_{\min}^{(1)})$ and $\lambda\,\Delta_{\min}^{(1)} u=g-u$, i.e., $g=S^{(1)}_{\min, \lambda} u\in\operatorname{Rg}(S^{(1)}_{\min, \lambda})$. This shows that $\operatorname{Rg}(S^{(1)}_{\min, \lambda})$ 
is closed as claimed.

Now assume (iii) holds, that is, $S^{(\infty)}_\lambda= \id +\lambda\Delta^{(\infty)}$ is injective for some $\lambda>0$. By the closed range theorem combined with $\bigl(\Delta_{\min}^{(1)}\bigr)^{\ast}=\Delta^{(\infty)}$ from Lemma~\ref{l:infty_adjoint}, we get
\[
\operatorname{Rg}\bigl(S^{(1)}_{\min, \lambda}\bigr)
=\bigl(\ker(\id+\lambda\Delta^{(\infty)})\bigr)^{\perp}
=\bigl(\ker S^{(\infty)}_\lambda\bigr)^{\perp}
=\ell^1(X,\mu)
\]
where the third equality uses injectivity of $S^{(\infty)}_\lambda$. Hence, $S^{(1)}_{\min, \lambda}$ is surjective for some $\lambda>0$, so $\Delta_{\min}^{(1)}$ is m-accretive on $\ell^1(X,\mu)$, see Remark~\ref{rem:lambda}.

Finally, we transfer the m-accretivity of $\Delta_{\min}^{(1)}$ to its Banach space adjoint. 
For every $\lambda>0$, the m-accretivity of $\Delta_{\min}^{(1)}$ means that $S^{(1)}_{\min, \lambda}$ is bijective from $\dom(\Delta_{\min}^{(1)})$ onto $\ell^1(X,\mu)$ with $\bigl(S^{(1)}_{\min, \lambda}\bigr)^{-1}$ being a bounded operator on $\ell^1(X,\mu)$ 
with $\bigl\|(S^{(1)}_{\min, \lambda})^{-1}\bigr\|\le 1$. 
Since $\Delta_{\min}^{(1)}$ is densely defined and closed, so is $S^{(1)}_{\min, \lambda}$ 
and \cite[Proposition~B.11]{EngelNagel2000} yields 
that $\bigl(S^{(1)}_{\min, \lambda}\bigr)^{\ast}$ is bijective with
\[
\Bigl(\bigl(S^{(1)}_{\min, \lambda}\bigr)^{\ast}\Bigr)^{-1}
=\Bigl(\bigl(S^{(1)}_{\min, \lambda}\bigr)^{-1}\Bigr)^{\ast}\]
and is bounded on $\ell^\infty(X)$ with 
\[ \Bigl\|\Bigl(\bigl(S^{(1)}_{\min, \lambda}\bigr)^{\ast}\Bigr)^{-1}\Bigr\|
=\bigl\|(S^{(1)}_{\min, \lambda})^{-1}\bigr\| \le 1.
\]
Combined with $\bigl(\Delta_{\min}^{(1)}\bigr)^{\ast}=\Delta^{(\infty)}$, this gives
\[
\bigl(S^{(1)}_{\min, \lambda}\bigr)^{\ast}
=\id+\lambda\bigl(\Delta_{\min}^{(1)}\bigr)^{\ast}
=\id+\lambda\Delta^{(\infty)}
=S^{(\infty)}_\lambda
\]
so $S^{(\infty)}_\lambda$ is bijective with contractive inverse for every $\lambda>0$. Therefore, 
$\Delta^{(\infty)}$ is m-accretive, proving (i) for~$p=\infty$.
\end{proof}

\begin{remark}[Connection to stochastic completeness]
In the above proof we have shown that if $S^{(\infty)}_\lambda$ is injective for some $\lambda$, i.e., if $\Delta$ is stochastically complete at infinity, see Definition~\ref{def:stochastic_completeness} below, then $\Delta^{(1)}_{\min}$ is m-accretive. This is the content, in our setting and for $\kappa=0$, of  \cite[Theorem 2.11 (ii)] {milatovic2015maximal}.
A minor variation of the above argument also shows that if $\EM[p]$ holds  for some  $1<p<\infty$, then $(\Delta_{\min}^{(p)})^\ast= \Delta^{(q)}$ for $q$ such that $p^{-1}+q^{-1}=1$.  Moreover, if we  also assume that $S^{(q)}_\lambda$ is injective for some $\lambda >0$, then $\Delta^{(p)}_{\min}$ is m-accretive. Noting that, by  Theorem~\ref{thm:min}, $\IP$ implies the  injectivity of $S^{(q)}_\lambda$ for every  $\lambda >0$ and  $1\leq q<\infty$, we recover the conclusion, in our setting,  of  \cite[Theorem 2.11 (i)] {milatovic2015maximal}.
\end{remark}

We  have the following straightforward consequence.
\begin{corollary}
   Let $p\in[1,\infty)$.  Then, $S^{(p)}_\lambda$ is injective for some $\lambda>0$
   if and only if $\Delta^{(p)} = \Delta^{(p)}_{|\Omega^p}$ where $\Omega^p$ is such that $\Delta^{(p)}_{|\Omega^p}$ is m-accretive.
\end{corollary}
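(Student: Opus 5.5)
The plan is to mirror the proof of the corresponding corollary for $\Deltaphi$ (right after Theorem~\ref{t:fix}), replacing $\Omega$ by $\Omega^p$ and Theorem~\ref{thm:new} by Theorem~\ref{thm:new2}. Throughout, $\Omega^p=\Omega^p_{(X_n)}$ is the set produced by Theorem~\ref{thm:new2} for a fixed exhaustion, so $\Delta^{(p)}_{|\Omega^p}$ is m-accretive and $\Omega^p\subseteq\dom(\Delta^{(p)})$.

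\emph{Forward direction.} Assume $S^{(p)}_{\lambda_0}$ is injective on $\dom(\Delta^{(p)})$ for some $\lambda_0>0$. We have $\Delta^{(p)}_{|\Omega^p}\subseteq\Delta^{(p)}$, and by m-accretivity $S^{(p)}_{\lambda_0}\colon\Omega^p\to\ell^p(X,\mu)$ is onto. Lemma~\ref{l:inclusion}, applied with $T_1=\Delta^{(p)}_{|\Omega^p}$ and $T_2=\Delta^{(p)}$, then gives $\Delta^{(p)}=\Delta^{(p)}_{|\Omega^p}$.

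\emph{Converse.} If $\Delta^{(p)}=\Delta^{(p)}_{|\Omega^p}$, then $\Delta^{(p)}$ is m-accretive by Theorem~\ref{thm:new2}. Hence it is accretive, and $S^{(p)}_\lambda$ is injective for every $\lambda>0$. One could also cite Theorem~\ref{t:acc_inj}, but accretivity alone gives this directly.

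I do not expect any real obstacle. The only point to watch is that the statement concerns the specific $\Omega^p$ from Theorem~\ref{thm:new2}, and that set is defined for an arbitrary exhaustion with no subsequence needed. So the argument works for whichever exhaustion was fixed, which is consistent with the remark that $\Omega^p$ then coincides with the whole domain.
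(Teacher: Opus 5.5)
Your proof is correct and follows the paper's argument. The forward direction is Lemma~\ref{l:inclusion} applied to $\Delta^{(p)}_{|\Omega^p}\subseteq\Delta^{(p)}$, using the surjectivity supplied by Theorem~\ref{thm:new2}. The converse follows from the m-accretivity of $\Delta^{(p)}$; your observation that accretivity gives injectivity directly, without Theorem~\ref{t:acc_inj}, is a harmless simplification.
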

\begin{proof}
    Since $\Delta^{(p)}_{\vert \Omega^p} \subseteq \Dep$, the statement follows by combining Theorems~\ref{thm:new2}~and~\ref{t:acc_inj} with Lemma~\ref{l:inclusion}.
\end{proof}

Given Theorem~\ref{t:acc_inj}, we can derive some immediate conditions for the
m-accretivity of $\Dep$ by looking at the injectivity
of the shifted operator. In fact, we have already investigated this injectivity as a consequence
of our comparison principle, see Theorem~\ref{thm:min}.
The first result gives that the infinite measure of infinite paths condition implies that the operator $\Delta^{(p)}$
is m-accretive for all $p \in [1,\infty)$. This extends Theorem~\ref{t:lf} for the porous medium-type operator
on $\ell^1(X,\mu)$. Furthermore, m-accretivity always implies that the operator is closed 
which has been used in our previous considerations and will play a role later on as well.
Thus, we note it below.
\begin{proposition}\label{p:Lap_accretive}
If $G$ satisfies $\IP$, then
$\Delta^{(p)}$ is 
m-accretive and closed for $p \in [1,\infty)$.
\end{proposition}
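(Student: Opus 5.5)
The plan is to combine the comparison principle with the equivalence of accretivity, m-accretivity and injectivity in Theorem~\ref{t:acc_inj}, and then to use the general fact that m-accretive operators are closed.

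\textbf{Step 1: injectivity.} Fix $p \in [1,\infty)$ and $\lambda>0$. I will apply Theorem~\ref{thm:min}~(i) with $\phi(s)=s$, so that $\Phi=\id$ (continuous, monotone increasing, $\phi(0)=0$), with $\sigma\equiv 1$ and with $\lambda\Delta$ in place of $\lambda\Delta\Phi$. Since $\dom(\Dep)\subseteq \ell^p(X,\mu)\cap\dom(\Delta)$, and $\IP$ is exactly the hypothesis $\sum_n\mu(x_n)=\infty$ for every infinite path, that theorem shows $\Sep=\id+\lambda\Delta$ is injective on $\ell^p(X,\mu)\cap \dom(\Delta)$. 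In particular it is injective on $\dom(\Dep)$.

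\textbf{Step 2: m-accretivity.} Given the injectivity of $\Sep$ for some (indeed every) $\lambda>0$, the implication (iii) $\Longrightarrow$ (i) of Theorem~\ref{t:acc_inj} gives that $\Dep$ is m-accretive.

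For completeness, I recall where that implication comes from when $p<\infty$. By Theorem~\ref{thm:new2}, $\Delta^{(p)}_{|\Omega^p}$ is m-accretive. Then Lemma~\ref{l:inclusion}, applied to $\Delta^{(p)}_{|\Omega^p}\subseteq\Dep$, gives $\Dep=\Delta^{(p)}_{|\Omega^p}$.

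\textbf{Step 3: closedness.} This follows from the general fact that m-accretive operators are closed, \cite[III.~Proposition~3.4]{barbu2010nonlinear}. In the linear setting there is also a direct argument. The resolvent $(\Sep)^{-1}$ is a bounded, everywhere defined operator on $\ell^p(X,\mu)$, hence closed. Its inverse $\Sep$ is therefore closed, and so is $\Dep=\lambda^{-1}(\Sep-\id)$.

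\textbf{Expected obstacle.} There is no real difficulty here, since all the work sits in Theorems~\ref{thm:min}, \ref{thm:new2} and~\ref{t:acc_inj}. The one point to check is that Theorem~\ref{thm:min} applies verbatim with $\Phi=\id$ and $\sigma\equiv1$. In particular, $v=u_1-u_2\in\ell^p(X,\mu)$ satisfies the Case~3 summability condition, which that theorem already establishes for $\ell^p$ functions.
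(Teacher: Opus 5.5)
Your proposal is correct and matches the paper's proof: injectivity of $\Sep$ from Theorem~\ref{thm:min}~(i) with $\Phi=\id$ and $\sigma\equiv 1$, m-accretivity from the implication (iii) $\Longrightarrow$ (i) of Theorem~\ref{t:acc_inj}, and closedness from the general fact that m-accretive operators are closed. Your additional linear argument for closedness is also valid: the resolvent is bounded and everywhere defined, hence closed, so its inverse $\Sep$ is closed, and therefore so is $\Dep$.
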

\begin{proof}
The m-accretivity follows immediately from Theorem~\ref{t:acc_inj} and Theorem~\ref{thm:min}~(i).
Furthermore, any m-accretive operator
is automatically closed, see \cite[III.~Proposition~3.4]{barbu2010nonlinear}. 
\end{proof}

\begin{remark} 
In the more general setting of Schrödinger operators on vector bundles over weighted graphs, 
by combining Theorems~2.11~and~2.15 in \cite{milatovic2015maximal}, the
authors establish the m-accretivity of $\Delta^{(1)}$
assuming stochastic completeness (see Definition~\ref{def:stochastic_completeness} below), 
\EM, and infinite measure of infinite
paths. We note that Proposition~\ref{p:Lap_accretive} 
gives m-accretivity for all $p\in[1,\infty)$ while dropping the assumptions of stochastic completeness
and \EM.
\end{remark}

The next statement gives a criterion for the $p=\infty$ case.

\begin{proposition}\label{p:Lap_accretive2}
If $G$ satisfies $\sum_{n} \frac{1}{\Deg(x_{n})}=\infty$ for every infinite path $(x_n)$, then
$\Delta^{(\infty)}$ is m-accretive.
\end{proposition}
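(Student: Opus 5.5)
The plan is to reduce the statement to the equivalence in Theorem~\ref{t:acc_inj} for $p=\infty$. That result shows that m-accretivity of $\Delta^{(\infty)}$ follows once $S^{(\infty)}_\lambda=\id+\lambda\Delta^{(\infty)}$ is injective on $\dom(\Delta^{(\infty)})$ for a single $\lambda>0$. So the task becomes proving injectivity. Injectivity, in turn, should come directly from the comparison principle, Theorem~\ref{thm:min}~(ii), with the identity nonlinearity $\phi(s)=s$ and the weight $\sigma\equiv 1$.

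Concretely, I would fix $\lambda>0$ and take $u_1,u_2\in\dom(\Delta^{(\infty)})\subseteq \ell^\infty(X)\cap\dom(\Delta)$ with $S^{(\infty)}_\lambda u_1=S^{(\infty)}_\lambda u_2$. With $\Phi=\id$ and $\sigma\equiv 1$, the operator $\sigma+\lambda\Delta\Phi$ in Theorem~\ref{thm:min} is exactly $S^{(\infty)}_\lambda$ acting on $\ell^\infty(X)\cap\dom(\Delta)$. The hypothesis of Theorem~\ref{thm:min}~(ii) then reads $\sum_n 1/\Deg(x_n)=\infty$ for every infinite path $(x_n)$, which is precisely our assumption. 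That theorem gives injectivity of $\id+\lambda\Delta$ on $\ell^\infty(X)\cap\dom(\Delta)$, hence $u_1=u_2$.

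Two small points need checking. First, $\ell^\infty(X)\subseteq\dom(\Delta)$ by local summability of $w$, so the domains match. Second, vertices with $\Deg(x)=0$ cause no trouble: by connectedness they occur only in the trivial one-vertex graph, which has no infinite paths. In Case~4 of the comparison argument, the descent step already ensures $\Deg(x_0)>0$ along the constructed path, so the quotient $\sigma/\Deg$ is well defined there.

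Finally, Theorem~\ref{t:acc_inj} (iii)$\Rightarrow$(i) for $p=\infty$ yields that $\Delta^{(\infty)}$ is m-accretive. That implication runs through the duality $(\Delta^{(1)}_{\min})^\ast=\Delta^{(\infty)}$ and the closed range theorem, so it already covers the non-dense-domain setting of $\ell^\infty$. There is no real obstacle here. The only point requiring care is that the condition on paths in Theorem~\ref{thm:min}~(ii) involves $\sigma(x_n)/\Deg(x_n)$, which reduces to $1/\Deg(x_n)$ when $\sigma\equiv1$, and that the boundedness requirement of Case~4 is supplied by $u_1,u_2\in\ell^\infty(X)$.
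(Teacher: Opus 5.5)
Your proposal is correct and follows the paper's proof exactly: apply Theorem~\ref{thm:min}~(ii) with $\phi=\id$ and $\sigma\equiv 1$ to get injectivity of $S^{(\infty)}_\lambda$ on $\dom(\Delta^{(\infty)})\subseteq \ell^\infty(X)\cap\dom(\Delta)$, then invoke Theorem~\ref{t:acc_inj} (iii)$\Rightarrow$(i) for $p=\infty$. Your side remarks about $\Deg>0$ and the boundedness needed in Case~4 are accurate but not essential, since Theorem~\ref{thm:min}~(ii) already handles them.
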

\begin{proof}
    This follows immediately from Theorem~\ref{t:acc_inj} and Theorem~\ref{thm:min}~(ii).
\end{proof}

A stronger assumption on the vertex degree gives m-accretivity for
all maximal Laplacians as we will now discuss.
In order to place this result in a proper context, we now recall a notion
for a metric to be intrinsic for $\ell^2$.

We let $\varrho \colon X\times X \to [0,\infty)$ 
be a pseudo-metric and 
say that $\varrho$ is \emph{$\ell^2$-intrinsic} if
$$\sum_{y \in X}w(x,y)\varrho^2(x,y) \leq C \mu(x)$$ 
for some $C>0$ and all $x \in X$. 

We recall the finite balls 
condition $\FB$ which states that all distance balls defined with respect to a pseudo-metric
are finite which has played a role in our previous considerations.
There is a weaker condition than $\FB$ which states that $\Deg$ is 
bounded on distance balls defined with respect to an $\ell^2$-intrinsic metric. 
In this case, Liouville-type theorems which give the constancy of positive
subharmonic functions, imply that the shifted operator is injective on $\ell^p$.
\begin{lemma}\label{l:Liouv}
    If there exists an $\ell^2$-intrinsic pseudo-metric $\varrho$ such that $\Deg$ is bounded
    on distance balls defined with respect to $\varrho$, then $S_\lm^{(p)}$ is injective
    for $p \in (1,\infty)$.
\end{lemma}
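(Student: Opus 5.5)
By linearity, injectivity of $S_\lambda^{(p)}$ amounts to showing that any $u \in \dom(\Delta^{(p)})$ with $u + \lambda \Delta u = 0$ vanishes identically. For such $u$ we have $\Delta u = -u/\lambda$, so the function $v = u|u|^{p-2}$ is the natural test function. The plan is a Karp/Yau-type $\ell^p$ Liouville argument with cutoffs built from $\varrho$.

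\textbf{Cutoffs.} Fix $x_0\in X$ and set $\eta_R = (1-\varrho(x_0,\cdot)/R)_+$. Then $\eta_R$ is supported in the ball $B_R(x_0)$, takes values in $[0,1]$, is $1/R$-Lipschitz with respect to $\varrho$, and tends to $1$ pointwise as $R \to \infty$. The balls need not be finite, so $\eta_R$ need not lie in $C_c(X)$. Instead, every sum should carry the factor $\eta_R^2$. On $B_R$, boundedness of $\Deg$ gives $w(x,y)\le C_R\mu(x)$ and $\sum_y w(x,y)\le C_R\mu(x)$. Together with $u \in \ell^p$, this should make every series absolutely convergent after multiplication by $\eta_R^2$. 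This is exactly where the hypothesis enters, replacing finiteness of balls.

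\textbf{Pairing identity and discrete Leibniz estimate.} Compute
\[
0 = \sum_x (u+\lambda\Delta u)(x)\, v(x)\eta_R^2(x)\mu(x) = \|\eta_R u\|_p^p + \lambda \sum_x \Delta u\, v\eta_R^2\,\mu .
\]
Apply Green's formula to the second sum; justifying it is part of the main obstacle. This yields
\[
\frac12\sum_{x,y} w(x,y)\,\nabla_{x,y}u\;\nabla_{x,y}(v\eta_R^2) + \sum_x \kappa(x)|u(x)|^p\eta_R^2(x).
\]
Then use the product rule
\[
\nabla(v\eta^2) = \tfrac{1}{2}(\eta^2(x)+\eta^2(y))\nabla v + \tfrac{1}{2}(v(x)+v(y))\nabla\eta^2 .
\]
By monotonicity of $s\mapsto s|s|^{p-2}$, the first part gives a nonnegative energy term
\[
E_R=\sum w\,(\eta^2(x)+\eta^2(y))\,\nabla u\,\nabla v \ge 0 .
\]
The error term is bounded via $|\nabla\eta^2|\le(\eta(x)+\eta(y))|\nabla\eta|$, Cauchy--Schwarz with weight $w$, and an elementary inequality of the form $(a|a|^{p-2}-b|b|^{p-2})(a-b)\ge c_p(|a|^{p-2}+|b|^{p-2})(a-b)^2$, valid for $p>1$. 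The target is an estimate
\[
\text{error} \le \varepsilon E_R + C_\varepsilon \sum_{x,y} w(x,y)|\nabla_{x,y}\eta_R|^2(|u(x)|^p+|u(y)|^p).
\]
By the $\ell^2$-intrinsic property, $\sum_y w(x,y)|\nabla\eta_R|^2\le C\mu(x)/R^2$, so the last sum is at most $C\|u\|_p^p/R^2$.

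\textbf{Conclusion.} Absorbing $\varepsilon E_R$ into the left side gives
\[
\|\eta_R u\|_p^p \le \frac{C\lambda\|u\|_p^p}{R^2}.
\]
Letting $R\to\infty$, monotone convergence yields $\|u\|_p = 0$, so $u = 0$ and $S_\lambda^{(p)}$ is injective.

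\textbf{Expected main obstacle.} The hard part is the justification of Green's formula and all rearrangements on the nonfinite support of $\eta_R$, together with the case $1<p<2$, where $|u|^{p-2}$ is singular. For the latter, I would try replacing $v$ by the regularized $u(|u|^2+\delta)^{(p-2)/2}$ and letting $\delta\to0$. Alternatively, one could simply quote the known $\ell^p$ Liouville theorem for $\ell^2$-intrinsic metrics with degree bounded on balls (cf.~\cite{keller2021graphs}, Karp-type theorems) applied to $|u|$. Here $u+\lambda\Delta u=0$ makes $|u|$ subharmonic in the sense $\Delta|u|\le 0$ (Kato's inequality), so $|u|$ is constant. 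A constant in $\ell^p$ with nonconstant-zero structure then forces $u=0$ via the equation.
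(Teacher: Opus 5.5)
Your closing alternative is the paper's proof. The paper gives no computation of its own: it invokes the $\ell^p$-Liouville theorem for $\ell^2$-intrinsic metrics with degree bounded on balls, together with the accompanying reduction lemma (\cite[Theorem~12.15 and Lemma~12.19]{keller2021graphs}). In that route the last step should read as follows. Kato's inequality gives $\Delta|u|\le \sgn(u)\Delta u=-|u|/\lambda\le 0$, so $|u|\equiv c$. Then $c\,\kappa/\mu=\Delta|u|\le -c/\lambda$ forces $c=0$, and nothing about $\mu(X)$ is needed.

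Your main route is genuinely different: a self-contained Karp/Caccioppoli computation done directly on the resolvent equation. It is more elementary and even quantitative. It yields $\sum_x|u|^p\eta_R^2\mu\le C\lambda\|u\|_p^p/R^2$ (this, not $\|\eta_R u\|_p^p$, is what appears on the left), and it skips the intermediate ``subharmonic implies constant'' step. The scheme works, but two steps fail as written.

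\emph{First}, the elementary inequality with weight $|a|^{p-2}+|b|^{p-2}$ is false for $1<p<2$. For $a=1$ and $b=\epsilon\downarrow 0$ the left side tends to $1$ while the right side blows up. Regularizing $v$ to $u(|u|^2+\delta)^{(p-2)/2}$ does not cure this as long as the weight is a sum of one-point weights, because the constant then degenerates as $\delta\to 0$. The correct form, valid for all $p>1$, is
$(a|a|^{p-2}-b|b|^{p-2})(a-b)\ge c_p(|a|+|b|)^{p-2}(a-b)^2$.
For $p\ge 2$ your version also holds, with $c_p=1/2$. With the two-point weight, Young's inequality closes because
$|v(x)+v(y)|^2(|u(x)|+|u(y)|)^{2-p}\le 4(|u(x)|+|u(y)|)^p\le 2^{p+1}(|u(x)|^p+|u(y)|^p)$,
so no regularization is needed.

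\emph{Second}, the bound $\sum_y w(x,y)\le C_R\mu(x)$ for $x\in B_R$ does not by itself give absolute convergence. After H\"older, terms such as $w(x,y)\eta_R^2(x)|v(x)||u(y)|$ need the column bound $\sum_x w(x,y)\eta_R^2(x)\le C'\mu(y)$ for \emph{all} $y$. This includes $y$ outside the ball, where the degree is not controlled. The bound follows from the intrinsic property:
\begin{itemize}
\item if $\eta_R(y)=0$, then $\eta_R(x)\le\varrho(x,y)/R$, so $\sum_x w(x,y)\eta_R^2(x)\le C\mu(y)/R^2$;
\item if $\eta_R(y)>0$, the degree bound on $B_R$ applies.
\end{itemize}
With these two repairs, Green's formula, the splitting into $E_R$ and the error term, and your final estimate are all justified.
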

\begin{proof}
    This follows by a Liouville-type theorem and general principles, see
    \cite[Theorem~12.15 and Lemma~12.19]{keller2021graphs} for a proof.
\end{proof}

As a consequence, we now give two results. One says that bounded degree implies m-accretivity
across all $p$.
\begin{corollary}\label{c:Lap_accretive3}
    If $G$ satisfies $\BD$, then $\Dep$ is m-accretive for all $p \in [1,\infty]$.
\end{corollary}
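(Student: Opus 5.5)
The plan is to reduce everything to Theorem~\ref{t:acc_inj}: for each $p \in [1,\infty]$ it suffices to show that $\Sep=\id+\lambda\Dep$ is injective on $\dom(\Dep)$ for some $\lambda>0$. Under $\BD$, we have $D:=\sup_x \Deg(x)<\infty$, and I would split the range of $p$ into three parts.

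\textbf{Case $p\in(1,\infty)$.} Take $\varrho$ to be the combinatorial graph distance scaled by a constant, e.g.\ $\varrho=d$. Then
\[
\sum_y w(x,y)\varrho^2(x,y)=\sum_y w(x,y)\le D\mu(x),
\]
so $\varrho$ is $\ell^2$-intrinsic. Moreover, $\Deg\le D$ is bounded everywhere, in particular on every distance ball. Lemma~\ref{l:Liouv} then gives injectivity of $\Sep$.

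\textbf{Case $p=\infty$.} Every infinite path $(x_n)$ satisfies $\sum_n 1/\Deg(x_n)\ge \sum_n 1/D=\infty$. Hence Proposition~\ref{p:Lap_accretive2} applies directly and gives m-accretivity of $\Delta^{(\infty)}$. One caveat: if $\Deg(x_n)=0$ for some vertex, then that vertex is isolated with $\kappa=0$. By connectedness this can only happen when $X$ is a single point, which is the trivial finite case already covered by Corollary~\ref{c:accretive_finite_Lap}.

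\textbf{Case $p=1$.} This is the step needing a small argument. Under $\BD$ the Laplacian is bounded on $\ell^1(X,\mu)$ by \cite[Theorem~2.15]{keller2021graphs}, so $\dom(\Delta^{(1)})=\ell^1(X,\mu)$. Since $C_c(X)$ is dense, continuity gives $\Delta^{(1)}=\Delta^{(1)}_{\min}$. This operator is accretive by Lemma~\ref{l:min_accretivity}, because $\EM[1]$ always holds. Alternatively, one can invoke Proposition~\ref{p:agreement} with $\Phi=\id$, which yields m-accretivity directly (its accretivity input, Theorem~\ref{t:accretivity_bd}, holds since $\BD$ implies $\B$ and $\C$ is trivial for $\Phi=\id$).

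Either way, accretivity together with Theorem~\ref{t:acc_inj} yields m-accretivity for $p=1$. The same boundedness argument would also work for any finite $p$ via accretivity of $\Delta_c$ under $\EM[p]$ (which follows from $\B$ by Corollary~\ref{c:emp}) plus continuity. I would mention this as a uniform alternative for all $p\in[1,\infty)$.

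The only potential obstacle is making sure Lemma~\ref{l:Liouv} is applicable with a pseudo-metric that may take value $1$ between neighbors. This is unproblematic, since only the intrinsic bound and the boundedness of $\Deg$ on balls are required.
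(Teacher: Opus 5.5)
Your proof is correct and follows the paper's: the same split into $p=1$, $p\in(1,\infty)$ and $p=\infty$, using Lemma~\ref{l:Liouv} with Theorem~\ref{t:acc_inj} in the middle range (the paper leaves the $\ell^2$-intrinsic metric implicit, and your choice $\varrho=d$ works) and Proposition~\ref{p:Lap_accretive2} for $p=\infty$. The only variation is at $p=1$: the paper applies Theorem~\ref{t:accretivity_bd} directly, which needs only $\B$, while your main route (boundedness of $\Delta$ plus density of $C_c(X)$ to transfer accretivity from $\Delta_c$) is the argument of Proposition~\ref{p:bounded_Lap} and, as you note, covers all finite $p$ without the Liouville theorem.
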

\begin{proof}
    For $p=1$, this follows from Theorem~\ref{t:accretivity_bd} which gives
    accretivity and only requires $\B$ along with Theorem~\ref{t:acc_inj}.
    For $p=\infty$, this follows from Proposition~\ref{p:Lap_accretive2} directly above.
    For $p \in (1,\infty)$, this follows from Lemma~\ref{l:Liouv} and Theorem~\ref{t:acc_inj}
    as $\Deg$ is bounded on the entire vertex set. 
\end{proof}

As another application of the Liouville theorems,
we also get a statement for metric completeness.
\begin{corollary}\label{c:Lap_accretive_intrinsic}
If there exists an $\ell^2$-intrinsic metric satisfying $\FB$, then
$\Dep$ is m-accretive for all $p \in (1,\infty)$.
\end{corollary}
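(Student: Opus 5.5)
The plan is to reduce the statement to Lemma~\ref{l:Liouv} and Theorem~\ref{t:acc_inj}, as in the proof of Corollary~\ref{c:Lap_accretive3}. By Theorem~\ref{t:acc_inj}, for each $p \in (1,\infty)$ it suffices to show that $S^{(p)}_\lambda$ is injective on $\dom(\Delta^{(p)})$ for some $\lambda>0$. Lemma~\ref{l:Liouv} gives exactly this injectivity for all $p\in(1,\infty)$, provided there is an $\ell^2$-intrinsic pseudo-metric $\varrho$ for which $\Deg$ is bounded on $\varrho$-balls. So the whole task is to check that the hypothesis of the corollary implies the hypothesis of Lemma~\ref{l:Liouv}.

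\textbf{Step 1.} Let $\varrho$ be an $\ell^2$-intrinsic metric satisfying $\FB$. Every distance ball $B_R(x_0)=\{x\in X \mid \varrho(x,x_0)\le R\}$ is then a finite set.

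\textbf{Step 2.} The function $\Deg$ is real-valued, since $\Deg(x)<\infty$ for every $x$ by local summability of $w$. A real-valued function on a finite set is bounded, so
\[
\sup_{x\in B_R(x_0)}\Deg(x)=\max_{x\in B_R(x_0)}\Deg(x)<\infty
\]
for every $x_0\in X$ and $R>0$. Thus $\Deg$ is bounded on distance balls.

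\textbf{Step 3.} A metric is in particular a pseudo-metric, so $\varrho$ satisfies all hypotheses of Lemma~\ref{l:Liouv}. That lemma then gives injectivity of $S^{(p)}_\lambda$ for every $p\in(1,\infty)$ and $\lambda>0$.

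\textbf{Step 4.} Applying Theorem~\ref{t:acc_inj}, (iii) $\Longrightarrow$ (i), for each such $p$ yields the m-accretivity of $\Delta^{(p)}$.

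There is no real obstacle here: all the analytic content sits in the Liouville-type result behind Lemma~\ref{l:Liouv} and in the $\Omega^p$ machinery behind Theorem~\ref{t:acc_inj}. The only point to state explicitly is the observation in Step 2 that finite balls force boundedness of $\Deg$ on balls.
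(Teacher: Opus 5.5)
Your proposal is correct and matches the paper's proof exactly. The paper also observes that finite $\varrho$-balls force $\Deg$ to be bounded on balls, and then combines Lemma~\ref{l:Liouv} (injectivity of $S^{(p)}_\lambda$) with Theorem~\ref{t:acc_inj} to conclude m-accretivity for all $p\in(1,\infty)$.
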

\begin{proof}
    This follows from Lemma~\ref{l:Liouv} and Theorem~\ref{t:acc_inj}
    as all distance balls are finite and thus $\Deg$ is bounded on distance balls. 
\end{proof}

We now summarize the preceding results for the maximal Laplacian on $\ell^p$.
\begin{theorem}\label{t:Lap_accretive}
Let $G$ be a graph.
\begin{enumerate}
\item[\textup{(1)}] If $G$ satisfies $\IP$, then $\Dep$ is m-accretive for all $p \in [1,\infty)$.
\item[\textup{(2)}] If $G$ satisfies $\BD$, then $\Dep$ is m-accretive for all $p \in [1,\infty]$. 
\item[\textup{(3)}] If there exists an $\ell^2$-intrinsic metric satisfying $\FB$, then
$\Dep$ is m-accretive for all $p \in (1,\infty)$.
\end{enumerate}
\end{theorem}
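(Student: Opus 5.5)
This theorem collects results already proved in this section, so the proof will cite each part to its source.

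For (1), $\IP$ gives injectivity of $\Sep$ on $\ell^p(X,\mu)\cap\dom(\Delta)$ for every $p\in[1,\infty)$ and every $\lambda>0$. This is Theorem~\ref{thm:min}~(i) with $\phi=\id$ and $\sigma\equiv1$. Theorem~\ref{t:acc_inj} then upgrades injectivity to m-accretivity, which is exactly Proposition~\ref{p:Lap_accretive}.

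For (2), I split by exponent, as in Corollary~\ref{c:Lap_accretive3}:
\begin{itemize}
\item For $p=1$: $\BD$ implies $\B$, and $\Phi=\id$ trivially satisfies $\C$. Theorem~\ref{t:accretivity_bd} then gives accretivity of $\Delta^{(1)}$, and Theorem~\ref{t:acc_inj} gives m-accretivity.
\item For $p=\infty$: $\BD$ gives $\Deg\le D$ for some constant $D$. Hence $\sum_n 1/\Deg(x_n)\ge\sum_n 1/D=\infty$ along every infinite path. Proposition~\ref{p:Lap_accretive2} applies.
\item For $p\in(1,\infty)$: take the combinatorial metric, or any $\ell^2$-intrinsic pseudo-metric; the zero pseudo-metric is trivially $\ell^2$-intrinsic. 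Since $\Deg$ is bounded on all of $X$, it is bounded on every ball. Lemma~\ref{l:Liouv} gives injectivity of $\Sep$, and Theorem~\ref{t:acc_inj} gives m-accretivity.
\end{itemize}
The case $\Deg\equiv0$ cannot occur in (2) unless $X$ is a single vertex without killing term. In that case everything is trivial by Corollary~\ref{c:accretive_finite_Lap}.

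For (3), if all balls of an $\ell^2$-intrinsic metric are finite, then $\Deg$ takes finitely many values on each ball and is therefore bounded there. Lemma~\ref{l:Liouv} and Theorem~\ref{t:acc_inj} again give m-accretivity for $p\in(1,\infty)$; this is Corollary~\ref{c:Lap_accretive_intrinsic}.

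There is no real obstacle. The only point needing care is that the injectivity assumptions in Theorem~\ref{thm:min} and Lemma~\ref{l:Liouv} are stated on exactly the domain $\dom(\Delta^{(p)})$ used in Theorem~\ref{t:acc_inj}. They are, since $\dom(\Delta^{(p)})\subseteq\ell^p(X,\mu)\cap\dom(\Delta)$.
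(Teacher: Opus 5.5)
Your proposal is correct and follows the paper's proof: the paper also just cites Proposition~\ref{p:Lap_accretive} for (1), Corollary~\ref{c:Lap_accretive3} for (2), and Corollary~\ref{c:Lap_accretive_intrinsic} for (3). Your unpacking of each, including the split by exponent in (2), matches how those results are proved, and your explicit choice of an $\ell^2$-intrinsic pseudo-metric (combinatorial or zero) for $p\in(1,\infty)$ in (2) makes precise what the paper leaves implicit when it says $\Deg$ is bounded on the entire vertex set.
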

\begin{proof}
Statement (1) is Proposition~\ref{p:Lap_accretive}, (2) is Corollary~\ref{c:Lap_accretive3}, and
(3) is Corollary~\ref{c:Lap_accretive_intrinsic}. 
\end{proof}

\subsection{\texorpdfstring{On the m-accretivity of $\deltam^{(p)}$ for $p\in[1,\infty]$}{On the m-accretivity of Delta min p for p in [1,infty]}}
We now focus on the minimal Laplacians and give some conditions for them to be
m-accretive. The m-accretivity follows as soon as they are equal to the maximal Laplacians so
we can use the results in the previous subsection. We also show that accretivity
and m-accretivity are not equivalent for the minimal Laplacians.

We recall that if condition \EM[p] holds, then $\Delta_c=\Delta_{|C_c(X)}$ maps into $\ell^p(X, \mu)$
and $\Delta_{\min}^{(p)}$, the closure of $\Delta_c$ in $\ell^p(X,\mu)$,
is always accretive. Furthermore, if $\Delta^{(p)}$ is closed, then $\deltam^{(p)} \subseteq \Delta^{(p)}$, see Lemma~\ref{l:min_accretivity}.
We note that \EM[p]
is satisfied, for example, whenever the graph is locally finite or when the edge degree
is bounded or whenever we have a uniform lower bound
on the measure, see Corollary~\ref{c:emp}.

In order to leverage our previous results for $\Dep$, we now show that
whenever $\Delta_{\min}^{(p)}=\Dep$ these operators are automatically m-accretive.
This gives a counterpart to Lemma~\ref{l:min_equal_accretive} concerning $\deltaphim$.
\begin{lemma}\label{l:min_accretivity2}
Let $G$ satisfy \EM[p] for some $p \in [1,\infty]$. 
{ Then, $\Delta^{(p)}=\deltam^{(p)}$ if and only
if $\Delta^{(p)}$ and $\deltam^{(p)}$ are both m-accretive.}
\end{lemma}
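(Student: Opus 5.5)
Both directions reduce to results already in hand: the forward direction uses Theorem~\ref{t:acc_inj} to pass from accretivity of $\Delta^{(p)}$ to m-accretivity, and the backward direction uses Lemma~\ref{l:inclusion}. Throughout we work under \EM[p]. Then Lemma~\ref{l:min_accretivity} guarantees that $\Delta_c$ is closable on $\ell^p(X,\mu)$, so $\deltam^{(p)}$ is defined and accretive.

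\emph{Forward direction.} Assume $\Delta^{(p)}=\deltam^{(p)}$.
\begin{enumerate}
\item Since $\deltam^{(p)}$ is accretive by Lemma~\ref{l:min_accretivity}, so is $\Delta^{(p)}$.
\item By Theorem~\ref{t:acc_inj}, which is valid for every $p\in[1,\infty]$ including $p=\infty$, accretivity of $\Dep$ implies m-accretivity.
\item Since the two operators coincide, $\deltam^{(p)}$ is m-accretive as well.
\end{enumerate}
No closedness of $\Delta^{(p)}$ is needed here.

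\emph{Backward direction.} Assume both operators are m-accretive. We want to apply Lemma~\ref{l:inclusion} with $T_1=\deltam^{(p)}$ and $T_2=\Delta^{(p)}$: there $T_1$ is m-accretive and $T_2$ is accretive, and the lemma yields equality. The one missing hypothesis is the inclusion $\deltam^{(p)}\subseteq\Delta^{(p)}$.
\begin{enumerate}
\item $\Delta^{(p)}$ is m-accretive, hence closed by \cite[III.~Proposition~3.4]{barbu2010nonlinear}.
\item Under \EM[p], Lemma~\ref{lem:EMp} gives $C_c(X)\subseteq\dom(\Delta^{(p)})$, so $\Delta_c\subseteq\Delta^{(p)}$.
\item The closure of an operator is contained in any closed extension. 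Concretely, if $\varphi_n\in C_c(X)$ with $\varphi_n\to u$ and $\Delta\varphi_n\to v$ in $\ell^p(X,\mu)$, then closedness of $\Delta^{(p)}$ gives $u\in\dom(\Delta^{(p)})$ and $\Delta^{(p)}u=v$. Hence $\deltam^{(p)}\subseteq\Delta^{(p)}$; Lemma~\ref{l:min_accretivity} records exactly this.
\item With the inclusion in place, Lemma~\ref{l:inclusion} gives $\deltam^{(p)}=\Delta^{(p)}$.
\end{enumerate}

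The main care point is that closedness of $\Delta^{(p)}$ is not assumed in general; it follows here only from the m-accretivity hypothesis. For $p=\infty$, closedness holds unconditionally by Lemma~\ref{lem:EMp}, since \EM[1] always holds, but the m-accretivity argument above covers every $p$ uniformly.
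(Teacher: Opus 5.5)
Your proof is correct and follows essentially the same route as the paper. In the forward direction, accretivity of $\deltam^{(p)}$ passes to $\Delta^{(p)}$ and Theorem~\ref{t:acc_inj} upgrades it to m-accretivity; in the converse, m-accretivity makes $\Delta^{(p)}$ closed, so $\deltam^{(p)}\subseteq\Delta^{(p)}$ by Lemma~\ref{l:min_accretivity}, and Lemma~\ref{l:inclusion} gives equality.
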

\begin{proof}  
If \EM[p] holds for some $p \in [1,\infty]$, then $\deltam^{(p)}$ is accretive by Lemma~\ref{l:min_accretivity}. Hence, if $\Delta^{(p)}=\deltam^{(p)}$, then
$\Delta^{(p)}$ is accretive and, therefore, m-accretive by Theorem~\ref{t:acc_inj}; the equality then shows that $\deltam^{(p)}$ is also m-accretive. Conversely, if
$\Delta^{(p)}$ and $\deltam^{(p)}$ are both m-accretive, then $\Delta^{(p)}$
is closed so that $\deltam^{(p)} \subseteq \Dep$ by Lemma~\ref{l:min_accretivity}
and equality now holds by Lemma~\ref{l:inclusion}.
\end{proof}

In the bounded degree case, 
the Laplacian gives a bounded operator on all $\ell^p$ spaces and thus the minimal and
maximal Laplacian agree whenever the finitely supported functions are dense in $\ell^p$.
\begin{proposition}\label{p:bounded_Lap}
    If $G$ satisfies $\BD$, then $\Delta^{(p)}=\Delta^{(p)}_{\min}$ for all $p \in [1,\infty)$. 
    In particular, $\Delta^{(p)}_{\min}$ is m-accretive.
\end{proposition}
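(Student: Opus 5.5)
Under $\BD$ the formal Laplacian $\Delta$ is a bounded operator on $\ell^p(X,\mu)$ for every $p\in[1,\infty]$ by \cite[Theorem~2.15]{keller2021graphs}. Hence $\dom(\Delta^{(p)})=\ell^p(X,\mu)$ and $\Delta^{(p)}$ is continuous, in particular closed. Moreover, $\BD$ implies $\B$ and hence \EM[p] by Corollary~\ref{c:emp}. So $\Delta_c$ maps into $\ell^p(X,\mu)$, $\deltam^{(p)}$ is defined, and, $\Delta^{(p)}$ being closed, $\deltam^{(p)}\subseteq\Delta^{(p)}$ by Lemma~\ref{l:min_accretivity}.

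For the reverse inclusion, fix $p\in[1,\infty)$ and $u\in\ell^p(X,\mu)=\dom(\Delta^{(p)})$. Since $C_c(X)$ is dense in $\ell^p(X,\mu)$ for finite $p$, choose $\varphi_n\in C_c(X)$ with $\varphi_n\to u$ in $\ell^p(X,\mu)$; for instance, take the cutoffs $\varphi_n=1_{X_n}u$ along an exhaustion. Boundedness gives
\[
\|\Delta\varphi_n-\Delta u\|_p\le\|\Delta\|_{p\to p}\,\|\varphi_n-u\|_p\to0 .
\]
By the definition of the closure, $u\in\dom(\deltam^{(p)})$ and $\deltam^{(p)}u=\Delta u$. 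Hence $\Delta^{(p)}=\deltam^{(p)}$.

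For the ``in particular'' statement, $\Delta^{(p)}$ is m-accretive under $\BD$ by Corollary~\ref{c:Lap_accretive3}. Since the two operators coincide, $\deltam^{(p)}$ is m-accretive too; alternatively, this follows from Lemma~\ref{l:min_accretivity2} together with Theorem~\ref{t:acc_inj}, since $\deltam^{(p)}$ is accretive and equals $\Delta^{(p)}$.

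There is no real obstacle here. The only points to check are that the bounded-operator theorem applies with a killing term under $\BD$, which it does since $\kappa/\mu$ is bounded, and that the density argument genuinely needs $p<\infty$. This explains why $p=\infty$ is excluded from the statement.
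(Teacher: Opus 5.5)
Your proof is correct and follows the paper's argument. It uses boundedness of $\Delta$ under $\BD$ via \cite[Theorem~2.15]{keller2021graphs}, \EM[p] from Corollary~\ref{c:emp}, the inclusion $\deltam^{(p)}\subseteq\Delta^{(p)}$ from closedness, and density of $C_c(X)$ in $\ell^p(X,\mu)$ for $p<\infty$ for the reverse inclusion. For the m-accretivity, the paper uses exactly your alternative, Lemma~\ref{l:min_accretivity2}; this has the small advantage over citing Corollary~\ref{c:Lap_accretive3} that it does not rely on the Liouville-type Lemma~\ref{l:Liouv} for $p\in(1,\infty)$.
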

\begin{proof}
    Under assumption $\BD$, it follows that $\Delta$ gives a bounded operator
    on all $\ell^p(X,\mu)$ spaces for $p\in[1,\infty]$, see \cite[Theorem~2.15]{keller2021graphs}. 
    In particular, $\dom(\Dep) = \ell^p(X,\mu)$ and \EM[p] holds for all $p \in [1,\infty]$, see Corollary~\ref{c:emp}.
    Therefore, $\deltam^{(p)}$ is defined and the stated equality of operators holds
    whenever $C_c(X)$ is dense in the domain of the maximal operator  which is the case
    for $p \in [1,\infty)$. The m-accretivity then
    follows from Lemma~\ref{l:min_accretivity2}.
\end{proof}

\begin{remark}[The case $p=\infty$] 
    We note that the result above does not extend to the case of $p=\infty$.
    In particular, we note that if $\kappa=0$, then $1 \in \dom(\Delta^{(\infty)})$
    but $1 \not \in \dom(\deltam^{(\infty)})$ whenever $X$ is infinite since 
    a non-zero constant function cannot be approximated by finitely supported functions
    in the $\ell^\infty$ norm.
\end{remark}

We now improve a result from \cite{milatovic2015maximal}. Specifically,
for  $p\in(1,\infty)$, \cite[Theorem~2.11(i) and Remark~2.13]{milatovic2015maximal} gives
that if \EM[p] and $\IP$ hold, then $-\Delta_{\min}^{(p)}$ generates a strongly continuous contraction semigroup on $\ell^p(X,\mu)$ and, in particular, $\Delta_{\min}^{(p)}$ is m-accretive. If, additionally, \EM[q] holds, where $p^{-1} + q^{-1} =1$, then $\Delta^{(p)}_{\min}=\Delta^{(p)}$ and $\Delta^{(p)}$ is m-accretive (\cite[Theorem~2.15(i)]{milatovic2015maximal}). We can improve this result by removing the assumption of \EM[q] as we now show.

\begin{proposition}\label{p:IP_Lap}
    If $G$ satisfies $\IP$ and \EM[p] for $p \in (1,\infty)$, then $\Delta^{(p)}= \Delta^{(p)}_{\min}$. 
    In particular, $\Delta^{(p)}_{\min}$ is m-accretive.
\end{proposition}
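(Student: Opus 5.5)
The plan is to deduce the equality from Lemma~\ref{l:inclusion}, with $T_1=\deltam^{(p)}$ and $T_2=\Delta^{(p)}$. This requires three ingredients: the inclusion $\deltam^{(p)}\subseteq\Delta^{(p)}$, surjectivity of $\id+\lambda\deltam^{(p)}$ for some $\lambda>0$, and injectivity of $S^{(p)}_\lambda$.

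\textbf{The inclusion and injectivity.} By \IP{} and Proposition~\ref{p:Lap_accretive}, $\Delta^{(p)}$ is m-accretive and closed. Injectivity of $S^{(p)}_\lambda$ is therefore immediate. Since \EM[p] holds, Lemma~\ref{l:min_accretivity} shows that $\deltam^{(p)}$ is defined and accretive. Closedness of $\Delta^{(p)}$ then gives $\deltam^{(p)}\subseteq\Delta^{(p)}$, because $\Delta_c\subseteq\Delta^{(p)}$ by \EM[p] and Lemma~\ref{lem:EMp}. Note that we never need \EM[q], which is exactly what the statement claims to remove.

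\textbf{Surjectivity via duality.} This is the main step, and I would prove it as in the case $p=\infty$ of Theorem~\ref{t:acc_inj}. First I would identify $(\deltam^{(p)})^\ast$ with $\Delta^{(q)}$, where $p^{-1}+q^{-1}=1$, under the $\ell^p$--$\ell^q$ pairing.
\begin{itemize}
\item Since $\deltam^{(p)}$ is the closure of $\Delta_c$, we have $(\deltam^{(p)})^\ast=(\Delta_c)^\ast$.
\item Let $u\in\ell^q(X,\mu)$. The functional $\varphi\mapsto\sum_x\Delta\varphi(x)u(x)\mu(x)$ on $C_c(X)$ is well defined because $\Delta\varphi\in\ell^p$.
\end{itemize}

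The delicate point is that $u$ need not lie in $\dom(\Delta)$ when \EM[q] fails. I would avoid Green's formula and test instead with $\varphi=1_x$. Then $\sum_y\Delta 1_x(y)u(y)\mu(y)=\Deg(x)\mu(x)u(x)-\sum_{y}w(x,y)u(y)$, and this sum converges absolutely by Hölder, using $w(x,\cdot)/\mu\in\ell^p$ from \EM[p] and $u\in\ell^q$. So $u\in\dom(\Delta)$ automatically, and the pairing equals $\mu(x)\Delta u(x)$. Hence $u\in\dom((\Delta_c)^\ast)$ if and only if $\Delta u\in\ell^q$, and in that case $(\Delta_c)^\ast u=\Delta u$. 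This gives $(\deltam^{(p)})^\ast=\Delta^{(q)}$.

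Next, accretivity of $\deltam^{(p)}$ together with closedness shows that $\operatorname{Rg}(\id+\lambda\deltam^{(p)})$ is closed, by the Cauchy argument in Theorem~\ref{t:acc_inj}. The closed range theorem then gives
\[
\operatorname{Rg}(\id+\lambda\deltam^{(p)})=\bigl(\ker(\id+\lambda\Delta^{(q)})\bigr)^\perp .
\]
By Theorem~\ref{thm:min}~(i) and \IP, $\id+\lambda\Delta\Phi$ with $\Phi=\id$ is injective on $\ell^q\cap\dom(\Delta)$, since $q\in(1,\infty)$. So the kernel is trivial, the range is all of $\ell^p(X,\mu)$, and $\deltam^{(p)}$ is m-accretive.

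\textbf{Conclusion.} Lemma~\ref{l:inclusion} now yields $\Delta^{(p)}=\deltam^{(p)}$, and m-accretivity of $\deltam^{(p)}$ was obtained along the way; alternatively it follows from Lemma~\ref{l:min_accretivity2}. The main obstacle is the adjoint identification without \EM[q]. The resolution is that the identity $\sum_y\Delta 1_x(y)u(y)\mu(y)=\mu(x)\Delta u(x)$ holds for every $u\in\ell^q$ directly from \EM[p], which also shows $\ell^q\subseteq\dom(\Delta)$ via Lemma~\ref{lem:EMp}.
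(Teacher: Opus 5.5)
Your proof is correct, and its outer architecture is the paper's. First, \IP{} makes $\Delta^{(p)}$ m-accretive and hence closed, so $\deltam^{(p)}\subseteq\Delta^{(p)}$. Second, $\deltam^{(p)}$ is m-accretive. Third, Lemma~\ref{l:inclusion} identifies the two; the paper invokes it through Lemma~\ref{l:min_accretivity2}.

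The genuine difference is the middle step. The paper does not prove m-accretivity of $\deltam^{(p)}$ but imports it from \cite[Theorem~2.11(i) and Remark~2.13]{milatovic2015maximal}. You instead derive it by transplanting the $p=\infty$ duality argument of Theorem~\ref{t:acc_inj}: you identify $(\deltam^{(p)})^\ast=\Delta^{(q)}$ by testing with $1_x$, get closed range from accretivity plus closedness, and show $\ker(\id+\lambda\Delta^{(q)})$ is trivial via Theorem~\ref{thm:min}~(i). This is exactly the ``minor variation'' the paper only sketches in the remark following Theorem~\ref{t:acc_inj}.

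Your version is self-contained and makes transparent where each hypothesis enters. \EM[p] gives $C_c(X)\subseteq\dom(\Delta^{(p)})$ and $\ell^q\subseteq\dom(\Delta)$, and \IP{} gives injectivity of both $S^{(p)}_\lambda$ and $S^{(q)}_\lambda$. The paper's version is a short citation relying on a result proved in a more general vector-bundle setting.

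A minor point: the ``delicate point'' you flag is not really about \EM[q]. By Lemma~\ref{lem:EMp}, \EM[p] alone is equivalent to $\ell^q\subseteq\dom(\Delta)$, so that step is automatic. What \EM[q] would supply is $\ell^p\subseteq\dom(\Delta)$ and closedness of $\Delta^{(p)}$. In both your proof and the paper's, this is replaced by the closedness that \IP{} provides.
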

\begin{proof} 
    {By Proposition~\ref{p:Lap_accretive}, $\IP$ gives
    that $\Delta^{(p)}$ is m-accretive while
    \EM[p] and $\IP$ imply that $\deltam^{(p)}$ is m-accretive by \cite[Theorem~2.11(i) and Remark~2.13]{milatovic2015maximal}.
    Thus, the equality of operators follows from Lemma~\ref{l:min_accretivity2}.}
\end{proof}

On the other hand, we now give a family of examples where $\Delta_{\min}^{(p)}$ is not m-accretive
for all $p \in [1,\infty]$.
This shows that the phenomenon of the equivalence of accretivity, m-accretivity
and injectivity of $S_\lm$ that we have observed for both $\Delta^{(p)}$ and
$\Deltaphi$ does not hold for the minimal operator since $\deltam^{(p)}$ is always accretive
whenever it is defined.

We first introduce the needed notions. 
Given a graph $G$, we consider the \emph{energy form} $\Q$ acting on $C(X)$ as
$$\Q(u) = \frac{1}{2} \sum_{x, y \in X} w(x,y) (u(x)-u(y))^2+ \sum_{x \in X} \kappa(x)u^2(x)$$
and denote by $\D$ the space of functions of \emph{finite energy} given as
$$\D= \{ u \in C(X) \mid \Q(u) < \infty \}.$$
If $\varphi \in C_c(X)$, Green's formula states
$$Q(\varphi) = \sum_{x \in X} \Delta \varphi(x) \varphi(x) \mu(x)$$
see Proposition~1.5 in \cite{keller2021graphs}.

In order to achieve our results, 
we now introduce the $p$-Sobolev condition,
see \cite{HKSW23, HM15} for $p\in [1,\infty]$ as well as \cite{KLSW17} for the case of $p=\infty$. 
\begin{enumerate}
\item[{\SP[p]}]\hypertarget{ass:SP}{} There exists a constant $C_p> 0$ such that $C_p\|\varphi\|_p^2 \leq \Q(\varphi)$ for 
all $\varphi \in C_c(X)$. \hfill(``$p$-Sobolev'')
\end{enumerate}
When $p=\infty$, we will write \SP[] for \SP[$\infty$] and call such graphs \emph{uniformly transient}.
For $p=2$, this condition is equivalent to the positivity of the bottom of
the spectrum of the Dirichlet Laplacian, i.e., $\lm_0(\Delta_D) >0$, see Section~\ref{sec:Laplacian}
below for the definition of $\Delta_D$.

For our unifying result, 
we will assume that the measure of the entire vertex set is finite, i.e., $\mu(X)<\infty$
and refer to such graphs as \emph{graphs of finite measure}, see \cite{GHKLW15} for some consequences.

We first explore some consequences of the finite measure assumption.
Whenever $\mu(X)<\infty$, H{\"o}lder's inequality implies that if $p \leq r$, then
$\ell^{r}(X,\mu) \subseteq \ell^{p}(X,\mu)$ and there exists a constant $C>0$ depending on $p$
and $r$ such that
\begin{equation}\label{eq:holder}
\|f\|_{p} \leq C \|f\|_{r}
\end{equation}
for all $f \in \ell^{r}(X,\mu)$.
This has the following immediate implications for the $p$-Sobolev inequality.

\begin{lemma}\label{l:pSob}
    Let $G$ satisfy $\mu(X)<\infty$.
    If $G$ satisfies \SP[r] for some $r \in [1,\infty]$,
    then $G$ satisfies \SP[p] for all $p \leq r$. In particular, if $G$ satisfies
    \SP[], then $G$ satisfies \SP[p] for all $p \in [1,\infty]$.
\end{lemma}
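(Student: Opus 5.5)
The argument is a direct combination of the Hölder estimate \eqref{eq:holder} with the definition of \SP[r]. Fix $r \in [1,\infty]$ such that \SP[r] holds, i.e., $C_r\|\varphi\|_r^2 \leq \Q(\varphi)$ for all $\varphi \in C_c(X)$. Let $p \leq r$.

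Since $\mu(X)<\infty$, the inclusion \eqref{eq:holder} yields a constant $C>0$, depending only on $p$, $r$ and $\mu(X)$, with $\|f\|_p \leq C\|f\|_r$ for all $f \in \ell^r(X,\mu)$. Finitely supported functions lie in every $\ell^r(X,\mu)$, so the estimate applies to each $\varphi\in C_c(X)$. Squaring and combining with \SP[r] gives
\[
\frac{C_r}{C^2}\,\|\varphi\|_p^2 \leq C_r\|\varphi\|_r^2 \leq \Q(\varphi)
\]
for all $\varphi \in C_c(X)$. This is exactly \SP[p] with constant $C_p = C_r/C^2>0$.

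For completeness, the constant in \eqref{eq:holder} can be made explicit. For $r<\infty$, Hölder's inequality with exponents $r/p$ and its conjugate gives $C=\mu(X)^{1/p-1/r}$. For $r=\infty$, one has directly $\|f\|_p \leq \mu(X)^{1/p}\|f\|_\infty$. The only point needing care is therefore the endpoint $r=\infty$, which is handled by the trivial bound $|f|\leq \|f\|_\infty$ instead of Hölder's inequality.

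The ``in particular'' statement is the case $r=\infty$: \SP[] is \SP[$\infty$], and every $p\in[1,\infty]$ satisfies $p\leq\infty$. So there is no real obstacle; the proof is a single chain of inequalities.
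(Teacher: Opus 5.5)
Your proposal is correct and takes the same approach as the paper, whose proof just says the lemma follows immediately from the definition of \SP[p] and the norm comparison \eqref{eq:holder}. Your squared chain of inequalities gives \SP[p] with constant $C_r/C^2$. Making the constants explicit, including the $r=\infty$ endpoint, is a harmless addition.
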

\begin{proof}
    This follows immediately from the definition of \SP[p] and the norm comparison \eqref{eq:holder}.
\end{proof}

In terms of the minimal Laplacian, finiteness of measure has the following consequences.
We recall our convention that \EM[] stands for \EM[$\infty$], i.e., that
$\Delta(C_c(X)) \subseteq \ell^\infty(X)$.

\begin{lemma}\label{l:em_finitemeasure}
    Let $G$ satisfy $\mu(X)<\infty$. 
    If $G$ satisfies \EM[r] for some $r \in [1,\infty]$ and $p \leq r$,
    then $G$ satisfies \EM[p] and 
    $$\dom(\deltam^{(r)})\subseteq \dom(\deltam^{(p)}).$$
    In particular, if $G$ satisfies \EM[], then
    $G$ satisfies \EM[p], $\deltam^{(p)}$ is defined, satisfies
    $$\deltam^{(p)}\subseteq \deltam^{(1)}$$
    and is accretive for all $p \in [1,\infty]$. 
\end{lemma}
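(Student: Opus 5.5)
First, the implication \EM[r] $\Rightarrow$ \EM[p]. By definition, \EM[r] says $w(x,\cdot)/\mu(\cdot)\in\ell^r(X,\mu)$ for every $x$. Since $\mu(X)<\infty$, the Hölder comparison \eqref{eq:holder} gives $\ell^r(X,\mu)\subseteq\ell^p(X,\mu)$ for $p\le r$, so \EM[p] holds. Equivalently, by Lemma~\ref{lem:EMp}, $\Delta(C_c(X))\subseteq\ell^r(X,\mu)\subseteq\ell^p(X,\mu)$. In particular $\Delta_c$ maps into both spaces, and by Lemma~\ref{l:min_accretivity} both $\deltam^{(r)}$ and $\deltam^{(p)}$ are defined and accretive.

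Next, the domain inclusion. Let $u\in\dom(\deltam^{(r)})$ and pick $\varphi_n\in C_c(X)$ with $\varphi_n\to u$ and $\Delta\varphi_n\to v$ in $\ell^r(X,\mu)$. By \eqref{eq:holder},
\[
\|\varphi_n-u\|_p\le C\|\varphi_n-u\|_r\to0 \qquad\text{and}\qquad \|\Delta\varphi_n-v\|_p\le C\|\Delta\varphi_n-v\|_r\to 0 .
\]
Hence the same sequence witnesses $u\in\dom(\deltam^{(p)})$ with $\deltam^{(p)}u=v=\deltam^{(r)}u$. This gives the operator inclusion $\deltam^{(r)}\subseteq\deltam^{(p)}$, which is slightly stronger than the stated domain inclusion.

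The ``in particular'' part applies the above with $r=\infty$. Then \EM[] gives \EM[p] for all $p\in[1,\infty]$. Lemma~\ref{l:min_accretivity} then shows $\deltam^{(p)}$ is defined and accretive for every $p$; for $p=\infty$ closability comes from that lemma via the closedness of $\Delta^{(\infty)}$. For the inclusion $\deltam^{(p)}\subseteq\deltam^{(1)}$, apply the previous paragraph with the pair $(r,p)$ replaced by $(p,1)$, which is allowed because \EM[p] holds and $1\le p$. The case $p=\infty$ works the same way: $\ell^\infty$-convergence implies $\ell^1$-convergence on a finite measure space, with $\|f\|_1\le\mu(X)\|f\|_\infty$.

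There is no real obstacle here. The only thing to watch is that the defining approximating sequences are intrinsic to $C_c(X)$, so passing to the weaker norm requires no change of sequence. One should also note that the limit $v$ is unique in $\ell^p$, and this is guaranteed by closability.
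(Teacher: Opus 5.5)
Your proof is correct and follows the paper's argument: you use the H\"older comparison \eqref{eq:holder} both for \EM[p] and for the domain inclusion, and Lemma~\ref{l:min_accretivity} for existence and accretivity. The one small difference is how you get $\deltam^{(p)}\subseteq\deltam^{(1)}$. You read off $\deltam^{(p)}u=\deltam^{(1)}u$ directly from the shared approximating sequence, whereas the paper notes that $\Delta^{(1)}$ is closed under \EM[] (Lemma~\ref{lem:EMp}) so that all minimal operators are restrictions of it; your version avoids that extra step.
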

    \begin{proof}
        If $G$ satisfies $\mu(X)<\infty$ and \EM[r], then $G$ satisfies
        \EM[p] for $p \in [1,r]$ by the norm comparison \eqref{eq:holder}. Therefore,
        $\deltam^{(p)}$ is defined for all $p \in [1,r]$
        by Lemma~\ref{l:min_accretivity}. Furthermore, $\dom(\deltam^{(r)})\subseteq \dom(\deltam^{(p)})$
        by the definition of the domain
        along with the norm comparison \eqref{eq:holder}.
        The existence and inclusion statements for $\deltam^{(p)}$ under \EM[], then follow
        from Lemma~\ref{l:min_accretivity}
        as $\Delta^{(1)}$ is closed by Lemma~\ref{lem:EMp} and thus all 
        minimal operators are restrictions of $\Delta^{(1)}$.
        The statement on accretivity follows
        from Lemma~\ref{l:min_accretivity}.
    \end{proof}

We now put the conditions together to show how the minimal Laplacian may not be m-accretive.
We will need one more condition in order to make this happen. More specifically,
we will need the existence of a non-trivial harmonic function in $\ell^q$, i.e., a function
$h \in \dom(\Delta) \cap \ell^q(X,\mu)$ such that $\Delta h=0$.

For functions $f \in \ell^p(X,\mu)$ and $ g\in \ell^q(X,\mu)$ with $p^{-1}+q^{-1}=1$, we let
$$(f, g) = \sum_{x \in X} f(x)g(x)\mu(x)$$
which exists by H{\"o}lder's inequality.

\begin{theorem}\label{t:non_minimal_m-accretivity}
    Let $p \in [1,\infty]$ and let $q$ be the H{\"o}lder conjugate of $p$, i.e., $p^{-1}+q^{-1}=1$. 
    Assume that $G$ satisfies \SP[p] and \EM[p], and, if $p=\infty$, 
    then additionally $\mu(X)<\infty$. 
    If there exists a non-trivial $h \in \dom(\Delta) \cap \ell^q(X,\mu)$ 
    with $\Delta h=0$, then $\deltam^{(p)}$ is not m-accretive.
\end{theorem}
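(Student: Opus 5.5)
We argue by contradiction: the harmonic function $h$ will be shown to annihilate the range of $\id+\lambda\deltam^{(p)}$, so this range cannot be all of $\ell^p(X,\mu)$. By \EM[p] and Lemma~\ref{l:min_accretivity}, $\deltam^{(p)}$ is defined and accretive. For $p=\infty$ we additionally have $\mu(X)<\infty$, so $h\in\ell^1(X,\mu)$ and the pairing $(f,h)$ makes sense for $f\in\ell^\infty(X)$. Throughout we fix $\lambda=1$; by Remark~\ref{rem:lambda}, failure of surjectivity for one $\lambda$ suffices.

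\textbf{Step 1: $h$ annihilates $\Delta(C_c(X))$.} For $\varphi\in C_c(X)$ we have $(\Delta\varphi,h)=(\varphi,\Delta h)=0$ by Green's formula \cite[Proposition~1.5]{keller2021graphs}. This applies because $h\in\dom(\Delta)$ and $\varphi$ is finitely supported, so all sums are finite or absolutely convergent. Now take $u\in\dom(\deltam^{(p)})$ with $\varphi_n\to u$ and $\Delta\varphi_n\to\deltam^{(p)}u$ in $\ell^p$. Since $h\in\ell^q$, Hölder's inequality lets us pass to the limit and obtain $(\deltam^{(p)}u,h)=0$. Consequently, for every $u\in\dom(\deltam^{(p)})$,
\[
\bigl((\id+\deltam^{(p)})u,h\bigr)=(u,h).
\]

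\textbf{Step 2: use \SP[p] to control $(u,h)$.} Suppose $\deltam^{(p)}$ were m-accretive and choose $u$ with $(\id+\deltam^{(p)})u=g$, where $g\in\ell^p$ is picked so that $(g,h)\neq 0$; this is possible since $h\neq0$, e.g.\ $g=1_{x}$ with $h(x)\ne0$. Then $(u,h)=(g,h)\neq0$. To reach a contradiction we test with $u$ itself. For $\varphi\in C_c(X)$, Green's formula gives $\Q(\varphi)=(\Delta\varphi,\varphi)$, and \SP[p] gives $C_p\|\varphi\|_p^2\le\Q(\varphi)$. Taking limits along the approximating sequence $\varphi_n$, and using lower semicontinuity of $\Q$ (Fatou) on the left, we get
\[
C_p\|u\|_p^2\le\Q(u)\le\liminf_n(\Delta\varphi_n,\varphi_n).
\]
The main obstacle is that $(\Delta\varphi_n,\varphi_n)$ pairs $\ell^p$ with $\ell^p$, not $\ell^p$ with $\ell^q$, so this limit need not be controlled directly. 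Instead, the plan is to use \SP[p] to show that $\dom(\deltam^{(p)})$ lies in the closure of $C_c(X)$ with respect to $\Q$, namely $u\in\D_0$, and that $\Q(u-\varphi)\to0$ along suitable approximants. Since $h$ is harmonic, the pairing $\Q(\varphi,h)=(\varphi,\Delta h)=0$ for $\varphi\in C_c(X)$, so $h$ is $\Q$-orthogonal to $\D_0$.

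\textbf{Step 3: conclude.} The cleanest version I would actually try is to apply the argument to the resolvent family. Let $u_\lambda$ solve $(\id+\lambda\deltam^{(p)})u_\lambda=g$ with $g=1_x$. Then $(u_\lambda,h)=(g,h)=h(x)\mu(x)\neq0$ for every $\lambda$. On the other hand, contractivity gives $\|u_\lambda\|_p\le\|g\|_p$, and the Sobolev bound combined with $\lambda\,\deltam^{(p)}u_\lambda=g-u_\lambda$ should force $\|u_\lambda\|_p\to0$ as $\lambda\to\infty$. Heuristically, $C_p\|u_\lambda\|_p^2\lesssim\Q(u_\lambda)\approx(\deltam^{(p)}u_\lambda,u_\lambda)\le\lambda^{-1}\|g-u_\lambda\|\,\|u_\lambda\|$, which decays like $\lambda^{-1}$. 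Once $\|u_\lambda\|_p\to0$, Hölder's inequality gives $(u_\lambda,h)\to0$, contradicting $(u_\lambda,h)=h(x)\mu(x)\ne0$. Justifying the energy inequality for $u_\lambda\in\dom(\deltam^{(p)})$, including the dual pairing of $\deltam^{(p)}u_\lambda$ with $u_\lambda$, is the delicate point. For $p\ne2$ I would handle it by approximation with $C_c$ functions, using finite measure and \EM[p] where needed.
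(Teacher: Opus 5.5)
Your architecture matches the paper's. Step~1 is correct: the harmonic $h$ annihilates $\operatorname{Rg}\deltam^{(p)}$, so $(u_\lambda,h)=(g,h)\neq0$ and $\|u_\lambda\|_p\ge|(g,h)|/\|h\|_q$ uniformly in $\lambda$. The contradiction then has to come from a Sobolev bound $\lambda C_p\|u_\lambda\|_p\le\|g\|_q$. That bound is the whole difficulty, and you leave it heuristic.

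The route you suggest for it fails for $p\in[1,\infty)\setminus\{2\}$. Take arbitrary $\vartheta_n\in C_c(X)$ with $\vartheta_n\to u_\lambda$ and $\Delta\vartheta_n\to\deltam^{(p)}u_\lambda$ in $\ell^p$, and set $f_n:=\vartheta_n+\lambda\Delta\vartheta_n$. Then $\Q(\vartheta_n)=\lambda^{-1}\bigl((f_n,\vartheta_n)-\|\vartheta_n\|_2^2\bigr)$. But $f_n\to g$ only in $\ell^p$, so $(f_n,\vartheta_n)$ is an uncontrolled $\ell^p$--$\ell^p$ pairing. The assumption $\mu(X)<\infty$ would rescue $p\ge2$, but it is not assumed when $p<\infty$.

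Two smaller points. Your bound $\lambda^{-1}\|g-u_\lambda\|\,\|u_\lambda\|$ is not a H\"older estimate. Also, lower semicontinuity gives $C_p\|u\|_p^2\le\liminf_n\Q(\varphi_n)$, not $C_p\|u\|_p^2\le\Q(u)$.

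Your $C_c$-approximation does work for $p=2$. For $p=\infty$ with $\mu(X)<\infty$ it is exactly the paper's argument, since then $\vartheta_n\to u_\lambda$ in $\ell^1$ and $(\Delta\vartheta_n,\vartheta_n)\to(\Delta u_\lambda,u_\lambda)$.

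The missing idea for $p\in[1,\infty)$ is to use the finite Dirichlet solutions as approximants.
\begin{itemize}
\item By \EM[p] and Lemma~\ref{lem:Cc_density_Omegap}, $C_c(X)\subseteq\Omega^p$.
\item $\Delta^{(p)}_{|\Omega^p}$ is m-accretive by Theorem~\ref{thm:new2}, hence closed, so $\deltam^{(p)}\subseteq\Delta^{(p)}_{|\Omega^p}$.
\item If $\deltam^{(p)}$ were m-accretive, Lemma~\ref{l:inclusion} would give equality of the two operators.
\item Consequently $u_\lambda$ is also the $\ell^p$-limit of the $\varphi_n$ from the proof of Theorem~\ref{thm:new2}. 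These satisfy $\operatorname{supp}\varphi_n\subseteq X_n$ and $\varphi_n+\lambda\Delta_n\varphi_n=g_n$ \emph{exactly}.
\item Since $\Delta\varphi_n=\Delta_n\varphi_n$ on $X_n$, Green's formula gives $\Q(\varphi_n)=(\Delta_n\varphi_n,\varphi_n)=\lambda^{-1}\bigl((g_n,\varphi_n)-\|\varphi_n\|_2^2\bigr)\le\lambda^{-1}\|g_n\|_q\|\varphi_n\|_p$. This is a finite, genuinely dual pairing.
\item Applying \SP[p] and letting $n\to\infty$ yields $\lambda C_p\|u_\lambda\|_p\le\|g\|_q$.
\end{itemize}

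So you need both approximations for the same $u_\lambda$. The $C_c$-approximation from $\deltam^{(p)}$ drives Step~1, and the $\Omega^p$-approximation drives the energy bound. The assumed m-accretivity of $\deltam^{(p)}$ is precisely what lets you use both.

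Your Step~2 (the $\Q$-closure and $\Q$-orthogonality of $h$) is unnecessary. It would in any case require $h\in\D$, which is not assumed.
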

\begin{proof}
We divide the proof into two cases. For $p \in [1,\infty)$, we do not need
to assume the finiteness of measure. We handle this case first.

\noindent
\textit{Case $p \in [1,\infty)$}: Since \EM[p] holds, then 
by Lemma~\ref{lem:Cc_density_Omegap}, $C_c(X) \subseteq \Omega^p$, where $\Omega^p$ is 
a dense domain for which $\Delta^{(p)}_{|\Omega^p}$ is m-accretive by Theorem~\ref{thm:new2}.  
Since  $\Delta^{(p)}_{|\Omega^p}$ is m-accretive and therefore closed, we get that
\begin{equation*}
\deltam^{(p)} \subseteq \Delta^{(p)}_{|\Omega^p}.
\end{equation*}
Assume, toward a contradiction, that $\deltam^{(p)}$ is m-accretive. 
Since $\Delta^{(p)}_{|\Omega^p}$ is, in particular, accretive, then applying Lemma~\ref{l:inclusion}, 
it follows that $\deltam^{(p)} = \Delta^{(p)}_{|\Omega^p}$. 

Let now $h$ be as in the assumptions and let $x_0 \in X$ be such that $h(x_0) \neq 0$. Define 
$$g = \sgn(h(x_0))1_{x_0} \neq 0 $$
    so that $(g,h) = |h(x_0)|\mu(x_0) > 0$.

By the m-accretivity of $\Delta^{(p)}_{|\Omega^p}$, 
there exists $u_\lm \in \Omega^p \subseteq \dom(\Delta^{(p)})$ 
that satisfies $(\id + \lm \Delta )u_\lm =g$, equivalently, $u_\lm = g - \lm \Delta u_\lm$. We then have:

\begin{itemize}
    \item[(i)] By the equality  $\deltam^{(p)} = \Delta^{(p)}_{|\Omega^p}$ and by the definition 
    of $\deltam^{(p)}$ as the closure, there exists a sequence $(\vartheta_n)$  such that
    \begin{itemize}
       \item[(i.a)] $\vartheta_n \in C_c(X)$,
        \item[(i.b)] $\vartheta_n \to u_\lm$ in $\ell^p(X,\mu)$,
        \item[(i.c)] $\Delta\vartheta_n \to \Delta u_\lm$ in $\ell^p(X,\mu)$.
    \end{itemize}

    \item[(ii)] By the equality $\deltam^{(p)} = \Delta^{(p)}_{|\Omega^p}$ and 
    the proof of Theorem~\ref{thm:new2} (see Property~\eqref{eq:linear_1}) 
    there exists an exhaustion $(X_n)$ and a sequence $(\varphi_n)$  such that
    \begin{itemize}
       \item[(ii.a)] $\varphi_n \in C_c(X)$ with $\operatorname{supp}\varphi_n \subseteq X_n$,
        \item[(ii.b)] $\varphi_n \to u_\lm$ in $\ell^p(X,\mu)$,
        \item[(ii.c)] $S_{\lm,n}\varphi_n = g_n$, that is, $\Delta_n\varphi_n = \lm^{-1}(g_n - \varphi_n)$ where $g_n = \boldsymbol{\mathfrak{i}}_n\boldsymbol{\pi}_ng$ is the Dirichlet cutoff of $g$
        and $\Delta_n$ is defined in \eqref{eq:exhaustion_Lap}. 
    \end{itemize}
\end{itemize}

From (i),  since $h \in \dom(\Delta) \cap \ell^q(X,\mu)$ is harmonic, Green's formula, 
e.g., Proposition~1.5 in \cite{keller2021graphs}) implies
    $$
    (\Delta u_\lm, h) = \lim_{n \to \infty}(\Delta \vartheta_n, h)= \lim_{n \to \infty}(\vartheta_n, \Delta h)=0.
    $$
    This yields
    $$
    (u_\lm, h)=(g-\lm\Delta u_\lm, h) = (g,h)
    $$
    and thus 
    \begin{equation*}
        (g,h)=(u_\lm, h) \leq \|u_\lm\|_p \|h\|_q
    \end{equation*}
    or, equivalently,
    \begin{equation}\label{eq:first_bound}
        \frac{1}{\|u_\lm\|_p} \leq \frac{\|h\|_q}{(g,h)}.
    \end{equation}

Now, observe that $\Delta\varphi_n(x) = \Delta_n\varphi_n (x)$ for every $x\in X_n$ since
$\operatorname{supp} \varphi_n \subseteq X_n$
and, therefore,
\begin{equation*}
    ((\Delta - \Delta_n) \varphi_n, \varphi_n) = \sum_{x\notin X_n} (\Delta - \Delta_n)\varphi_n(x) \varphi_n(x)\mu(x) = 0 \qquad \mbox{by (ii.a)}.
  \end{equation*}
As a consequence, by Green's formula and the above identity,
\begin{equation}\label{eq:energy_identity}
\Q(\varphi_n) = (\Delta \varphi_n, \varphi_n) = (\Delta_n \varphi_n, \varphi_n).
\end{equation}
    From (ii), by using \SP[p] with $C_p>0$ as a $p$-Sobolev constant, and the identity~\eqref{eq:energy_identity}, we obtain
    \begin{align*}
        C_p \|u_\lm\|_p^2 = C_p \lim_{n \to \infty} \|\varphi_n\|_p^2 &\leq \limsup_{n \to \infty} \Q(\varphi_n) \\
        &= \limsup_{n \to \infty}( \Delta_n \varphi_n, \varphi_n)\\
        &= \limsup_{n \to \infty} \frac{1}{\lm} ( g_n -\varphi_n, \varphi_n )\\
        &\leq \limsup_{n \to \infty} \frac{1}{\lm}(g_n, \varphi_n)\\
        &\leq \limsup_{n \to \infty} \frac{1}{\lm} \|\varphi_n\|_p \|g_n\|_q = \frac{1}{\lm} \|u_\lm\|_p \|g\|_q
    \end{align*} 
 where the last limit follows by (ii.b) and the fact that $g\in C_c(X)$ 
 and so $g_n = \boldsymbol{\mathfrak{i}}_n\boldsymbol{\pi}_n g$ satisfies 
 $g_n \to g$ in all $\ell^r$ spaces.

 This yields, $\lm C_p \|u_\lm\|_p \leq \|g\|_q$ since $u_\lm$
 is non-trivial and combining with~\eqref{eq:first_bound} gives
    $$\lm C_p \leq \frac{\|g\|_q \|h\|_q}{(g,h)}.$$
    Since $\lm>0$ is arbitrary,
    this gives a contradiction and thus 
    completes the proof for $\deltam^{(p)}$ for $p \in [1, \infty)$ 
    as, for m-accretivity, $S^{(p)}_\lm$ should be surjective for all $\lm>0$.

\medskip
\noindent
\textit{Case $p = \infty$.} The arguments are mostly the same as in the 
previous case $p\in [1,\infty)$. Assume, toward a contradiction, 
that $\deltam^{(\infty)}$ is m-accretive and 
let $u_\lm \in \dom(\deltam^{(\infty)})$ satisfy $(\id + \lm \Delta)u_\lm =g$ 
with $g = \sgn(h(x_0))1_{x_0}\neq 0$.  
Fix $(\vartheta_n)$ such that $\vartheta_n \to u_\lm$ and 
$\Delta \vartheta_n \to \Delta u_\lm  =  \frac{1}{\lm}(g-u_\lm)$ in $\ell^\infty(X)$. 
Let us first highlight that \eqref{eq:first_bound} still holds.  
However, we cannot use the finite graph approximations from 
Theorem~\ref{thm:new2}, as we did for $p\in [1,\infty)$, since
Theorem~\ref{thm:new2} does not hold  for $p=\infty$. 

Note that thus far we have not used the assumption $\mu(X) < \infty$. 
Suppose now $\mu(X) < \infty$. Convergence in $\ell^\infty(X)$  then implies convergence 
in $\ell^1(X,\mu)$. Therefore,
$$
\lim_{n \to \infty}( \Delta \vartheta_n, \vartheta_n) = ( \Delta u_\lm, u_\lm ).
$$
By using now \SP[] with $C>0$, Green's formula and
    H{\"o}lder's inequality, we obtain
\begin{align*}
        C \|u_\lm\|_\infty^2 = C \lim_{n \to \infty} \|\vartheta_n\|_\infty^2 &\leq \lim_{n \to \infty} \Q(\vartheta_n) \\
        &= \lim_{n \to \infty}( \Delta \vartheta_n, \vartheta_n) = ( \Delta u_\lm, u_\lm )
        = \frac{1}{\lm} ( g-u_\lm, u_\lm ) \leq \frac{1}{\lm}(g, u_\lm) \leq \frac{1}{\lm} \|g\|_1 \|u_\lm\|_\infty.
    \end{align*} 
We now conclude as in the previous case by combining the above inequality with \eqref{eq:first_bound}, and getting a contradiction. 
\end{proof}

As for the additional assumption in the above result concerning the existence of 
a harmonic function in $\ell^q$ we note that this is the case whenever the killing term
satisfies some additional assumptions. For example, when $\kappa=0$ and $\mu(X)<\infty$, 
we can use
the constant function $h=1$ which is in all $\ell^q$ spaces if the graph has finite measure. 
We now highlight this.
\begin{corollary}\label{c:non_minimal_m-accretivity_killing}
    Let $p \in [1,\infty]$.
    If $G$ satisfies $\mu(X)<\infty$, $\kappa =0$, \SP[p], and \EM[p], 
    then $\deltam^{(p)}$ is not m-accretive.
\end{corollary}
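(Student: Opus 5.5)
The plan is to apply Theorem~\ref{t:non_minimal_m-accretivity} with the constant function $h=1$. Then it only remains to check that $h$ is a non-trivial element of $\dom(\Delta)\cap\ell^q(X,\mu)$ with $\Delta h=0$, where $q$ is the H{\"o}lder conjugate of $p$. The other hypotheses of the theorem, namely \SP[p] and \EM[p], together with $\mu(X)<\infty$ in the case $p=\infty$, are assumed directly in the corollary.

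First, $h=1$ lies in $\dom(\Delta)$. By the local summability of the edge weight,
\[
\sum_{y\in X}w(x,y)\,|h(y)|=\sum_{y\in X}w(x,y)<\infty
\]
for every $x\in X$; equivalently, $\ell^\infty(X)\subseteq\dom(\Delta)$ as noted earlier. Second, $h$ is harmonic: every difference $h(x)-h(y)$ vanishes and $\kappa=0$, so
\[
\Delta h(x)=\frac{1}{\mu(x)}\sum_{y\in X}w(x,y)\bigl(1-1\bigr)+\frac{\kappa(x)}{\mu(x)}=0
\]
for all $x\in X$. Third, $h\in\ell^q(X,\mu)$ for every $q\in[1,\infty]$. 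For $q=\infty$ this holds because $h$ is bounded. For $q<\infty$ it holds because $\|h\|_q^q=\mu(X)<\infty$. This is exactly where the finite measure assumption is used, and it is needed for every $p\in(1,\infty]$, since then $q<\infty$. Finally, $h$ is non-trivial since $X\neq\emptyset$.

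With these checks, Theorem~\ref{t:non_minimal_m-accretivity} gives that $\deltam^{(p)}$ is not m-accretive. I do not expect any step to be a real obstacle. The only point needing care is noticing that $\mu(X)<\infty$ is what places the constant function in $\ell^q$ for $q<\infty$, while $\kappa=0$ is what makes it harmonic.
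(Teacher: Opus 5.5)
Your proposal is correct and follows the paper's own proof: apply Theorem~\ref{t:non_minimal_m-accretivity} with $h=1$, which is harmonic because $\kappa=0$ and lies in every $\ell^q(X,\mu)$ because $\mu(X)<\infty$. Your additional checks that $1\in\dom(\Delta)$ via local summability, and that $1$ is non-trivial, are fine details that the paper leaves implicit.
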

\begin{proof}
    From the assumption that $\kappa =0$ it follows that $h=1$ is harmonic. Furthermore,
    by $\mu(X)<\infty$, it follows that $1 \in \ell^q(X,\mu)$ for all $q \in [1,\infty]$.
    Thus, the result follows
    by Theorem~\ref{t:non_minimal_m-accretivity}.
    \end{proof}    

    However, we note that some additional assumptions on $\kappa$ or on the graph 
    are required for the existence
    of a non-trivial harmonic function as the following examples
    show.
    \begin{example}[Existence of $h$]
    We give two examples to show that some additional assumptions 
    on $\kappa$ are needed to give the existence of a non-trivial harmonic function
    $h \in \ell^p(X,\mu)$. 
    \begin{itemize}
    \item[(1)]{[Single vertex graph].}
    Let $X=\{x\}$, $\mu(x)=1$, $w=0$ and $\kappa(x)=1$. Then, if $u \in C(X)$, then
$$
\Delta u = u \qquad \textup{ and } \qquad  Q(u)=u^2(x).
$$
In particular, this implies $\lambda_0(\Delta_D)=1>0$ so that \SP[2] holds.
Moreover, \EM[2] holds trivially and $\mu(X)<\infty$.
However,
$$
    \Delta_{\min}^{(2)}=\Delta^{(2)}=\id
$$
which is m-accretive.
In this example there is no non-trivial harmonic function, since
$\Delta h=h$, so $\Delta h=0$ implies $h=0$.

    \item[(2)]{[Birth-death chain].} 
    Let $G=(\N,w, \mu, \kappa)$ be a birth-death chain, i.e., $w(x,y)>0$
        if and only if $|x-y|=1$. If $G$ satisfies $\mu(X)<\infty$ and $\sum_r 1/w(r,r+1)<\infty$,
        then $G$ satisfies \SP[2] by Theorem~9.15 in
        \cite{keller2021graphs}. Furthermore, since $G$ is locally finite, $G$ satisfies 
        \EM[r] for all $r\in[1,\infty]$.
        On the other hand, if $h \in C(X)$ satisfies $\Delta h=0$, then
        $$h(n+1)-h(n)=\frac{1}{w(n,n+1)} \sum_{j=0}^n \kappa(j)h(j)$$
        for all $n \in \N_0$ by a recursion formula, see Lemma~9.16 in \cite{keller2021graphs}.
        Thus, for any $h \neq 0$ we can always choose $\kappa$ large enough 
        so that $h \not \in \ell^p(X,\mu)$
        for any $p \in [1,\infty]$.
    \end{itemize}
    \end{example}

As a consequence of the above, we obtain that the Sobolev and edge-measure conditions
imply the failure of m-accretivity for all minimal Laplacians below where the Sobolev and edge-measure
conditions hold. In particular,
for uniformly transient graphs of finite measure
which satisfy the edge-measure condition, all minimal Laplacians are not m-accretive for every $p \in [1,\infty]$.
\begin{corollary}\label{c:non_minimal_m-accretivity}
    If $G$ satisfies $\mu(X)<\infty$, $\kappa =0$, 
    \SP[r] and \EM[r] for some $r \in [1, \infty]$,
    then $\deltam^{(p)}$ is not m-accretive and thus $\deltam^{(p)} \neq \Dep$ for all $p \in [1,r]$.
    In particular, if $G$ satisfies $\mu(X)<\infty$, $\kappa =0$, 
    \SP[] and \EM[], then $\deltam^{(p)}$ is not m-accretive and thus  $\deltam^{(p)} \neq \Dep$ for all 
    $p \in [1, \infty]$.
\end{corollary}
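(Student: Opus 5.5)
The plan is to reduce everything to Corollary~\ref{c:non_minimal_m-accretivity_killing}, using the finite-measure lemmas to move the conditions \SP[r] and \EM[r] down to every exponent $p \in [1,r]$. After that, the inequality $\deltam^{(p)} \neq \Dep$ will follow from Lemma~\ref{l:min_accretivity2}.

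\textbf{Step 1.} Fix $p \in [1,r]$. Since $\mu(X)<\infty$, Lemma~\ref{l:pSob} gives \SP[p] from \SP[r]. Lemma~\ref{l:em_finitemeasure} gives \EM[p] from \EM[r]. The same lemma, together with Lemma~\ref{l:min_accretivity}, shows that $\deltam^{(p)}$ is defined, so the statement makes sense.

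\textbf{Step 2.} The hypotheses $\mu(X)<\infty$ and $\kappa=0$ hold globally. With \SP[p] and \EM[p] in hand, Corollary~\ref{c:non_minimal_m-accretivity_killing}, applied with exponent $p$, shows that $\deltam^{(p)}$ is not m-accretive. Implicitly this uses the harmonic function $h=1$, which lies in every $\ell^q(X,\mu)$ because the measure is finite. The case $p=\infty$, which occurs only when $r=\infty$, is covered by the same corollary because $\mu(X)<\infty$ is part of the assumptions.

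\textbf{Step 3.} Suppose, for contradiction, that $\deltam^{(p)}=\Dep$. Lemma~\ref{l:min_accretivity2}, which applies since \EM[p] holds, then says that both operators are m-accretive. This contradicts Step~2, so $\deltam^{(p)}\neq\Dep$. The ``in particular'' statement is the case $r=\infty$, where \SP[] and \EM[] are \SP[$\infty$] and \EM[$\infty$], giving the claim for every $p\in[1,\infty]$.

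The only point needing care is the bookkeeping in Step~1. Lemma~\ref{l:pSob} and Lemma~\ref{l:em_finitemeasure} both rely on the Hölder norm comparison \eqref{eq:holder}, which needs $\mu(X)<\infty$ and goes only downward in the exponent. This is why the conclusion is restricted to $p\le r$. I do not expect a genuine obstacle here.
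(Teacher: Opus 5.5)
Your proposal is correct and is essentially the paper's own proof. You transfer \SP[r] and \EM[r] down to every $p\in[1,r]$ via Lemmas~\ref{l:pSob} and~\ref{l:em_finitemeasure}, apply Corollary~\ref{c:non_minimal_m-accretivity_killing} to get that $\deltam^{(p)}$ is not m-accretive, and deduce $\deltam^{(p)}\neq\Dep$ from Lemma~\ref{l:min_accretivity2}.
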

\begin{proof}
    Since \SP[r] and \EM[r] hold for some $r \geq 1$ and $\mu(X)<\infty$, we get that
    \SP[p] and \EM[p] hold for all $p \leq r$ by Lemmas~\ref{l:pSob}~and~\ref{l:em_finitemeasure}.
    Now, the conclusion follows from Corollary~\ref{c:non_minimal_m-accretivity_killing}.
    That $\deltam^{(p)} \neq \Dep$ follows from Lemma~\ref{l:min_accretivity2}.
\end{proof}

\section{Essential self-adjointness, form uniqueness and accretivity of the Laplacian}\label{sec:Laplacian}
In this section we briefly discuss the $\ell^2$ case and give some connections
between accretivity and the well-studied properties of essential self-adjointness and form uniqueness. 
In particular, we show that accretivity of the maximal Laplacian on $\ell^2$
implies form uniqueness.
Finally, we show that if form uniqueness fails, then accretivity
fails for $\Dep$ for all $p \in [1,\infty]$.

We start with the relevant definitions. Recall that
the energy form $\Q$ is given by
$$\Q(u) = \frac{1}{2} \sum_{x, y \in X} w(x,y) (u(x)-u(y))^2+ \sum_{x \in X} \kappa(x)u^2(x)$$
for $u \in C(X)$ and $\D$ is the space of functions of finite energy:
$$\D= \{ u \in C(X) \mid \Q(u) < \infty \}.$$
We note that $\Q$ is lower semi-continuous, i.e., if $u_n(x) \to u(x)$ for 
every $x \in X$, then
$$\Q(u) \leq \liminf_{n \to \infty}\Q(u_n),$$
see Proposition~1.3 in \cite{keller2021graphs}.

There are two distinguished restrictions of the energy form: $\QN$, the \emph{form
with Neumann boundary conditions}, is defined
as the restriction of $\Q$ to
$$\dom(\QN) = \ell^2(X,\mu) \cap \D$$
while, $\QD$, the \emph{form with Dirichlet boundary conditions}, 
is defined as the restriction of $\Q$ to
$$\dom(\QD) = \overline{C_c(X)}^{\| \cdot \|_\Q}$$
where
$$\| \varphi \|_\Q^2 = \|\varphi\|_2^2 + \Q(\varphi)$$
for $\varphi \in C_c(X)$ and $\| \cdot \|_2$ is the norm on $\ell^2(X,\mu)$.
We say that a graph satisfies \emph{form uniqueness} if $\QD=\QN$.
We note that this is equivalent to \emph{Markov uniqueness}, i.e., that $\Delta$
has a unique Markov realization. More specifically, there exists a
unique operator acting
as $\Delta$ and whose form is a Dirichlet form with domain
containing $C_c(X)$, see \cite{Sch20, Sch20b, keller2021graphs}
for the equivalence of these two notions.

We denote the operators associated to $\QD$ and $\QN$
by $\Delta_{D}$ and $\Delta_{N}$, respectively, and note that
both act as restrictions of $\Delta$ on their domains, see Theorem~1.12 in \cite{keller2021graphs}.
As both operators
are positive and self-adjoint, they are m-accretive by general principles.

When \EM[2] holds, we 
say
that $\Delta_c$ is \emph{essentially self-adjoint} whenever $\Delta_{\min}^{(2)}$ is self-adjoint.
This is equivalent to $\Delta_c$ having a unique self-adjoint extension.
Essential self-adjointness implies form uniqueness, however, the reverse implication is not true.

We note that from Theorem~\ref{t:acc_inj}, $\Delta^{(2)}$ is accretive if and only if $S_\lm=\id +\lm \Delta$ is injective
on $\dom(\Delta^{(2)})$ if and only if $S_\lm$ is injective on $\dom(\Delta) \cap \ell^2(X,\mu)$. Furthermore,
by general theory, form uniqueness is equivalent to $S_\lm$ being injective on $\D \cap \ell^2(X,\mu)$ while essential
self-adjointness is
equivalent to $S_\lm$ being injective on $\dom(\Delta^{(2)})$.
This easily gives connections between m-accretivity and these properties as we now discuss.
We start with form uniqueness.

\begin{proposition}\label{p:MU}
If $\Delta^{(2)}$ is m-accretive, then $\QD=\QN$. If $\dom(\Delta^{(2)}) \subseteq \D$
and $\QD=\QN$, then $\Delta^{(2)}$ is m-accretive.
In this case, $\Delta^{(2)}=\Delta_D$.
\end{proposition}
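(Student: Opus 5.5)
The plan is to reduce everything to injectivity statements via Theorem~\ref{t:acc_inj} and Lemma~\ref{l:inclusion}, using the two Markov realizations $\Delta_D$ and $\Delta_N$. Both are positive self-adjoint restrictions of $\Delta$, hence m-accretive on $\ell^2(X,\mu)$. Since they act as $\Delta$ and have ranges in $\ell^2(X,\mu)$, we have $\Delta_D,\Delta_N\subseteq\Delta^{(2)}$.

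\emph{First implication.} Assume $\Delta^{(2)}$ is m-accretive, in particular accretive. Apply Lemma~\ref{l:inclusion} with $T_1=\Delta_D$, which is m-accretive, and $T_2=\Delta^{(2)}$; this gives $\Delta_D=\Delta^{(2)}$. Doing the same with $T_1=\Delta_N$ gives $\Delta_N=\Delta^{(2)}$. Hence $\Delta_D=\Delta_N$. Self-adjoint operators determine their closed forms uniquely, because the form domain is $\dom((\Delta_D)^{1/2})$ with $Q(u)=\|(\Delta_D)^{1/2}u\|^2$. Therefore $\QD=\QN$.

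\emph{Second implication.} Assume $\dom(\Delta^{(2)})\subseteq\D$ and $\QD=\QN$. By Theorem~\ref{t:acc_inj}, it suffices to show that $S_\lambda=\id+\lambda\Delta^{(2)}$ is injective for some $\lambda>0$. Take $u\in\dom(\Delta^{(2)})$ with $u+\lambda\Delta u=0$. Then $u\in\ell^2(X,\mu)\cap\D=\dom(\QN)=\dom(\QD)$. The key step is a Green-type identity
\[
\QD(u,\varphi)=\langle \Delta u,\varphi\rangle \qquad \text{for } \varphi\in C_c(X)\text{ and } u\in\D\cap\dom(\Delta),
\]
which follows from \cite[Proposition~1.5]{keller2021graphs}: for finitely supported $\varphi$ all sums are finite. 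We then extend this identity by $\|\cdot\|_\Q$-density of $C_c(X)$ in $\dom(\QD)$ to every $\varphi\in\dom(\QD)$. The extension uses that the left side is continuous in $\|\cdot\|_\Q$, by Cauchy--Schwarz for the form, and that the right side is continuous in $\ell^2(X,\mu)$ because $\Delta u\in\ell^2(X,\mu)$. Taking $\varphi=u$ gives $\|u\|_2^2+\lambda\Q(u)=\langle u+\lambda\Delta u,u\rangle=0$, so $u=0$. This proves injectivity and hence m-accretivity.

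This density and extension step is the only delicate point. It is exactly where $\QD=\QN$ enters, since the identity needs $u$ to lie in the closure of $C_c(X)$ and not merely in $\D\cap\ell^2(X,\mu)$. One could alternatively invoke the quoted general fact that form uniqueness is equivalent to injectivity of $S_\lambda$ on $\D\cap\ell^2(X,\mu)$; the inclusion $\dom(\Delta^{(2)})\subseteq\D$ then transfers injectivity to $\dom(\Delta^{(2)})$ directly.

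\emph{The final equality.} In either case $\Delta^{(2)}$ is m-accretive. The argument of the first part, Lemma~\ref{l:inclusion} applied with $T_1=\Delta_D$ and $T_2=\Delta^{(2)}$, then yields $\Delta^{(2)}=\Delta_D$.
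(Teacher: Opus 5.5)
Your proof is correct, but it follows a different route from the paper's. The paper handles both implications with one quoted fact from general theory: $\QD=\QN$ if and only if $\id+\lambda\Delta$ is injective on $\D\cap\ell^2(X,\mu)$ (Theorem~3.2 in \cite{keller2021graphs}). It combines this with $\D\subseteq\dom(\Delta)$ and Theorem~\ref{t:acc_inj}. For the final equality it cites general theory to say that an m-accretive $\Delta^{(2)}$ is self-adjoint, and then compares it with the self-adjoint restriction $\Delta_D\subseteq\Delta^{(2)}$.

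For the first implication, you apply Lemma~\ref{l:inclusion} to both Markov realizations. This gives the stronger conclusion $\Delta_D=\Delta^{(2)}=\Delta_N$, and then $\QD=\QN$ by uniqueness of the closed form of a nonnegative self-adjoint operator. No injectivity characterization is needed.

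For the second implication, you reprove the needed direction of that characterization using Green's formula and $\|\cdot\|_\Q$-density of $C_c(X)$ in $\dom(\QD)$. This makes explicit that form uniqueness is used only to ensure $u\in\dom(\QD)$, so that you may test with $\varphi=u$.

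For the final equality, Lemma~\ref{l:inclusion} gives $\Delta^{(2)}=\Delta_D$ directly. Self-adjointness of $\Delta^{(2)}$ then comes out as a consequence rather than being needed as an input.

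The paper's argument is shorter because it leans on the quoted characterization. Yours is more self-contained: it uses only Lemma~\ref{l:inclusion}, Theorem~\ref{t:acc_inj}, Green's formula, and standard facts about closed forms.
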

\begin{proof}
    From general theory, $\QD=\QN$ if and only if $S_\lm=\id+\lm \Delta$ is injective
    on $\D \cap \ell^2(X,\mu)$, see Theorem~3.2
    in \cite{keller2021graphs}. Furthermore, $\D \subseteq \dom(\Delta)$, see Proposition~1.4 in \cite{keller2021graphs}.
    Combining this with Theorem~\ref{t:acc_inj}, which
    states that $\Delta^{(2)}$ is m-accretive if and only if $S_\lm$ is
    injective on $\dom(\Delta^{(2)})$, equivalently, on $\dom(\Delta) \cap \ell^2(X,\mu)$,
    gives the first two statements. If $\Delta^{(2)}$ is m-accretive, then
    $\Delta^{(2)}$ is self-adjoint by general theory \cite{kato2013perturbation}, 
thus, it follows that $\Delta^{(2)}=\Delta_D$ as both are self-adjoint
    restrictions of $\Delta$ and $\Delta_D \subseteq \Delta^{(2)}$. 
\end{proof}

We note that $\QD=\QN$ does not imply either $\dom(\Delta^{(2)}) \subseteq \D$ or that $\Delta^{(2)}$
is m-accretive as, for example,
taking a graph which satisfies form uniqueness but not essential self-adjointness shows.
Such a graph will have a non-trivial function $u \in \dom(\Delta^{(2)})$ such that $S_\lm u=0$, but $u \not \in \D$. 
See \cite{HKMW13, HKMRW} for some concrete examples.

Regarding the condition $\dom(\Delta^{(2)}) \subseteq \D$, we also have the following result which states
that this inclusion is implied by the finite balls assumption for an $\ell^2$-intrinsic metric. As this may be of independent interest, we include
a proof. See also \cite{HM15}. 

\begin{proposition}\label{p:energy}
If $G$ is a graph with an $\ell^2$-intrinsic metric satisfying $\FB$, then
$\dom(\Delta^{(2)}) \subseteq \D$.
\end{proposition}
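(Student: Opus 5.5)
The plan is a cutoff (Caccioppoli-type) argument with the intrinsic-metric cutoffs $\eta_R(\cdot)=(1-\varrho(x_0,\cdot)/R)_+$ already introduced before Lemma~\ref{l:cutoff}. Under $\FB$ each $\eta_R$ is finitely supported, and since $\varrho$ is $\ell^2$-intrinsic,
\[
\sum_{y}w(x,y)(\eta_R(x)-\eta_R(y))^2\le \frac{1}{R^2}\sum_y w(x,y)\varrho^2(x,y)\le \frac{C}{R^2}\mu(x).
\]
Let $u\in\dom(\Delta^{(2)})$, so $u,\Delta u\in\ell^2(X,\mu)$. The goal is the bound
\[
\Q(\eta_R u)\le C'\bigl(\|u\|_2\|\Delta u\|_2+\|u\|_2^2\bigr)
\]
with $C'$ independent of $R$. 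Since $\eta_R u\to u$ pointwise as $R\to\infty$, lower semicontinuity of $\Q$ then gives $\Q(u)<\infty$, that is, $u\in\D$.

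To get this bound, I would first apply Green's formula to the finitely supported function $\eta_R^2u$ paired with $u\in\dom(\Delta)$:
\[
\sum_x \Delta u(x)\,\eta_R^2(x)u(x)\mu(x)=\frac12\sum_{x,y}w(x,y)(u(x)-u(y))(\eta_R^2u(x)-\eta_R^2u(y))+\sum_x\kappa\eta_R^2u^2.
\]
This is legitimate because only finitely many $x$ appear in the first factor, and the sums over $y$ converge since $u\in\dom(\Delta)$. Next I would use the standard pointwise identity
\[
(a-b)(\eta^2(x)a-\eta^2(y)b)=(\eta(x)a-\eta(y)b)^2-ab(\eta(x)-\eta(y))^2 .
\]
Summing it shows that $\Q(\eta_Ru)$ equals the left-hand side plus $\tfrac12\sum_{x,y}w(x,y)u(x)u(y)(\eta_R(x)-\eta_R(y))^2$.

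The main obstacle is controlling this last term absolutely. Bounding $|u(x)u(y)|\le\frac12(u(x)^2+u(y)^2)$ and using symmetry, it is at most $\sum_x u(x)^2\sum_y w(x,y)(\eta_R(x)-\eta_R(y))^2\le \frac{C}{R^2}\|u\|_2^2$ by the intrinsic bound. This double sum is absolutely convergent, and only finitely many terms (those with $x$ or $y$ in the support of $\eta_R$) are nonzero for the part involving $\eta_R$. The first term is bounded by $\|\Delta u\|_2\|u\|_2$ by Cauchy--Schwarz since $0\le\eta_R\le1$.

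One subtlety I would check carefully is the rearrangement in Green's formula when $G$ is not locally finite. Here $y$ ranges over infinitely many neighbours of the support of $\eta_R$. I would split the sum into the part with $x\in\operatorname{supp}\eta_R$ and the symmetric part, and verify absolute convergence using $u\in\dom(\Delta)$ together with the finiteness of $\operatorname{supp}\eta_R$. This is the same justification as in \cite[Proposition~1.5]{keller2021graphs} for pairing $C_c(X)$ with $\dom(\Delta)$. With the uniform bound established, letting $R\to\infty$ and invoking lower semicontinuity completes the argument.
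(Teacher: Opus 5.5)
Your proposal is correct and follows essentially the same route as the paper: cutoffs $\eta_R$ built from the $\ell^2$-intrinsic metric, finite support of $\eta_R$ via $\FB$, and a Caccioppoli-type bound $\Q(u\eta_R)\le \langle \Delta u, u\eta_R^2\rangle + \frac{C}{2R^2}\|u\|_2^2$ that is uniform in $R$, followed by lower semicontinuity of $\Q$. The only difference is that you derive the Caccioppoli identity yourself from Green's formula and the pointwise algebraic identity, bounding the $u(x)u(y)$ cross term with AM--GM, whereas the paper simply cites Lemma~12.2 of \cite{keller2021graphs} for it.
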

\begin{proof}
Let $\varrho$ denote the $\ell^2$-intrinsic metric with $\FB$.
Let $u \in \dom(\Delta^{(2)})$ and let
$$\eta_R(x) = \left(1 - \frac{\varrho(x_0, x)}{R} \right)_+$$
for $x \in X$ and $R >0$.
By a direct calculation using the $\ell^2$-intrinsic condition and the triangle inequality as in
the proof of Lemma~\ref{l:cutoff}, we obtain
$$\frac{1}{\mu(x)} \sum_{y \in X} w(x,y) (\eta_R(x)-\eta_R(y))^2 \leq \frac{C}{R^2}$$
for every $x\in X$ and $R > 0$.
By the finite balls condition, $u\eta_R \in C_c(X)$ and Lebesgue's dominated convergence theorem implies
$u\eta_R \to u$ in $\ell^2(X,\mu)$. Furthermore, a Caccioppoli-type
inequality, see Lemma~12.2 in \cite{keller2021graphs}, implies
$$\Q(u\eta_R)\leq \langle \Delta u, u \eta_R^2\rangle + \frac{1}{2} \sum_{x,y \in X}w(x,y) u^2(x) (\eta_R(x)-\eta_R(y))^2
\leq\langle \Delta u, u \eta_R^2\rangle+ \frac{C}{2R^2} \|u\|^2. $$
Since the right-hand side is uniformly bounded in $R$, the lower semi-continuity of $\Q$ then gives $u \in \D$. This completes the proof.
\end{proof}
\begin{remark}[Domain inclusions]
We note that the preceding lemma actually shows that $\dom(\Delta^{(2)}) \subseteq \dom(Q_N) = \ell^2(X,\mu) \cap \D$.
Furthermore, as $\FB$ for an $\ell^2$-intrinsic metric implies form uniqueness, see \cite[Theorem~12.7]{keller2021graphs}
or combine Theorem~\ref{t:Lap_accretive} and Proposition~\ref{p:MU} above,
we get $\dom(\Delta^{(2)}) \subseteq \dom(Q_D)$.
\end{remark}

As for essential self-adjointness, we have the following result.
\begin{proposition}\label{p:ESA}
    Let $G$ satisfy \EM[2]. The following statements are equivalent:
    \begin{itemize}
        \item[\textup{(i)}] $\Delta_c$ is essentially self-adjoint.
        \item[\textup{(ii)}] $\Delta^{(2)}$ is m-accretive.
        \item[\textup{(iii)}] $\deltam^{(2)}$ is m-accretive.
        \item[\textup{(iv)}] $\deltam^{(2)}=\Delta^{(2)}$.
    \end{itemize}
\end{proposition}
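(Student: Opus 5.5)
The plan is to combine three ingredients: Lemma~\ref{l:min_accretivity2}, which under \EM[2] says that $\deltam^{(2)}=\Delta^{(2)}$ if and only if both operators are m-accretive; Theorem~\ref{t:acc_inj}; and the standard Hilbert space duality $(\deltam^{(2)})^\ast=\Delta^{(2)}$. For the duality, I would argue as in Lemma~\ref{l:infty_adjoint}. Under \EM[2] we have $C_c(X)\subseteq\dom(\Delta^{(2)})$ and $\ell^2(X,\mu)\subseteq\dom(\Delta)$ by Lemma~\ref{lem:EMp}. Green's formula then gives $\langle\Delta\varphi,u\rangle=\langle\varphi,\Delta u\rangle$ for $\varphi\in C_c(X)$ and $u\in\ell^2(X,\mu)$, so $(\Delta_c)^\ast=\Delta^{(2)}$. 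Moreover, $\Delta^{(2)}$ is closed and $\deltam^{(2)}\subseteq\Delta^{(2)}$ by Lemma~\ref{l:min_accretivity}.

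\textbf{(i)$\Leftrightarrow$(iv).} If $\deltam^{(2)}$ is self-adjoint, then $\Delta^{(2)}=(\deltam^{(2)})^\ast=\deltam^{(2)}$. Conversely, if $\deltam^{(2)}=\Delta^{(2)}=(\deltam^{(2)})^\ast$, then $\deltam^{(2)}$ is self-adjoint, which is (i).

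\textbf{(iv)$\Rightarrow$(ii),(iii).} This is the forward direction of Lemma~\ref{l:min_accretivity2}.

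\textbf{(iii)$\Rightarrow$(iv).} Since $\deltam^{(2)}$ is m-accretive, $\id+\lambda\deltam^{(2)}$ is surjective. Taking adjoints, $\ker(\id+\lambda\Delta^{(2)})=\operatorname{Rg}(\id+\lambda\deltam^{(2)})^\perp=\{0\}$. Hence $S^{(2)}_\lambda$ is injective, and Theorem~\ref{t:acc_inj} makes $\Delta^{(2)}$ m-accretive. Lemma~\ref{l:inclusion}, applied to $\deltam^{(2)}\subseteq\Delta^{(2)}$, then gives equality.

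\textbf{(ii)$\Rightarrow$(iv).} If $\Delta^{(2)}$ is m-accretive, then in particular $S^{(2)}_\lambda$ is injective. For the accretive closed operator $\deltam^{(2)}$, the range of $\id+\lambda\deltam^{(2)}$ is closed by the same lower-bound argument as in the proof of Theorem~\ref{t:acc_inj}, case $p=\infty$. It is also dense, since its orthogonal complement is $\ker(\id+\lambda\Delta^{(2)})=\{0\}$. So it equals $\ell^2(X,\mu)$ and $\deltam^{(2)}$ is m-accretive, and (iv) follows from Lemma~\ref{l:inclusion} as before.

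The step needing the most care is establishing $(\deltam^{(2)})^\ast=\Delta^{(2)}$ rigorously, specifically that the adjoint's domain is exactly $\dom(\Delta^{(2)})$. If $u\in\ell^2(X,\mu)$ and $\varphi\mapsto\langle\Delta\varphi,u\rangle$ is bounded on $C_c(X)$, then, since this functional equals $\langle\varphi,\Delta u\rangle$ by Green's formula, Riesz representation on the dense set $C_c(X)$ forces $\Delta u\in\ell^2(X,\mu)$. Green's formula is needed only for $\varphi$ finitely supported, which is covered by \cite[Proposition~1.5]{keller2021graphs} because $u\in\dom(\Delta)$.
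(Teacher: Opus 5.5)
Your proposal is correct, but it is organized differently from the paper's proof.

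\textbf{The paper's route.} It obtains (i)$\Leftrightarrow$(ii) by citing the general criterion \cite[Theorem~3.6]{keller2021graphs}: under \EM[2], essential self-adjointness is equivalent to injectivity of $\id+\lambda\Delta$ on $\dom(\Delta^{(2)})$. It combines this with Theorem~\ref{t:acc_inj}. It obtains (i)$\Leftrightarrow$(iii) by citing that a symmetric operator is m-accretive if and only if it is nonnegative and self-adjoint \cite[Problem V.3.32]{kato2013perturbation}. Only then does it bring in (iv), via Lemma~\ref{l:min_accretivity2}.

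\textbf{Your route.} You make the identity $(\deltam^{(2)})^\ast=\Delta^{(2)}$ the central tool; this is the $\ell^2$ analogue of Lemma~\ref{l:infty_adjoint}. With it, (i)$\Leftrightarrow$(iv) becomes a tautology. You then pass between (ii) and (iii) with the closed-range and orthogonal-complement argument that the paper uses only in the $p=\infty$ case of Theorem~\ref{t:acc_inj}.

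\textbf{What each buys.} Your version is self-contained: it effectively reproves the two cited Hilbert-space facts instead of quoting them. The paper's version is shorter precisely because it outsources those facts.

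\textbf{A further simplification.} Your steps (ii)$\Rightarrow$(iv) and (iii)$\Rightarrow$(iv) only need the ``onto plus injective'' form of Lemma~\ref{l:inclusion}. So the appeal to Theorem~\ref{t:acc_inj} in (iii)$\Rightarrow$(iv) is redundant. Likewise, (iv)$\Rightarrow$(ii),(iii) can be done by your duality argument instead of Lemma~\ref{l:min_accretivity2}: equality makes $\Delta^{(2)}$ accretive, hence $S^{(2)}_\lambda$ injective for every $\lambda>0$, and your (ii)$\Rightarrow$(iv) reasoning then applies. Done this way, the whole proof avoids the $\Omega^p$ machinery entirely.
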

\begin{proof}
    When $\Delta(C_c(X)) \subseteq \ell^2(X,\mu)$, i.e., when \EM[2] holds,
    $\Delta_c$ is essentially self-adjoint
    if and only if $S_\lm^{(2)}$ is injective by general theory, e.g., \cite[Theorem~3.6]{keller2021graphs}.
    The equivalence of (i) and (ii) now follows by Theorem~\ref{t:acc_inj}. 
    As essential self-adjointness is equivalent to the self-adjointness of $\deltam^{(2)}$
    by definition, the
    equivalence of (i) and (iii) follows as m-accretivity of $\deltam^{(2)}$
    is equivalent to self-adjointness of $\deltam^{(2)}$, see
    \cite[Problem V.3.32]{kato2013perturbation}.
    { As have already established the equivalence of (i), (ii), and (iii),
    the equivalence to (iv) now follows from Lemma~\ref{l:min_accretivity2}.}
\end{proof}

As a consequence, we note that finiteness of balls for an $\ell^2$-intrinsic metric
implies that the minimal and maximal Laplacians agree on $\ell^2$.
\begin{corollary}\label{c:balls_Lap}
  If $G$ satisfies \EM[2] and has an $\ell^2$-intrinsic metric that satisfies $\FB$, then
  $\Delta^{(2)}=\Delta^{(2)}_{\min}$. In particular, $\Delta^{(2)}_{\min}$ is m-accretive.
\end{corollary}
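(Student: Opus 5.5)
The plan is to combine the results on the maximal Laplacian $\Delta^{(2)}$ with the equivalences in Proposition~\ref{p:ESA}. The chain is: the finite balls condition gives m-accretivity of $\Delta^{(2)}$, and under \EM[2] this is equivalent to $\deltam^{(2)}=\Delta^{(2)}$.

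First, since $G$ has an $\ell^2$-intrinsic metric satisfying $\FB$, Corollary~\ref{c:Lap_accretive_intrinsic} applies with $p=2 \in (1,\infty)$. This rests on the Liouville-type injectivity in Lemma~\ref{l:Liouv}: all distance balls are finite, so $\Deg$ is bounded on each ball. Combined with Theorem~\ref{t:acc_inj}, it gives that $\Delta^{(2)}$ is m-accretive. Equivalently, item (3) of Theorem~\ref{t:Lap_accretive} can be cited directly.

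Second, since \EM[2] is assumed, Proposition~\ref{p:ESA} is available. There, statement (ii), m-accretivity of $\Delta^{(2)}$, is equivalent to statement (iv), $\deltam^{(2)}=\Delta^{(2)}$, and also to statement (iii), m-accretivity of $\deltam^{(2)}$. These give both claims of the corollary.

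Alternatively, the second step can avoid Proposition~\ref{p:ESA}. By Lemma~\ref{l:min_accretivity}, $\deltam^{(2)}$ is defined and accretive. Since $\Delta^{(2)}$ is m-accretive, it is closed, so $\deltam^{(2)}\subseteq\Delta^{(2)}$. The equality then needs $\deltam^{(2)}$ to be m-accretive, after which Lemma~\ref{l:min_accretivity2} or Lemma~\ref{l:inclusion} concludes. The cleanest way to get that m-accretivity is to rely on Proposition~\ref{p:ESA}, which uses the Hilbert-space fact that essential self-adjointness is equivalent to injectivity of $S^{(2)}_\lambda$ on $\dom(\Delta^{(2)})$.

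I expect no real obstacle. The only point to check is that the hypotheses of each cited result match: \EM[2] for Proposition~\ref{p:ESA}, and an $\ell^2$-intrinsic (pseudo-)metric with $\FB$ for Lemma~\ref{l:Liouv}.
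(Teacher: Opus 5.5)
Your proof is correct. It differs from the paper's only in which of the equivalent conditions of Proposition~\ref{p:ESA} you establish first.

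The paper starts from (i). It cites \cite[Theorem~12.21]{keller2021graphs}, which says that $\FB$ for an $\ell^2$-intrinsic metric implies essential self-adjointness of $\Delta_c$, and then reads off (iii) and (iv). You start from (ii). You get m-accretivity of $\Delta^{(2)}$ from the paper's own Corollary~\ref{c:Lap_accretive_intrinsic}, that is, the Liouville-type injectivity of Lemma~\ref{l:Liouv} combined with Theorem~\ref{t:acc_inj}.

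Your route keeps the argument inside the paper. It also makes visible that \EM[2] is not needed for the m-accretivity of $\Delta^{(2)}$. It is needed only for $\deltam^{(2)}$ to be defined and for Proposition~\ref{p:ESA} to apply. The paper's route is a one-line citation.

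Your alternative second step can also be completed without Proposition~\ref{p:ESA}:
\begin{itemize}
\item Under \EM[2] one has $(\deltam^{(2)})^{\ast}=\Delta^{(2)}$; see the remark following Theorem~\ref{t:acc_inj}.
\item The range of $\id+\lambda\deltam^{(2)}$ is closed, because $\deltam^{(2)}$ is closed and accretive.
\item The orthogonal complement of that range is $\ker(\id+\lambda\Delta^{(2)})=\{0\}$.
\item Hence $\deltam^{(2)}$ is m-accretive, and Lemma~\ref{l:inclusion} gives the equality.
\end{itemize}
This is the same closed-range argument the paper uses for $p=\infty$.
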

\begin{proof}
    This follows immediately from the fact that $\FB$ for an $\ell^2$-intrinsic
    metric implies essential self-adjointness, see \cite[Theorem~12.21]{keller2021graphs}, and Proposition~\ref{p:ESA}.
\end{proof}

We have seen that the accretivity of $\Delta^{(2)}$ implies form uniqueness.
On the other hand, 
we now show that the failure of form uniqueness implies that the maximal operator is not accretive
for all $p \in [1,\infty]$.
\begin{theorem}\label{t:counter}
If $\QD \neq \QN$,  
then $\Delta^{(p)}$ is not accretive for all $p \in [1,\infty]$.
In particular, $\Delta^{(p)} \neq \Delta^{(p)}_{|\Omega^{p}}$ for all $p \in [1,\infty)$. 
If additionally $G$ satisfies $\EMp$, then
$\Delta^{(p)} \neq \Delta^{(p)}_{\min}$ for all $p \in [1,\infty]$.
\end{theorem}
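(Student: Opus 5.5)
The plan is to use the failure of form uniqueness to build a single nonzero function that lies in every $\ell^p$ space and is annihilated by the shifted operator $S_\lambda=\id+\lambda\Delta$. Once we have it, Theorem~\ref{t:acc_inj} and the general lemmas give all three conclusions at once.

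\emph{Construction.} Fix $\lambda>0$ and let $\Delta_D$, $\Delta_N$ be the self-adjoint operators associated with $\QD$ and $\QN$. Both forms are Dirichlet forms, so the resolvents $R_D=(\id+\lambda\Delta_D)^{-1}$ and $R_N=(\id+\lambda\Delta_N)^{-1}$ are positivity preserving and sub-Markovian on $\ell^2(X,\mu)$. That is, $0\le f\le 1$ implies $0\le R_\ast f\le 1$, so both resolvents are $\ell^\infty$-contractive on $\ell^2\cap\ell^\infty$. By symmetry and duality they are also $\ell^1$-contractive on $\ell^1\cap\ell^2$. Hence for $f=1_{x_0}$, which lies in $\ell^1\cap\ell^\infty$, both $R_Df$ and $R_Nf$ lie in $\ell^1(X,\mu)\cap\ell^\infty(X)$, and therefore in every $\ell^p$, $p\in[1,\infty]$. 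This interpolation step is the main technical point; I would justify it by citing the standard theory of Dirichlet forms in \cite{keller2021graphs} rather than reproving it.

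Set $v=R_Nf-R_Df$. Both $\Delta_D$ and $\Delta_N$ act as restrictions of $\Delta$, by Theorem~1.12 in \cite{keller2021graphs}. So $v\in\dom(\Delta)$ and $(\id+\lambda\Delta)v=f-f=0$. Consequently $\Delta v=-v/\lambda\in\ell^p(X,\mu)$, which gives $v\in\dom(\Delta^{(p)})$ and $S_\lambda^{(p)}v=0$ for every $p\in[1,\infty]$.

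\emph{Nontriviality.} It remains to choose $x_0$ with $v\neq0$. Suppose instead that $R_N1_x=R_D1_x$ for all $x\in X$. By linearity the two resolvents then agree on $C_c(X)$. Since $C_c(X)$ is dense in $\ell^2$ and both resolvents are bounded, $R_N=R_D$, hence $\Delta_N=\Delta_D$ and $\QN=\QD$, contradicting the hypothesis. So some $x_0$ gives $v\neq0$. Alternatively, one can take a nonzero $u\in\D\cap\ell^2$ with $S_\lambda u=0$ directly from Theorem~3.2 in \cite{keller2021graphs}. The resolvent-difference construction is preferable here because it automatically yields membership in all $\ell^p$ spaces.

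\emph{Conclusions.} Since $S^{(p)}_\lambda$ has the nontrivial kernel element $v$, it is not injective, so $\Delta^{(p)}$ is not accretive for any $p\in[1,\infty]$, by Theorem~\ref{t:acc_inj} or directly from the definition. For $p\in[1,\infty)$, $\Delta^{(p)}_{|\Omega^p}$ is m-accretive by Theorem~\ref{thm:new2}, in particular accretive, so it cannot coincide with the non-accretive $\Delta^{(p)}$. Under \EM[p], $\deltam^{(p)}$ is defined and accretive by Lemma~\ref{l:min_accretivity}. Equality $\deltam^{(p)}=\Delta^{(p)}$ would then make $\Delta^{(p)}$ accretive, a contradiction; hence $\Delta^{(p)}\neq\deltam^{(p)}$ for every $p\in[1,\infty]$.
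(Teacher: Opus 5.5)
Your proof is correct and follows the same route as the paper: failure of form uniqueness yields a nonzero $v$ with $(\id+\lambda\Delta)v=0$ and $v\in\dom(\Delta^{(p)})$ for every $p\in[1,\infty]$. Non-accretivity then follows from the definition, and the remaining claims follow from the accretivity of $\Delta^{(p)}_{|\Omega^p}$ and of $\deltam^{(p)}$. The only difference is that the paper cites \cite[Theorem~3.2]{keller2021graphs} for this kernel element, whereas you reprove it via the resolvent difference $R_N1_{x_0}-R_D1_{x_0}$ and the sub-Markov property, which is the standard argument behind that citation.
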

\begin{proof}
It follows by general theory that if $\QD \neq \QN$,
then
$S_\lambda= \id + \lambda \Delta$ is not injective on $\dom(\Delta^{(p)}) \cap \D$ for 
all $p \in [1,\infty]$ and $\lambda>0$
see, e.g., \cite[Theorem~3.2]{keller2021graphs}. Hence, $\Delta^{(p)}$ is not
accretive by definition for all $p \in [1,\infty]$. 
The ``in particular'' statement follows as $\Delta^{(p)}_{| \Omega^{p}}$
is always accretive, see Proposition~\ref{p:accretivityDeltap}.
Finally, 
by Lemma~\ref{l:min_accretivity}, if $\EMp$ holds, then $\Delta_{\min}^{(p)}$ is accretive for $p \in [1,\infty]$
and $\Delta^{(p)} \neq \Delta_{\min}^{(p)}$ follows.
\end{proof}

\begin{remark}\label{rem:LneqL|omega}
For an example where the above result gives non-accretive maximal operators,
we note that whenever $\mu(X) < \infty$, $\QD \neq \QN$ is
equivalent to transience as well as to
stochastic incompleteness. Thus, to construct examples where accretivity fails,
it suffices to take any transient graph, for example a regular
3-tree with standard edge weights, and put a finite measure on the graph. See also \cite{HKMRW} and \cite{HKMW13} 
for examples with $\mu(X)=\infty$ and where form uniqueness fails.
With respect to Theorem~\ref{t:non_minimal_m-accretivity}, we note that
whenever a graph is weakly spherically symmetric, satisfies $\mu(X)<\infty$,
and is not form unique, then $\lm_0(\Delta_D) > 0$, see \cite{KLW13} or \cite[Theorem~9.15]{keller2021graphs} and thus \SP[2] holds.
Thus, the example of a regular 3-tree with a finite measure 
and $\kappa=0$ also works for this purpose.
\end{remark}

\section{\texorpdfstring{Stochastic completeness at infinity and m-accretivity}{Stochastic completeness at infinity and m-accretivity}}\label{s:sc_and_acc}
In this final section we consider stochastic completeness at infinity.
This property concerns uniqueness of bounded solutions of the heat equation. Here, 
we discuss connections to m-accretivity of the Laplacians.

We note that $\ell^\infty(X) \subseteq \dom(\Delta)$ and 
now introduce the property of interest which is equivalent
to injectivity of the shifted formal Laplacian.
\begin{definition}[Stochastic completeness]\label{def:stochastic_completeness}
    A graph $G$ is called \emph{stochastically complete at infinity} if $S_\lm =\id+\lm \Delta$
    is injective on $\ell^\infty(X)$ for some (equivalently, all) $\lm>0$.
    If $\kappa=0$ and $S_\lm$ is injective on $\ell^\infty(X)$ for $\lm>0$, then the graph
    is called \emph{stochastically complete}.
\end{definition}

For the equivalence of this formulation to stochastic properties, see \cite[Theorem~1]{KL12} or
\cite[Theorems~7.16~and~7.18]{keller2021graphs}.

We first note that the definition of stochastic completeness at infinity
can also be rephrased in terms of $\Delta^{(\infty)}$.
\begin{lemma}\label{l:SC}
    A graph $G$ is stochastically complete at infinity if and only if
    $S_\lm^{(\infty)}=\id+\lm\Delta^{(\infty)}$ is injective on $\dom(\Delta^{(\infty)})$ for some (equivalently, all) $\lambda >0$.
\end{lemma}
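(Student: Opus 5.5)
This is essentially a matter of unwinding definitions. The key observation is that $\ell^\infty(X)\subseteq \dom(\Delta)$ by local summability of $w$. Hence for every $u\in\ell^\infty(X)$ the quantity $S_\lambda u=u+\lambda\Delta u$ is defined pointwise.

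I would compare the two kernels directly.

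\emph{Kernel of $S_\lambda^{(\infty)}$ inside the kernel of $S_\lambda$.} Since $\dom(\Delta^{(\infty)})\subseteq\ell^\infty(X)$ and $\Delta^{(\infty)}$ acts as $\Delta$, we have $\ker S_\lambda^{(\infty)}\subseteq \ker S_\lambda|_{\ell^\infty(X)}$.

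\emph{Kernel of $S_\lambda$ inside the kernel of $S_\lambda^{(\infty)}$.} Let $u\in\ell^\infty(X)$ satisfy $u+\lambda\Delta u=0$. Then $\Delta u=-u/\lambda\in\ell^\infty(X)$, so $u\in\dom(\Delta^{(\infty)})$ and $S_\lambda^{(\infty)}u=0$.

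By linearity, injectivity is equivalent to a trivial kernel. The two inclusions therefore show that, for each fixed $\lambda>0$, injectivity of $S_\lambda$ on $\ell^\infty(X)$ is equivalent to injectivity of $S_\lambda^{(\infty)}$ on $\dom(\Delta^{(\infty)})$.

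The ``some (equivalently, all) $\lambda$'' clause is then inherited from Definition~\ref{def:stochastic_completeness}. Alternatively, it follows from Theorem~\ref{t:acc_inj} for $p=\infty$: injectivity for one $\lambda$ gives m-accretivity, and m-accretivity gives injectivity for all $\lambda$.

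I expect no real obstacle here. The only point to be careful about is the remark that bounded functions lie in $\dom(\Delta)$, so that the two notions of the equation $S_\lambda u=0$ coincide on $\ell^\infty(X)$.
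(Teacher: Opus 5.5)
Your proposal is correct and follows the paper's proof: both use $\ell^\infty(X)\subseteq\dom(\Delta)$ to show that any bounded solution of $S_\lambda u=0$ has $\Delta u=-u/\lambda\in\ell^\infty(X)$, so that $\ker S_\lambda|_{\ell^\infty(X)}=\ker S^{(\infty)}_\lambda$. Your handling of the ``some (equivalently, all)'' clause, either via the definition or via Theorem~\ref{t:acc_inj} (which does not depend on this lemma), is also sound.
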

\begin{proof}
    Since $\ell^\infty(X) \subseteq \dom(\Delta)$, the domain
    of the maximal Laplacian reads as
$$\dom(\Delta^{(\infty)})= \{u \in \ell^\infty(X) \mid \Delta u \in \ell^\infty(X)\}.$$
    In particular, if $u \in \ell^\infty(X)$ satisfies $S_\lm u =(\id+\lm\Delta)  u=0$,
    then $u \in \dom(\Delta^{(\infty)})$. Therefore, $\ker(S_\lambda)= \ker(S^{(\infty)}_\lambda)$. This gives the result.
\end{proof}

We recall that \EM[1] always holds and thus $\deltam^{(1)}$
always exists by Lemma~\ref{l:min_accretivity}.
Specializing the results  in \cite{milatovic2015maximal} to the case of the graph Laplacian establishes that, if the graph
is stochastically complete, then $\Delta_{\min}^{(1)}$ generates a strongly continuous contraction semigroup on $\ell^1(X,\mu)$. In particular, $\Delta_{\min}^{(1)}$ is m-accretive (\cite[Theorem 2.11]{milatovic2015maximal}). 
We now extend this result by showing that stochastic completeness at infinity is actually equivalent to the m-accretivity of $\Delta_{\min}^{(1)}$
as well as the m-accretivity of $\Delta^{(\infty)}$.
\begin{theorem}\label{thm:stoch_comp_maccr}
Let $G$ be a graph. The following statements are equivalent:
\begin{enumerate}[(i)]
    \item[\textup{(i)}] $G$ is stochastically complete at infinity.
     \item[\textup{(ii)}] $\Delta^{(\infty)}$ is m-accretive.
    \item[\textup{(iii)}] $\Delta^{(1)}_{\min}$ is m-accretive.
\end{enumerate}
\end{theorem}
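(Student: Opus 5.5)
The plan is to prove (i) $\Leftrightarrow$ (ii) directly from earlier results and then close the cycle with (ii) $\Rightarrow$ (iii) $\Rightarrow$ (i), using the duality $\bigl(\Delta_{\min}^{(1)}\bigr)^{\ast}=\Delta^{(\infty)}$ from Lemma~\ref{l:infty_adjoint}.

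For (i) $\Leftrightarrow$ (ii), I would combine Lemma~\ref{l:SC} with Theorem~\ref{t:acc_inj} for $p=\infty$. By Lemma~\ref{l:SC}, stochastic completeness at infinity is equivalent to the injectivity of $S^{(\infty)}_\lambda$ on $\dom(\Delta^{(\infty)})$ for some (equivalently, all) $\lambda>0$. Theorem~\ref{t:acc_inj} states that this injectivity is equivalent to the m-accretivity of $\Delta^{(\infty)}$.

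For (ii) $\Rightarrow$ (iii), I would reuse the closed range argument from the proof of Theorem~\ref{t:acc_inj}, case $p=\infty$. Since $\Delta^{(1)}_{\min}$ is closed and accretive (Lemma~\ref{l:min_accretivity}, using that \EM[1] always holds), the range of $S^{(1)}_{\min,\lambda}$ is closed. The closed range theorem together with Lemma~\ref{l:infty_adjoint} then gives
\[
\operatorname{Rg}\bigl(S^{(1)}_{\min,\lambda}\bigr)=\bigl(\ker S^{(\infty)}_\lambda\bigr)_{\perp}.
\]
Under (ii), $S^{(\infty)}_\lambda$ is injective, so the right-hand side is all of $\ell^1(X,\mu)$. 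Hence $S^{(1)}_{\min,\lambda}$ is surjective, and with accretivity this makes $\Delta^{(1)}_{\min}$ m-accretive. Most of this was already carried out inside that proof, so I will only point to it.

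The step I expect to need the most care is (iii) $\Rightarrow$ (i). Assume $\Delta^{(1)}_{\min}$ is m-accretive and let $u\in\ell^\infty(X)$ satisfy $(\id+\lambda\Delta)u=0$. Then $u\in\dom(\Delta^{(\infty)})$ because $\Delta u=-u/\lambda$ is bounded. Take any $f\in\ell^1(X,\mu)$. By surjectivity there is $v\in\dom(\Delta^{(1)}_{\min})$ with $S^{(1)}_{\min,\lambda}v=f$. The adjoint relation from Lemma~\ref{l:infty_adjoint} then gives
\[
(f,u)=\bigl(S^{(1)}_{\min,\lambda}v,u\bigr)=\bigl(v,S^{(\infty)}_\lambda u\bigr)=0.
\]
The middle equality should be justified by approximating $v$ by $\varphi_n\in C_c(X)$ in the graph norm and applying Green's formula to $\varphi_n$ and the bounded $u$, exactly as in Lemma~\ref{l:infty_adjoint}. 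Since $f\in\ell^1(X,\mu)$ is arbitrary, $u=0$. Therefore $S_\lambda$ is injective on $\ell^\infty(X)$, which is (i). The only delicate point is this limit passage, and it is handled by the $\ell^1$–$\ell^\infty$ pairing because both $\varphi_n\to v$ and $\Delta\varphi_n\to\Delta^{(1)}_{\min}v$ in $\ell^1(X,\mu)$.
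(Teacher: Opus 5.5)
Your proposal is correct and follows essentially the paper's route: (i)$\Leftrightarrow$(ii) via Lemma~\ref{l:SC} and Theorem~\ref{t:acc_inj}, and the link to $\Delta^{(1)}_{\min}$ via the adjoint identity of Lemma~\ref{l:infty_adjoint} combined with the closed range argument. The only difference is that for (iii)$\Rightarrow$(i) you check by hand, using Green's formula and the $\ell^1$--$\ell^\infty$ pairing, the elementary half of that duality (surjectivity of $S^{(1)}_{\min,\lambda}$ forces $\ker S^{(\infty)}_\lambda=\{0\}$), whereas the paper reads this off from the identity $\operatorname{Rg}(S^{(1)}_{\min,\lambda})=(\ker S^{(\infty)}_\lambda)^{\perp}$.
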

\begin{proof}
(i)\,$\Longleftrightarrow$\,(ii): 
By Theorem~\ref{t:acc_inj}, the m-accretivity of $\Delta^{(\infty)}$ is equivalent to the injectivity of $S^{(\infty)}_\lambda=\id+\lambda\Delta^{(\infty)}$
on $\dom(\Delta^{(\infty)})$ 
for some (equivalently, all) $\lambda>0$. 
The conclusion now follows from Lemma~\ref{l:SC}.

(ii)\,$\Longleftrightarrow$\,(iii): 
This was established in the course of the proof of 
Theorem~\ref{t:acc_inj} for $p=\infty$. 
In particular, the closed range theorem combined with the adjoint identification $\bigl(\Delta_{\min}^{(1)}\bigr)^{\ast}=\Delta^{(\infty)}$ given in Lemma~\ref{l:infty_adjoint} yields, for every $\lambda>0$,
\[
S^{(1)}_{\min, \lambda} \text{ is surjective onto } \ell^1(X,\mu)
\quad\Longleftrightarrow\quad
S^{(\infty)}_\lambda \text{ is injective on } \dom(\Delta^{(\infty)})
\]
where $S^{(1)}_{\min, \lambda}=\id+\lambda\Delta_{\min}^{(1)}$. 

Since $\Delta_{\min}^{(1)}$ is accretive on $\ell^1(X,\mu)$ by Lemma~\ref{l:min_accretivity}, 
the surjectivity of $S^{(1)}_{\min, \lambda}$ for every $\lambda>0$ is equivalent to the m-accretivity of $\Delta_{\min}^{(1)}$. 
On the other hand, by Theorem~\ref{t:acc_inj}, the injectivity of $S^{(\infty)}_\lambda$ is equivalent to the m-accretivity of $\Delta^{(\infty)}$. This completes the proof.
\end{proof}

We recall that if the vertex degree is bounded, then all minimal and maximal Laplacians are m-accretive
for $p\in[1,\infty)$, see Theorem~\ref{t:Lap_accretive} and Proposition~\ref{p:bounded_Lap}. 
Furthermore, for $p=\infty$, we get that $\Delta^{(\infty)}$ is m-accretive
whenever $1/\Deg$ is not summable over paths, see Proposition~\ref{p:Lap_accretive2}.
In particular, the non-summability of $1/\Deg$ implies stochastic completeness at infinity
extending results found in \cite{KL10, Woj21}. 

\begin{corollary}
    If $G$ satisfies $\sum_{n} \frac{1}{\Deg(x_{n})}=\infty$ for every infinite path $(x_n)$, then
    $G$ is stochastically complete at infinity.
\end{corollary}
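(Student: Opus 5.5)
The corollary follows by chaining two results already in the paper. The non-summability hypothesis is exactly the assumption of Proposition~\ref{p:Lap_accretive2}, so that proposition gives that $\Delta^{(\infty)}$ is m-accretive. Theorem~\ref{thm:stoch_comp_maccr}, implication (ii)$\Longrightarrow$(i), then says that m-accretivity of $\Delta^{(\infty)}$ is equivalent to stochastic completeness at infinity, which is the claim.

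A more direct route avoids the duality machinery behind Theorem~\ref{t:acc_inj} for $p=\infty$. Take $\sigma\equiv 1$ and $\phi=\id$ in Theorem~\ref{thm:min}~(ii). The assumption $\sum_n \sigma(x_n)/\Deg(x_n)=\sum_n 1/\Deg(x_n)=\infty$ along every infinite path gives that $\id+\lambda\Delta$ is injective on $\ell^\infty(X)\cap\dom(\Delta)$ for every $\lambda>0$. We noted earlier that $\ell^\infty(X)\subseteq\dom(\Delta)$, so this intersection is just $\ell^\infty(X)$. Injectivity of $S_\lambda$ on $\ell^\infty(X)$ is precisely Definition~\ref{def:stochastic_completeness}, so the graph is stochastically complete at infinity.

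The only point needing care is how the hypotheses of Theorem~\ref{thm:min} are met in Case~4:
\begin{itemize}
\item Bounded solutions satisfy the extra condition \eqref{extra_assumption_case4} automatically. For $\phi=\id$ we have $v=u$, so it holds with $\varepsilon_c=c$.
\item The condition that $\Deg(x_n)>0$, needed so that $1/\Deg$ makes sense, is handled inside the proof of Theorem~\ref{thm:min_principle}. There, descent only occurs through vertices that have neighbors.
\end{itemize}
Hence no real obstacle remains, and either route gives a short proof.
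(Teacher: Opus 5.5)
Your first route is exactly the paper's proof: Proposition~\ref{p:Lap_accretive2} followed by (ii)$\Longrightarrow$(i) of Theorem~\ref{thm:stoch_comp_maccr}. Your second route is also correct and more direct. In the paper's chain, Proposition~\ref{p:Lap_accretive2} is itself derived from the injectivity given by Theorem~\ref{thm:min}~(ii) via Theorem~\ref{t:acc_inj}, and Theorem~\ref{thm:stoch_comp_maccr} then returns to that injectivity through Lemma~\ref{l:SC}; applying Theorem~\ref{thm:min}~(ii) with $\sigma\equiv 1$ and $\phi=\id$ skips this round trip through m-accretivity and the duality argument.
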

\begin{proof}
    This follows immediately by combining Proposition~\ref{p:Lap_accretive2} and 
    Theorem~\ref{thm:stoch_comp_maccr}.
\end{proof}

If a graph satisfies stochastic completeness at infinity, $\EM$ and $\IP$, then Theorem~2.15 in 
\cite{milatovic2015maximal} states that $\Delta_{\min}^{(1)}=\Delta^{(1)}$. 
We now improve this result by removing the $\EM$ condition.
\begin{corollary}\label{c:min_ell1}
    If $G$ is stochastically complete at infinity and satisfies $\IP$, then $\Delta^{(1)}=\Delta_{\min}^{(1)}$.
\end{corollary}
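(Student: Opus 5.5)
The statement reduces to Lemma~\ref{l:inclusion}, applied with $T_1=\Delta_{\min}^{(1)}$ and $T_2=\Delta^{(1)}$. Three facts are needed: the inclusion $\Delta_{\min}^{(1)}\subseteq\Delta^{(1)}$, m-accretivity of $\Delta_{\min}^{(1)}$ (which makes $S^{(1)}_{\min,\lambda}$ onto), and injectivity of $S^{(1)}_\lambda$ on $\dom(\Delta^{(1)})$.

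The plan has four steps.

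\textbf{Step 1.} Under $\IP$, Proposition~\ref{p:Lap_accretive} gives that $\Delta^{(1)}$ is m-accretive and closed. In particular, $S^{(1)}_\lambda=\id+\lambda\Delta^{(1)}$ is injective for every $\lambda>0$. Alternatively, this injectivity follows directly from Theorem~\ref{thm:min}~(i).

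\textbf{Step 2.} Since $\Delta^{(1)}$ is closed and $\EM[1]$ always holds, Lemma~\ref{l:min_accretivity} gives that $\Delta_{\min}^{(1)}$ is defined, accretive, and satisfies $\Delta_{\min}^{(1)}\subseteq\Delta^{(1)}$. Here one uses only that $\Delta_c\subseteq\Delta^{(1)}$ and that the closure of an operator sits inside any closed extension.

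\textbf{Step 3.} Stochastic completeness at infinity implies, by Theorem~\ref{thm:stoch_comp_maccr} ((i)$\Rightarrow$(iii)), that $\Delta_{\min}^{(1)}$ is m-accretive. Hence $S^{(1)}_{\min,\lambda}$ is surjective onto $\ell^1(X,\mu)$ for every $\lambda>0$.

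\textbf{Step 4.} Lemma~\ref{l:inclusion}, with $T_1=\Delta_{\min}^{(1)}$ (onto) and $T_2=\Delta^{(1)}$ (injective), now yields $\Delta^{(1)}=\Delta_{\min}^{(1)}$. Equivalently, since both operators are m-accretive, one can invoke Lemma~\ref{l:min_accretivity2} with $p=1$.

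I do not expect a real obstacle, since every ingredient is already proved earlier in the paper. The only point to check is that the inclusion in Step 2 genuinely uses the closedness of $\Delta^{(1)}$, and this is supplied by $\IP$ through m-accretivity rather than by $\EM$. This is exactly why the $\EM$ hypothesis of \cite{milatovic2015maximal} can be dropped.
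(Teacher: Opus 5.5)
Your proposal is correct and matches the paper's proof. The paper combines Proposition~\ref{p:Lap_accretive} and Theorem~\ref{thm:stoch_comp_maccr} and then cites Lemma~\ref{l:min_accretivity2}; your Steps~2 and~4 simply unpack that lemma's converse direction (closedness of $\Delta^{(1)}$, the inclusion from Lemma~\ref{l:min_accretivity}, then Lemma~\ref{l:inclusion}).
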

\begin{proof}
    {The infinite path measure condition $\IP$ implies that $\Delta^{(1)}$ is m-accretive by Proposition~\ref{p:Lap_accretive}
    while stochastic completeness at infinity implies that $\deltam^{(1)}$ is m-accretive by Theorem~\ref{thm:stoch_comp_maccr}. Since \EM[1] always holds,
    Lemma~\ref{l:min_accretivity2} gives $\Delta^{(1)}=\Delta_{\min}^{(1)}$.}
\end{proof}

We also get an immediate corollary in the case of measure uniformly bounded from below.

\begin{corollary}\label{c:sc_um}
If $G$ is stochastically complete at infinity 
and satisfies $\UM$, then $\Delta^{(p)}$ is m-accretive for every $p\in [1,\infty]$.
Furthermore, $\Delta^{(p)}=\deltam^{(p)}$ for every $p \in [1,\infty)$.
\end{corollary}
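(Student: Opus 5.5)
The plan is to assemble the statement from results already established, treating $p=\infty$ and $p\in[1,\infty)$ separately.

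\emph{The case $p=\infty$.} Stochastic completeness at infinity is equivalent to the m-accretivity of $\Delta^{(\infty)}$ by Theorem~\ref{thm:stoch_comp_maccr}, so nothing further is needed.

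\emph{The case $p\in[1,\infty)$.} The assumption $\UM$ trivially implies $\IP$: if $\mu\geq c>0$, then every infinite path has infinite measure. Proposition~\ref{p:Lap_accretive} (equivalently, Theorem~\ref{t:Lap_accretive}~(1)) then gives that $\Delta^{(p)}$ is m-accretive and closed for all $p\in[1,\infty)$. Note that stochastic completeness at infinity is not needed for this part.

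\emph{The equality $\Delta^{(p)}=\deltam^{(p)}$.} For $p=1$, I invoke Corollary~\ref{c:min_ell1}, which requires only stochastic completeness at infinity and $\IP$.

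For $p\in(1,\infty)$, I first note that $\UM$ implies \EM[p] by Corollary~\ref{c:emp}. Then Proposition~\ref{p:IP_Lap}, which requires $\IP$ and \EM[p], yields $\Delta^{(p)}=\deltam^{(p)}$. Alternatively, Lemma~\ref{l:min_accretivity2} gives the same conclusion once both operators are known to be m-accretive. For the minimal operator this comes from the cited result of \cite{milatovic2015maximal} under \EM[p] and $\IP$; for the maximal operator it comes from the previous step.

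\emph{Main obstacle.} There is essentially none; the proof is bookkeeping. The only point to check is that each cited result's hypotheses (\EM[p], $\IP$) follow from $\UM$. The statement for minimal operators is restricted to $p<\infty$ because of the $p=\infty$ remark after Proposition~\ref{p:bounded_Lap}: constants lie in $\dom(\Delta^{(\infty)})$ but not in $\dom(\deltam^{(\infty)})$ when $\kappa=0$ and $X$ is infinite.
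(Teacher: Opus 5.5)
Your proof is correct. The equality $\Delta^{(p)}=\deltam^{(p)}$ is handled exactly as in the paper: Corollary~\ref{c:min_ell1} for $p=1$, and Corollary~\ref{c:emp} with Proposition~\ref{p:IP_Lap} for $p\in(1,\infty)$.

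Your m-accretivity argument, however, uses a different decomposition. The paper treats all $p\in[1,\infty]$ at once:
\begin{itemize}
\item $\UM$ gives $\ell^p(X,\mu)\subseteq\ell^\infty(X)$, hence $\dom(\Delta^{(p)})\subseteq\dom(\Delta^{(\infty)})$;
\item so any $u\in\ker S^{(p)}_\lambda$ lies in $\ker S^{(\infty)}_\lambda=\{0\}$ by stochastic completeness at infinity (Lemma~\ref{l:SC});
\item Theorem~\ref{t:acc_inj} then turns this injectivity into m-accretivity.
\end{itemize}
You instead split off $p=\infty$, using Theorem~\ref{thm:stoch_comp_maccr}, which is the same argument at that exponent. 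For $p<\infty$ you use that $\UM$ implies $\IP$ and invoke the comparison principle behind Proposition~\ref{p:Lap_accretive}.

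What each route buys:
\begin{itemize}
\item The paper's route is uniform and shows how the single hypothesis of stochastic completeness at infinity propagates to every $\ell^p$ through the embedding.
\item Your route shows that this hypothesis is superfluous for the m-accretivity of $\Delta^{(p)}$ with $p<\infty$, and also for the equality with $p\in(1,\infty)$. It is genuinely used only for $p=\infty$ and for the equality at $p=1$.
\end{itemize}
At $p=1$ the hypothesis is in fact necessary. Under $\IP$, $\Delta^{(1)}$ is m-accretive, so Lemma~\ref{l:min_accretivity2} and Theorem~\ref{thm:stoch_comp_maccr} make $\Delta^{(1)}=\deltam^{(1)}$ equivalent to stochastic completeness at infinity.
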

\begin{proof}

By $\UM$ we have that $\ell^{p}(X,\mu) \subseteq \ell^{\infty}(X)$ for all $p \in [1,\infty]$. 
In particular, $\dom(\Delta^{(p)})  \subseteq \dom(\Delta^{(\infty)}) $ and all operators
are restrictions of $\Delta$.
Let $u \in \dom(\Delta^{(p)})$ be such that $S^{(p)}_\lambda u = 0$ for $p \in [1,\infty]$. 
If $G$ is stochastically complete at infinity, then $S^{(\infty)}_\lambda$ is injective
by Lemma~\ref{l:SC}. Therefore, since $u \in \dom(\Delta^{(\infty)})$, we get $S^{(\infty)}_\lambda u = S^{(p)}_\lambda u = 0$ which implies $u=0$, i.e., $S^{(p)}_\lambda$ is injective and  then  $\Delta^{(p)}$ is m-accretive for all $p \in [1,\infty]$ by Theorem~\ref{t:acc_inj}.

It remains to show $\Delta^{(p)}=\deltam^{(p)}$ for $p \in [1,\infty)$. 
{ For $p=1$, the equality follows from Corollary~\ref{c:min_ell1}
since $\UM$ implies $\IP$. For $p\in(1,\infty)$, $\UM$ implies both $\IP$
and \EM[p], the latter by Corollary~\ref{c:emp}. Thus,
$\Delta^{(p)}=\deltam^{(p)}$ by Proposition~\ref{p:IP_Lap}.}
\end{proof}

\appendix

\section{\texorpdfstring{On the proof of Theorem~1 in \cite{bianchi2022generalized}}{On the proof of Theorem 1 in Bianchi et al.}}\label{sec:proof_Theo1}

The aim of this appendix is to revisit the original proof of \cite[Theorem 1]{bianchi2022generalized}, which, as pointed out in \cite{Ble24}, contains a gap. In Section~\ref{sec:m-accretivity_Lomega}, we presented a refined argument in Theorem~\ref{thm:new} that yields a strengthened version of \cite[Theorem 1]{bianchi2022generalized} and, along the way, closes this gap.

Adapted to the notation of this manuscript, the original statement
in question reads as follows.
\begin{namedtheorem}[Statement of Theorem 1 in \cite{bianchi2022generalized}]
Let $G$ be a graph. Then, there exists a dense subset $\Omega \subseteq \dom(\Deltaphi)$ such that $\Deltaphi_{|\Omega}$ is accretive.  Moreover, for every $\lambda >0$ and for every $g \in \ell^{1,\pm}(X,\mu)$ there exists a unique $u \in \ell^{1,\pm}(X,\mu)\cap\Omega$ such that
\begin{equation*}\label{eq:main_equation}
\mathcal{S}_\lambda u=g.
\end{equation*}
If  either  $G$ is locally finite, or $\UM$ holds, or $G$ satisfies $\B$ and $\Phi$ satisfies $\C$, then 
$\operatorname{id}+\lambda\Deltaphi$ restricted to $\Omega$ is also surjective.
In particular, $\Deltaphi_{|\Omega}$ is m-accretive.
Moreover, in all cases, the solution $u$ satisfies the
contractivity estimate  $$||u||_1\leq ||g||_1.$$
\end{namedtheorem}

The set $\Omega$ in the above statement coincides with that of Definition~\ref{def:Omega} in the present paper and is fixed by choosing any exhaustion $(X_n)$ of connected sets. 

For the reader's convenience and ease of comparison, we restate Theorem~\ref{thm:new} below.

\begin{namedtheorem}[Statement of Theorem~\ref{thm:new}]
Let $G$ be a  graph. Then, there exists a dense subset $\Omega\subseteq\dom(\Deltaphi)$ such that $\Deltaphi_{|\Omega}$ is m-accretive. In particular, for every $\lambda>0$ and every $g\in \ell^{1}(X,\mu)$, there exists a unique $u\in \Omega$ such that
\[
\mathcal{S}_\lambda u=g
\]
and the solution $u$ satisfies the contractivity estimate
\[
\|u\|_1\leq \|g\|_1.
\]
Moreover, if $g\geq 0$, then $u\geq 0$, and if $g\leq 0$, then $u\leq 0$.
\end{namedtheorem}

First, we observe that the additional hypotheses in \cite[Theorem 1]{bianchi2022generalized}, that is, local finiteness or $\UM$  or $\B$+$\C$, were introduced to justify the interchange of limit and summation
$$
\sum_{y \in X} w(x,y) \Phi u(y) = \sum_{y \in X} w(x,y) \lim_{k\to\infty} \Phi \varphi_{n_k}(y) = \lim_{k\to\infty} \sum_{y \in X} w(x,y) \Phi \varphi_{n_k}(y)
$$
in~\eqref{eq:extraHP} (see also~\cite[Equation (4.24)]{bianchi2022generalized}). However, this interchange comes for free from dominated convergence: as shown in~\eqref{eq:dominate_convergence_2},
\begin{equation*}
\sum_{y\in X} w(x,y)|\Phi \varphi_{n_k}(y)|
\le
\sum_{y\in X} w(x,y)\bigl(|\Phi u^-(y)| + |\Phi u^+(y)|\bigr)
< \infty
\end{equation*}
since both the auxiliary comparison solutions $u^+$ and  $u^-$ lie in the formal domain of $\Delta\Phi$. This allows us to drop any assumption on the graph $G$. As a side note, we observe that $\UM$  or $\B$+$\C$ imply $\Deltaphiom = \Deltaphi$, as proved in Theorem~\ref{t:lf} and Theorem~\ref{t:accretive_bd}, respectively.

Turning to the gap, in the signed  case $g \in \ell^1(X,\mu)$ of the proof of \cite[Theorem 1]{bianchi2022generalized}, the solution $u$ of $\mathcal{S}_\lambda u = g$ was obtained by extracting a subsequence $(\varphi_{n_k})$ from a sequence $(\varphi_n)$ supported on an initial exhaustion $(X_n)$. This guarantees $u \in \Omega_{(X_{n_k})}$ but not $u \in \Omega = \Omega_{(X_n)}$ as claimed since, in general, $\Omega_{(X_n)} \subseteq \Omega_{(X_{n_k})}$. 

The gap is resolved in the current paper by passing first to a suitable diagonal exhaustion $Y_k = X_{n_k}$, with $(X_{n_k})$ a subsequence of $(X_n)$, and then setting $\Omega = \Omega_{(Y_k)}$ as carried out in Step~3 of the proof of Theorem~\ref{thm:new}. The exhaustion $(Y_k)$, and hence the set $\Omega$, depend neither on the parameter $\lambda$ nor on the datum $g$. Over that specific exhaustion $(Y_k)$, it is then proved that for any fixed pair $\lambda >0$ 
and $g \in \ell^1(X,\mu)$, 
the constructed sequence $(\varphi_{n_k})$ converges to a solution 
$u\in \Omega$ of $\mathcal{S}_\lambda u = g$.

\section*{Acknowledgments}
We thank Marcel Schmidt and Uwe Blechschmidt for pointing out the gap in the proof
of Theorem~1 in \cite{bianchi2022generalized} and for helpful discussions.
We thank Bobo Hua for inspiring discussions while visiting Sun Yat-sen University.
We thank Ognjen Milatovic and an anonymous referee for helpful comments
on an earlier version of this paper which led to improvements in the results
and presentation.
We thank the Universit\`a dell'Insubria, the Graduate Center and York College of CUNY, and Sun Yat-sen University 
for hosting us while parts of this work were completed.
The research of D.~B.~is  supported by the Startup Fund of Sun Yat-sen University.
The research of M.~K.~is supported by the DFG.
A.~G.~S.~is a member of the GNAMPA-INdAM group {``Equazioni Differenziali e Sistemi Dinamici''}.
The research of R.~K.~W.~is partially funded by the Simons Foundation, in the form
of a Travel Support for Mathematicians gift, and by PSC-CUNY, in the form
of a Department Chair CUNY Research Foundation (RF) Account. 
\printbibliography

@article{Mawhin2013,
  author  = {Mawhin, Jean},
  title   = {Variations on {Poincar\'e--Miranda}'s Theorem},
  journal = {Advanced Nonlinear Studies},
  volume  = {13},
  number  = {1},
  pages   = {209--217},
  year    = {2013},
  doi     = {10.1515/ans-2013-0112}
}

@article{benilan1981continuous,
  title={The continuous dependence on $\varphi$ of solutions of $u_t - \Delta \varphi (u)= 0$},
  author={B{\'e}nilan, Philippe and Crandall, Michael G},
  journal={Indiana University Mathematics Journal},
  volume={30},
  number={2},
  pages={161--177},
  year={1981},
  publisher={JSTOR}
}

@misc{berchio2026fractional,
  author = {Berchio, Elvise and Santagati, Federico and Vallarino, Maria},
  title = {The fractional porous medium equation on graphs},
  year = {2026},
  eprint = {2606.23360},
  eprinttype = {arXiv},
  eprintclass = {math.AP},
  url = {https://arxiv.org/abs/2606.23360}
}

@article{berchio2026semilinear,
  author = {Berchio, Elvise and Bianchi, Davide and Setti, Alberto G. and Vallarino, Maria},
  title = {Semilinear diffusion equations on infinite graphs: the dissipative and Lipschitz cases},
  journal = {Journal of Differential Equations},
  volume = {476},
  pages = {114485},
  year = {2026},
  doi = {10.1016/j.jde.2026.114485}
}

@article{hanche2010kolmogorov,
  author = {Hanche-Olsen, Harald and Holden, Helge},
  title = {The Kolmogorov--Riesz compactness theorem},
  journal = {Expositiones Mathematicae},
  volume = {28},
  number = {4},
  pages = {385--394},
  year = {2010},
  doi = {10.1016/j.exmath.2010.03.001}
}

@article{BrezisStraussJFA1973,
  author = {Brezis, Ha{\"\i}m and Strauss, Walter A.},
  title = {Semi-linear second-order elliptic equations in {$L^1$}},
  journal = {Journal of the Mathematical Society of Japan},
  volume = {25},
  number = {4},
  pages = {565--590},
  year = {1973},
  doi = {10.2969/jmsj/02540565}
}

@article{CrandallLiggett1971,
  author = {Crandall, Michael G. and Liggett, Thomas M.},
  title = {Generation of semi-groups of nonlinear transformations on general {B}anach spaces},
  journal = {American Journal of Mathematics},
  volume = {93},
  number = {2},
  pages = {265--298},
  year = {1971},
  doi = {10.2307/2373376}
}

@article{ma2022porous,
  author = {Ma, Li},
  title = {Porous media equation on locally finite graphs},
  journal = {Archivum Mathematicum},
  volume = {58},
  number = {3},
  pages = {177--187},
  year = {2022},
  doi = {10.5817/AM2022-3-177}
}

@article{schmidt2025nonlinear,
  author = {Schmidt, Marcel and Zimmermann, Ian},
  title = {A nonlinear characterization of stochastic completeness of graphs},
  journal = {Mathematische Nachrichten},
  volume = {298},
  number = {3},
  pages = {925--943},
  year = {2025},
  doi = {10.1002/mana.202400436}
}

@article{HM15,
  author = {Hua, Bobo and Mugnolo, Delio},
  title = {Time regularity and long-time behavior of parabolic $p$-Laplace equations on infinite graphs},
  journal = {Journal of Differential Equations},
  shortjournal = {J. Differ. Equ.},
  volume = {259},
  pages = {6162--6190},
  year = {2015},
  doi = {10.1016/j.jde.2015.07.018}
}

@article{chill2024real,
  author = {Chill, Ralph and Sharma, Praveen and Srivastava, Sachi},
  title = {Real interpolation of functions with applications to accretive
                  operators on {B}anach spaces},
  journal = {Journal of Differential Equations},
  shortjournal = {J. Differential Equations},
  volume = {402},
  pages = {554--592},
  year = {2024},
  issn = {0022-0396},
  doi = {10.1016/j.jde.2024.05.024},
  mrclass = {46B70 (35K92 47H06 47H20 47J35)},
  mrnumber = {4749397}
}

@article{beurich2024interpolation,
  author = {Beurich, Johann and Sharma, Praveen},
  title = {Interpolation results for convergence of implicit {E}uler
                  schemes with accretive operators},
  journal = {NoDEA. Nonlinear Differential Equations and Applications},
  shortjournal = {NoDEA Nonlinear Differential Equations Appl.},
  volume = {31},
  number = {6},
  pages = {Paper No. 109, 16},
  year = {2024},
  issn = {1021-9722},
  doi = {10.1007/s00030-024-01003-9},
  mrclass = {46B70 (34G10 35K90 35R70 47H06 47H20 47J35)},
  mrnumber = {4807081},
  mrreviewer = {Constantin B\u{a}cu\c{t}\u{a}}
}

@article{nochetto2006nonlinear,
  author = {Nochetto, Ricardo H. and Savar\'{e}, Giuseppe},
  title = {Nonlinear evolution governed by accretive operators in
                  {B}anach spaces: error control and applications},
  journal = {Mathematical Models and Methods in Applied Sciences},
  shortjournal = {Math. Models Methods Appl. Sci.},
  volume = {16},
  number = {3},
  pages = {439--477},
  year = {2006},
  issn = {0218-2025},
  doi = {10.1142/S0218202506001224},
  mrclass = {34G25 (35K90 47H06 47J35 65J15 65M15)},
  mrnumber = {2238759},
  mrreviewer = {J\'{e}r\^{o}me Bastien}
}

@article{anne2020m,
  author = {Ann{\'e}, Colette and Balti, Marwa and Torki-Hamza, Nabila},
  title = {m-accretive Laplacian on a non symmetric graph},
  journal = {Indagationes Mathematicae},
  shortjournal = {Indag. Math.},
  volume = {31},
  number = {2},
  pages = {277--293},
  year = {2020},
  doi = {10.1016/j.indag.2020.01.005}
}

@book{barbu2010nonlinear,
  author = {Barbu, Viorel},
  title = {{Nonlinear Differential Equations of Monotone Types in Banach Spaces}},
  series = {Springer Monographs in Mathematics},
  publisher = {Springer, New York, NY},
  year = {2010},
  doi = {10.1007/978-1-4419-5542-5}
}

@unpublished{benilan1988evolution,
  author = {B{\'e}nilan, Philippe and Crandall, Michael G. and Pazy, Amnon},
  title = {Nonlinear evolution equations in Banach spaces},
  note = {Preprint},
  year = {1988}
}

@article{bianchi2022generalized,
  author = {Bianchi, Davide and Setti, Alberto Giulio and Wojciechowski, Rados{\l}aw K.},
  title = {The generalized porous medium equation on graphs:	existence and uniqueness of solutions with $\ell^1$ data},
  journal = {Calculus of Variations and Partial Differential Equations},
  shortjournal = {Calc. Var. Partial Differ. Equ.},
  volume = {61},
  number = {5},
  pages = {Paper No. 171, 42},
  year = {2022},
  doi = {10.1007/s00526-022-02249-w}
}

@book{Ble24,
  author = {Blechschmidt, Uwe},
  title = {Existenz und Eindeutigkeit von
    L{\"o}sungen zum Cauchy-Problem der
    verallgemeinerten
    Por{\"o}se-Medien-Gleichung auf $\ell^1$},
  publisher = {Friedrich-Schiller-Universit{\"a}t Jena},
  year = {2024},
  pages = {87},
  note = {Thesis (Bachelorarbeit)}
}

@article{punzo2025semilinear,
  author = {Punzo, Fabio and Sacco, Alessandro},
  title = {On a semilinear parabolic equation with time-dependent source
                  term on infinite graphs},
  journal = {Journal of Evolution Equations},
  shortjournal = {J. Evol. Equ.},
  volume = {26},
  number = {1},
  pages = {Paper No. 13, 16},
  year = {2026},
  issn = {1424-3199,1424-3202},
  doi = {10.1007/s00028-025-01159-6},
  mrclass = {35A01 (35A02 35B44 35K05 35K58)},
  mrnumber = {5017866}
}

@article{BG15,
  author = {Bonnefont, Michel and Gol\'{e}nia, Sylvain},
  title = {Essential spectrum and {W}eyl asymptotics for discrete
                  {L}aplacians},
  journal = {Annales de la Facult\'{e} des Sciences de Toulouse. Math\'{e}matiques. S\'{e}rie 6},
  shortjournal = {Ann. Fac. Sci. Toulouse Math. (6)},
  volume = {24},
  number = {3},
  pages = {563--624},
  year = {2015},
  issn = {0240-2963},
  doi = {10.5802/afst.1456},
  mrclass = {58J50 (47A10)},
  mrnumber = {3403733},
  mrreviewer = {De Tang Zhou}
}

@article{CTT11,
  author = {Colin de Verdi\`ere, Yves and Torki-Hamza, Nabila and Truc,
                  Fran\c{c}oise},
  title = {Essential self-adjointness for combinatorial {S}chr\"{o}dinger
                  operators {II}---metrically non complete graphs},
  journal = {Mathematical Physics, Analysis and Geometry},
  shortjournal = {Math. Phys. Anal. Geom.},
  volume = {14},
  number = {1},
  pages = {21--38},
  year = {2011},
  issn = {1385-0172},
  doi = {10.1007/s11040-010-9086-7},
  mrclass = {81Q35 (05C12 05C63 35R02 47B25)},
  mrnumber = {2782792},
  mrreviewer = {Pavel V. Exner}
}

@book{deimling1985nonlinear,
  author = {Deimling, Klaus},
  title = {Nonlinear Functional Analysis},
  publisher = {Springer-Verlag Berlin Heidelberg},
  year = {1985},
  doi = {10.1007/978-3-662-00547-7}
}

@book{EngelNagel2000,
  author = {Engel, Klaus-Jochen and Nagel, Rainer},
  title = {One-parameter semigroups for linear evolution equations},
  series = {Graduate Texts in Mathematics},
  volume = {194},
  publisher = {Springer-Verlag, New York},
  year = {2000},
  pages = {xxii+586},
  isbn = {0-387-98463-1},
  doi = {10.1007/b97696}
}

@book{Folland1999RealAnalysis,
  author = {Folland, Gerald B.},
  title = {Real Analysis: Modern Techniques and Their Applications},
  edition = {Second},
  publisher = {Wiley-Interscience},
  year = {1999}
}

@article{GHKLW15,
  author = {Georgakopoulos, Agelos and Haeseler, Sebastian and Keller,
                  Matthias and Lenz, Daniel and Wojciechowski, Rados{\l}aw K.},
  title = {Graphs of finite measure},
  journal = {Journal de Math\'{e}matiques Pures et Appliqu\'{e}es},
  shortjournal = {J. Math. Pures Appl. (9)},
  volume = {103},
  number = {5},
  pages = {1093--1131},
  year = {2015},
  issn = {0021-7824},
  doi = {10.1016/j.matpur.2014.10.006},
  mrclass = {31E05 (05C63 47B39)},
  mrnumber = {3333051},
  mrreviewer = {Peter I. Kogut}
}

@book{goebel1990topics,
  author = {Goebel, Kazimierz and Kirk, W. A.},
  title = {Topics in metric fixed point theory},
  series = {Cambridge Studies in Advanced Mathematics},
  volume = {28},
  publisher = {Cambridge University Press, Cambridge},
  year = {1990},
  pages = {viii+244},
  isbn = {0-521-38289-0},
  doi = {10.1017/CBO9780511526152},
  mrclass = {47H10 (47-02 47H06)},
  mrnumber = {1074005},
  mrreviewer = {M. M. Day}
}

@article{Gol14,
  author = {Gol\'{e}nia, Sylvain},
  title = {Hardy inequality and asymptotic eigenvalue distribution for
                  discrete {L}aplacians},
  journal = {Journal of Functional Analysis},
  shortjournal = {J. Funct. Anal.},
  volume = {266},
  number = {5},
  pages = {2662--2688},
  year = {2014},
  issn = {0022-1236},
  doi = {10.1016/j.jfa.2013.10.012},
  mrclass = {35R02 (35J05 35Q60 47A10)},
  mrnumber = {3158705},
  mrreviewer = {M. S. Agranovich}
}

@article{GLY16a,
  author = {Grigor'yan, Alexander and Lin, Yong and Yang, Yunyan},
  title = {Kazdan-{W}arner equation on graph},
  journal = {Calculus of Variations and Partial Differential Equations},
  shortjournal = {Calc. Var. Partial Differential Equations},
  volume = {55},
  number = {4},
  pages = {Art. 92, 13},
  year = {2016},
  issn = {0944-2669},
  doi = {10.1007/s00526-016-1042-3},
  mrclass = {34B45 (35A15 35R02 58E30)},
  mrnumber = {3523107},
  mrreviewer = {Enrico Serra}
}

@article{GLY16b,
  author = {Grigor'yan, Alexander and Lin, Yong and Yang, Yunyan},
  title = {Yamabe type equations on graphs},
  journal = {Journal of Differential Equations},
  shortjournal = {J. Differential Equations},
  volume = {261},
  number = {9},
  pages = {4924--4943},
  year = {2016},
  issn = {0022-0396},
  doi = {10.1016/j.jde.2016.07.011},
  mrclass = {35R03 (35J20 35J25 35J91 35J92 58E30)},
  mrnumber = {3542963},
  mrreviewer = {Anna Maria Candela}
}

@article{GMP,
  author = {Grillo, Gabriele and Meglioli, Giulia and Punzo, Fabio},
  title = {Blow-up and global existence for semilinear parabolic
                  equations on infinite graphs},
  journal = {Calculus of Variations and Partial Differential Equations},
  shortjournal = {Calc. Var. Partial Differential Equations},
  volume = {65},
  number = {4},
  pages = {Paper No. 114, 20},
  year = {2026},
  issn = {0944-2669,1432-0835},
  doi = {10.1007/s00526-026-03291-8},
  mrclass = {35A01 (35A02 35B44 35K05 35K58)},
  mrnumber = {5037980}
}

@article{HKLW12,
  author = {Haeseler, Sebastian and Keller, Matthias and Lenz, Daniel and
                  Wojciechowski, Rados{\l}aw},
  title = {Laplacians on infinite graphs: {D}irichlet and {N}eumann
                  boundary conditions},
  journal = {Journal of Spectral Theory},
  shortjournal = {J. Spectr. Theory},
  volume = {2},
  number = {4},
  pages = {397--432},
  year = {2012},
  issn = {1664-039X},
  doi = {10.4171/jst/35},
  mrclass = {47B25 (05C63 31C20)},
  mrnumber = {2947294},
  mrreviewer = {Eugen J. Ionascu}
}

@article{HK14,
  author = {Hua, Bobo and Keller, Matthias},
  title = {Harmonic functions of general graph {L}aplacians},
  journal = {Calculus of Variations and Partial Differential Equations},
  shortjournal = {Calc. Var. Partial Differential Equations},
  volume = {51},
  number = {1-2},
  pages = {343--362},
  year = {2014},
  issn = {0944-2669},
  doi = {10.1007/s00526-013-0677-6},
  mrclass = {31C20 (58E20)},
  mrnumber = {3247392},
  mrreviewer = {Wolfgang Woess}
}

@article{HKSW23,
  author = {Hua, Bobo and Keller, Matthias and Schwarz, Michael and Wirth,
                  Melchior},
  title = {Sobolev-type inequalities and eigenvalue growth on graphs with
                  finite measure},
  journal = {Proceedings of the American Mathematical Society},
  shortjournal = {Proc. Amer. Math. Soc.},
  volume = {151},
  number = {8},
  pages = {3401--3414},
  year = {2023},
  issn = {0002-9939,1088-6826},
  doi = {10.1090/proc/14361},
  mrclass = {47A75 (05C63 39A70 46E39)},
  mrnumber = {4591775},
  mrreviewer = {Jos\'{e}\ Francisco Alves de Oliveira}
}

@article{HX23,
  author = {Hua, Bobo and Xu, Wendi},
  title = {Existence of ground state solutions to some nonlinear
                  {S}chr\"{o}dinger equations on lattice graphs},
  journal = {Calculus of Variations and Partial Differential Equations},
  shortjournal = {Calc. Var. Partial Differential Equations},
  volume = {62},
  number = {4},
  pages = {Paper No. 127, 17},
  year = {2023},
  issn = {0944-2669},
  doi = {10.1007/s00526-023-02470-1},
  mrclass = {35Q55 (39A14 58E30)},
  mrnumber = {4568177}
}

@article{HKMW13,
  author = {Huang, Xueping and Keller, Matthias and Masamune, Jun and
                  Wojciechowski, Rados{\l}aw K.},
  title = {A note on self-adjoint extensions of the {L}aplacian on
                  weighted graphs},
  journal = {Journal of Functional Analysis},
  shortjournal = {J. Funct. Anal.},
  volume = {265},
  number = {8},
  pages = {1556--1578},
  year = {2013},
  issn = {0022-1236},
  doi = {10.1016/j.jfa.2013.06.004},
  mrclass = {47B37 (05C22)},
  mrnumber = {3079229},
  mrreviewer = {George Stacey Staples}
}

@article{HMW21,
  author = {Hua, Bobo and Masamune, Jun and Wojciechowski, Rados{\l}aw K.},
  title = {Essential self-adjointness and the {$L^2$}-{L}iouville
                  property},
  journal = {The Journal of Fourier Analysis and Applications},
  shortjournal = {J. Fourier Anal. Appl.},
  volume = {27},
  number = {2},
  pages = {Paper No. 26, 27},
  year = {2021},
  issn = {1069-5869},
  doi = {10.1007/s00041-021-09833-2},
  mrclass = {47B25 (05C50 58J05)},
  mrnumber = {4231682}
}

@article{IKMW25,
  author = {Inoue, Atsushi and Ku, Sean and Masamune, Jun and
                  Wojciechowski, Rados{\l}aw K.},
  title = {Essential {S}elf-{A}djointness of the {L}aplacian on
                  {W}eighted {G}raphs: {H}armonic {F}unctions, {S}tability,
                  {C}haracterizations and {C}apacity},
  journal = {Mathematical Physics, Analysis and Geometry},
  shortjournal = {Math. Phys. Anal. Geom.},
  volume = {28},
  number = {2},
  pages = {Paper No. 12},
  year = {2025},
  issn = {1385-0172},
  doi = {10.1007/s11040-025-09498-z},
  mrclass = {31C20 (47B39 60J27)},
  mrnumber = {4910968}
}

@article{Jor08,
  author = {Jorgensen, Palle E. T.},
  title = {Essential self-adjointness of the graph-{L}aplacian},
  journal = {Journal of Mathematical Physics},
  shortjournal = {J. Math. Phys.},
  volume = {49},
  number = {7},
  pages = {073510, 33},
  year = {2008},
  issn = {0022-2488},
  doi = {10.1063/1.2953684},
  mrclass = {47B39 (05C50 31C20 47B25 94C99)},
  mrnumber = {2432048},
  mrreviewer = {Alexey V. Borovskikh}
}

@article{JP11,
  author = {Jorgensen, Palle E. T. and Pearse, Erin P. J.},
  title = {Spectral reciprocity and matrix representations of unbounded
                  operators},
  journal = {Journal of Functional Analysis},
  shortjournal = {J. Funct. Anal.},
  volume = {261},
  number = {3},
  pages = {749--776},
  year = {2011},
  issn = {0022-1236},
  doi = {10.1016/j.jfa.2011.01.016},
  mrclass = {47A10 (05C50)},
  mrnumber = {2799579},
  mrreviewer = {Vivien G. Miller}
}

@book{kato2013perturbation,
  author = {Kato, Tosio},
  title = {Perturbation Theory for Linear Operators},
  series = {Classics in Mathematics},
  volume = {132},
  publisher = {Springer-Verlag, Berlin},
  year = {1995},
  doi = {10.1007/978-3-642-66282-9}
}

@incollection{Kel15,
  author = {Keller, Matthias},
  title = {Intrinsic metrics on graphs: a survey},
  booktitle = {Mathematical technology of networks},
  series = {Springer Proc. Math. Stat.},
  volume = {128},
  pages = {81--119},
  publisher = {Springer, Cham},
  year = {2015},
  doi = {10.1007/978-3-319-16619-3_7},
  mrclass = {31C20 (05C12 05C50 47B37 58J50)},
  mrnumber = {3375157}
}

@article{KL10,
  author = {Keller, M. and Lenz, D.},
  title = {Unbounded {L}aplacians on graphs: basic spectral properties
                  and the heat equation},
  journal = {Mathematical Modelling of Natural Phenomena},
  shortjournal = {Math. Model. Nat. Phenom.},
  volume = {5},
  number = {4},
  pages = {198--224},
  year = {2010},
  issn = {0973-5348,1760-6101},
  doi = {10.1051/mmnp/20105409},
  mrclass = {60J45 (60J27)},
  mrnumber = {2662456},
  mrreviewer = {Ping\ He}
}

@article{KL12,
  author = {Keller, Matthias and Lenz, Daniel},
  title = {Dirichlet forms and stochastic completeness of graphs and
                  subgraphs},
  journal = {Journal f\"{u}r die Reine und Angewandte Mathematik},
  shortjournal = {J. Reine Angew. Math.},
  volume = {666},
  pages = {189--223},
  year = {2012},
  issn = {0075-4102},
  doi = {10.1515/CRELLE.2011.122},
  mrclass = {60J45 (05C22 31C20)},
  mrnumber = {2920886},
  mrreviewer = {Fr\'{e}d\'{e}ric Math\'{e}us}
}

@article{KLSW17,
  author = {Keller, Matthias and Lenz, Daniel and Schmidt, Marcel and
                  Wojciechowski, Rados{\l}aw K.},
  title = {Note on uniformly transient graphs},
  journal = {Revista Matem\'{a}tica Iberoamericana},
  shortjournal = {Rev. Mat. Iberoam.},
  volume = {33},
  number = {3},
  pages = {831--860},
  year = {2017},
  issn = {0213-2230,2235-0616},
  doi = {10.4171/RMI/957},
  mrclass = {39A70 (58J50)},
  mrnumber = {3713033}
}

@article{KLW13,
  author = {Keller, Matthias and Lenz, Daniel and Wojciechowski, Rados{\l}aw K.},
  title = {Volume growth, spectrum and stochastic completeness of
                  infinite graphs},
  journal = {Mathematische Zeitschrift},
  shortjournal = {Math. Z.},
  volume = {274},
  number = {3-4},
  pages = {905--932},
  year = {2013},
  issn = {0025-5874,1432-1823},
  doi = {10.1007/s00209-012-1101-1},
  mrclass = {05C63 (05C50 39A12 58J35)},
  mrnumber = {3078252},
  mrreviewer = {Carlos\ M.\ da Fonseca}
}

@book{keller2021graphs,
  author = {Keller, Matthias and Lenz, Daniel and Wojciechowski, Rados{\l}aw K.},
  title = {Graphs and discrete {D}irichlet spaces},
  series = {Grundlehren der mathematischen Wissenschaften [Fundamental
                  Principles of Mathematical Sciences]},
  volume = {358},
  publisher = {Springer, Cham},
  year = {2021},
  pages = {xv+668},
  isbn = {978-3-030-81458-8},
  doi = {10.1007/978-3-030-81459-5},
  mrclass = {05-01 (05C63 35P05)},
  mrnumber = {4383783}
}

@article{KM24,
  author = {Keller, Matthias and M\"{u}nch, Florentin},
  title = {Gradient estimates, {B}akry-{E}mery {R}icci curvature and
                  ellipticity for unbounded graph {L}aplacians},
  journal = {Communications in Analysis and Geometry},
  shortjournal = {Comm. Anal. Geom.},
  volume = {32},
  number = {2},
  pages = {343--364},
  year = {2024},
  issn = {1019-8385},
  doi = {10.4310/cag.241015004626},
  mrclass = {53C21 (05C63 58J65)},
  mrnumber = {4828188}
}

@article{KMW25,
  author = {Keller, Matthias and M\"{u}nch, Florentin and Wojciechowski,
                  Rados{\l}aw K.},
  title = {Neumann semigroup, subgraph convergence, form uniqueness,
                  stochastic completeness and the {F}eller property},
  journal = {Journal of Geometric Analysis},
  shortjournal = {J. Geom. Anal.},
  volume = {35},
  number = {1},
  pages = {Paper No. 14, 23},
  year = {2025},
  issn = {1050-6926},
  doi = {10.1007/s12220-024-01838-9},
  mrclass = {58J35 (05C63 35K08 35R02 39A12 47B25)},
  mrnumber = {4821965}
}

@article{KS18,
  author = {Keller, Matthias and Schwarz, Michael},
  title = {The {K}azdan-{W}arner equation on canonically compactifiable
                  graphs},
  journal = {Calculus of Variations and Partial Differential Equations},
  shortjournal = {Calc. Var. Partial Differential Equations},
  volume = {57},
  number = {2},
  pages = {Paper No. 70, 18},
  year = {2018},
  issn = {0944-2669},
  doi = {10.1007/s00526-018-1329-7},
  mrclass = {34B45 (35A15 35J61 35R01 35R02)},
  mrnumber = {3776360},
  mrreviewer = {Wenxiong Chen}
}

@article{HKMRW,
  author = {Hernandez, Luis and Ku, Sean and Masamune, Jun and Romanelli, Genevieve and Wojciechowski, Rados{\l}aw K.},
  title = {Form uniqueness for graphs with weakly spherically symmetric
                  ends},
  journal = {Journal of Mathematical Analysis and Applications},
  shortjournal = {J. Math. Anal. Appl.},
  volume = {564},
  number = {1},
  pages = {Paper No. 130855},
  year = {2026},
  issn = {0022-247X,1096-0813},
  doi = {10.1016/j.jmaa.2026.130855},
  mrclass = {31C20 (05C63 31C25 47B25)},
  mrnumber = {5083880}
}

@article{LSZ23,
  author = {Lenz, Daniel and Schmidt, Marcel and Zimmermann, Ian},
  title = {Blow-up of nonnegative solutions of an abstract semilinear
                  heat equation with convex source},
  journal = {Calculus of Variations and Partial Differential Equations},
  shortjournal = {Calc. Var. Partial Differential Equations},
  volume = {62},
  number = {4},
  pages = {Paper No. 140, 19},
  year = {2023},
  issn = {0944-2669},
  doi = {10.1007/s00526-023-02482-x},
  mrclass = {35K05 (35K08 35K58 47D07)},
  mrnumber = {4581147}
}

@article{LW17,
  author = {Lin, Yong and Wu, Yiting},
  title = {The existence and nonexistence of global solutions for a
                  semilinear heat equation on graphs},
  journal = {Calculus of Variations and Partial Differential Equations},
  shortjournal = {Calc. Var. Partial Differential Equations},
  volume = {56},
  number = {4},
  pages = {Paper No. 102, 22},
  year = {2017},
  issn = {0944-2669},
  doi = {10.1007/s00526-017-1204-y},
  mrclass = {35R02 (35A01 35K91 58J35)},
  mrnumber = {3688855},
  mrreviewer = {Guilherme Mazanti}
}

@article{LY21,
  author = {Lin, Yong and Yang, Yunyan},
  title = {A heat flow for the mean field equation on a finite graph},
  journal = {Calculus of Variations and Partial Differential Equations},
  shortjournal = {Calc. Var. Partial Differential Equations},
  volume = {60},
  number = {6},
  pages = {Paper No. 206, 15},
  year = {2021},
  issn = {0944-2669},
  doi = {10.1007/s00526-021-02086-3},
  mrclass = {35R02 (34B45)},
  mrnumber = {4305428}
}

@incollection{Mas09,
  author = {Masamune, Jun},
  title = {A {L}iouville property and its application to the {L}aplacian
                  of an infinite graph},
  booktitle = {Spectral analysis in geometry and number theory},
  series = {Contemp. Math.},
  volume = {484},
  pages = {103--115},
  publisher = {Amer. Math. Soc., Providence, RI},
  year = {2009},
  doi = {10.1090/conm/484/09468},
  mrclass = {05C50 (05C63 39A70 58J05)},
  mrnumber = {1500141},
  mrreviewer = {J\'{o}zef Dodziuk}
}

@article{milatovic2015maximal,
  author = {Milatovic, Ognjen and Truc, Francoise},
  title = {Maximal Accretive Extensions of Schr{\"o}dinger Operators on Vector Bundles over Infinite Graphs},
  journal = {Integral Equations and Operator Theory},
  shortjournal = {Integral Equations Operator Theory},
  volume = {81},
  pages = {35--52},
  year = {2015},
  doi = {10.1007/s00020-014-2196-z}
}

@incollection{neidhardt2018operator,
  author = {Neidhardt, Hagen and Stephan, Artur and Zagrebnov, Valentin
                  A.},
  title = {Operator-norm convergence of the {T}rotter product formula on
                  {H}ilbert and {B}anach spaces: a short survey},
  booktitle = {Current research in nonlinear analysis},
  series = {Springer Optim. Appl.},
  volume = {135},
  pages = {229--247},
  publisher = {Springer, Cham},
  year = {2018},
  doi = {10.1007/978-3-319-89800-1_9},
  mrclass = {47D06 (47B25)},
  mrnumber = {3793268},
  mrreviewer = {Sabina Milella}
}

@incollection{Sch20,
  author = {Schmidt, Marcel},
  title = {On the existence and uniqueness of self-adjoint realizations
                  of discrete (magnetic) {S}chr\"{o}dinger operators},
  booktitle = {Analysis and geometry on graphs and manifolds},
  series = {London Math. Soc. Lecture Note Ser.},
  volume = {461},
  pages = {250--327},
  publisher = {Cambridge Univ. Press, Cambridge},
  year = {2020},
  doi = {10.1017/9781108615259.012},
  mrclass = {05C50},
  mrnumber = {4412977}
}

@article{Sch20b,
  author = {Schmidt, Marcel},
  title = {A note on reflected {D}irichlet forms},
  journal = {Potential Analysis},
  shortjournal = {Potential Anal.},
  volume = {52},
  number = {2},
  pages = {245--279},
  year = {2020},
  issn = {0926-2601},
  doi = {10.1007/s11118-018-9745-z},
  mrclass = {31C25 (60J46)},
  mrnumber = {4064320},
  mrreviewer = {Wenjie Sun}
}

@article{Tor10,
  author = {Torki-Hamza, Nabila},
  title = {Laplaciens de graphes infinis ({I}-graphes) m\'{e}triquement
                  complets},
  journal = {Confluentes Mathematici},
  shortjournal = {Confluentes Math.},
  volume = {2},
  number = {3},
  pages = {333--350},
  year = {2010},
  issn = {1793-7442},
  doi = {10.1142/S179374421000020X},
  mrclass = {05C63 (05C12 05C50 35R02 47B25)},
  mrnumber = {2740044}
}

@book{vazquez2007porous,
  author = {V{\'a}zquez, Juan Luis},
  title = {The Porous Medium Equation: Mathematical Theory},
  publisher = {Oxford University Press},
  year = {2007},
  isbn = {9780198569039},
  doi = {10.1093/acprof:oso/9780198569039.001.0001}
}

@article{Web10,
  author = {Weber, Andreas},
  title = {Analysis of the physical {L}aplacian and the heat flow on a
                  locally finite graph},
  journal = {Journal of Mathematical Analysis and Applications},
  shortjournal = {J. Math. Anal. Appl.},
  volume = {370},
  number = {1},
  pages = {146--158},
  year = {2010},
  issn = {0022-247X},
  doi = {10.1016/j.jmaa.2010.04.044},
  mrclass = {35Q79 (35A08 35B30 35B50 47B25)},
  mrnumber = {2651136},
  mrreviewer = {Song Jiang}
}

@book{Woj08,
  author = {Wojciechowski, Rados{\l}aw Krzysztof},
  title = {Stochastic completeness of graphs},
  publisher = {ProQuest LLC, Ann Arbor, MI},
  year = {2008},
  pages = {87},
  isbn = {978-0549-58579-4},
  mrclass = {Thesis},
  mrnumber = {2711706},
  note = {Thesis (Ph.D.)--City University of New York}
}

@article{Woj09,
  author = {Wojciechowski, Rados{\l}aw K.},
  title = {Heat kernel and essential spectrum of infinite graphs},
  journal = {Indiana University Mathematics Journal},
  shortjournal = {Indiana Univ. Math. J.},
  volume = {58},
  number = {3},
  pages = {1419--1442},
  year = {2009},
  issn = {0022-2518},
  doi = {10.1512/iumj.2009.58.3575},
  mrclass = {35K08 (05C50 35P05)},
  mrnumber = {2542093}
}

@article{Woj21,
  author = {Wojciechowski, Rados{\l}aw K.},
  title = {Stochastic completeness of graphs: bounded {L}aplacians,
                  intrinsic metrics, volume growth and curvature},
  journal = {The Journal of Fourier Analysis and Applications},
  shortjournal = {J. Fourier Anal. Appl.},
  volume = {27},
  number = {2},
  pages = {Paper No. 30, 45},
  year = {2021},
  issn = {1069-5869,1531-5851},
  doi = {10.1007/s00041-021-09821-6},
  mrclass = {58J35 (05C63 31C12 31C20 60J35)},
  mrnumber = {4240786}
}
\end{document}